\newcommand{\new}{\textcolor{black}}
\title{Minimal bipartite dimers and higher genus Harnack
curves}
\author{%
C\'edric Boutillier%
  \thanks{%
    {\small Sorbonne Université, CNRS,
      Laboratoire de Probabilités Statistique et Modélisation, LPSM, UMR 8001,
      F-75005 Paris, France; Institut Universitaire de France.
      \texttt{cedric.boutillier@sorbonne-universite.fr}
}},
  David Cimasoni%
  \thanks{%
    {\small Université de Genève, Section de Mathématiques, 1211 Genève 4, Suisse.
     \texttt{david.cimasoni@unige.ch}
    }},
  B\'eatrice de Tili\`ere%
  \thanks{{\small %
PSL University-Dauphine, CNRS, UMR 7534, CEREMADE, 75016 Paris, France; Institut Universitaire de France.}
{\small\texttt{detiliere@ceremade.dauphine.fr}}
}
}
\begin{document}
\maketitle
\begin{abstract}
This paper completes the comprehensive study of the dimer model on infinite minimal graphs with Fock's weights~\cite{Fock}
initiated in~\cite{BCdTelliptic}: the latter article dealt with the elliptic case, \emph{i.e.}, models whose associated spectral curve is of
genus~1, while the present work applies to models of arbitrary genus.
This provides a far-reaching extension of the genus~0 results of~\cite{Kenyon:crit,KO:Harnack}, from isoradial graphs with
critical weights to minimal graphs with weights defining an arbitrary spectral data.
For any minimal graph with Fock's weights, we give an explicit local expression for a two-parameter
family of inverses of the associated Kasteleyn operator.
In the periodic case, this allows us to prove local formulas for all ergodic Gibbs measures,
thus providing an alternative description of the measures constructed in~\cite{KOS}.
We also compute the corresponding slopes, exhibit an explicit parametrization of the spectral curve,
identify the divisor of a vertex,
and build on~\cite{KO:Harnack,GK} to
establish a correspondence between Fock's models on periodic minimal graphs and Harnack curves
endowed with a standard divisor.
\end{abstract}

\section{Introduction}
\label{sec:intro}

This paper is a follow up to~\cite{BCdTelliptic}. The latter aimed at giving a
comprehensive study of the dimer model on infinite minimal graphs with Fock's
\emph{elliptic} weights, \emph{i.e.}, with underlying Riemann surface of genus~1.
This extended the \emph{rational} (genus 0) results of~\cite{Kenyon:crit,KO:Harnack},
since the rational case can be interpreted as a degeneration of the elliptic one~\cite[Section~8.1]{BCdTelliptic}.
We now turn to the general case, and consider Fock's weights in any genus~$g>0$~\cite{Fock}.

As in many questions involving compact Riemann surfaces,
moving from~$g=1$ to~$g\ge 1$ is by no means a trivial extension.
Moreover, for the dimer model to have a probabilistic meaning, it needs to be parametrized by
a specific type of abstract, compact Riemann surface know as an \emph{M-curve}~\cite{Harnack,Mikhalkin1}, which we need to thoroughly understand.
Therefore, a significant part of this paper, namely the whole of Section~\ref{sec:M}, is devoted to establishing the results on M-curves necessary for the study of the corresponding dimer models: we describe
their period matrix, Abel-Jacobi map, Riemann theta functions and prime form.

Before turning to the statistical mechanics implications, let us recall the context of Fock's work~\cite{Fock} and ours,
that is, the rich interplay between dimer models on the one hand,
real algebraic geometry and complex analysis on the other.
In their seminal paper~\cite{KOS}, Kenyon, Okounkov and Sheffield show that the spectral curve of a dimer model
on a~$\ZZ^2$-periodic, bipartite graph is of a very special type, namely a \emph{Harnack curve}.
In the subsequent articles~\cite{KO:Harnack,GK}, the authors define the \emph{spectral data} of such a dimer model
as the spectral curve~$\C$ together with a divisor consisting of one point on each of the ovals of~$\C$.
Furthermore, they show that given any Harnack curve and any such \emph{standard divisor}, there exists a dimer model realizing this spectral data,
and that the dimer model can be chosen on a \emph{minimal} graph~\cite{Thurston,GK}.
We refer the reader to Section~\ref{sub:dimer} for the definition of
minimal graphs and of the related notion of \emph{train-tracks}. Let us point out that throughout this article
and unless otherwise stated, the graphs are locally finite, embedded in the plane, with faces consisting of bounded
topological discs; in particular, they are infinite.

The articles~\cite{KO:Harnack,GK} contain an explicit construction of a
periodic minimal graph from the spectral curve,
a characterisation of genus~0 spectral curves as coming from isoradial graphs with critical weights~\cite{Kenyon:crit},
but no determination of the actual dimer model in general.
In other words, these powerful results do not answer the following question:
\emph{given a Harnack curve~$\C$, can we explicitly construct a dimer model on a periodic bipartite graph whose spectral curve is~$\C$?}

The remarkable contribution of Fock~\cite{Fock} consists in filling this gap.
More precisely,
Fock starts with an arbitrary complex curve~$\C$ (not necessarily Harnack)
and an arbitrary divisor of the appropriate degree (not necessarily a standard divisor),
and constructs an explicit ``dimer model'' on a periodic minimal graph
whose spectral data is the curve~$\C$ together with this divisor.
The quotation marks are due to the fact that in Fock's construction, dimer weights are complex.
From our statistical mechanics perspective, an important question is to understand in which setting
the ``dimer model'' is indeed a probabilistic model, \emph{i.e.}, has positive edge-weights.
This is the first main contribution of the present paper and the content of Proposition~\ref{prop:Kasteleyn},
whose proof heavily relies on the study of M-curves
of Section~\ref{sec:M}.
Let us briefly introduce the tools required to explain this statement,
referring the reader to the relevant parts of Sections~\ref{sec:M} and~\ref{sec:Fock} for more complete definitions.

Fix a compact Riemann surface~$\Sigma$ of positive genus together with an element~$t$  of its Jacobian variety~$\Jac(\Sigma)$ \new{and a theta characteristic~$\binom{\delta'}{\delta''}\in(\frac{1}{2}\ZZ/\ZZ)^{2g}$}.
Let~$\Gs$ be a bipartite graph (not necessarily periodic), and let~$\mapalpha\colon\T\to\Sigma$ be a map
assigning to each train-track of~$\Gs$ an element of~$\Sigma$ called its \emph{angle}.
Let~$\mapd\colon\{\text{faces of~$\Gs$}\}\to\Jac(\Sigma)$ be the \emph{discrete Abel map}, uniquely defined up to an additive constant
by the local rule described in Figure~\ref{fig:around_rhombus}.
\emph{Fock's adjacency operator} is represented by an infinite matrix~$\Ks$ whose rows are indexed by white
vertices of~$\Gs$, columns by black ones, and whose non-zero \new{entries}
correspond to edges of~$\Gs$ and are given by: for every edge~$\ws\bs$ of~$\Gs$ as in  Figure~\ref{fig:around_rhombus},
\begin{equation}\label{eq:def_Kast_intro}
    \K_{\ws,\bs}=\frac{E(\alpha,\beta)}{\theta\thchar{\delta'}{\delta''}(t+\mapd(\fs))\,
    \theta\thchar{\delta'}{\delta''}(t+\mapd(\fs'))}\,,
\end{equation}
where~$E$ is the prime form of~$\Sigma$
and~$\theta\thchar{\delta'}{\delta''}$ the theta function with theta characteristic~$\binom{\delta'}{\delta''}$.

\begin{rem}\label{rem:weight_Fock_intro}
In the genus~0 case~\cite{Kenyon:crit}, the weight of the edge~$\ws\bs$ only depends on the angles~$\alpha,\beta$ of the train-tracks crossing $\ws\bs$, and is therefore referred to as a \emph{local weight}.
In the general case defined above, the situation is just about as favorable, but not quite.
Indeed, the weight~$\K_{\ws,\bs}$ depends on~$\alpha,\beta$, but also on the value of~$\mapd$ at neighbouring faces.
The map~$\mapd$ is non-local, but is defined via a local rule: it should be thought of as a discrete primitive of the angle map~$\mapalpha$
with initial condition given by~$t$.
Despite this subtle difference, we make a slight abuse of terminology and still refer to the weights defined in Equation~\eqref{eq:def_Kast_intro} as \emph{local weights}.
\end{rem}

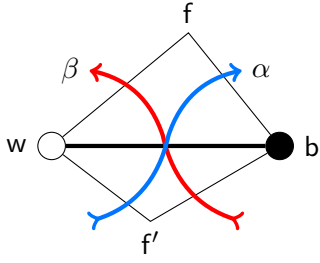
\begin{figure}
  \centering
  \definecolor{electricblue}{RGB}{0,117,255}
  \begin{tikzpicture}[auto]
    \node  (w) at (-1.5,0) [shape=circle,draw,fill=white,label=left:$\ws$] {};
    \node (b) at (1.5,0) [shape=circle,draw,fill=black,label=right:$\bs$] {};
    \node (o) at (0,0) {};
    \coordinate [label=above:$\fs$](f)  at (0.3,1.5)  ;
    \coordinate [label=below:$\fs'$](g)  at (-0.2,-1)  ;
    \draw  (b) -- (f) -- (w) -- (g) --  (b);
    \draw[ultra thick] (w) -- (b);
    \coordinate [label=left:$\beta$] (d1) at (-1,1);
    \coordinate [label=right:$\alpha$] (d2) at (1,1);
    \path[<-<,draw=red,ultra thick]  (-1,1) to [bend left]   (o)  to [bend right] (1,-1) ;
    \path[<-<,draw=electricblue,ultra thick]   (1,1) to [bend right]   (o)  to [bend left] (-1,-1) ;
  \end{tikzpicture}
  \caption{%
An arbitrary edge~$\ws\bs$ of~$\Gs$ with the two adjacent faces~$\fs,\fs'$ \new{depicted as dual vertices.
The four corresponding edges of the quad-graph~$\GR$ are also drawn, together with the two}
incident train-tracks with
angles~$\alpha,\beta\in\Sigma$.
\new{The discrete Abel map~$\mapd$ satisfies the local rule}~$\mapd(\fs')-\mapd(\fs)=\alpha-\beta\in\Pic^0(\Sigma)\simeq\Jac(\Sigma)$.
  }
  \label{fig:around_rhombus}
\end{figure}

An \emph{M-curve} is a compact Riemann surface of genus~$g$ endowed with an anti-holomorphic
involution~$\sigma$
whose set of fixed points is given by~$g+1$ circles, called \emph{real components} (see Figure~\ref{fig:surface}).
Proposition~\ref{prop:Kasteleyn} can now be stated as
follows.

\begin{prop}\label{prop:Kasteley_intro}
Let us assume that
\begin{itemize}
\item[(i)] the surface~$\Sigma$ is an M-curve (of genus~$g>0$ and period matrix~$\Omega$);
\item[(ii)] the element~$t$ of $\Jac(\Sigma)$ is real, \emph{i.e.}, belongs to~$(\RR/\ZZ)^g+\Omega\delta$ for some~$\delta\in(\frac{1}{2}\ZZ/\ZZ)^g$;
\item[(iii)] the theta characteristic~$\binom{\delta'}{\delta''}$ satisfies~$\delta'=\delta$;
\item[(iv)] the graph~$\Gs$ is minimal;
\item[(v)] the image of~$\mapalpha$ is contained in a real (oriented) component~$A_0$ of $\Sigma$, and~$\mapalpha\colon\T\to A_0$ is monotone with respect to the natural cyclic orders on~$\T$ and on~$A_0$.
\end{itemize}
Then,
Fock's adjacency operator\eqref{eq:def_Kast_intro} is a Kasteleyn operator, \emph{i.e.}, defines
a dimer model on the graph $\Gs$.
\end{prop}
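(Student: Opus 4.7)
The plan is to reduce the claim that Fock's operator is Kasteleyn to two separate statements, an edge-wise reality/gauge statement and a face-wise sign statement, and to establish each using the input from Section~\ref{sec:M} together with the combinatorial structure of minimal graphs. Recall that, in the bipartite setting, an operator with \emph{a priori} complex weights defines a dimer model if, after a suitable gauge transformation at vertices, the weights become positive real and the product of signs around each face of degree $2n$ equals $(-1)^{n+1}$.

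First, I would use hypotheses~(i)--(iii) to pin down the arguments of the two factors in~\eqref{eq:def_Kast_intro}. The crucial observation is that, thanks to the local rule $\mapd(\fs')-\mapd(\fs)=\alpha-\beta$ and hypothesis~(v), every increment of $\mapd$ is the difference of two points on the oriented real component $A_0$. Consequently $t+\mapd(\fs)$ stays in the affine real subtorus $(\RR/\ZZ)^g+\Omega\delta$ of $\Jac(\Sigma)$ adapted to the theta characteristic $\binom{\delta'}{\delta''}=\binom{\delta}{\delta''}$. The reality results of Section~\ref{sec:M} should then imply that $\theta\thchar{\delta'}{\delta''}(t+\mapd(\fs))$ is \emph{real} for every face $\fs$, with a sign that must be tracked. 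In parallel, I would invoke the reality properties of the prime form $E(\alpha,\beta)$ when both $\alpha$ and $\beta$ lie on $A_0$, extracting an explicit argument depending only on the cyclic position of $\alpha$ and $\beta$ along the oriented circle $A_0$.

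Combining these two inputs, each entry $\K_{\ws,\bs}$ splits as a real number times an explicit, gauge-removable phase. Absorbing the latter into multiplicative gauges $u_\ws,v_\bs$ at the vertices reduces the proof to: (a) the signs of the theta and prime-form factors combine so that the gauged weights are positive; and~(b) the product of the edge signs around any face of length $2n$ equals $(-1)^{n+1}$. For~(b), the contributions $\theta\thchar{\delta}{\delta''}(t+\mapd(\fs))$ attached to the central face appear in every incident edge and therefore cancel in the alternating product; what remains is a signed alternating product of theta values at the $2n$ neighbouring faces and of prime forms at successive train-track angles. The monotonicity hypothesis~(v) controls the cyclic arrangement of angles around~$\fs$, while the minimality hypothesis~(iv) ensures that no train-track either self-intersects or crosses another train-track twice along the boundary of $\fs$, so that this sign count on $A_0$ is unambiguous and reduces to a finite, essentially combinatorial verification.

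I expect the main obstacle to lie precisely in this combinatorial-analytic synthesis: namely, reading off how the sign of $\theta\thchar{\delta}{\delta''}(t+\mapd(\fs))$ evolves as one crosses train-tracks of $\Gs$ and matching the resulting total phase around a $2n$-gon to $(-1)^{n+1}$. This is where the higher-genus nature of $\Sigma$ genuinely enters, because for $g>1$ the zero locus of $\theta\thchar{\delta}{\delta''}$ and its intersections with the real translates $(\RR/\ZZ)^g+\Omega\delta$ are considerably more intricate than in the elliptic case treated in~\cite{BCdTelliptic}. I therefore expect the argument to rely on the full package of period matrix, Abel--Jacobi map, real theta functions and prime form developed in Section~\ref{sec:M}, rather than on ad hoc identities.
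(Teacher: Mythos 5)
Your high-level outline (analyze the face-weight phase directly, split it into theta and prime-form contributions, and use reality properties of each) matches the paper's strategy, but you have mislocated the difficulty in a way that would send you down a dead end. You anticipate that the ``main obstacle'' is to track how the sign of $\theta\thchar{\delta}{\delta''}(t+\mapd(\fs))$ evolves as one crosses train-tracks, and you predict that for $g>1$ the intersection of the theta divisor with the real translates is intricate. The crux of the paper's argument is precisely that this problem does not exist: Lemma~\ref{lem:ident_2}, Point~2 (proved by a heat-equation/maximum-principle argument, not by Abel--Jacobi analysis) states that for a purely imaginary period matrix $\Omega$ the function $z\mapsto\theta\thchar{0}{\delta''}(z|\Omega)$ is \emph{strictly positive} on all of $\RR^g$. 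Since for every face $\fs$ the element $\wt t+\wt\mapd(\fs)$ lies in $\RR^g$ (this is where $\mathit{(ii)}$, $\mathit{(iii)}$ and Lemma~\ref{lemma:real} enter, via the shift in Lemma~\ref{lem:ident_1}, Point~3), the entire denominator of~\eqref{eq:def_Kast_intro} is strictly positive for every edge. There is no sign to track, no alternating product of theta values to analyze, and no gauge to introduce: the face-weight phase is exactly the face-weight phase of the prime forms alone. Hypothesis $\mathit{(iii)}$ is the condition that rules out the sign-changing characteristics of Lemma~\ref{lem:ident_2}, Point~3, and is thus essential to this positivity.

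The remaining content, which your proposal leaves essentially unaddressed beyond a reference to ``a finite, essentially combinatorial verification,'' is twofold. First, one needs Lemma~\ref{lem:prime}, which identifies the phase of $E(x,y)$ for $x,y\in\wt A_0$ as $(-1)^{\lfloor y-x\rfloor}\varphi$ for a global constant $\varphi\in\{\pm1,\pm i\}$ --- this is the real analytic input about the prime form on an M-curve, and it is not a direct reality statement but requires the continuity-plus-skew-symmetry argument in that lemma. Second, the Kasteleyn condition then reduces to proving that the integer $\sum_j\lfloor\wt\alpha_{j+1}'-\wt\alpha_j\rfloor+\sum_j\lfloor\wt\alpha_j'-\wt\alpha_j\rfloor$ is odd, and this is handled not by an ad hoc count but by Lemmas~8 and~9 of~\cite{BCdTimmersions}, which give the precise cyclic-order monotonicity of $\mapalpha$ around each white vertex and around the boundary of each face of a minimal graph. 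Your invocation of minimality (``no train-track self-intersects or crosses twice'') is the right raw material, but the derivation from it of these two monotonicity statements is nontrivial and is exactly what~\cite{BCdTimmersions} supplies; without naming these inputs the combinatorial step is not a verification but an open problem.
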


In the periodic setting, these constraints can be heuristically explained as follows.
As we will see in more detail below, the abstract Riemann surface~$\Sigma$ serves as a parametrization domain for the spectral curve~$\C$;
this latter curve being Harnack, it has the maximal number of real components.
As for the element~$t\in\Jac(\Sigma)$, it corresponds via the Abel-Jacobi map and the identification~$\Sigma\simeq\C$ to the divisor
of the model, which consists of one point on each of the ovals of~$\C$; such divisors are mapped to real elements of the Jacobian variety.
Finally, the train-tracks of~$\Gs$ correspond to the ``points at infinity'' of~$\C$, which according to the definition of a Harnack curve,
must be arranged in a natural cyclic order; this forces the angle map~$\mapalpha$ to be monotone.
As proved in~\cite{BCdTimmersions}, the minimality of~$\Gs$ then ensures that this (global) monotonicity
implies the corresponding (local) monotonicity around each face of~$\Gs$.
This can finally be translated into the
Kasteleyn condition using properties of the prime form restricted to the real component of an M-curve.

To a certain extent, the arguments given above show that conditions~$(i)$,~$\mathit{(ii)}$ and~$(v)$
are actually necessary for~$\Ks$ to be a Kasteleyn operator, at least in the periodic setting.
The fact that the graph~$\Gs$ needs to be minimal for the theory to apply is discussed at the end of Section~\ref{sub:inv}, see also~\cite[Theorem~31]{BCdTimmersions}. Finally, there is little hope for Proposition~\ref{prop:Kasteley_intro}
to hold without condition~$\mathit{(iii)}$ because of the last point of Lemma~\ref{lem:ident_2}.

On the other hand, one shows that all theta
characteristics~$\binom{\delta'}{\delta''}\in(\frac{1}{2}\ZZ/\ZZ)^{2g}$ yield
gauge equivalent models, so we set~$\delta'=\delta''=0$ for definiteness in our
final definition,
see the third and fourth points of Remark~\ref{rem:Kast}, and Definition~\ref{def:K}.

Note that Fock originally defined his operator for periodic graphs,
and some of the heuristic arguments above only hold in this restricted situation.
However, Proposition~\ref{prop:Kasteley_intro} is valid for any minimal graph, allowing us to
harness the power of Kasteleyn theory in this very general setting.
Before doing so, let us mention that even with the constraints listed in Proposition~\ref{prop:Kasteley_intro},
the dimer models given by Fock's weights span all periodic dimer models, when considered from the point of view of their spectral data.
Indeed, we prove the following result, see Theorem~\ref{thm:Harnack} for a full statement.

\begin{thm}\label{thm:Harnack_intro}
For any Harnack curve~$\C$ and standard divisor~$D$, there exists an~M-curve~$\Sigma$,
a periodic minimal graph~$\Gs$, a monotone angle map~$\mapalpha$ and a
real element~$t$ of~$\Jac(\Sigma)$, such that the associated Fock operator~$\Ks$ is periodic, and the spectral
data of the corresponding dimer model coincides with~$(\Cscr,D)$.
\end{thm}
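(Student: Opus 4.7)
The plan is to reverse-engineer Fock's data from the spectral information. Starting from the Harnack curve~$\C$, I would identify the M-curve~$\Sigma$ as its normalization, translate the standard divisor~$D$ and the points at infinity of~$\C$ into Fock's input via the Abel-Jacobi map, and finally invoke the construction of~\cite{KO:Harnack,GK,Thurston} to build a periodic minimal graph~$\Gs$ whose train-track data matches the cyclic arrangement at infinity.

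First, I would let~$\Sigma$ be the normalization of~$\C$. Since~$\C\subset(\CC^*)^2$ is Harnack, complex conjugation on~$(\CC^*)^2$ restricts to an anti-holomorphic involution of~$\C$ whose fixed locus has the maximal number of real components, namely~$g+1$; pulling back gives the required M-curve structure on~$\Sigma$, with the unique unbounded oval of~$\C$ corresponding to a distinguished real component~$A_0\subset\Sigma$. Next, using the Abel-Jacobi map~$\Sigma\to\Jac(\Sigma)$ extended to divisors, I would associate to the standard divisor~$D$ (one point per oval) an element~$t\in\Jac(\Sigma)$. Because each point of~$D$ lies on a real component, the results established in Section~\ref{sec:M} imply that~$t$ is real, \emph{i.e.}\ belongs to~$(\RR/\ZZ)^g+\Omega\delta$ for some half-characteristic~$\delta\in(\tfrac12\ZZ/\ZZ)^g$; condition~(ii) of Proposition~\ref{prop:Kasteley_intro} is thus satisfied, and one takes~$\delta'=\delta$, $\delta''=0$ to fulfill~(iii).

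I would then produce the combinatorial data. The Newton polygon of~$\C$ records, through its integer edge lengths, the multiplicities of the points at infinity, and the Harnack condition forces these points to lie in a prescribed cyclic order along~$A_0$. The existence results of Goncharov-Kenyon and Kenyon-Okounkov \cite{KO:Harnack,GK}, ultimately based on Thurston's construction \cite{Thurston}, produce a periodic bipartite minimal graph~$\Gs$ whose train-tracks~$\T$ are in bijection with these points at infinity and inherit their cyclic order. Defining~$\mapalpha\colon\T\to A_0$ to send each train-track to its prescribed point on~$A_0$ then gives a monotone angle map, so conditions~(iv) and~(v) of Proposition~\ref{prop:Kasteley_intro} hold as well. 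Periodicity of Fock's operator~$\Ks$ follows from that of~$\Gs$ and from the invariance of~$\mapalpha$ and~$t$ under the relevant translations.

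At this stage, Proposition~\ref{prop:Kasteley_intro} guarantees that~$\Ks$ is a genuine Kasteleyn operator, and the dimer model it defines has a well-defined spectral data. The remaining task, which I expect to be the main obstacle, is to show that this spectral data coincides with~$(\C,D)$ and not merely with some Harnack curve in the same combinatorial class. I would split this into two parts. For the curve, I would rely on the explicit parametrization of the spectral curve of a Fock model (announced in the abstract and carried out earlier in the paper): this parametrization writes the spectral curve as the image of~$\Sigma$ under a map built from theta functions with arguments linear in the angles~$\mapalpha$ and the discrete Abel map, and by construction its points at infinity, Newton polygon, and genus match those of~$\C$; by the uniqueness part of \cite{KO:Harnack,GK}, this forces the two curves to coincide. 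For the divisor, I would use the computation of the \emph{divisor of a vertex} (also announced in the abstract): the divisor read off from the Fock kernel at any black or white vertex is expressible in terms of~$t$ via the Abel-Jacobi map, and unwinding this identification recovers~$D$ on the nose. This last identification is where the careful choice of theta characteristic and of base point in the Abel-Jacobi normalization must be matched to the conventions of~\cite{KO:Harnack,GK}, and is the step where I anticipate the delicate bookkeeping to lie.
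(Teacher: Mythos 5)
Your overall strategy is the same as the paper's: take $\Sigma$ to be the normalisation of~$\C$, read off the angles from the poles and zeros of $z,w$ on the unbounded real component, build the minimal graph from~\cite{GK,Thurston}, and invoke Propositions~\ref{prop:div(w)} and~\ref{prop:divisor} to go from the divisor~$D$ to the parameter~$t$.

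However, there is a genuine gap in the step where you conclude that the Fock model's spectral curve equals~$\C$. You argue that the spectral curve's Newton polygon, points at infinity, and genus match those of~$\C$, and then appeal to \emph{the uniqueness part of \cite{KO:Harnack,GK}} to force the two curves to be the same. That cannot work: Harnack curves with a fixed Newton polygon and fixed boundary points form a positive-dimensional moduli space (the spectral transform of~\cite{KO:Harnack,GK} parametrizes them by an \emph{additional} choice of standard divisor -- and even that only determines a dimer model up to gauge, not the curve from the coarser data you list). The uniqueness in~\cite{KO:Harnack,GK} refers to the correspondence between full spectral data $(\C,D,\nu)$ and gauge classes of dimer weights, not to a rigidity of Harnack curves given only their polygon and divisor at infinity. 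What actually makes the argument close is more direct: once $\mapalpha(T_j)=\alpha_j\in A_0$ is chosen as the pullback of the points at infinity of~$\C$, the parametrization functions of the Fock spectral curve are the explicit prime-form products
\[
z(u)=\prod_j E(\alpha_j,u)^{-v_j},\qquad w(u)=\prod_j E(\alpha_j,u)^{h_j}
\]
from Equation~\eqref{eq:expr_zu_wu}, and these are meromorphic functions on~$\Sigma$ with the \emph{same divisors} as the coordinate functions of the given parametrization $\psi\colon\Sigma\to\C$. By Lemma~\ref{lem:primeform_conj} the two pairs then differ by real constants, which a global torus rescaling removes, so the Fock spectral curve \emph{is}~$\C$, not merely a Harnack curve in the same combinatorial class. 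You would need to replace your appeal to external uniqueness by this divisor comparison.

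One smaller point: identifying~$t$ as ``the Abel--Jacobi image of~$D$'' is not quite right. The relation from Proposition~\ref{prop:div(w)} is
\[
(\mathrm{div}(\ws)-g x_0)+(\mapd(\ws)+x_0)=\Delta - t,
\]
so~$t$ is obtained from~$\Phi(D)$ only after subtracting~$\mapd(\ws)+x_0$ and the Riemann constant~$\Delta\in(\RR/\ZZ)^g+\Omega\frac{1}{2}\II$, and after these corrections it lands in $(\RR/\ZZ)^g$ rather than in a shifted torus $(\RR/\ZZ)^g+\Omega\delta$. Your acknowledgement of ``careful bookkeeping'' is well placed, but the half-characteristic shift you anticipate on~$t$ is an artefact of skipping this normalisation -- the paper's $t$ is always in $(\RR/\ZZ)^g$.
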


\new{Together with~\cite[Theorem~7.3]{GK}, this implies that
two periodic dimer models on the same minimal graph~$\Gs$ arising from the same M-curve~$\Sigma$,
the same angle map~$\mapalpha$, and elements~$t,t'\in(\RR/\ZZ)^{g}$
are gauge equivalent if and only if~$t=t'$, see Remark~\ref{rem:Harnack}.}

We have another result worth mentioning on the topic of spectral curves.
Consider the dimer model on a periodic, minimal graph~$\Gs$ with Fock's weights given by
parameters~$\Sigma,t$ and~$\mapalpha$ as in Proposition~\ref{prop:Kasteley_intro}.
In Proposition~\ref{prop:param_curve}, we provide an explicit birational parametrization~$\Sigma\to\C$ of
the spectral curve.
This allows us to transport several notions, such as the standard divisor of the model, from the spectral curve to the underlying abstract M-curve, thus extending these notions beyond the periodic setting.
Most notably,
the phase diagram of the dimer model can be tranported
from~$\C$ to~$\Sigma$.
This is related to probabilistic questions which we now address.

Before turning to our results, let us describe the setting and motivations.
Consider a dimer model on a periodic bipartite graph~$\Gs$.
Kenyon, Okounkov and Sheffield~\cite{Sheffield,KOS} prove that there is a
two-parameter family of ergodic Gibbs measures, indexed by the \emph{slope},
and that the set of allowed slopes coincides with the \emph{Newton polygon}~$N(\Gs)$.
Moreover,
they provide an explicit expression for this family of measures by
taking the weak limit of the Boltzmann measures on a toroidal exhaustion, with
weights modified by magnetic field coordinates.
They also
prove that the dimer model has three phases, liquid, solid and gaseous, and that
the phase diagram is given by the amoeba~$\Ascr$
of the spectral curve.
Using different techniques, Kenyon~\cite{Kenyon:crit} establishes an
explicit expression for the maximal entropy Gibbs measure in the case of
isoradial graphs with critical weights, which has the remarkable property of
being \emph{local}: this means that edge probabilities can be computed using
geometric information of paths joining
these edges. Note that by
uniqueness, we know that the expressions of~\cite{KOS} and~\cite{Kenyon:crit}
are equal, but this is only explicitly understood since~\cite{BdTR1}.
Local expressions have now been obtained for dimer models related to the Ising
model~\cite{BeaCed:isogen,BdTR2}, to rooted spanning forests~\cite{BdTR1}, and for the
two parameter family of Gibbs measures of the dimer model with Fock's elliptic weights~\cite{BCdTelliptic}.
Note that these are non trivial extensions of the result of~\cite{Kenyon:crit},
two of the main difficulties being to find the appropriate extension of the discrete exponential functions of Mercat~\cite{Mercat:exp}
and to define suitable paths of integration.
Having local expressions for Gibbs measures
opens the way to computing precise asymptotics, and
to constructing Gibbs measures for general, possibly non-periodic graphs.
This latter application requires an additional argument, however, and such extensions to non-periodic graphs have only been
obtained in some specific rational and elliptic cases~\cite{BeaQuad,BeaCed:isogen,BdTR1, BdTR2}.

These results yield the following question: \emph{can we obtain an explicit
local expression for the two-parameter family of ergodic Gibbs measures of the
dimer model on periodic, bipartite graphs?}
We give a positive answer for all dimer models with Fock's weights on minimal periodic graphs.

Recall that, by Theorem~\ref{thm:Harnack_intro} and the general theory of~\cite{KO:Harnack,GK},
any dimer model on a periodic minimal graph is gauge-equivalent to a model with Fock's weights (see Remark~\ref{rem:Harnack}).
In that sense, our result extends the theory initiated by Kenyon twenty years ago~\cite{Kenyon:crit}, originally valid for one measure
on isoradial graphs, to the full set of ergodic Gibbs measures of \emph{any} dimer model on a periodic minimal graph.

We now state this result in two steps.
Let us fix a minimal graph~$\Gs$ together with parameters~$\Sigma,t,\mapalpha$ as in Proposition~\ref{prop:Kasteley_intro},
and consider
the associated Kasteleyn operator~$\Ks$ defined in Equation~\eqref{eq:def_Kast_intro}.
Let~$\Sigma^+$ denote the upper half of the M-curve~$\Sigma$, see Figure~\ref{fig:surface},
and set~$\D=\Sigma^+\setminus\mapalpha(\T)$.
The first step is an explicit, \emph{local} expression for a two parameter family of
inverses~$(A^{u_0})_{u_0\in\D}$ of Fock's Kasteleyn operator~$\Ks$,
see Definition~\ref{defi:inv} and Theorem~\ref{thm:Kinv} for details.

\begin{thm}\label{thm:Kinv_intro}
For every~$u_0\in\D$, consider the operator~$\As^{u_0}$
defined as follows: for every black vertex~$\bs$ and white vertex~$\ws$ of~$\Gs$, set
\[
\A_{\bs,\ws}^{u_0}=\frac{1}{2i\pi}\int_{\sigma(u_0)}^{u_0} g_{\bs,\ws}\,,
\]
where~$g_{\bs,\ws}$ is the meromorphic 1-form on~$\Sigma$ with explicit local expression given in Section~\ref{sub:kernel},
and the integration path in~$\Sigma$ from~$\sigma(u_0)$ to~$u_0$ is defined in Section~\ref{sub:inverse}.
Then, the operator~$A^{u_0}$ is an inverse of the Kasteleyn operator~$\Ks$.
\end{thm}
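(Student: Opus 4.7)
The plan is to check directly that $\Ks A^{u_0}=\mathrm{Id}$, \emph{i.e.}, that for every pair of white vertices $\ws,\ws'$,
\[
\sum_{\bs\sim\ws}\K_{\ws,\bs}\,A^{u_0}_{\bs,\ws'}=\delta_{\ws,\ws'}.
\]
The sum is finite, since $\K_{\ws,\bs}$ vanishes unless $\bs$ is a neighbour of $\ws$, so it can be exchanged with the integral and the task reduces to showing that the meromorphic 1-form
\[
\omega_{\ws,\ws'}(u):=\sum_{\bs\sim\ws}\K_{\ws,\bs}\,g_{\bs,\ws'}(u)
\]
has integral $2i\pi\,\delta_{\ws,\ws'}$ along the path from $\sigma(u_0)$ to $u_0$ of Section~\ref{sub:inverse}. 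The proof then proceeds in two stages: first, find a closed-form expression for $\omega_{\ws,\ws'}$; then, perform a residue computation on $\Sigma$.

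For the first stage, I would substitute the definition~\eqref{eq:def_Kast_intro} of $\K_{\ws,\bs}$ and the explicit formula for $g_{\bs,\ws'}(u)$ from Section~\ref{sub:kernel}, and then apply the local rule for $\mapd$ depicted in Figure~\ref{fig:around_rhombus}. Each face $\fs$ adjacent to $\ws$ lies between two consecutive edges $\ws\bs,\ws\bs'$ emanating from $\ws$, so the theta-function factor $\theta(t+\mapd(\fs))$ appearing in the denominators of both $\K_{\ws,\bs}$ and $\K_{\ws,\bs'}$ is shared; this structural feature invites a telescoping simplification around the cyclic sequence of train-track angles incident to $\ws$. The identity responsible for the telescoping is a higher-genus analogue of the trigonometric identity exploited in~\cite{Kenyon:crit} and of the elliptic theta identity of~\cite{BCdTelliptic}; in arbitrary genus I expect it to follow from Fay's trisecant identity, combined with the properties of the prime form on an M-curve established in Section~\ref{sec:M}. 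The output should be a closed-form expression of $\omega_{\ws,\ws'}$ as a 1-form on $\Sigma$ with a small, explicitly located set of poles.

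For the second stage, the contour analysis splits according to whether $\ws=\ws'$ or not. In the off-diagonal case, the closed form should show that $\omega_{\ws,\ws'}$ has no residues enclosed by the path from $\sigma(u_0)$ to $u_0$—either because the poles lie outside $\D$, or because they pair up under $\sigma$ and cancel between the two halves of the contour, which is a natural symmetry to exploit since the endpoints of the path are related by the anti-holomorphic involution. In the diagonal case, the same manipulation should produce an additional simple pole inside the contour of residue~$2i\pi$, yielding the value $1$ after dividing by $2i\pi$. The specific path of Section~\ref{sub:inverse}, which crosses the real component $A_0$ at a point chosen relative to the train-track angles incident to $\ws$, is essential to ensure that the correct residues are picked up.

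The main obstacle will be the telescoping identity of the first stage. Although the structural mechanism (pairing of theta factors between adjacent edges at $\ws$) is clear from the local rule on $\mapd$, executing it rigorously in arbitrary genus requires a careful application of Fay's identity together with a consistent tracking of theta characteristics, of the Kasteleyn signs implicit in Proposition~\ref{prop:Kasteley_intro}, and of the branches induced by the anti-holomorphic involution on the M-curve. Once this identity is established, the residue computation follows in a fairly standard fashion from the symmetry properties of the prime form under $u\leftrightarrow\sigma(u)$ and from the monotonicity of $\mapalpha$ provided by hypothesis $(v)$ of Proposition~\ref{prop:Kasteley_intro}.
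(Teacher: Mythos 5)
There is a genuine gap, and it sits exactly at the step you treat as routine. You propose to interchange $\sum_{\bs\sim\ws}$ with the path integral, collapsing everything onto a single 1-form $\omega_{\ws,\ws'}=\sum_{\bs\sim\ws}\K_{\ws,\bs}\,g_{\bs,\ws'}$ integrated along ``the'' path from $\sigma(u_0)$ to $u_0$. But the contour $\Cs^{u_0}_{\bs,\ws'}$ in Definition~\ref{defi:inv} depends on $\bs$: it is required to cross $A_0$ in the complement of the sector $s_{\bs,\ws'}$, and for $\ws'=\ws$ the sectors $\{s_{\bs,\ws}\}_{\bs\sim\ws}$ tile all of $A_0$ (the incident train-track angles wind once around $A_0$, as used in the proof of Proposition~\ref{prop:Kasteleyn}), so no common path exists. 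A single common path can be chosen only when $\ws\neq\ws'$, via~\cite[Lemma~23]{BCdTelliptic}. Worse, the 1-form $\omega_{\ws,\ws'}$ that you plan to put into closed form is \emph{identically zero} on $\Sigma$, for \emph{every} choice of $\ws'$ including $\ws'=\ws$: this is exactly Lemma~\ref{lem:gkernel}, i.e.\ Fay's identity~\eqref{eq:Fay_bis}. Your ``telescoping around the cyclic sequence of angles at $\ws$'' produces $0$, not a 1-form with ``a small set of poles,'' and your second stage would therefore yield $0$ on the diagonal rather than $1$. The discussion of poles pairing under $\sigma$, or lying outside $\D$, in the off-diagonal case is likewise moot: the integrand is already $0$ there.

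The mechanism the paper actually uses to produce the diagonal $1$ — and which your plan omits entirely — is Lemma~\ref{lem:H}: replace each $\bs$-dependent path integral by a contour integral of $H^{u_0}\,g_{\bs,\ws}$ around the poles, where $H^{u_0}$ is a function with a unit jump across $\Cs^{u_0}_{\bs,\ws}$ (made explicit in Remark~\ref{rem:Hexplicit}). Residues at poles of $H^{u_0}$ cancel when summed over $\bs$ precisely because $\sum_\bs\K_{\ws,\bs}g_{\bs,\ws}\equiv 0$; what survives are the residues of $\K_{\ws,\bs}g_{\bs,\ws}=\omega_{\beta-\alpha}+\sum_jc^j_{\ws,\bs}\omega_j$ (Equation~\eqref{eq:Fay_ter}) at $\alpha,\beta\in A_0$, contributing $H^{u_0}(\beta)-H^{u_0}(\alpha)$ per edge; and because the angles at $\ws$ wind once around $A_0$ while $H^{u_0}$ has a $+1$ monodromy across the cut, these increments sum to $1$, not $0$. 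Without this discontinuous auxiliary function (or an equivalent device) the diagonal case cannot be reached from the identically vanishing kernel relation alone.
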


\begin{rem}\label{rem:inverse_intro}\leavevmode
\begin{enumerate}
\item The terminology \emph{local} is used in the same sense as in Remark~\ref{rem:weight_Fock_intro}: when $g\geq 1$, there is some non-local information, all encoded in the discrete Abel map $\mapd$.
\item The cornerstone of the proof is Fay's celebrated identity~\cite{Fay}, see Section~\ref{subsub:Fay}. This identity is also the reason why (and in some precise sense, equivalent to the fact that) the dimer model with Fock's weights
is invariant under natural local transformations, see Section~\ref{sub:inv}.
\end{enumerate}
\end{rem}

We are now ready to state our result for Gibbs measures on periodic minimal
graphs. This is a combination of
Theorem~\ref{prop:ABAu0} and Corollary~\ref{cor:Gibbs}.

\begin{thm}\label{thm:Gibbs_intro}
For every~$u_0\in\D$, consider the measure~$\PP^{u_0}$ whose expression on cylinder sets is given as follows:
for every set~$\{\es_1=\ws_1\bs_1,\dotsc,\es_k=\ws_k\bs_k\}$ of distinct edges of~$\Gs$,
\[
\PP^{u_0}(\es_1,\dotsc,\es_k)=\Bigl(\prod_{j=1}^k \Ks_{\ws_j,\bs_j}\Bigr)\times \det_{1\leq i,j\leq k} \Bigl(\As^{u_0}_{\bs_i,\ws_j}\Bigr)\,.
\]
This defines an ergodic Gibbs measure on dimer configurations of~$\Gs$.

Moreover,
the measures~$(\PP^{u_0})_{u_0\in\D}$ form the two-parameter family of ergodic Gibbs measures of~\cite{KOS},
where~$u_0\in\D\subset\Sigma$ is related to the magnetic field coordinates in~$\Ascr$ via the composition of
the explicit parametrization~$\Sigma\to\C$ from Proposition~\ref{prop:param_curve}
with the amoeba map~$\C\to\Ascr$.

Finally, if~$u_0$ belongs to the real component~$A_0$ of $\Sigma$ (resp. to the complement of~$A_0$ in the real locus of~$\Sigma$, to the interior of~$\D$), then the corresponding dimer model is in a solid (resp. gaseous, liquid) phase.
\end{thm}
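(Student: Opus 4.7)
My plan is to combine the local kernel provided by Theorem~\ref{thm:Kinv_intro} with the spectral-curve parametrization of Proposition~\ref{prop:param_curve}, and proceed in three steps matching the three assertions of the theorem. For the first step, namely that $\PP^{u_0}$ defines an ergodic Gibbs measure, I would observe that since $\As^{u_0}$ is an inverse of $\Ks$ by Theorem~\ref{thm:Kinv_intro}, the proposed cylinder probabilities satisfy the Kasteleyn consistency relations automatically, and the Gibbs property is a direct consequence of the Kasteleyn formula. In the periodic setting, both $\Ks$ and $\As^{u_0}$ are $\ZZ^2$-equivariant, which yields translation invariance and hence ergodicity of the induced determinantal measure by standard arguments on shift-invariant determinantal processes on $\ZZ^2$. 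The delicate point is positivity of the minors defining $\PP^{u_0}$; the natural approach is to exhibit $\PP^{u_0}$ as a weak limit of Boltzmann measures on a toroidal exhaustion of $\Gs$, with magnetic field determined by~$u_0$, in the spirit of~\cite{KOS}.

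The main obstacle, and the heart of the proof, is the identification of the family $(\PP^{u_0})_{u_0\in\D}$ with the KOS family. My plan is to use the birational parametrization $\Sigma \to \Cscr$ of Proposition~\ref{prop:param_curve} composed with the amoeba map $\Cscr \to \Ascr$ to transport $u_0$ to a point of $\Ascr$ and read off the corresponding magnetic field coordinates. Under this change of variables, the contour-integral expression for $\As^{u_0}_{\bs,\ws}$ coming from Theorem~\ref{thm:Kinv_intro} should match, after suitable deformation of contours, the double contour-integral expression for the KOS inverse with that magnetic field. A cleaner alternative is to compute the slope of $\PP^{u_0}$, which is precisely the content of Theorem~\ref{prop:ABAu0}, and verify that it sweeps out the full Newton polygon $N(\Gs)$ as $u_0$ varies over~$\D$; Sheffield's uniqueness of ergodic Gibbs measures with prescribed slope then forces the two families to coincide, and the explicit relation between $u_0$ and the magnetic field can be read off from the parametrization.

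For the phase classification, I would analyse the asymptotics of $\As^{u_0}_{\bs,\ws}$ as the graph distance between $\bs$ and $\ws$ tends to infinity, according to the location of $u_0$. When $u_0 \in A_0$, the specific structure of the integration path together with the pole structure of $g_{\bs,\ws}$ on $A_0$ forces the kernel to be supported on a deterministic edge configuration, the signature of the solid phase. When $u_0$ lies on a real component other than $A_0$, the integrand is real along the corresponding oval and a steepest-descent or residue analysis produces exponentially decaying kernel entries, the hallmark of the gaseous phase. When $u_0$ lies in the interior of~$\D$, a critical-point analysis of the meromorphic $1$-form $g_{\bs,\ws}$ near its critical points on $A_0$ yields polynomial decay, characteristic of the liquid phase. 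Transporting these three regimes to $\Ascr$ via the bijection of the previous step matches them respectively with the distinguished boundary of the amoeba, the bounded complementary gas components, and the interior of the amoeba, in agreement with the KOS phase diagram.
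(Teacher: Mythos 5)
Your core strategy for the middle step — transporting $u_0$ to the amoeba via $\psi$, then matching the contour-integral form of $\As^{u_0}_{\bs,\ws}$ to the double contour integral of the KOS inverse $A^B_{\bs,\ws}$ by a change of variables and deformation of contours — is precisely what the paper does in Theorem~\ref{prop:ABAu0}: one of the two integrals in $A^B_{\bs+(m,n),\ws}$ is evaluated by residues, the resulting collection of paths $\{(z,w_j(z))\}$ is pulled back to $\Sigma$ by $\psi$, and the deformed path becomes $\Cs^{u_0}_{\bs+(m,n),\ws}$. The technical ingredient that makes the change of variables work is Lemma~\ref{lem:Q}, which identifies $Q(z(u),w(u))_{\bs,\ws}\,\lambda(u)$ with $g_{\bs,\ws}(u)$; your sketch implicitly assumes something like this but does not name it, and it is the crux of the computation.

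There are two substantive misreadings in the rest of your plan. First, you describe Theorem~\ref{prop:ABAu0} as a slope computation and propose to leverage Sheffield's uniqueness of ergodic Gibbs measures with prescribed slope as a ``cleaner alternative.'' But Theorem~\ref{prop:ABAu0} is not about slopes; it \emph{is} the operator identity $A^B=\As^{u_0}$. The slope discussion appears independently in Section~\ref{sub:slope}, where it uses the Riemann bilinear relation and Proposition~\ref{prop:J_dlog}, and is downstream of (not an ingredient for) the identification of the measures. The Sheffield-uniqueness route is not obviously circular, but it would require independently proving that $u_0\mapsto(s^{u_0},t^{u_0})$ surjects onto $N(\Gs)$, which is itself derived in the paper from the operator identity and the properties of~$\varphi$.

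Second, your steps for ergodicity/Gibbs (weak limits of toroidal Boltzmann measures) and for the phase classification (asymptotic or steepest-descent analysis of $\As^{u_0}$) are large unnecessary detours. Once the operator identity $A^B = \As^{u_0}$ is established, the entire first and third assertions of the theorem follow immediately from the KOS classification~\cite[Theorem~4.1]{KOS}: the measures $\PP^B$ are already known to be ergodic Gibbs measures, and the phase is read off directly from the position of $B$ in the amoeba. The only remaining observation is that, via Proposition~\ref{prop:param_curve}, $u_0\in A_0$ corresponds to $B$ in the unbounded component of the amoeba complement, $u_0\in A_j$ ($1\le j\le g$) to $B$ in a bounded complementary oval, and $u_0$ in the interior of $\D$ to $B$ in the interior of the amoeba. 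No asymptotic analysis of the kernel is necessary, and redoing the toroidal-exhaustion construction would amount to reproving~\cite{KOS}.
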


\new{Note that if~$u_0$ and~$u_1$ belong to the same connected component of the real locus of~$\D$,
then the operators~$\As^{u_0}$ and~$\As^{u_1}$ coincide, yielding identical measures~$\PP^{u_0}=\PP^{u_1}$.}

As mentioned above, this set of ergodic Gibbs measures is also naturally parametrized by slopes,
and it is natural to wonder whether a simple expression can be given for the slope
of the ergodic Gibbs measure~$\PP^{u_0}$.
This is done in Section~\ref{sub:slope}, but we shall not attempt to summarize these results here.
Let us only mention that, in our setting, the identification of the~$g$ distinct slopes corresponding to the gaseous phases uses
as its main ingredient the Riemann bilinear relation, see Corollary~\ref{cor:slope-gas}.
Once again, this illustrates the very rich interplay between statistical physics and complex analysis on compact Riemann surfaces at work in this theory.

\subsubsection*{Outline of the paper}
\begin{itemize}
  \item Section~\ref{sec:M} gathers all the results of complex analysis needed for our study of minimal bipartite dimers.
    After introducing abstract M-curves in Section~\ref{sub:M-curves},
    we recall the definition of several classical objects for an arbitrary compact Riemann surface~$\Sigma$ and study their special properties when~$\Sigma$ is an~M-curve:
    first the period matrix in Section~\ref{sub:period},
    then the Abel-Jacobi map
    in Section~\ref{sub:Abel-Jacobi}, the Riemann theta functions in Section~\ref{sub:theta}, and finally the prime form in Section~\ref{sub:prime}.
  \item In Section~\ref{sec:Fock}, after briefly recalling some background material on minimal bipartite dimers (Section~\ref{sub:dimer}),
  the discrete Abel map and monotone angle maps (Section~\ref{sub:mapd}), we
  prove Proposition~\ref{prop:Kasteley_intro} in Section~\ref{sub:Kast}.
   In Section~\ref{sub:kernel}, we construct explicit forms in the kernel
   of~$\Ks$, which we use in Section~\ref{sub:inverse} to construct a~two-parameter family of inverses of~$\Ks$,
   proving Theorem~\ref{thm:Kinv_intro}.
   \item Section~\ref{sec:periodic} deals with the case of~$\ZZ^2$-periodic models. After the preliminary Section~\ref{sub:prelim}, we study the periodicity of~$\Ks$ in Section~\ref{sec:Kast-per}.
   In Section~\ref{sub:curve}, we give an explicit parametrization of the spectral curve by the abstract~M-curve~$\Sigma$,
   and prove Theorem~\ref{thm:Harnack_intro}.
   In Section~\ref{sub:Gibbs}, we study the full set of ergodic Gibbs measures, proving Theorem~\ref{thm:Gibbs_intro}.
   Finally, we derive explicit formulas for the slopes of the Gibbs measures in Section~\ref{sub:slope},
and study the surface tension and free energy in Section~\ref{sub:free}.
   \item The more informal Section~\ref{sec:more} deals with miscellaneous additional features of our theory: the construction of Gibbs measures beyond the periodic case in Section~\ref{sub:beyond},
   the invariance of the model under local transformations in Section~\ref{sub:inv}, and its relation to known models in Section~\ref{sub:known}. This concluding section also contains various future perspectives.
\end{itemize}

\subsubsection*{Acknowledgements}
The authors express their gratitude to Vladimir Fock for stimulating discussions, as well as Erwan Brugall\'e, Elisha Falbel, Ilia Itenberg, Nicolas Lerner, Florent Schaffauser, and Evgeny Verbitskiy. \new{They also thank the two anonymous referees for their very careful reading of the paper and their valuable suggestions. The authors}
acknowledge that Alexander Bobenko, Nikolai Bobenko and Yuri Suris informed them that they are working on a related project.
The first- and third-named authors are partially supported by the \emph{DIMERS}
project ANR-18-CE40-0033 funded by the French National Research Agency.
The second-named author is partially supported by the Swiss NSF  grant 200020-200400.

\section{Compact Riemann surfaces and~M-curves}
\label{sec:M}

This section contains all the results in complex analysis that are needed for our study of dimers on minimal graphs.
More precisely, we recall classical statements about Riemann surfaces, referring to~\cite{Jost_2006,Farkas-Kra,ThetaTata1,ThetaTata2} for proofs and details,
and explain what more can be said in the case of~M-curves.

We start in Section~\ref{sub:M-curves} by recalling the definition of this special class of compact Riemann surfaces, and provide several examples.
In Section~\ref{sub:period}, we briefly summarise the theory of period matrices, whose \new{entries} are showed to be purely imaginary in the case of~M-curves (Lemma~\ref{lem:imaginary}).
Section~\ref{sub:Abel-Jacobi} deals with the Abel-Jacobi map, whose behaviour for~M-curves is described in Lemma~\ref{lemma:real}.
In Section~\ref{sub:theta}, we recall the definition of the Riemann theta functions along with their well-known general properties (Lemma~\ref{lem:ident_1}), and lesser-known behaviour for purely imaginary period matrices (Lemma~\ref{lem:ident_2}).
Finally, Section~\ref{sub:prime} deals with the general theory of prime forms, with Lemmas~\ref{lem:primeform_conj} and~\ref{lem:prime} containing the results needed in the case of~M-curves.

\subsection{Abstract M-curves}
\label{sub:M-curves}

Recall that an \emph{anti-holomorphic involution} on a
Riemann surface~$\Sigma$ is a smooth
involution~$\sigma\colon\Sigma\to\Sigma$ whose induced map~$\sigma_*\colon T\Sigma\to T\Sigma$
satisfies~$\sigma_*\circ J=-J\circ\sigma_*$, where~$J$ denotes the almost-complex structure on~$\Sigma$.
The points of~$\Sigma$ that are fixed by~$\sigma$ are said to be \emph{real}.

One easily shows that if~$\sigma$ is an \new{anti-holomorphic} involution on a compact orientable surface of genus~$g$,
then its set of fixed points consists of at most~$g+1$ topological circles. (This is Harnack's theorem,
whose proof follows from an Euler characteristic argument.)

\begin{defi}
An \emph{(abstract)~M-curve} is a compact Riemann surface~$\Sigma$ endowed with an
anti-holomorphic involution $\sigma$ whose set of fixed points is
given by~$g+1$ topological circles, where~$g$ is the genus of~$\Sigma$.
\end{defi}

The~M in~M-curve stands for~`maximal'. We now give some examples.

\begin{exm}
Any genus~$0$ Riemann surface is isomorphic to the Riemann sphere,
which is trivially an~M-curve with respect to complex conjugation.
This case being well-known, we assume from now on that~$g$
is positive. Note however that this rational case can be recovered
as a degeneration of the elliptic case, \emph{i.e.}~$g=1$,
as explained in~\cite[Section~8.1]{BCdTelliptic}.
\end{exm}

\begin{exm}
\label{ex:Melliptic}
A Riemann surface of genus~$1$ is isomorphic to
a torus~$\TT(\tau)=\mathbb{C}/(\mathbb{Z}+\tau\mathbb{Z})$ of modular parameter~$\tau$
with~$\Im(\tau)>0$.
The complex conjugation admits as real locus the curve~$\RR/\ZZ$,
together with the curve~$\RR/\ZZ+\frac{\tau}{2}$
if and only if~$\tau$ is purely imaginary.
Therefore,~$\TT(\tau)$ is an~M-curve if and only if~$\tau$ is purely imaginary.
This case is treated extensively in~\cite{BCdTelliptic}.
\end{exm}

\begin{exm}
By definition, (the toric closure of) a Harnack curve in~$(\CC^*)^2$
is an~M-curve with respect to the anti-holomorphic involution given by~$\sigma(z,w)=(\overline{z},\overline{w})$.
\end{exm}

\begin{figure}
\center
\begin{overpic}[width=10cm]{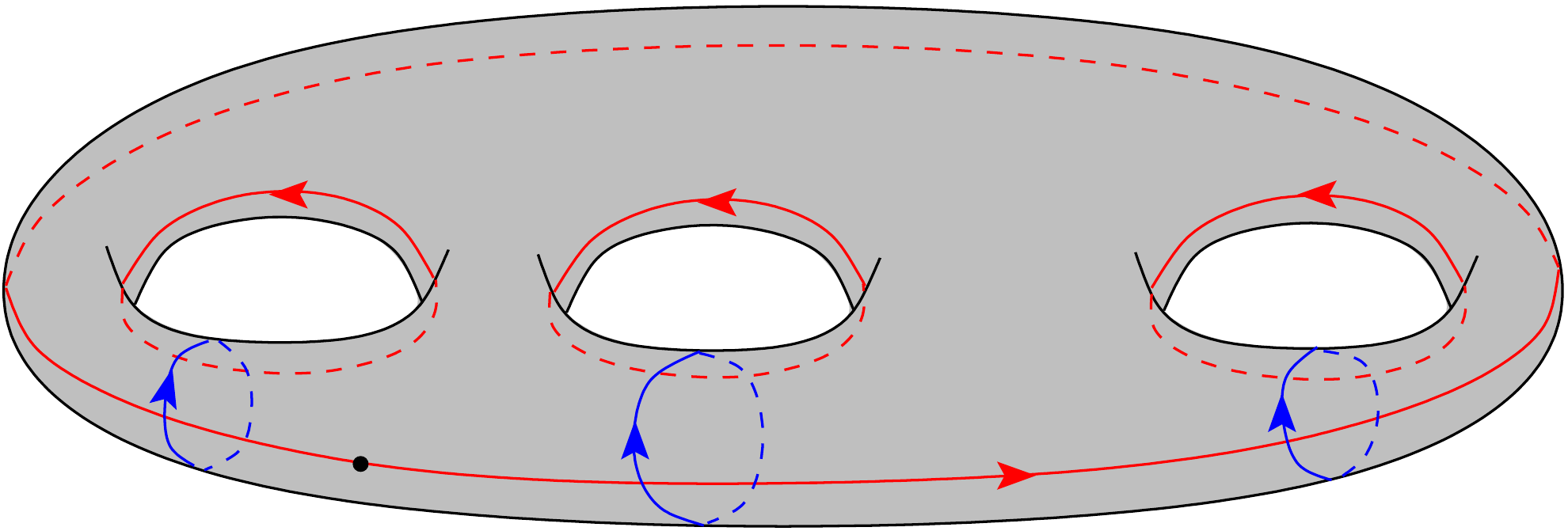}
\put(0,30){$\Sigma$}
\put(24,6){\scriptsize $x_0$}
\put(63,6){\scriptsize $A_0$}
\put(16,24){\scriptsize $A_1$}
\put(44,24){\scriptsize $A_2$}
\put(82,24){\scriptsize $A_g$}
\put(8,0){\scriptsize $B_1$}
\put(42,-4){\scriptsize $B_2$}
\put(84,-1){\scriptsize $B_g$}
\end{overpic}
  \medskip
\caption{The surface~$\Sigma$ together with the cycles~$A_0,A_1,\dotsc,A_g$ (in red) and~$B_1,\dotsc,B_g$ (in blue).
In this picture, the \new{anti-holomorphic} involution~$\sigma$ should be understood as the
refle\new{ct}ion \new{across} the horizontal plane containing $A_0,A_1,\dotsc,A_g$.}
\label{fig:surface}
\end{figure}

\subsection{The period matrix}
\label{sub:period}

This section is devoted to the study of the period matrix of an abstract~M-curve.

We fix a real point~$x_0$ in an~M-curve~$\Sigma$, denote by~$A_0$ the corresponding real circle, and
number the remaining ones as~$A_1,\dotsc,A_g$.
Note that the real locus necessarily separates~$\Sigma$ into two connected surfaces with boundary;
we fix an orientation of the real locus so that the oriented boundary of one of these surfaces, denoted by~$\Sigma^+$,
is equal to~$A_0-(A_1+\dotsb+A_g)$. Finally, we use the same symbol~$A_j$ for the oriented cycle in~$\Sigma$
and its homology class in~$H_1(\Sigma;\mathbb{Z})$.

Note that there are homology classes~$B_1,\dotsc,B_g\in H_1(\Sigma;\mathbb{Z})$
with~$\sigma_*(B_i)=-B_i$ and such that~$\{A_1,\dotsc,A_g,B_1,\dotsc,B_g\}$ forms a basis of~$H_1(\Sigma;\ZZ)$
satisfying the equalities
\[
A_i \wedge A_j =0,
 \quad
  B_i \wedge B_j = 0,
  \quad
  A_i \wedge B_j = \delta_{i,j},
\]
for all~$1\le i,j\le g$, where~$\wedge$ denotes the intersection form.
This is illustrated in Figure~\ref{fig:surface}.

The complex vector space of holomorphic differential forms has dimension~$g$.
Let us denote by~$\vec{\omega}=(\omega_1,\dotsc,\omega_g)$ the basis of this space determined by
\begin{equation*}
  \forall\ 1\leq i,j\leq g,\quad \int_{A_i}\omega_j = \delta_{i,j}\,.
\end{equation*}

We let~$\Omega$ be the matrix \new{with entries}~$\new{\Omega_{ij}:=\int_{B_j}\omega_i}$.
This is the
``interesting part" of the \emph{period matrix} $\begin{pmatrix}I_g &
\Omega\end{pmatrix}$ of $\Sigma$ in the basis $\vec{\omega}$.
By the general theory,~$\Omega$ \new{is a symmetric matrix whose} imaginary part is positive definite, and the columns of the period matrix are linearly independent over~$\mathbb{R}$.
They generate a full rank lattice $\Lambda=\mathbb{Z}^g\new{\oplus}\Omega\mathbb{Z}^g$ in
$\mathbb{C}^g$.

In the setting of~M-curves, the \new{entries} of~$\Omega$ are purely imaginary.
This is the subject of the following lemma.

\begin{lem}
\label{lem:imaginary}
If~$\Sigma$ is an abstract~M-curve with \new{anti-holomorphic} involution~$\sigma$, then the following holds.
\begin{enumerate}
\item For all~$1\leq j\leq g$, we have the equality~$\sigma^*\omega_j=\overline{\omega}_j$.
\item For all~$0\leq j\leq g$, the subspace~$TA_j\subset T\Sigma$ is fixed pointwise by~$\sigma_*$.
\item The \new{entries} of~$\Omega$ are purely imaginary.
\end{enumerate}
\end{lem}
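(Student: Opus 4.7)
The plan is to prove the three parts in the order (2), (1), (3). Part (2) is purely local and rests on the standard normal form for anti-holomorphic involutions at a real point: near any $p \in A_j$ there exist holomorphic coordinates $z$ centered at $p$ in which $\sigma$ reads $z \mapsto \bar{z}$. In these coordinates, $A_j$ is locally the real axis, while the $\mathbb{C}$-antilinear map $\sigma_*\colon T_p\Sigma \simeq \mathbb{C} \to \mathbb{C}$ is complex conjugation $v \mapsto \bar{v}$, which restricts to the identity on $T_p A_j = \mathbb{R}$.

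For part (1), the key observation is that, $\sigma$ being anti-holomorphic, $\sigma^*\omega$ is an anti-holomorphic form whenever $\omega$ is holomorphic; hence $\omega \mapsto \overline{\sigma^*\omega}$ defines a map from the $g$-dimensional space of holomorphic 1-forms on $\Sigma$ to itself. Since $\sigma$ pointwise fixes $A_i$, the chain $\sigma_*A_i$ coincides with $A_i$ in $H_1(\Sigma;\mathbb{Z})$, and one computes
\[
\int_{A_i} \overline{\sigma^*\omega_j} \;=\; \overline{\int_{A_i} \sigma^*\omega_j} \;=\; \overline{\int_{\sigma_*A_i} \omega_j} \;=\; \overline{\int_{A_i} \omega_j} \;=\; \delta_{ij}.
\]
Since a holomorphic 1-form is uniquely determined by its $A$-periods in the chosen symplectic basis $\omega_1,\dots,\omega_g$, this forces $\overline{\sigma^*\omega_j} = \omega_j$, equivalently $\sigma^*\omega_j = \overline{\omega}_j$.

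Part (3) then follows by combining part (1) with the defining property $\sigma_*B_j = -B_j$ of the $B$-cycles:
\[
\overline{\Omega_{ij}} \;=\; \overline{\int_{B_j} \omega_i} \;=\; \int_{B_j} \overline{\omega}_i \;=\; \int_{B_j} \sigma^*\omega_i \;=\; \int_{\sigma_*B_j} \omega_i \;=\; -\Omega_{ij},
\]
so $\Omega_{ij}$ is purely imaginary.

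The main subtlety to watch is the orientation bookkeeping under $\sigma_*$: although $\sigma$ reverses the global orientation of $\Sigma$ (being anti-holomorphic), each $A_i$ is preserved both set-theoretically and orientation-wise (as it is pointwise fixed, hence also at the tangent level by part~(2)), while each $B_j$ is reversed by the chosen normalization. Beyond this, the only nontrivial analytic ingredient is the local holomorphic normal form $\sigma(z) = \bar{z}$ at real points, which powers part~(2) and explains why the anti-holomorphic structure interacts so cleanly with the real cycles.
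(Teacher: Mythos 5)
Your proof is correct, and parts (1) and (3) follow essentially the same route as the paper: show that $\overline{\sigma^*\omega_j}$ is holomorphic with the same $A$-periods as $\omega_j$ (hence equal to it), then push that identity through the $B$-periods using $\sigma_*B_j=-B_j$ to get $\overline{\Omega_{ij}}=-\Omega_{ij}$. The one genuine difference is in part (2): you invoke the local normal form for anti-holomorphic involutions at a fixed point, writing $\sigma$ as $z\mapsto\bar z$ in suitable holomorphic coordinates so that $\sigma_*$ becomes literal complex conjugation on $T_p\Sigma\simeq\CC$. The paper does something more elementary and coordinate-free: take any parametrization $\gamma$ of $A_j$; since $A_j$ is pointwise fixed, $\sigma\circ\gamma=\gamma$, and differentiating gives $\sigma_*(\gamma'(0))=\gamma'(0)$ directly. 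Your normal-form argument is valid and conceptually illuminating (it is the standard picture for real structures on Riemann surfaces), but it relies on a nontrivial local linearization theorem, whereas the paper's one-line differentiation uses nothing beyond the chain rule. One small presentational point: in part (1) you assert without proof that $\sigma^*\omega$ is anti-holomorphic whenever $\omega$ is holomorphic, so that $\omega\mapsto\overline{\sigma^*\omega}$ lands back in the space of holomorphic forms; the paper spells this out via the $(1,0)$/$(0,1)$-type and $\bar\partial$/$\partial$ computation, which is worth including since it is the only analytic content of the argument.
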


\begin{proof}
To show the first point, consider the~$1$-forms defined by~$\omega_j':=\overline{\sigma^*\omega_j}$
for~$1\le j\le g$. We now check that these are holomorphic forms.
By definition, if a form~$\omega$ is holomorphic, it is a~$(1,0)$-form with~$\overline\partial\omega=0$.
It follows that~$\sigma^*\omega$ is a~$(0,1)$-form with~$\partial(\sigma^*\omega)=0$. Indeed,
the involution~$\sigma$ being anti-holomorphic, we have
\[
\sigma^*\omega(J(v))=\omega(\sigma_*(J(v))=\omega(-J(\sigma_*(v)))=-i\omega(\sigma_*(v))=-i\sigma^*\omega(v)
\]
for all~$v\in T\Sigma$, checking the first claim. The second follows from the naturality of~$\sigma^*$ via
\[
\partial(\sigma^*\omega)=d(\sigma^*\omega)=\sigma^*(d\omega)=\sigma^*(\overline\partial\omega)=\sigma^*(0)=0\,.
\]
The fact that~$\omega'=\overline{\sigma^*\omega}$ is a~$(1,0)$-form with~$\overline\partial\omega'=0$
now follows easily, proving that~$\omega_j'=\overline{\sigma^*\omega_j}$ is a holomorphic form.
Next, observe that these holomorphic forms satisfy
\[
\int_{A_i}\omega_j'=\int_{A_i}\overline{\sigma^*\omega_j}=
\overline{\int_{\sigma_*A_{\new{i}}}\omega_j}=
\overline{\int_{A_{\new{i}}}\omega_j}=\overline{\delta_{i,j}}=\delta_{i,j},
\]
for all~$1\le i,j\le g$. Since these properties characterize the basis of holomorphic forms,
this shows the equality~$\omega_j'=\omega_j$, and the first point.

To prove the second one, simply observe that an element of~$TA_j$ is the form~$\gamma'(0)$ with~$\gamma$
a parametrization of~$A_j$. Since this curve is fixed pointwise by~$\sigma$, we have~$\sigma\circ\gamma=\gamma$
and~$\sigma_*(\gamma'(0))=\frac{d}{dt}(\sigma\circ\gamma)(0)=\gamma'(0)$.

The third point follows from the first one via
\[
\overline{\int_{B_i}\omega_j}=\int_{B_i}\overline{\omega_j}=
\int_{B_i}\sigma^*\omega_j=\int_{\sigma_*B_{\new{i}}}\omega_j=
\int_{-B_{\new{i}}}\omega_j=-\int_{B_{\new{i}}}\omega_j\,.
\]
This concludes the proof.
\end{proof}

\begin{exm}
\label{ex:period-elliptic}
As mentioned above, a Riemann surface of genus~$1$ is isomorphic to
a torus~$\TT(\tau)=\mathbb{C}/(\mathbb{Z}\new{\oplus}\tau\mathbb{Z})$ of modular parameter~$\tau$
with~$\Im(\tau)>0$.
It is an~M-curve if and only if~$\tau$ is purely imaginary.
In such a case, the imaginary axis~$\tau\RR/\tau\ZZ$ can be chosen as the cycle representing the
class~$B_1$.
The basis of holomorphic forms is then given by~$\omega_1=dz$,
since its integral along~$A_1=\mathbb{R}/\mathbb{Z}+\frac{\tau}{2}$ is equal to~$1$.
Along~$B_1=\tau\RR/\tau\ZZ$, the integral is~$\tau$,
so the period matrix is simply given by~$\begin{pmatrix}1&\tau\end{pmatrix}$.
\end{exm}

\begin{exm}
In the case of a Harnack curve,
there is a very concrete way to describe~$\vec{\omega}$ and compute~$\Omega$, explained in
the proof of Proposition~6 of~\cite{KO:Harnack}, see also the proof of
Theorem~3 of~\cite{Cretois-Lang} for the general setting.
\end{exm}

We need an additional lemma,
part of which is known to hold for Harnack curves, see the end of Section~2 of~\cite{KO:Harnack}.
We now show that it is valid in the more general setting of~M-curves (with a simpler proof).

\begin{lem}
\label{lem:non-zero}
Let~$\Sigma$ be an~M-curve, with real circles~$A_0,A_1,\dotsc,A_g$ and associated basis of holomorphic forms~$\vec{\omega}=(\omega_1,\dotsc,\omega_g)$ as above.
Then, for any~$1\le i\le g$ and~$\alpha\neq\beta\in A_0$, resp.~$A_i$, we have~$\int_\alpha^\beta\omega_i> 0$,
where the integration path follows the orientation of~$A_0$, resp.~$A_i$.
\end{lem}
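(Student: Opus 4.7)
The approach is to prove the stronger statement that the restrictions $\omega_i|_{A_0}$ and $\omega_i|_{A_i}$ are nowhere-vanishing real 1-forms. Since their total integrals over $A_0$, respectively $A_i$, will turn out to be positive, this forces the integrands to be strictly positive with respect to the chosen orientation, and the conclusion follows immediately.

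First, I would check that $\omega_i|_{A_j}$ is a real-valued 1-form for every $0 \leq j \leq g$. Indeed, combining the first two points of Lemma~\ref{lem:imaginary}, for any tangent vector $v \in TA_j$ one has $\sigma_\ast v = v$ and $\sigma^\ast \omega_i = \overline{\omega_i}$, so that $\omega_i(v) = \omega_i(\sigma_\ast v) = \sigma^\ast\omega_i(v) = \overline{\omega_i(v)} \in \RR$. I would then compute $\int_{A_j} \omega_i$ for all $j$: for $1 \leq j \leq g$ it equals $\delta_{ij}$ by the defining property of $\vec\omega$, while for $j = 0$, applying Stokes' theorem to the closed form $\omega_i$ on the compact surface-with-boundary $\Sigma^+$ (whose oriented boundary is $A_0 - (A_1 + \cdots + A_g)$) yields $\int_{A_0}\omega_i = \sum_{j=1}^g \delta_{ij} = 1$. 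In particular, both $\int_{A_0}\omega_i$ and $\int_{A_i}\omega_i$ equal $1$, while the remaining $g-1$ integrals vanish.

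The key step is a divisor count. The form $\omega_i$ cannot vanish identically on any of the real-analytic arcs $A_j$: in a local chart in which $A_j$ is the real axis, the identity theorem applied to the local coefficient function would force $\omega_i$ to vanish on all of $\Sigma$. Hence, in any real-analytic parametrization of $A_j$, the form $\omega_i|_{A_j}$ is of the form $f(t)\,dt$ with $f$ real-analytic and not identically zero. For each $j \notin \{0, i\}$ the function $f$ has vanishing integral around the circle, so it must change sign at least twice; each such sign change corresponds to a zero of $\omega_i$ of odd order on $A_j$, contributing at least $1$ to the total vanishing order of $\omega_i$ along $A_j$. Summing over the $g-1$ such indices, the total order of vanishing of $\omega_i$ on $A_1\cup\cdots\cup A_g\setminus A_i$ is at least $2(g-1)=2g-2$. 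Since the canonical divisor has degree exactly $2g-2$, and any remaining zeros of $\omega_i$ (those on $A_0\cup A_i$ together with those in $\Sigma^+$, which pair up with their $\sigma$-conjugates by the first point of Lemma~\ref{lem:imaginary}) contribute non-negatively, each of these contributions must vanish. In particular, $\omega_i$ has no zero on $A_0$ or on $A_i$.

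Combined with the first step, this shows that $\omega_i|_{A_0}$ and $\omega_i|_{A_i}$ are real-valued, continuous, nowhere-vanishing 1-forms, hence of constant sign with respect to the chosen orientation; this sign must be positive since the total integrals equal $+1$. The lemma follows. I expect the main obstacle to be the divisor bookkeeping, in particular making precise that a real-analytic, non-identically-zero function on a circle with vanishing integral changes sign at least twice, and matching these sign changes with the appropriate contributions to the canonical divisor.
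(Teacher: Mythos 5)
Your proposal is correct and follows essentially the same route as the paper's proof: show that $\omega_i$ is real along each real circle, use the vanishing $A_j$-periods (for $j\neq 0,i$) to produce at least two zeros on each such $A_j$, invoke the degree $2g-2$ of the canonical divisor to conclude that these exhaust all zeros so that $\omega_i$ is nowhere zero on $A_0$ and $A_i$, and finish by computing $\int_{A_0}\omega_i=\int_{A_i}\omega_i=1>0$. The only cosmetic difference is that you count sign changes of the real-analytic coefficient $f$ directly, whereas the paper invokes local primitives and their extrema on $A_j$; these are the same observation.
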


\begin{proof}
\new{Fix~$1\le i\le g$. For any~$1\le j\le g$ with~$j\neq i$}, we have~$\int_{A_j}
\omega_i=0$, and the form~$\omega_i$ is real on~$A_j$.
Therefore, it can be written~$\omega_i=df$ in a neighborhood of~$A_j$, with~$f$
a complex-valued function taking real values on $A_j$.
This form not being identically zero,
the function~$f$ is non-constant, and~$\omega_i$
admits at least two
zeros on~$A_j$.
Furthermore, being a holomorphic differential form, it admits exactly
$2g-2$ zeros (counted with multiplicity).
In conclusion,  for all~\new{$1\le j\le g$ with}~$j\neq i$, the form~$\omega_i$ admits exactly~$2$ simple zeros on~$A_j$,
and no zero elsewhere.
In particular, it has no zero on~$A_i$ and on~$A_0$, so the integration along these real components is monotone.
Finally, by definition of the orientation of $A_0$, the integration along
these full loops is given by
\begin{equation*}
\int_{A_0}\omega_i = \sum_{j=1}^g \int_{A_j}\omega_i =\int_{A_i}\omega_i =1>0\,.
\end{equation*}
This implies both claims.
\end{proof}

\subsection{The Abel-Jacobi map}
\label{sub:Abel-Jacobi}

In this section, we briefly recall the definition of the Abel-Jacobi map associated with an arbitrary compact Riemann surface, before explaining its special features in the case of an~M-curve in Lemma~\ref{lemma:real}.

Recall that a \emph{divisor} on~$\Sigma$ is a formal linear combination of points of~$\Sigma$ with integer coefficients.
The set of divisors on~$\Sigma$ is endowed with a natural grading~$\Div(\Sigma)=\bigoplus_{n\in\ZZ}\Div^n(\Sigma)$,
where the degree of a divisor is the sum of its
integer coefficients.
A divisor~$D$ is said to be \emph{principal} if it represents the zeros and poles of
a non-zero meromorphic function~$f$ on~$\Sigma$, \emph{i.e.}, if it is of the form
\begin{equation*}
  D=\sum_z \operatorname{ord}_f(z) z.
\end{equation*}
Two divisors are said to be \emph{linearly equivalent} if their difference is a
principal divisor.
Since the number of zeros and poles of a non-zero meromorphic function coincide, \emph{i.e.}, the degree of a principal divisor vanishes,
the set of linear equivalence
classes of divisors forms a~$\ZZ$-graded Abelian group,
denoted by~$\mathrm{Pic}(\Sigma)=\bigoplus_{n\in\ZZ}\mathrm{Pic}^n(\Sigma)$.

Abel's theorem~\cite[Theorem~5.9.1]{Jost_2006} implies
that there is an injection from~$\mathrm{Pic}^0(\Sigma)$ to the
\emph{Jacobian variety}~$\Jac(\Sigma)=\mathbb{C}^g/\Lambda$ of~$\Sigma$ through the so-called \emph{Abel-Jacobi map}
\begin{equation*}
 D=\sum_i(y_i-x_i) \longmapsto \sum_i \int_{x_i}^{y_i} \vec{\omega} \in\mathbb{C}^g.
\end{equation*}
The decomposition of~$D\in\mathrm{Pic}^0(\Sigma)$ is not unique, and the right-hand side depends on the
choice of paths between~$x_i$ and~$y_i$ on $\Sigma$. However, two possible results
differ by an element of~$\Lambda$, so the formula displayed above gives a well-defined map~$\Phi\colon\mathrm{Pic}^0(\Sigma)\to\Jac(\Sigma)$.

Jacobi's inversion theorem~\cite[Theorem 5.9.2]{Jost_2006} states that this map
induces an isomorphism of Abelian groups~$\mathrm{Pic}^0(\Sigma)\simeq\Jac(\Sigma)$.
More concretely, the Abel-Jacobi map can be inverted as follows:
given~$\lambda\in\Jac(\Sigma)$ and a fixed point~$x_0\in\Sigma$,
one can find a divisor~$D=\sum_{i=1}^g x_i$ of degree~$g$ with only positive
coefficients (the $x_i$'s may not be distinct) such that~$\Phi(\sum_{i=1}^g
(x_i-x_0)) = \lambda$.
Following standard practice, we use the same notation for (the equivalence class of) a degree~$0$ divisor
and for its corresponding element in~$\Jac(\Sigma)$.

Note that one can define a map~$\Phi\colon\mathrm{Pic}(\Sigma)\to\Jac(\Sigma)$
by first sending~$\mathrm{Pic}^n(\Sigma)$ to~$\mathrm{Pic}^0(\Sigma)$ via~$D\mapsto D-nx_0$ and then
applying the Abel-Jacobi map.
In particular, this gives a well-defined map~$\bigsqcup_{n>0}\Sigma^n\to\Jac(\Sigma)$.
By abuse of notation, we simply denote it by~$\Phi$,
even though it does depend on the choice of~$x_0$.

In the case of Harnack curves, the real torus~$A_1\times\dotsb\times A_g$
is known to inject into~$\Jac(\Sigma)$ and form one of its real components,
see~\cite{KO:Harnack}, end of Section~2. We now show that this
still holds in the more general setting of~M-curves.

\begin{lem}
\label{lemma:real}
Let~$\Phi\colon\bigsqcup_{n>0}\Sigma^n\to\Jac(\Sigma)$ be the Abel-Jacobi map associated to an~M-curve~$\Sigma$, defined with respect to
a fixed real point~$x_0\in A_0$.
Let~$(e_1,\dotsc,e_g)$ denote the canonical basis of~$\CC^g$ (and of~$\ZZ^g$), and set~$\II=e_1+\dotsb+e_g$.
\begin{enumerate}
\item The real locus of~$\Jac(\Sigma)$ is equal
to~$(\RR/\ZZ)^g\new{\oplus}\Omega(\frac{1}{2}\ZZ/\ZZ)^g$, \emph{i.e.}, it consists of~$2^g$ real tori of dimension~$g$
indexed by~$(\frac{1}{2}\ZZ/\ZZ)^g$.
\item For every $1\leq i\leq g$, the map~$\Phi\colon\Sigma\to\Jac(\Sigma)$ sends the real component~$A_i$ to a cycle of homology class~$e_i\in\ZZ^g$
inside the real torus indexed by~$\frac{e_i}{2}\in(\frac{1}{2}\ZZ/\ZZ)^g$,
strictly increasing in the~$e_i$-direction.
\item The map~$\Phi\colon\Sigma\to\Jac(\Sigma)$ sends the component~$A_0$ to a cycle of
homology class~$\II\in\ZZ^g$ inside the real torus indexed by~$0\in(\frac{1}{2}\ZZ/\ZZ)^g$,
strictly increasing in the~$e_i$-direction for all~$1\le i\le g$.
\item The restriction of~$\Phi\colon\Sigma^g\to\Jac(\Sigma)$ to~$A_1\times\dotsb\times A_g$ defines a homeomorphism onto
the real torus indexed by~$\frac{1}{2}\II\in(\frac{1}{2}\ZZ/\ZZ)^g$.
\end{enumerate}
\end{lem}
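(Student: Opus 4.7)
My plan is to derive all four claims from two inputs from the preceding subsection: the identity $\sigma^{*}\omega_{j}=\overline{\omega_{j}}$ of Lemma~\ref{lem:imaginary} and the strict monotonicity of the $\omega_{i}$-primitives along each real component from Lemma~\ref{lem:non-zero}. For point~(1), I first determine the involution induced by $\sigma$ on $\Jac(\Sigma)=\CC^{g}/\Lambda$: from
\[
\int_{\sigma(x_{0})}^{\sigma(x)}\omega_{j}=\int_{x_{0}}^{x}\sigma^{*}\omega_{j}=\overline{\int_{x_{0}}^{x}\omega_{j}},
\]
this involution is pointwise complex conjugation, well defined modulo $\Lambda$ because $\Omega$ purely imaginary forces $\bar\Lambda=\Lambda$. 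Writing any $z\in\CC^{g}$ uniquely as $z=x+\Omega\eta$ with $x,\eta\in\RR^{g}$ (valid since $\Im\Omega$ is invertible), the fixed point equation $\bar z\equiv z\pmod{\Lambda}$ reduces to $\eta\in\tfrac{1}{2}\ZZ^{g}$, which is exactly~(1).

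For~(2), I fix $y_{i}\in A_{i}$ and take the integration path from $x_{0}$ to $y_{i}$ to be a path $\beta_{i}\subset\overline{\Sigma^{+}}$ from $x_{0}$ to a fixed base point $p_{i}\in A_{i}$, followed by the arc of $A_{i}$ from $p_{i}$ to $y_{i}$. The cycle $\beta_{i}-\sigma\beta_{i}$ is closed, satisfies $\sigma_{*}(\beta_{i}-\sigma\beta_{i})=-(\beta_{i}-\sigma\beta_{i})$ and crosses $A_{i}$ (resp.\ $A_{j}$ for $j\ne i$) algebraically once (resp.\ not at all), so I take it as $B_{i}$. The computation
\[
\Omega_{j,i}=\int_{B_{i}}\omega_{j}=\int_{\beta_{i}}(\omega_{j}-\sigma^{*}\omega_{j})=2i\,\Im\int_{\beta_{i}}\omega_{j}
\]
shows that the imaginary part of $\int_{\beta_{i}}\vec\omega$ equals $\tfrac{1}{2}\Omega e_{i}$; the arc along $A_{i}$ contributes a purely real vector whose $i$th coordinate is strictly increasing by Lemma~\ref{lem:non-zero}, and $\int_{A_{i}}\omega_{j}=\delta_{ij}$ pins down the homology class. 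Point~(3) is obtained identically but with the entire integration along $A_{0}$; here $\int_{A_{0}}\omega_{j}=\sum_{k}\int_{A_{k}}\omega_{j}=1$ for every $j$ gives homology class $\II$, and Lemma~\ref{lem:non-zero} gives strict monotonicity in every coordinate.

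Point~(4) combines surjectivity, obtained via Jacobi inversion, with injectivity. Additivity and~(2)--(3) place the image in the real torus of index $\tfrac{1}{2}\II$. Given a real $\lambda$ in that torus, Jacobi inversion provides a positive degree-$g$ divisor $D$ with $\Phi(D-gx_{0})=\lambda$; since the Abel-Jacobi map is injective on $\mathrm{Pic}^{0}(\Sigma)$, for generic $\lambda$ this $D$ is unique, forcing $\sigma D=D$, so $D$ is real. Decomposing a real positive divisor as $\sum_{j=0}^{g}n_{j}P_{j}+\sum_{k}(q_{k}+\sigma q_{k})$ with $P_{j}\in A_{j}$, the conjugate-pair contributions are real and the $A_{0}$-contributions lie in $(\RR/\ZZ)^{g}$, so the component index of $\Phi(D-gx_{0})$ equals $\sum_{j\ge 1}n_{j}\tfrac{e_{j}}{2}\pmod{\ZZ^{g}}$; equating with $\tfrac{1}{2}\II$ forces each $n_{j}$ ($j\ge 1$) to be odd. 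Combined with $n_{0}+\sum_{j\ge 1}n_{j}+2\#\{\text{pairs}\}=g$, this forces $n_{0}=0$, no pairs, and $n_{j}=1$ throughout, yielding surjectivity at generic $\lambda$; a density and compactness argument extends it to the whole real torus.

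The main obstacle I anticipate is injectivity. If two tuples $(y_{i}),(y_{i}')\in\prod_{i\ge 1}A_{i}$ have equal Abel-Jacobi image, the corresponding divisors satisfy $D\sim D'$, producing a meromorphic $f$ with $(f)=D-D'$ supported on $\bigcup_{i\ge 1}A_{i}$. After rescaling $f$ by a suitable unit constant to achieve $f\circ\sigma=\bar f$, the function $f$ is real-valued on the real locus and has no zeros or poles on $A_{0}$; a winding/degree argument on $f\colon\Sigma\to\CC\cup\{\infty\}$, exploiting the $\sigma$-symmetry of the preimage of $\RR\cup\{\infty\}$, should force $D=D'$. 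Combined with surjectivity, this yields a continuous bijection from the compact space $A_{1}\times\dotsb\times A_{g}$ onto the connected Hausdorff target, hence the homeomorphism claimed in~(4).
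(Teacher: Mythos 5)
Your treatment of Points~1--3 is correct and essentially matches the paper's (the representation $z=x+\Omega\eta$ with $x,\eta\in\RR^g$ is the same decomposition the paper uses, and your choice of path $\beta_i$ with $\beta_i-\sigma\beta_i$ serving as $B_i$ reproduces the paper's computation in slightly different clothing). For Point~4 your surjectivity argument is genuinely different from the paper's and is sound: Jacobi inversion plus uniqueness at generic $\lambda$ forces the divisor to be $\sigma$-invariant, and the index bookkeeping $\sum_{j\ge1}n_j\tfrac{e_j}{2}\equiv\tfrac12\II$ together with $\sum n_j+2(\#\mathrm{pairs})=g$ correctly pins down $D=P_1+\dotsb+P_g$ with one $P_j$ per $A_j$; the paper instead reads surjectivity off a degree-$1$ statement from Points~2--3.

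The gap is injectivity in Point~4. You explicitly leave it at ``a winding/degree argument \dots\ should force $D=D'$,'' which is not a proof, and the route you sketch (studying $f\colon\Sigma\to\CC\cup\{\infty\}$ with $f\circ\sigma=\bar f$, $(f)=D-D'$) is actually the harder way in: controlling the topology of $f^{-1}(\RR\cup\{\infty\})$ when $f$ can have zeros and poles of arbitrary pattern on the $A_i$ is not straightforward, and nothing in your outline rules out a nonconstant such $f$. The efficient completion, which is what the paper's phrase ``injectivity follows from the monotonicity'' is compressing, is to show the map $\Phi\colon A_1\times\dotsb\times A_g\to(\RR/\ZZ)^g+\Omega\tfrac12\II$ has nowhere-vanishing Jacobian. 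Its Jacobian matrix at $(P_1,\dotsc,P_g)$ is $\bigl(\omega_i(\dot P_j)\bigr)_{i,j}$ (real by Lemma~\ref{lem:imaginary}). If this were singular there would exist $c\in\RR^g\setminus\{0\}$ such that the real holomorphic form $\omega=\sum_i c_i\omega_i$ vanishes at $P_j\in A_j$ for all $j\ge1$. On each $A_j$ the form $\omega$ is real-valued with $\int_{A_j}\omega=c_j$; a simple zero is a sign change, and the number of sign changes on a circle is even, so $\omega$ has at least two zeros (with multiplicity) on each $A_j$, $j\ge1$ — at least $2g$ in total. But a nonzero holomorphic form has exactly $2g-2$ zeros, a contradiction. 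Hence the map is a local diffeomorphism, a covering of degree $1$, and therefore a bijection, with the homeomorphism following from compactness. Without some argument of this type (or an equivalent appeal to the divisor $D$ being non-special), your Point~4 is incomplete.
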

\begin{proof}
\new{To check the first point, consider an element~$t\in\Jac(\Sigma)=\CC^g/\Lambda$, represented by~$x+y\in\CC^g$ with~$x\in\RR^g$ and~$y\in i\RR^g$. This element~$t$ of~$\Jac(\Sigma)$ is real if and only if the difference~$(x+y)-(\overline{x+y})=2y$ belongs to~$\Lambda=\ZZ^g\oplus\Omega\ZZ^g$, which is equivalent to~$y\in\Omega\frac{1}{2}\ZZ^g$. In conclusion, the real locus of~$\Jac(\Sigma)$ is indeed given by
\[
(\RR^g\oplus{\textstyle{\frac{1}{2}}}\Omega\ZZ^g)/\Lambda=(\RR/\ZZ)^g\oplus\Omega({\textstyle{\frac{1}{2}}}\ZZ/\ZZ)^g\,.
\]}
We now check that the real components~$A_1,\dotsc,A_g$ are mapped to
this real locus of~$\Jac(\Sigma)$.
Indeed, fix any~$P_i\in A_i$. A path from~$x_0$ to~$P_i$ can be chosen
as a first path~$\gamma_0\subset A_0$ from~$x_0$ to the intersection of~$A_0$
with~$B_i$, then a path~$\beta_i\subset B_i$ \new{(following the orientation of~$B_i$)} to the intersection of~$B_i$ with~$A_i$, and a path~$\gamma_i\subset A_i$ to~$P_i$.
As in the proof of Lemma~\ref{lem:imaginary}, we can compute
\[
(\new{\Phi(P_i)-\overline{\Phi(P_i)}})_j=\int_{\new{\gamma_0-\sigma_*\gamma_0}}\omega_j+\int_{\new{\beta_i-\sigma_*\beta_i}}\omega_j+\int_{\new{\gamma_i-\sigma_*\gamma_i}}\omega_j
=\int_{B_i}\omega_j=\new{\Omega_{ji}}\,.
\]
Hence, we see that~$\new{\Phi(P_i)-\overline{\Phi(P_i)}}$ belongs to~$\Omega\ZZ^g$,
and so $\Phi(P_i)$ and $\overline{\Phi(P_i)}$ define the same element of~$\Jac(\Sigma)$.
More precisely, since~$\new{\Phi(P_i)-\overline{\Phi(P_i)}}=\Omega e_i$, we see that~$A_i$ is mapped
inside the real torus indexed by~$\frac{e_i}{2}\in(\frac{1}{2}\ZZ/\ZZ)^g$.
Moreover, going once around~$A_i$ replaces~$\Phi(P)$ with~$\Phi(P)+\int_{A_i}\vec\omega=\Phi(P)+e_i$.
Hence, the component~$A_i$ is mapped to a cycle in this real torus with homology
class~$e_i\in\ZZ^g=H_1((\RR/\ZZ)^g;\ZZ)$.
The fact that it is strictly increasing in the~$e_i$-direction is a reformulation of Lemma~\ref{lem:non-zero},
and the second point is proved.

With our choice of base point~$x_0$ in~$A_0$, the real component~$A_0$ also clearly
embeds into the real locus
of the Jacobian via~$\Phi$, and its image~$\Phi(A_0)$ contains the origin.
Hence, the component~$A_0$ embeds in the real torus indexed by~$0\in(\frac{1}{2}\ZZ/\ZZ)^g$.
Note also that since the homology class of~$A_0$
is given by~$A_1+\dotsb +A_g$, going once around
the component~$A_0$ replaces~$\Phi(P)$ by~$\Phi(P)+\mathbf{1}$.
Therefore,~$\Phi(A_0)$ is a cycle in this real torus with homology class~$\II\in\ZZ^g=H_1((\RR/\ZZ)^g;\ZZ)$.
The monotonicity follows from Lemma~\ref{lem:non-zero}, showing the third point.

Understanding an element of~$A_1\times\dotsb\times A_g$ as a divisor~$P_1+\dotsb+P_g$ with~$P_i\in A_i$,
the last point now follows from the third one: indeed, the restriction of~$\Phi$ to such divisors
defines a map from the real torus~$A_1\times\dotsb\times A_g$ to the real torus~$(\RR/\ZZ)^g+\Omega\frac{1}{2}\II$, a map of degree~1, hence surjective. The injectivity follows from the monotonicity,
and the homeomorphism from compactness.
\end{proof}

We conclude this section by recalling the classical \emph{Riemann bilinear relation}, in the form stated in~\cite[Eq.~(7)]{Fay}, but accounting for the different normalisation of~$\omega_k$.
(This result is also known as a \emph{reciprocity law}, see e.g.~\cite[Theorem~5.3.1~ii]{Jost_2006}.)
Let~$\omega_D$ be a differential 1-form of the third kind
(that is, a meromorphic differential 1-form having only simple poles~\cite[Section~5.3]{Jost_2006}) with zero period along the~$A$-cycles and simple poles at~$\beta_j$ with integer residue~$r_j\in\mathbb{Z}^*$,
for~$1\leq j\leq n$.

Note that the corresponding degree~$0$ divisor~$D=\sum_{j=1}^n  r_j \beta_j$ splits as~$D=D^+-D^-$, where
  \begin{equation*}
	D^+=\sum_{j:\,r_j >0} r_j
    \beta_j\quad\text{and}\quad D^- =\sum_{j:\,r_j<0} (-r_j) \beta_j
  \end{equation*}
are \emph{effective divisors} (that is, with positive coefficients).
Then, for any $1\leq k \leq g$, we have the equality
  \begin{equation}\label{eq:RiemannBilin}
    \int_{B_k} \omega_D = 2i\pi\int_{D^-}^{D^+} \omega_k\,,
  \end{equation}
 where the paths of integration for the right-hand side between pairs of points
  of $D^-$ and $D^+$ are paths in the surface~$\Sigma$ cut along~$\{A_i, B_i:\,1\leq i\leq g\}$,
see Figure~\ref{fig:cutopen}.

\subsection{Theta functions}
\label{sub:theta}

In this section, we recall the definition of the Riemann theta functions,
following the conventions of~\cite[Chapter II]{ThetaTata1}
(see also~\cite[Chapter~VI]{Farkas-Kra}),
and state their basic properties in Lemma~\ref{lem:ident_1}.
In the case of a purely imaginary period matrix, more subtle properties are proved in Lemma~\ref{lem:ident_2}.

The \emph{Riemann theta function} $\theta(z|\Omega)$ associated with a Riemann surface~$\Sigma$ is a
higher-dimensional analog of the classical Jacobi theta functions~\cite{Lawden}.
For $z\in\mathbb{C}^g$, set
\begin{equation*}
  \theta(z|\Omega)=\sum_{n\in\ZZ^g} e^{i\pi(n\cdot \Omega n+2n\cdot z)},
\end{equation*}
where $\cdot$ represent the canonical scalar product in $\mathbb{C}^g$.
For~$\binom{\delta'}{\delta''}\in(\frac{1}{2}\mathbb{Z})^{2g}$, the
\emph{theta function with characteristic $\binom{\delta'}{\delta''}$}, denoted by
$\theta\thchar{\delta'}{\delta''}$, is defined by
\begin{align*}
  \theta\thchar{\delta'}{\delta''}(z|\Omega)
  &= \sum_{n\in\mathbb{Z}^g}e^{i\pi[(n+\delta')\cdot \Omega(n+\delta')+2(n+\delta')\cdot(z+\delta'')]}.
\end{align*}

\begin{exm}\leavevmode
\begin{enumerate}
\item The theta function with characteristic $\binom{0}{0}$ is the Riemann theta function $\theta$ defined above.
\item When $g=1$ and $\Omega=\tau$, $\theta(z|\Omega)$ coincides
with the Jacobi function $\theta_3(z\pi|\tau)$, see
~\cite[Equation~(1.2.13)]{Lawden}.
The theta functions corresponding to the four characteristics
$\binom{0}{0},\binom{0}{\frac{1}{2}},\binom{\frac{1}{2}}{0},\binom{\frac{1}{2}}{\frac{1}{2}}$ are
the rescaled versions of $\theta_3,\theta_4,\theta_2,-\theta_1$ respectively.
\end{enumerate}
\label{ex:theta}
\end{exm}

The following elementary identities between theta functions are well-known, see e.g.~\cite{Farkas-Kra}.

\begin{lem}\label{lem:ident_1}\leavevmode
\begin{enumerate}
 \item
 For all~$\binom{\delta'}{\delta''}\in(\frac{1}{2}\mathbb{Z})^{2g}$, we have the equality
 \[\theta\thchar{\delta'}{\delta''}(z|\Omega)=
 e^{i\pi(\delta'\cdot \Omega\delta'+2\delta'\cdot(z+\delta'')) }
 \theta(z+\Omega\delta'+\delta''|\Omega).\]
 \item
For all~$m,n\in\ZZ^g$, we have
 \[
\theta\thchar{\delta'}{\delta''}(z+m+ \Omega n|\Omega)=
e^{-i\pi n\cdot(2z +2\delta''+\Omega n)}e^{2i\pi\delta'\cdot m}\theta\thchar{\delta'}{\delta''}(z|\Omega)\,.
 \]
In particular, the function $\theta$ is periodic in the $\ZZ^g$ directions, and quasi-periodic in the $\Omega\ZZ^g$ directions:
\[
\theta(z+m+ \Omega n|\Omega)=e^{-i\pi n\cdot(2z+\Omega n)}\theta(z|\Omega).
\]
\item
For all~$\gamma',\gamma''\in\frac{1}{2}\ZZ^g$, we have
\[
\theta\thchar{\delta'}{\delta''}(z+\gamma''+ \Omega \gamma'|\Omega)=
e^{-i\pi \gamma'\cdot (2z+2\delta''+2\gamma''+\Omega \gamma')}\theta\thchar{\delta'+\gamma'}{\delta''+\gamma''}(z|\Omega).
\]
\item
For all~$m,n\in\ZZ^g$, we have
\[
\theta\thchar{\delta'+n}{\delta''+m}(z|\Omega)=e^{2i\pi \delta'\cdot m}\theta\thchar{\delta'}{\delta''}(z|\Omega)=\pm \theta\thchar{\delta'}{\delta''}(z|\Omega)\,.
\]
This justifies the notation~$\binom{\delta'}{\delta''}\in(\frac{1}{2}\mathbb{Z}/\mathbb{Z})^{2g}$, that we will now use \new{even though in practice, we always work with fixed representatives in~$\frac{1}{2}\mathbb{Z}^{2g}$}.
\item For every $z\in\CC^g$, we have
\[
\theta\thchar{\delta'}{\delta''}(-z|\Omega)=(-1)^{2\delta'\cdot 2\delta''}
\theta\thchar{\delta'}{\delta''}(z|\Omega),
\]
implying that $\theta\thchar{\delta'}{\delta''}$ is even,
resp. odd, if and only if~$2\delta'\cdot 2\delta''$ is even, resp. odd.
\end{enumerate}
\end{lem}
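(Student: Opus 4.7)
The plan is to prove all five identities directly from the defining series
\[
\theta\thchar{\delta'}{\delta''}(z|\Omega) = \sum_{k\in\ZZ^g} e^{i\pi[(k+\delta')\cdot\Omega(k+\delta')+2(k+\delta')\cdot(z+\delta'')]}.
\]
Identities~(1),~(4) and~(5) follow by pure algebraic rewriting of the summand, while~(2) requires an index shift combined with the symmetry of~$\Omega$. Identity~(3) can be obtained most cleanly by applying~(1) twice, so I would defer it until after~(1) and~(2) are in place.

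For~(1), the strategy is simply to expand the quadratic form~$(k+\delta')\cdot\Omega(k+\delta')$ and the linear term~$2(k+\delta')\cdot(z+\delta'')$, and then pull out of the sum every factor that does not depend on the summation index~$k$; what remains is recognised as~$\theta(z+\Omega\delta'+\delta''|\Omega)$. For~(2), writing out~$\theta\thchar{\delta'}{\delta''}(z+m+\Omega n|\Omega)$ from the definition and substituting~$k\mapsto k-n$ brings the sum back to the defining series of~$\theta\thchar{\delta'}{\delta''}(z|\Omega)$, at the price of explicit extra factors. The main point to check carefully is the cancellation of two cross terms of the shape~$\pm 2n\cdot\Omega(k+\delta')$, which uses the symmetry of~$\Omega$, together with the triviality of contributions of the form~$e^{2i\pi k\cdot m}$ for~$k,m\in\ZZ^g$.

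For~(3), I would rewrite both sides using~(1): the left-hand side becomes a prefactor times~$\theta(z+\Omega(\delta'+\gamma')+\delta''+\gamma''|\Omega)$, which is exactly the theta function appearing when~(1) is applied to~$\theta\thchar{\delta'+\gamma'}{\delta''+\gamma''}(z|\Omega)$; taking the ratio of the two prefactors and simplifying yields the announced exponential. For~(4), the substitution~$k\mapsto k-n$ absorbs the shift in the first characteristic and produces a factor~$e^{2i\pi(k+\delta')\cdot m}$ inside the sum. Since~$m\in\ZZ^g$ the~$k$-dependent part is trivial, leaving the global constant~$e^{2i\pi\delta'\cdot m}$, which equals~$\pm 1$ because~$2\delta'\cdot m\in\ZZ$. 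Finally, for~(5), the key substitution is~$k\mapsto -k-2\delta'$, which is a bijection of~$\ZZ^g$ precisely because~$2\delta'\in\ZZ^g$: the quadratic form in the exponent is invariant, while the linear term reverses sign and produces a constant whose exponential evaluates to~$e^{-4i\pi\delta'\cdot\delta''}=(-1)^{2\delta'\cdot 2\delta''}$, using that~$2k\cdot 2\delta''\in 2\ZZ$ for~$k\in\ZZ^g$.

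There is no conceptual obstacle here: the whole lemma is a controlled exercise in index shifts and half-integer characteristics. The only routine traps are the use of the symmetry of~$\Omega$ in~(2), the careful accounting of the parity of~$2\delta'\cdot 2\delta''$ in~(5), and maintaining the convention that~$\binom{\delta'}{\delta''}\in(\tfrac{1}{2}\ZZ/\ZZ)^{2g}$, a convention which is in fact legitimised a posteriori by~(4).
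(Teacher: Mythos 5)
Your sketch is correct; each part really is a direct calculation from the defining series, with the index shifts and uses of the symmetry of~$\Omega$ going exactly where you place them, and the bijectivity of~$k\mapsto -k-2\delta'$ together with the parity bookkeeping in~(5) is precisely the delicate point to keep track of. The paper does not write out a proof at all (it just cites~Farkas--Kra), and what you propose is the standard textbook derivation that such a reference would give.
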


As showed in Lemma~\ref{lem:imaginary}, the matrix~$\Omega$ associated with an~M-curve
is purely imaginary. We will need the following properties of the corresponding theta functions.
The second one is of great importance to our work.

\begin{lem}\label{lem:ident_2}
Let us assume that~$\Omega$ is purely imaginary.
\begin{enumerate}
\item For all~$z\in\CC^g$, we have
    \begin{equation*}
      \theta\thchar{\delta'}{\delta''}(\bar{z}|\Omega)=
      \overline{\theta\thchar{\delta'}{\delta''}(z|\Omega)}\,.
    \end{equation*}
In particular,~$\theta\thchar{\delta'}{\delta''}(z|\Omega)$ is real for~$z\in\RR^g$.
\item If~$z$ belongs to~$\RR^g$,
then~$\theta\thchar{0}{\delta''}(z|\Omega)$ is strictly positive.
\item When $\delta'\neq 0$, $\theta\thchar{\delta'}{\delta''}(z|\Omega)$ takes
  strictly positive and negative values on $\mathbb{R}^g$.
\end{enumerate}
\end{lem}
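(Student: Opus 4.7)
The plan is to handle the three parts in order, with part~1 being a direct series manipulation, part~2 relying on Poisson summation, and part~3 following from the quasi-periodicity already recalled in Lemma~\ref{lem:ident_1}.

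For the first part, I would start from the series defining $\theta\thchar{\delta'}{\delta''}(\bar z|\Omega)$. Since $\Omega$ is purely imaginary we have $\overline{\Omega}=-\Omega$, and since $\delta',\delta''\in\frac{1}{2}\ZZ^g$ are real, taking complex conjugation term-by-term in $\overline{\theta\thchar{\delta'}{\delta''}(z|\Omega)}$ replaces the general exponent by
$-i\pi[-(n+\delta')\cdot\Omega(n+\delta')+2(n+\delta')\cdot(\bar z+\delta'')]$.
The change of summation index $n\mapsto -n-2\delta'$ (a bijection of $\ZZ^g$ since $2\delta'\in\ZZ^g$) fixes the quadratic term in $n+\delta'$ and flips the sign of the linear one, yielding exactly the series for $\theta\thchar{\delta'}{\delta''}(\bar z|\Omega)$. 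This proves the first identity, and the reality on $\RR^g$ follows.

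For the second part, write $\Omega=iB$ with $B$ real, symmetric, positive definite (by Lemma~\ref{lem:imaginary} and the general positivity of the imaginary part of a period matrix). Then for $z\in\RR^g$, setting $w=z+\delta''\in\RR^g$, the theta series becomes
\[
\theta\thchar{0}{\delta''}(z|\Omega)=\sum_{n\in\ZZ^g} e^{-\pi n\cdot B n+2i\pi n\cdot w}.
\]
Applying the Poisson summation formula to the Gaussian $x\mapsto e^{-\pi x\cdot Bx+2i\pi x\cdot w}$, whose Fourier transform at $k\in\RR^g$ is $(\det B)^{-1/2}e^{-\pi(w-k)\cdot B^{-1}(w-k)}$, one obtains
\[
\theta\thchar{0}{\delta''}(z|\Omega)=\frac{1}{\sqrt{\det B}}\sum_{k\in\ZZ^g}e^{-\pi(w-k)\cdot B^{-1}(w-k)}.
\]
For real $w$, each summand is a strictly positive Gaussian (as $B^{-1}$ is positive definite), hence the sum is strictly positive. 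This is the main computational step of the lemma, but it is fairly routine.

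For the third part, I would exploit the periodicity $\theta\thchar{\delta'}{\delta''}(z+m|\Omega)=e^{2i\pi\delta'\cdot m}\theta\thchar{\delta'}{\delta''}(z|\Omega)$ for $m\in\ZZ^g$, which follows from Lemma~\ref{lem:ident_1}. When $\delta'\neq 0$ in $(\frac{1}{2}\ZZ/\ZZ)^g$, the vector $2\delta'\in\ZZ^g$ is non-zero, so one can pick $m\in\ZZ^g$ with $2\delta'\cdot m$ odd, giving $e^{2i\pi\delta'\cdot m}=-1$ and hence $\theta\thchar{\delta'}{\delta''}(z+m|\Omega)=-\theta\thchar{\delta'}{\delta''}(z|\Omega)$. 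Since $\theta\thchar{\delta'}{\delta''}$ is a non-trivial entire function on $\CC^g$, its restriction to $\RR^g$ cannot vanish identically (otherwise analytic continuation would force it to vanish on all of $\CC^g$); pick $z_0\in\RR^g$ with $\theta\thchar{\delta'}{\delta''}(z_0|\Omega)\neq 0$, which by part~1 is a non-zero real number. Then $\theta\thchar{\delta'}{\delta''}(z_0+m|\Omega)$ is its opposite, so the function attains strictly positive and strictly negative values on $\RR^g$, as claimed. The only mildly delicate point here is justifying the non-vanishing on $\RR^g$, but analytic continuation settles this immediately.
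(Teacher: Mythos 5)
Your proof is correct in all three parts, and for parts~1 and~3 it follows essentially the same route as the paper (part~1 is a direct series conjugation plus an index shift, which is the same computation the paper splits as a parity identity followed by point~5 of Lemma~\ref{lem:ident_1}; part~3 uses the same quasi-periodicity trick, and your appeal to analytic continuation to rule out identical vanishing on $\RR^g$ actually makes explicit a step the paper glosses over with ``not identically zero'').

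For part~2, however, you take a genuinely different route. You invoke Poisson summation to rewrite $\theta\thchar{0}{\delta''}(z|\Omega)$, for $\Omega=iB$ with $B$ real symmetric positive definite and $w=z+\delta''\in\RR^g$, as $(\det B)^{-1/2}\sum_{k\in\ZZ^g}\exp(-\pi(w-k)\cdot B^{-1}(w-k))$, a sum of strictly positive Gaussians. The paper instead views $z\mapsto\theta(z|t\Omega)$ as the fundamental solution of the heat equation $\partial_t f=-\tfrac12\Delta f$ on the torus $(\RR/\ZZ)^g$ (with $\Delta=-\tfrac{1}{2i\pi}\sum_{j,k}\Omega_{j,k}\partial^2_{z_j z_k}$, which is positive definite precisely because $\Omega$ is purely imaginary) and applies the parabolic maximum principle to conclude strict positivity at $t=1$. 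The two arguments are two faces of the same coin --- Poisson summation is one standard proof of Jacobi's imaginary transformation, which in turn is the explicit statement that the theta series represents the periodized Gaussian heat kernel --- but yours is more elementary and self-contained (only Poisson summation and explicit Gaussian Fourier transforms), whereas the paper's is more conceptual and comes with the probabilistic interpretation of $\theta(z'-z|t\Omega)$ as a Brownian transition kernel on the torus, which the authors mention in passing.
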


\begin{proof}\leavevmode
\begin{enumerate}
\item Let us check the equality
\[
\overline{\theta\thchar{\delta'}{\delta''}(z|\Omega)}=(-1)^{2\delta'\cdot 2\delta''}\theta\thchar{\delta'}{\delta''}(-\overline{z}|\Omega)
\]
which, together with the last point of Lemma~\ref{lem:ident_1}, implies the first point of the statement.
Using that $\Omega$ is purely imaginary, we have
\begin{align*}
\overline{\theta\thchar{\delta'}{\delta''}(z|\Omega)}&=\sum_{n\in\mathbb{Z}^g}e^{i\pi[(n+\delta')\cdot \Omega(n+\delta')+2(n+\delta')\cdot(-\overline{z}-\delta'')]}\\
&=\sum_{n\in\mathbb{Z}^g}e^{i\pi[(n+\delta')\cdot \Omega(n+\delta')+2(n+\delta')\cdot(-\overline{z}+\delta'')]}
e^{-4i\pi(n+\delta')\cdot\delta''},
\end{align*}
and the proof is concluded using that $\delta''$ belongs to $(\frac{1}{2}\ZZ/\ZZ)^g$.

\item
By Point~1 of Lemma~\ref{lem:ident_1}, it is enough to prove the statement for
$\delta''=0$, \emph{i.e.}, for the Riemann theta function $\theta$.
By Point~2 of Lemma~\ref{lem:ident_1} and the first point above, for any~$t>0$
and~$\Omega$ purely imaginary, the real function~$z\mapsto\theta(z|t\Omega)$ is
well defined on the torus~$(\mathbb{R}/\mathbb{Z})^g$.
Furthermore, it converges as~$t\to 0$, in the sense of distributions, to the Dirac distribution on the torus
whose Fourier coefficients are all equal to~1.
Moreover, it satisfies the heat equation
  \begin{equation*}
  \frac{\partial}{\partial t} f = -\frac{1}{2}\Delta f\,,
\end{equation*}
  where~$\Delta=-\frac{1}{2i\pi}\sum_{j,k}\Omega_{j,k}\frac{\partial^2}{\partial
  z_j\partial z_k}$ is a positive definite Laplace operator (in a
  non-orthonormal system of coordinates) on the torus.
This means that~$\theta$ is the fundamental solution of this heat equation.
By the maximum principle,
we conclude that~$\theta(z|t\Omega)$ is strictly positive for any~$z$ and any~$t>0$, therefore in particular for~$t=1$.
(From a more probabilistic point of view, the function~$(z,z',t)\mapsto\theta(z'-z|t\Omega)$ is the transition kernel
of a non isotropic Brownian motion on the torus with no drift and a diffusivity
matrix given by~$\frac{1}{2i\pi}\Omega$, in other words, of the linear image of
a standard Brownian motion by a matrix~$A$ such that~$AA^T=\frac{1}{2i\pi}\Omega$.)

\item To show the last point, let us fix~$\delta'\neq 0$.
By Point~2 of Lemma~\ref{lem:ident_1}, we have
\[
\theta\thchar{\delta'}{\delta''}(z+m|\Omega)=e^{2i\pi\delta'\cdot m}\theta\thchar{\delta'}{\delta''}(z|\Omega).
\]
Now since $\delta'\neq 0$, it has at least one coefficient equal to~$\frac{1}{2}$, say the~$i^{\mathrm{\new{th}}}$; take~$m_i$ to be equal to zero except at position~$i$ where it is equal to 1. Then,
\[
\theta\thchar{\delta'}{\delta''}(z+m_i|\Omega)=-\theta\thchar{\delta'}{\delta''}(z|\Omega).
\]
Since~$\theta\thchar{\delta'}{\delta''}$ is real-valued on~$\mathbb{R}^g$ by the first point above, and
not identically zero, this concludes the proof.
\qedhere
\end{enumerate}
\end{proof}

From now on, the
matrix~$\Omega$ being fixed once and for all,
we simply write~$\theta(z|\Omega)$ as~$\theta(z)$.

We conclude this section by recalling a fundamental result, known as \emph{Riemann's theorem},
following~\cite[Theorem~3.1, p.~149]{ThetaTata1}.
Consider a fixed point~$x_0\in\Sigma$, and a lift~$\wt{x}_0$ in the universal cover~$\widetilde{\Sigma}$ of~$\Sigma$.
For any~$e\in\mathbb{C}^g$, the function~$f_{e}\colon\widetilde\Sigma\to\CC$ given
by~$f_e(\wt{x})=\theta(e+\int_{\wt{x}_0}^{\wt{x}}\vec{\omega})$ does not induce a well-defined function on~$\Sigma$,
because of~$\theta$ being only quasi-periodic. However, its zeros form a
periodic subset of~$\widetilde\Sigma$ which has a well-defined projection on~$\Sigma$.
A precise description of this set, called the \emph{theta divisor of~$\Sigma$},
is given as follows. There exists an element~$\Delta\in\mathbb{C}^g$ (depending on the choice of~$x_0$),
such that for any~$e$, if~$f_{e}$ is not identically equal to~$0$, then it
admits~$g$ zeros~$x_1,\dotsc,x_g\in\Sigma$ which satisfy the following equality in~$\Jac(\Sigma)$:
\begin{equation}
  \sum_{j=1}^g \int_{x_0}^{x_j}\vec{\omega} = -e + \Delta\,.
  \label{eq:RiemannDelta}
\end{equation}
Moreover, the theta divisor~$x_1+\dotsb+x_g$ is uniquely determined by this condition, see~\cite[Corollary~3.2, p.~153]{ThetaTata1}.
Note that the points~$x_j$ may be not distinct; in that case, they correspond to zeros with higher multiplicity, so that the total degree of the theta divisor is~$g$.

More can be said in the case of an M-curve~$\Sigma$ with fixed point~$x_0\in A_0$ and~$e\in\RR^g$.

\begin{lem}
  \label{lem:Delta}
Let~$\Sigma$ be an M-curve, and let~$x_0$ be an element of~$A_0$.
Then, for every~$e\in\RR^g$, each of the zeros~$x_j$ of the function~$f_e$ belongs to a different~$A_i$ with~$1\leq i\leq g$.
Thus, these zeros are distinct and satisfy~$x_j\in A_j$ for all~$1\leq j\leq g$
up to relabeling.
Consequently, the constant~$\Delta$ belongs to $\RR^g+\Omega\frac{1}{2}\mathbf{1}$.
\end{lem}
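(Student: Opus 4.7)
The plan is to establish, under the assumption that~$f_e\not\equiv 0$ (so that its zero divisor has degree exactly~$g$), two key facts: (a)~$f_e$ has no zero on~$A_0$; and (b)~for every~$1\le i\le g$, the function~$f_e$ admits at least one zero on~$A_i$. Combining (a), (b), and the degree count will force exactly one simple zero~$x_j\in A_j$ on each~$A_j$ with~$1\le j\le g$ and no zero elsewhere. The claim on~$\Delta$ will then follow directly from Riemann's identity~\eqref{eq:RiemannDelta}.

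For (a), since~$x_0\in A_0$, Lemma~\ref{lemma:real}(3) ensures that~$\Phi(A_0)$ lies in the real torus of~$\Jac(\Sigma)$ indexed by~$0\in(\frac12\ZZ/\ZZ)^g$, so~$e+\Phi(x)\in\RR^g$ modulo~$\ZZ^g$ for every~$x\in A_0$. Lemma~\ref{lem:ident_2}(2), applied with~$\delta'=\delta''=0$, then yields~$\theta(e+\Phi(x))>0$, so~$f_e$ cannot vanish on~$A_0$.

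For (b), fix~$1\le i\le g$ and parametrise~$A_i$ by a continuous loop~$\gamma_i\colon[0,1]\to\Sigma$. Lifting a path from~$x_0$ to~$\gamma_i(0)$ and continuing along~$\gamma_i$ yields a continuous map~$u\colon[0,1]\to\RR^g$ such that~$\Phi(\gamma_i(t))=u(t)+\Omega\frac{e_i}{2}$ with~$u(1)-u(0)=\int_{A_i}\vec\omega=e_i$, by Lemma~\ref{lemma:real}(2). Using Lemma~\ref{lem:ident_1}(1) with characteristic~$\binom{e_i/2}{0}$, one rewrites
\[
\theta\bigl(e+u(t)+\Omega\tfrac{e_i}{2}\bigr)=C(t)\,\theta\thchar{e_i/2}{0}\bigl(e+u(t)\bigr),
\]
with~$C(t)\neq 0$ for all~$t$. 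The function~$\varphi(t):=\theta\thchar{e_i/2}{0}(e+u(t))$ is real-valued by Lemma~\ref{lem:ident_2}(1), and Lemma~\ref{lem:ident_1}(2) applied to~$u(1)=u(0)+e_i$ gives~$\varphi(1)=e^{2i\pi(e_i/2)\cdot e_i}\varphi(0)=-\varphi(0)$. Since~$f_e\not\equiv 0$, analytic continuation on the connected complex manifold~$\widetilde\Sigma$ rules out~$\varphi\equiv 0$ (otherwise~$f_e$ would vanish on a real curve with accumulation points), and the intermediate value theorem produces at least one zero of~$\varphi$, hence of~$f_e$, on~$A_i$.

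Combining (a) and (b) with the degree count yields the stated description of the divisor. The claim on~$\Delta$ then follows from~\eqref{eq:RiemannDelta}: each~$x_j\in A_j$ gives~$\Phi(x_j)\in\RR^g+\Omega\frac{e_j}{2}$ by Lemma~\ref{lemma:real}(2), hence~$\sum_{j=1}^g\Phi(x_j)\in\RR^g+\Omega\frac{\mathbf{1}}{2}$ and~$\Delta=e+\sum_j\Phi(x_j)\in\RR^g+\Omega\frac{\mathbf{1}}{2}$ since~$e\in\RR^g$. The main obstacle lies in step (b): the restriction~$f_e|_{A_i}$ is~$\ZZ^g$-periodic on the universal cover and returns to its initial value after one turn, so no naive intermediate value argument applies directly to~$\theta$ itself; one must first factor out a non-vanishing complex phase so as to land on the theta function with characteristic~$\binom{e_i/2}{0}$, whose quasi-periodicity in the~$e_i$ direction precisely carries the required minus sign.
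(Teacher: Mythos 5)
Your proof is correct and follows essentially the same route as the paper: parametrize each $A_i$, use Lemma~\ref{lemma:real}(2) to land in the real torus indexed by $e_i/2$, factor out a nonvanishing exponential via Lemma~\ref{lem:ident_1}(1) so as to work with the real-valued function $\theta\thchar{e_i/2}{0}(e+u(t))$, and use its antiperiodicity ($\varphi(1)=-\varphi(0)$) plus the intermediate value theorem to produce a zero on each $A_i$, concluding by the degree count. The only difference is your step (a) (no zeros on $A_0$), which is logically redundant once you have one zero on each of the $g$ components $A_1,\dots,A_g$ and the degree count forces the divisor to be exactly these $g$ distinct simple points; the paper omits it for precisely this reason.
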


\begin{proof}
Let us fix an element~$x_0\in A_0$ and a real vector~$e\in\RR^g$.
For~$1\leq j\leq g$, let~$\delta_j\in\mathbb{R}^g$ denote the vector
whose coordinates are all zero, except the $j^{\mathrm{th}}$ one, which is equal to~$\frac{1}{2}$.
Finally, let~$\wt{x}_0\in\wt{\Sigma}$ be an arbitrary lift of~$x_0\in A_0\subset\Sigma$.
We now show that the function~$f_e\colon\wt{\Sigma}\to\CC$ given by
$f_e(\wt{x})=\theta\left(e+\int_{\wt{x}_0}^{\wt{x}}\vec{\omega}\right)$
vanishes at least once on any given lift~$\wt{A}_j\subset\wt{\Sigma}$ of the real component~$A_j\subset\Sigma$.

  Let~$\gamma_j\colon\mathbb{R}\rightarrow \wt{A}_j$ be a lift of a
  parametrization of~$A_j$ by~$\mathbb{R}/\mathbb{Z}$.
  By Lemma~\ref{lemma:real}, Point~2, the integral
  \begin{equation*}
  \int_{\wt{x}_0}^{\gamma_j(t)} \vec\omega
  \end{equation*}
  belongs to~$\mathbb{R}^g+\Omega(\delta_j+\mathbb{Z}^g)$ for every~$t\in\mathbb{R}$.
By continuity,
it belongs to~$\mathbb{R}^g+\Omega(\delta_j+v)$ for some fixed~$v\in\ZZ^g$.
 Moreover, this same Point~2 of Lemma~\ref{lemma:real} implies the equality
  \begin{equation*}
  \int_{\wt{x}_0}^{\gamma_j(t+1)} \vec\omega =
  \int_{\wt{x}_0}^{\gamma_j(t)} \vec\omega +2\delta_j
  \end{equation*}
for all~$t\in\RR$. Hence, by Lemma~\ref{lem:ident_1}, Point~2,  the function
  \begin{equation*}
    h_j:t\mapsto
    \theta\thchar{\delta_j}{0}\Bigl(e-\Omega(\delta_j+v)+\int_{\wt{x}_0}^{\gamma_j(t)}\vec{\omega}\Bigr)
\end{equation*}
satisfies~$h_j(t+1)=-h_j(t)$ for all $t\in\mathbb{R}$. Finally, this function is real by Lemma~\ref{lem:ident_2}, Point~1.
Therefore, there is a point~$\wt{x}_j=\gamma_j(t_j)$ in~$\wt{A}_j$ such that~$h_j(\wt{x}_j)=0$.
We then conclude by Lemma~\ref{lem:ident_1}, Points~1 and~2, that
\begin{equation*}
  \left|f_e(\wt{x}_j)\right|=\Bigl|\theta\Bigl(e+\int_{\wt{x}_0}^{\wt{x}_j}\vec{\omega}\Bigr)\Bigr|=
  \Bigl|\theta\thchar{\delta_j}{0}\Bigl(e-\Omega(\delta_j+v)+\int_{\wt{x}_0}^{\wt{x}_j}\vec{\omega}\Bigr)\Bigr|=
\left|h_j(\wt{x}_j)\right|=0.
\end{equation*}
This point~$\wt{x}_j$ is thus a zero of $f_e$, which projects on~$\Sigma$ to a point~$x_j\in A_j$ contributing to the theta divisor of~$\Sigma$.
The claim follows from the fact that this divisor is of degree~$g$.

  The constant~$\Delta$ belonging to~$\RR^g+\Omega\frac{1}{2}\mathbf{1}$ is now a direct consequence of
  Point~4 of Lemma~\ref{lemma:real}, together with
  Equation~\eqref{eq:RiemannDelta} applied to~$e=0$.
\end{proof}

\subsection{Prime form}
\label{sub:prime}

This section deals with the so-called \emph{prime form} associated with an arbitrary compact Riemann surface~$\Sigma$.
In Section~\ref{subsub:linebundles}, we start by studying the general theory of sections of line bundles,
a necessary formalism for the precise definition of the prime form given in Section~\ref{subsub:prime}.
When~$\Sigma$ is an~M-curve, this form exhibits special properties that are stated and proved
in Lemmas~\ref{lem:primeform_conj} and~\ref{lem:prime}.
Finally, Section~\ref{subsub:Fay} recalls Fay's trisecant identity,
which plays a crucial role in the rest of our work.

\subsubsection{Sections of line bundles and automorphic forms}
\label{subsub:linebundles}

The aim of this preliminary paragraph is to prove a statement that is most probably
standard: holomorphic sections of a fixed holomorphic line bundle on a Riemann surface~$\Sigma$ of
genus~$g>0$ can be understood as \emph{automorphic forms} on the universal cover~$\wt\Sigma$ of~$\Sigma$, \emph{i.e.}, holomorphic functions
with quasi-periodicity under the action of~$\pi_1(\Sigma)$ prescribed by the line bundle.

To check this fact, fix a holomorphic line bundle~$p\colon L\to\Sigma$
and denote by~$\pi\colon\wt\Sigma\to\Sigma$ the universal cover of~$\Sigma$.
This gives rise to the holomorphic line bundle~$\pi^*p\colon E\to\wt\Sigma$ induced by~$\pi$.
Since we assume~$g>0$, the Riemann surface~$\wt\Sigma$ is isomorphic to the open disc or the complex plane.
Since it is non-compact, the Weierstrass theorem implies that~$H^1(\wt\Sigma,\mathcal{O}^*)$ vanishes,
so all holomorphic line bundles on~$\wt\Sigma$ are trivial. In particular, there is an
isomorphism~$\varphi\colon\wt\Sigma\times\CC\to E$ such that~$\pi^*p\circ \varphi$ is equal to
the projection~$\mathit{pr}\colon\wt\Sigma\times\CC\to\wt\Sigma$.
In a nutshell, we have the commutative diagram
\[
\begin{xy}(-20,15)*+{\wt\Sigma\times\CC}="e";
(0,15)*+{E}="a"; (20,15)*+{L}="b";
(0,0)*+{\wt\Sigma}="c"; (20,0)*+{\Sigma,}="d";
{\ar "a";"b"}?*!/_2mm/{\Pi};{\ar "a";"c"}?*!/_4mm/{\pi^*p};{\ar "b";"d"}?*!/_2mm/{p};
{\ar "c";"d"}?*!/_2mm/{\pi};{\ar "e";"a"}?*!/_2mm/{\varphi};
{\ar "e";"c"}?*!/^4mm/{\mathit{pr}};
\end{xy}
\]
where~$\Pi\colon E\to L$ is the~$\pi_1(\Sigma)$-covering induced by~$p$.

Now, consider a holomorphic section~$\psi\colon\Sigma\to L$, which by definition
satisfies~$p\circ\psi=\mathit{id}$.
By the lifting property of the~$\pi_1(\Sigma)$-covering~$\Pi\colon E\to L$,
there is a map~$\wt\psi\colon\wt\Sigma\to E$ such that~$\Pi\circ\wt\psi=\psi\circ\pi$,
which is uniquely determined by
\new{its value on a lift~$\wt{x}_0\in\wt{\Sigma}$}
of a base point~$x_0\in\new{\Sigma}$.
Hence, we have
\[
\pi\circ\pi^*p\circ\wt\psi=p\circ\Pi\circ\wt\psi=p\circ\psi\circ\pi=\pi\,,
\]
and we see that~$\wt\psi$ is a holomorphic
section of~$\pi^*p$ up to action of the group~$\pi_1(\Sigma)$ of the covering~$\pi$.
Moreover, one can choose \new{the value~$\wt{\psi}(\wt{x}_0)\in E$} so that~$(\pi^*p\circ\wt{\psi})(\new{\wt{x}}_0)=\new{\wt{x}}_0$.
By continuity, we have~$\pi^*p\circ\wt{\psi}=\mathit{id}$, and~$\wt\psi$ is a holomorphic
section of~$\pi^*p$.
The composition~$\varphi^{-1}\circ\wt{\psi}$ now defines a holomorphic section of the trivial
bundle~$\mathit{pr}\colon\wt\Sigma\times\CC\to\wt\Sigma$, \emph{i.e.}, ~it is of the
form~$(\varphi^{-1}\circ\wt{\psi})(\new{\wt{x}})=(\new{\wt{x}},f_\psi(\new{\wt{x}}))$ for some holomorphic function
\[
f_\psi\colon\wt\Sigma\longrightarrow\CC
\]
uniquely determined by~$\psi$.

Moreover, its quasi-periodicity properties under the action of~$\pi_{\new{1}}(\Sigma)$
are uniquely determined by the holomorphic line bundle~$p\colon L\to\Sigma$. They are of the following form:
for~$\new{\wt{x}}\in\wt{\Sigma}$ and~$\gamma\in\pi_1(\Sigma)$, we have
\[
f_\psi(\gamma\cdot \new{\wt{x}})=p_\gamma(\new{\wt{x}}) f_\psi(\new{\wt{x}})
\]
for some holomorphic map~$p_\gamma\colon\wt\Sigma\to\CC^*$ \new{which only depends on~$\gamma$ and on the line bundle~$L\to\Sigma$.}
This is the \emph{factor of automorphy} of the automorphic form~$f_{\new{\psi}}$.

\begin{exm}
\label{ex:spin}
Consider the case of a spin structure~$L\to\Sigma$ with~$\Sigma=\TT(\tau)$.
It is a square root of the canonical line bundle, which is nothing but the trivial bundle.
Since~$\pi_1(\Sigma)=\ZZ+\ZZ\tau$ acts by translations on~$\wt{\Sigma}=\CC$,
the discussion above shows that holomorphic sections of~$L\to\Sigma$ (\emph{i.e.}, ~spinors)
can be understood as holomorphic maps~$f$ on~$\wt{\Sigma}=\CC$ with quasi-periodicity
\[
f(x+1)=p_1(x)f(x)\quad\text{and}\quad f(x+\tau)=p_\tau(x)f(x)
\]
for some~$p_1,p_\tau\colon\CC\to\CC^*$ with~$p_1^2=p_\tau^2=1$.
This gives~$4$ different spin structures corresponding
to the~$4$ possible signs~$p_1,p_\tau\in\{\pm 1\}$.
Note that the choice~$p_1=p_\tau=1$ leads to constant spinors,
while the other choices only allow for identically zero holomorphic sections.
This is coherent with the fact that the dimension of the space of spinors has the same parity as the spin
structure~\cite{Atiyah}. In the present case, the only odd spin structure corresponds to the trivial line bundle.
\end{exm}

\subsubsection{The prime form}
\label{subsub:prime}

We now give a definition with all necessary details for our purposes, but very little more,
referring the reader to~\cite{ThetaTata2} for additional information.

Given an arbitrary compact Riemann surface~$\Sigma$, let us fix a \emph{non-degenerate theta characteristic},
\emph{i.e.}, a theta characteristic such that the corresponding theta function satisfies
\begin{equation*}
  \ud_z\theta\thchar{\delta'}{\delta''}(0)\neq 0\,.
\end{equation*}
Such a theta characteristic is know\new{n} to hold by the Lefschetz embedding theorem.
Note that~$\binom{\delta'}{\delta''}$ must be odd, and thus also satisfy~$\theta\thchar{\delta'}{\delta''}(0)=0$.

By the general theory of spin structures on Riemann surfaces~\cite{Atiyah},
this theta characteristic corresponds to a spin structure, understood as a line bundle~$L$ whose
square is isomorphic to the canonical line bundle~$K$.
An explicit holomorphic section of this bundle $L$ can be constructed as follows.
Consider the holomorphic form
\[
\zeta=\sum_{i=1}^{g}\frac{\partial\theta\thchar{\delta'}{\delta''}}{\partial
z_i}(0) \omega_i
= \ud_z\theta\thchar{\delta'}{\delta''}(0)\cdot\vec\omega
\,,
\]
which is nothing but the differential of the function
$y\mapsto\theta\thchar{\delta'}{\delta''}(y-x)\new{:=}\theta\thchar{\delta'}{\delta''}(\int_x^y\vec\omega)$,
in the variable $y\in\Sigma$, evaluated at $y=x$.
All the zeros of~$\zeta$ are double zeros~\cite[Corollary~1.3]{Fay}.
Therefore, interpreting~$\zeta$ as a holomorphic section of~$K$, it admits a square
root~$\xi\thchar{\delta'}{\delta''}$ which is a section of the line bundle~$L$.
Note that~$\xi\thchar{\delta'}{\delta''}$ depends on
the theta characteristic in two ways:
in the construction of~$\zeta$, and in the choice of the square root.

\begin{defi}
\label{def:prime}
The \emph{prime form} is the form~$E$ defined by
\begin{equation*}
  E(x,y)=\frac{\theta\thchar{\delta'}{\delta''}(y-x)}{\xi\thchar{\delta'}{\delta''}(x)\xi\thchar{\delta'}{\delta''}(y)}\,.
\end{equation*}
for~$x,y$ in the universal cover~$\wt{\Sigma}$ of~$\Sigma$.
\end{defi}

Let us give an explicit example.

\begin{exm}
\label{ex:prime}
If~$\Sigma$ has genus~$1$, it is isomorphic to a torus~$\TT(\tau)$ with~$\Im(\tau)>0$. There is
a unique odd theta characteristic, namely~$\binom{\frac{1}{2}}{\frac{1}{2}}$, which is always
non-degenerate. Hence, the numerator of the prime form is given
by~$\theta\thchar{\frac{1}{2}}{\frac{1}{2}}(y-x)=-\theta_1(\pi(y-x))$, see Example~\ref{ex:theta}.
As for the denominator, recall that the canonical line bundle of the torus is trivial.
Furthermore, the square root~$L$ of~$K=1$ corresponding to the odd theta characteristic is the
trivial bundle~$L=1$ (recall Example~\ref{ex:spin}).
The constant value of~$\xi\thchar{\frac{1}{2}}{\frac{1}{2}}$ is given by a square root
of~$-\pi\theta_1'(0)$, leading to the explicit formula~$E(x,y)=\frac{\theta_1(\pi(y-x))}{\pi\theta_1'(0)}$.
\end{exm}

The prime form is the key ingredient for constructing meromorphic functions on~$\Sigma$.
Indeed, if~$x_1,\dotsc,x_k,y_1,\dotsc,y_k$ are points on~$\Sigma$ with~$x_i\neq y_j$ for all~$i,j$
and such that~$\sum_{i=1}^k (x_i-y_i)$ is a principal divisor, then all meromorphic
functions with this divisor are of the form
\begin{equation*}
  g(x)= c\times \prod_{i=1}^k \frac{E(x_i,x)}{E(y_i,x)}\,,
\end{equation*}
with~$c$ a complex number.

We will make use of the following standard properties of the prime form, valid for any~$x,y\in\wt{\Sigma}$
(see e.g.~\cite[p.~3.210]{ThetaTata2}):
\begin{itemize}
\item $E(x,y)=0$ if and only if~$x$ and~$y$ project to the same point on~$\Sigma$;
\item these are first order zeros;
\item When $x$ and $y$ are close to each other,
\begin{equation*}
  E(x,y) =
  \frac{z(y)-z(x)}{\sqrt{dz(x)}\sqrt{dz(y)}}\left(1+O(z(x)-z(y))^2\right),
\end{equation*}
where $z$ is a local coordinate such that $\zeta=dz$ in a connected open set
containing $x$ and $y$.
\item $E(x,y)=-E(y,x)$.
\end{itemize}

The following result is also well-known, but most references only consider equalities up to signs.
Since signs do play an important role in our setting, we include the proof for completeness.

\begin{prop}
\label{prop:prime}
The prime form does not depend on the choice of the non-degenerate theta characteristic.
\end{prop}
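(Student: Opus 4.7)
My plan is to prove the equality $E_1=E_2$ for the prime forms arising from two non-degenerate odd theta characteristics $\binom{\delta'_i}{\delta''_i}$ ($i=1,2$) by first establishing $E_1^2=E_2^2$ via a maximum principle argument on the compact complex surface $\Sigma\times\Sigma$, and then upgrading this to $E_1=E_2$ using the asymptotic expansion near the diagonal.

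The first key observation is that, although each $E_i$ is a section of a line bundle $p_1^*L_i^{-1}\otimes p_2^*L_i^{-1}$ on $\Sigma\times\Sigma$ depending on the choice of theta characteristic through the associated spin bundle $L_i$, the squared form
\[
E_i^2(x,y)=\frac{\theta\thchar{\delta'_i}{\delta''_i}(y-x)^2}{\zeta_i(x)\,\zeta_i(y)}
\]
is a section of $p_1^*K^{-1}\otimes p_2^*K^{-1}$, a line bundle that does not depend on $i$. Hence the ratio $F:=E_1^2/E_2^2$ makes sense as a meromorphic function on $\Sigma\times\Sigma$. Equivalently, writing $F=(\theta_1^2/\theta_2^2)\cdot(\zeta_2(x)\zeta_2(y)/\zeta_1(x)\zeta_1(y))$ with $\theta_i:=\theta\thchar{\delta'_i}{\delta''_i}$, the second factor descends to $\Sigma\times\Sigma$ at once since the $\zeta_i$ are $1$-forms on $\Sigma$, while for the first factor Lemma~\ref{lem:ident_1}, Point~2, shows that its deck-group transformation factor takes the form $\exp(4i\pi\,n\cdot(\delta''_1-\delta''_2))\exp(4i\pi\,m\cdot(\delta'_1-\delta'_2))$ for $m,n\in\ZZ^g$; since $\delta'_1-\delta'_2$ and $\delta''_1-\delta''_2$ lie in $(\tfrac{1}{2}\ZZ)^g$, both exponentials equal $1$, confirming descent.

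Next, I would invoke the property recalled just before Proposition~\ref{prop:prime} that each $E_i$ has only first-order zeros, located precisely on the diagonal of $\Sigma\times\Sigma$. This forces the divisors of $E_1^2$ and $E_2^2$ to coincide with $2\Delta$, so $F$ is a holomorphic and nowhere vanishing function on the compact complex manifold $\Sigma\times\Sigma$, hence constant by the maximum principle. To determine the value of this constant, I pass to the diagonal limit using the asymptotics stated before the proposition: in any fixed local coordinate $w$, writing $\zeta_i=f_i(w)\,dw$, a direct expansion shows that $E_i^2\cdot dw_x\,dw_y$ has leading term $(w_y-w_x)^2$ independently of $i$. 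Therefore $F\equiv 1$, and $E_1^2=E_2^2$. Taking a square root yields $E_1=\pm E_2$, and the sign is then fixed by requiring both $E_i$ to realize the common leading-order expansion $(w_y-w_x)/\sqrt{dw_x\,dw_y}$, which pins down the branch of $\xi_i$ coherently across both choices of theta characteristic.

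The main obstacle is the descent step above: one must verify that the formal ratio $\theta_1^2/\theta_2^2$ truly descends to a function on $\Sigma\times\Sigma$. This hinges on the squaring trick, which converts the quasi-periodicity factors $e^{2i\pi\cdot(\text{half-integer})}$ that would appear in $\theta_1/\theta_2$ alone (and be generally non-trivial) into $e^{2i\pi\cdot\text{integer}}=1$. This is precisely why the argument is naturally cast in terms of $E_i^2$ before deducing the claim for $E_i$.
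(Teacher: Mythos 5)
Your proof is correct in substance, and it takes a genuinely different route from the paper's. The paper fixes $x$, forms the ratio $f_x(y)=E(x,y)/E'(x,y)$, observes that its factors of automorphy along the $A$- and $B$-cycles in $y$ are all $\pm1$, passes to the compact $(\ZZ/2\ZZ)^{2g}$-cover $\Sigma'$ of $\Sigma$ on which $f_x$ becomes a holomorphic nowhere-vanishing function, concludes constancy in $y$, and then invokes the diagonal normalization. You instead square the prime form to kill the sign ambiguity at the source: you verify via Lemma~\ref{lem:ident_1} that the deck-transformation factors of $\theta_1^2/\theta_2^2$ are trivial (half-integer phases turning into integer ones after squaring), so $F=E_1^2/E_2^2$ descends directly to a holomorphic nowhere-vanishing function on the compact complex surface $\Sigma\times\Sigma$, hence constant, and the constant is~$1$ by the diagonal expansion. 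The upgrade from $E_1^2=E_2^2$ to $E_1=E_2$ then proceeds by connectedness of the complement of the diagonal lift in $\wt\Sigma\times\wt\Sigma$ together with the common normalization. The trade-off is that you avoid constructing and invoking the $2^{2g}$-sheeted cover $\Sigma'$, at the cost of a final square-root step to recover the sign.

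One imprecision worth noting: describing $E_i^2$ as ``a section of $p_1^*K^{-1}\otimes p_2^*K^{-1}$'' cannot be literally correct, as a degree count on a slice $\{x\}\times\Sigma$ shows: the divisor $2\Delta$ restricts to degree~$2$ on such a slice, whereas $K^{-1}$ has degree $-(2g-2)$. The accurate statement, which you in fact verify directly, is that the factor of automorphy of $\theta_i^2(y-x)$ on $\wt\Sigma\times\wt\Sigma$ is independent of $i$, so the ratio $\theta_1^2/\theta_2^2$ descends to $\Sigma\times\Sigma$; combined with the globally defined $1$-forms $\zeta_i$, one gets a genuine meromorphic function $F$ on $\Sigma\times\Sigma$, which is all the subsequent steps use. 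With this adjustment the proof is sound.
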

\begin{proof}
Let us start by studying the quasi-periodicity of~$E(x,y)$.
If~$y'\in\wt\Sigma$ is obtained from~$y$ by adding the corresponding lift of the cycle~$A_j$, we have
\[
\theta\thchar{\delta'}{\delta''}(y'-x)=\theta\thchar{\delta'}{\delta''}((y-x)+\int_{A_j}\vec{\omega})=
\theta\thchar{\delta'}{\delta''}((y-x)+e_j)=e^{2i\pi\delta_j'}\theta\thchar{\delta'}{\delta''}(y-x)
\]
by Lemma~\ref{lem:ident_1}.
On the other hand and by definition, we
have~$\xi\thchar{\delta'}{\delta''}(y')=p_j\thchar{\delta'}{\delta''}(y)\xi\thchar{\delta'}{\delta''}(y)$,
with~$p_j\thchar{\delta'}{\delta''}(y)^2=:p_j^2(y)$ encoding the~$A_j$-quasi-periodicity of any holomorphic form.
Recall however that the sign of this square root does depend on the theta characteristic, hence the heavy notation.
Fixing an arbitrary square root~$p_j(y)$, we see that
\begin{equation}
  \label{eq:primeformA}
E(x,y')=\pm
(-1)^{2\delta'_j}
p_j^{-1}(y) E(x,y)\,,
\end{equation}
with~$p_j$ independent of the theta characteristic, while the sign might a priori depend on it.
(Note that even in the genus~$1$ case, we have~$E(x,y')=-E(x,y)$, recall Example~\ref{ex:prime};
therefore, and unlike claimed in many references, the prime form
is in general not invariant along~$A$-cycles.)
Similarly, if~$y''\in\wt\Sigma$ is obtained from~$y$ by adding the lift of the cycle~$B_j$, then we have
\begin{equation}
  \label{eq:primeformB}
E(x,y'')=\pm
(-1)^{2\delta''_j}
q_j^{-1}(y) e^{2i\pi(-\frac{\Omega_{jj}}{2}-\int_x^y\omega_j)} E(x,y)\,,
\end{equation}
with~$q_j^2(y)$ encodes the~$B_j$-quasi-periodicity of any holomorphic form.
The crucial point here is that, once again,
the factor appearing does not depend on the theta characteristic, at least up to sign.

Let us now consider prime forms~$E$ and~$E'$ obtained from two non-singular theta characteristics.
For any fixed~$x\in\wt{\Sigma}$, set~$f_x(y):=\frac{E(x,y)}{E'(x,y)}$.
By the fundamental properties of~$E$ listed above, both the numerator and denominator have the same zeros,
namely a simple zero at each element of~$\wt\Sigma$ with same image in~$\Sigma$ as~$x$.
Therefore, this quotient defines a (non-vanishing) holomorphic function on~$\wt{\Sigma}$.
Moreover, by the equalities displayed above, the~$A$ and~$B$ periods of~$f_x$ are trivial up to a sign.
Therefore, the function~$f_x$ induces a well-defined holomorphic function on the cover~$\Sigma'$
of~$\Sigma$ given by
the homomorphism~$\pi_1(\Sigma)\to(\ZZ/2\ZZ)^{2g}$ mapping the cycles~$A_j$ and~$B_j$ to distinct
elements of the canonical basis of~$(\ZZ/2\ZZ)^{2g}$.
This cover being finite, the Riemann surface~$\Sigma'$ is compact, and~$f_x$ constant.
The normalization of~$E$ and~$E'$ ensures that they have the same asymptotic behavior near the diagonal,
and hence that this constant is equal to~$1$. This completes the proof.
\end{proof}

\begin{rem}
    The statement above implies that the factor of automorphy for the prime form
    does not depend on $\binom{\delta'}{\delta''}$. It means that there is a
    bijection between the collection of signs
    $\{(-1)^{2\delta'_j},(-1)^{2\delta''_j}\}_{1\leq j\leq g}$ and the
    collection of $\pm$ coming from the square roots of $p_j^2(y)$ and
    $q_j^2(y)$ we need to consider for the line bundle $L$, and if we choose
    correctly the sign of ``reference" square roots $p_j(y)$ and $q_j(y)$, these
    collections of signs can be taken to be equal.
\end{rem}

We now come back to the setting of~M-curves.
Note that the anti-holomorphic involution~$\sigma\colon\Sigma\to\Sigma$ lifts to an anti-holomorphic
involution on the universal cover~$\wt{\Sigma}$, that we also denote by~$\sigma$.

\begin{lem}
\label{lem:primeform_conj}
If~$\Sigma$ is an~M-curve, then
the associated prime form satisfies
  \begin{equation*}
    \forall\,x,y\in\widetilde{\Sigma},\quad
  E(\sigma(x),\sigma(y))=\overline{E(x,y)}\,.
\end{equation*}
\end{lem}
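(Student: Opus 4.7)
The plan is to analyse the numerator and denominator of
\[
E(x,y)=\frac{\theta\thchar{\delta'}{\delta''}(y-x)}{\xi\thchar{\delta'}{\delta''}(x)\,\xi\thchar{\delta'}{\delta''}(y)}
\]
separately under the involution $(x,y)\mapsto(\sigma(x),\sigma(y))$, the two key ingredients being that the period matrix $\Omega$ is purely imaginary (Lemma~\ref{lem:imaginary}.3) and that $\sigma^*\omega_j=\overline{\omega_j}$ for every $j$ (Lemma~\ref{lem:imaginary}.1).

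For the numerator, I would note that any path $\gamma$ from $x$ to $y$ in $\widetilde\Sigma$ is mapped by $\sigma$ to a path from $\sigma(x)$ to $\sigma(y)$, so that Lemma~\ref{lem:imaginary}.1 gives
\[
\int_{\sigma(x)}^{\sigma(y)}\vec\omega = \int_\gamma \sigma^*\vec\omega = \int_\gamma \overline{\vec\omega} = \overline{\int_x^y\vec\omega}\,.
\]
Combining this with Lemma~\ref{lem:ident_2}.1 (applicable since $\Omega$ is purely imaginary) yields $\theta\thchar{\delta'}{\delta''}(\sigma(y)-\sigma(x)) = \overline{\theta\thchar{\delta'}{\delta''}(y-x)}$, which is exactly the transformation law expected for the numerator of $E(x,y)$.

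For the denominator, the central observation is that all Taylor coefficients of $\theta\thchar{\delta'}{\delta''}$ at the origin are real: matching the Taylor expansion $\theta\thchar{\delta'}{\delta''}(z)=\sum_I c_I z^I$ with the conjugation identity $\theta\thchar{\delta'}{\delta''}(\overline z)=\overline{\theta\thchar{\delta'}{\delta''}(z)}$ of Lemma~\ref{lem:ident_2}.1 forces $c_I\in\RR$ for every multi-index $I$. In particular, $d_z\theta\thchar{\delta'}{\delta''}(0)$ is a real vector, so the holomorphic $1$-form $\zeta = d_z\theta\thchar{\delta'}{\delta''}(0)\cdot\vec\omega$ satisfies $\sigma^*\zeta = \overline\zeta$ thanks again to Lemma~\ref{lem:imaginary}.1. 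Since $\xi\thchar{\delta'}{\delta''}$ is a holomorphic square root of $\zeta$, both $\xi\thchar{\delta'}{\delta''}\circ\sigma$ and $\overline{\xi\thchar{\delta'}{\delta''}}$ square to $\overline\zeta$, and by continuity on the connected cover $\widetilde\Sigma$ they differ by a global sign $\varepsilon\in\{\pm 1\}$. Taking the product of the two pointwise identities at $x$ and $y$, I get
\[
\xi\thchar{\delta'}{\delta''}(\sigma(x))\,\xi\thchar{\delta'}{\delta''}(\sigma(y)) = \varepsilon^2\,\overline{\xi\thchar{\delta'}{\delta''}(x)\,\xi\thchar{\delta'}{\delta''}(y)} = \overline{\xi\thchar{\delta'}{\delta''}(x)\,\xi\thchar{\delta'}{\delta''}(y)}\,,
\]
and combining with the numerator yields the announced identity.

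The only subtle point is the appearance of the sign $\varepsilon$ in the square-root identity for $\xi\thchar{\delta'}{\delta''}$, which reflects the unavoidable ambiguity in the choice of a holomorphic square root of $\zeta$. Fortunately this sign enters the denominator of $E(\sigma(x),\sigma(y))$ as $\varepsilon^2=1$ and thus cancels automatically, so no finer analysis of $\varepsilon$ (nor of the sometimes delicate spin-bundle book-keeping illustrated in the proof of Proposition~\ref{prop:prime}) is required here.
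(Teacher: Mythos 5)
Your proof is correct and follows essentially the same route as the paper: handle the numerator via $\sigma^*\vec\omega=\overline{\vec\omega}$ (Lemma~\ref{lem:imaginary}.1) together with $\theta\thchar{\delta'}{\delta''}(\overline z)=\overline{\theta\thchar{\delta'}{\delta''}(z)}$ (Lemma~\ref{lem:ident_2}.1), then show $\sigma^*\zeta=\overline\zeta$ by observing that the derivative of $\theta\thchar{\delta'}{\delta''}$ at the origin is real, so the square root $\xi\thchar{\delta'}{\delta''}$ pulls back to $\pm\overline{\xi\thchar{\delta'}{\delta''}}$ with a global sign that cancels in the product over $x$ and $y$. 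Your Taylor-coefficient argument for the reality of $d_z\theta\thchar{\delta'}{\delta''}(0)$ is a clean way to fill in a step the paper only asserts (``easily seen to be real''), but it is the same underlying fact, invoked in the same way—so this is not a different proof, just a slightly more explicit version of the same one.
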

\begin{proof}
First note that by Points~1 and~3 of Lemma~\ref{lem:imaginary}
together with Point~1 of Lemma~\ref{lem:ident_2}, we have
\[
\theta\thchar{\delta'}{\delta''}\Big(\int_{\sigma(x)}^{\sigma(y)}\vec{\omega}\Big)
=\theta\thchar{\delta'}{\delta''}\Big(\int_x^y\sigma^*\vec{\omega}\Big)
=\theta\thchar{\delta'}{\delta''}\Big(\overline{\int_x^y\vec{\omega}}\Big)
=\overline{\theta\thchar{\delta'}{\delta''}\Big(\int_x^y\vec{\omega}\Big)}\,.
\]
Furthermore, the theta characteristic being odd
and the period matrix purely imaginary, the
number~$\frac{\partial}{\partial z_j}\theta\thchar{\delta'}{\delta''}(0)$ is
easily seen to be real.
An additional use of the first point of Lemma~\ref{lem:imaginary} then leads to
the identity~$\sigma^*\zeta=\overline\zeta$, which implies
that~$\xi\thchar{\delta'}{\delta''}(\sigma(x))=\pm\overline{\xi\thchar{\delta'}{\delta''}(x)}$, with the global sign independent of~$x$.
Together with the
equality displayed above, this implies the statement.
\end{proof}

We now study the restriction of the prime form to~$\wt{A}_0\times\wt{A}_0$,
where~$\wt{A}_0$ denotes the universal covering of~$A_0$ given by an arbitrary connected component
of the preimage of~$A_0$ in~$\wt{\Sigma}$.
More precisely, we compute the phase of the~$(-1/2,-1/2)$-form~$E(x,y)$ for~$x,y\in\wt{A}_0$,
evaluated at the tangent vectors~$(v_x,v_y)$, where~$v$ denotes the velocity vector field of
(any regular parametrization of) the oriented curve~$\wt{A}_0$.

For lifts~$x,y\in\wt{A}_0$ of different elements of~$A_0$, we write~$\lfloor y-x\rfloor\in\ZZ$
for the unique integer such that the inequalities
\[
x+\lfloor y-x\rfloor\II<y<x+(\lfloor y-x\rfloor+1)\II
\]
hold in~$\wt{A}_0$, where~$x+\II$ denotes the image of~$x$ in~$\wt{A}_0$ via the generator of the
infinite cyclic covering group. (This slight abuse of notation is motivated by the third point
of Lemma~\ref{lemma:real}.)

\begin{lem}
\label{lem:prime}
There exists~$\varphi\in\{\pm 1,\pm i\}$ such that for any lifts~$x,y\in\wt{A}_0$ of different elements of~$A_0$, the phase of~$E(x,y)$ evaluated at the velocity vectors~$(v_x,v_y)$ is equal
to~$(-1)^{\lfloor y-x\rfloor}\,\varphi$.
\end{lem}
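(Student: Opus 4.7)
The plan is to show that $\Psi(x,y):=E(x,y)(v_x,v_y)$ takes values in a single real line $\varphi\RR\subset\CC$ with $\varphi\in\{\pm 1,\pm i\}$, and that its sign within this line is $(-1)^{\lfloor y-x\rfloor}$. By Proposition~\ref{prop:prime}, the prime form is independent of the chosen non-degenerate odd theta characteristic, so I fix any such $\binom{\delta'}{\delta''}$ in the formula for $E$.

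The first step is to show $\Psi(x,y)\in\RR\cup i\RR$. For the numerator $\theta\thchar{\delta'}{\delta''}(y-x)$, Point~3 of Lemma~\ref{lemma:real} applied to $x,y\in A_0$ gives $y-x=\int_x^y\vec\omega\in\RR^g$, and Point~1 of Lemma~\ref{lem:ident_2} forces the theta value to be real. For the denominator $\xi(x)\xi(y)$ evaluated on the half-tangent vectors, its square equals $\zeta(v_x)\,\zeta(v_y)$, where $\zeta=\sum_j\partial_{z_j}\theta\thchar{\delta'}{\delta''}(0)\,\omega_j$. The coefficients $\partial_{z_j}\theta\thchar{\delta'}{\delta''}(0)$ are real (as noted in the proof of Lemma~\ref{lem:primeform_conj}), while each $\omega_j(v_x)$ is real by
\[
\omega_j(v_x)=\omega_j(\sigma_*v_x)=\sigma^*\omega_j(v_x)=\overline{\omega_j(v_x)},
\]
combining Points~1 and~2 of Lemma~\ref{lem:imaginary}. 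Hence $\zeta(v_x),\zeta(v_y)\in\RR$, so $\xi(x)(v_x^{1/2})$ and $\xi(y)(v_y^{1/2})$ each lie in $\RR\cup i\RR$, and the quotient of the real numerator by their product therefore also lies in $\RR\cup i\RR$.

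The second step pins down a single axis and yields the sign alternation. The function $\Psi$ extends continuously to all of $\widetilde{A}_0\times\widetilde{A}_0$, vanishing precisely along the anti-diagonals $y=x+n\II$ (by the characterisation of the zeros of $E$ together with the $\II$-periodicity of the fibers of $\widetilde{A}_0\to A_0$ from Point~3 of Lemma~\ref{lemma:real}), and these zeros are simple. The local expansion
\[
E(x,y)=\frac{z(y)-z(x)}{\sqrt{dz(x)}\sqrt{dz(y)}}\bigl(1+O((z(y)-z(x))^2)\bigr),
\]
evaluated at the velocity vectors with a holomorphic coordinate $z$ chosen so that $dz$ is real along $A_0$, shows that near each anti-diagonal $\Psi(x,y)$ is a nonzero real-linear function of the real parameter $y-(x+n\II)$, times a fixed factor in $\RR\cup i\RR$. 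Consequently $\Psi$ stays on the same axis as $(x,y)$ crosses an anti-diagonal, merely flipping sign along it. Connectedness of $\widetilde{A}_0\times\widetilde{A}_0$ and the first step then force $\Psi(x,y)\in\varphi\RR$ globally for a fixed $\varphi\in\{\pm 1,\pm i\}$, with sign $(-1)^{\lfloor y-x\rfloor}$.

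The main obstacle is the global consistency of the square-root branches entering the sections $\xi(x)(v_x^{1/2})$ and $\xi(y)(v_y^{1/2})$ across the different fundamental strips of $\widetilde{A}_0\times\widetilde{A}_0$; this is resolved by observing that only the product $\xi(x)(v_x^{1/2})\,\xi(y)(v_y^{1/2})$ enters $\Psi$, and this product is continuous on the complement of the anti-diagonals, so its argument is locally, hence globally, constant within $\RR\cup i\RR$. The precise value of $\varphi$ depends on spin-bundle conventions and on the genus of $\Sigma$, and is deliberately left unspecified by the statement.
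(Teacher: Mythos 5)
Your proof is correct, and the first half (establishing that the phase lies in $\{\pm 1,\pm i\}$ by computing that both the numerator $\theta\thchar{\delta'}{\delta''}(y-x)$ and the denominator $\xi(x)\xi(y)$, evaluated on velocity vectors, take values in $\RR\cup i\RR$) is essentially the same argument as the paper's. The second half, however, follows a genuinely different route. The paper observes that the phase $f(x,y)$ is constant on the strip $x<y<x+\II$ (since $E$ only vanishes on the lines $y=x+n\II$ and the strip is connected), that this constant $\varphi(x)$ does not depend on $x$ by continuity, and then deduces the sign alternation purely from the \emph{skew-symmetry} $E(y,x)=-E(x,y)$, via $f(x,y)=\varphi(y)=f(y,x+\II)=-f(x+\II,y)$. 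You instead exploit the \emph{analytic structure of the simple zeros} of $E$: near an anti-diagonal $y=x+n\II$, the value $\Psi(x,y)$ is a real-linear function of the transverse real parameter times a factor in $\RR\cup i\RR$, hence the phase flips sign when the anti-diagonal is crossed. Both are valid; the skew-symmetry argument is somewhat lighter, as it is purely algebraic and avoids the local asymptotic expansion (and hence the need for a local coordinate adapted to the real structure and to discuss the simple-zero property explicitly). Your approach, on the other hand, makes the mechanism of the sign change more tangible, and shows it does not even require a specific normalisation of the local coordinate --- only that the zeros are simple and that the constraint $\Psi\in\RR\cup i\RR$ from the first step forces the local proportionality constant to lie in the same union of real axes. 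One wording caveat: the complement of the anti-diagonals in $\widetilde{A}_0\times\widetilde{A}_0$ is not connected (it is a disjoint union of strips), so the ``connectedness'' you invoke at the end should be understood strip by strip, with adjacent strips linked by the sign flip at the separating anti-diagonal; this is what you argue in substance, but the phrasing could be tightened.
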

\begin{proof}
Recall that in our setting, the period matrix~$\Omega$ is purely imaginary.
Given an odd theta characteristic~$\binom{\delta'}{\delta''}$, this easily implies that the
number~$\frac{\partial}{\partial z_j}\theta\thchar{\new{\delta'}}{\new{\delta''}}(0)$ is real.
Furthermore, given any 
real element~$P\in\Sigma$,
the first two points of Lemma~\ref{lem:imaginary} imply
\[
\overline{\omega_j(v)}=\sigma^*\omega_j(v)=\omega_j(\sigma_*(v))=\omega_j(v)
\]
for all~$1\le j\le g$, where~$v$ denotes the velocity vector field of the real component containing~$P$.
As a consequence, the holomorphic
form~$\xi\thchar{\delta'}{\delta''}^2=\sum_j\frac{\partial}{\partial z_j}\theta\thchar{\delta'}{\delta''}(0)\,\omega_j$ takes real values on~$A_0$, when evaluated along the corresponding velocity vector field.
By the first point of Lemma~\ref{lem:ident_2},~$\theta\thchar{\delta'}{\delta''}(y-x)$ is also real,
so~$E(x,y)$ evaluated at~$(v_x,v_y)$ is either real or purely imaginary.
In other words, the corresponding phase~$f(x,y)$ is~$\pm 1$ or~$\pm i$.

We now use the crucial fact that the prime form vanishes only if both variables
are lifts of the same element of~$A_0$. This implies in particular that~$f(x,y)$ is constant for all~$x<y<x+\II$, say equal to~$\varphi(x)$ for some map~$\varphi\colon\wt{A}_0\to\{\pm 1,\pm i\}$. This map
being continuous, it is constant. Hence, for~$x,y\in\wt{A}_0$ with~$x<y<x+\II$, the fact that the
prime form is skew-symmetric now implies
\[
f(x,y)=\varphi(x)=\varphi(y)=f(y,x+\II)=-f(x+\II,y)\,.
\]
The equality~$f(x,y)=(-1)^{\lfloor y-x\rfloor}\,\varphi$ easily follows.
\end{proof}

\subsubsection{Fay's identity}
\label{subsub:Fay}

We will make use of the following three versions of Fay's identity~\cite{Fay}.
They are easy consequences of
the standard version formulated by Mumford~\cite[p.~3.214]{ThetaTata2}, that we now recall
without proof.

\begin{thm}[Fay's identity]
For any~$z\in\CC^g$ and $a,b,c,d\in\wt\Sigma$, we have
  \begin{multline}
  \label{eq:FayMumford}
    \theta(z+c-a)\theta(z+d-b)E(c,b)E(a,d)+
    \theta(z+c-b)\theta(z+d-a)E(c,a)E(d,b)\\=
    \theta(z+c+d-a-b)\theta(z)E(c,d)E(a,b)\,.
  \end{multline}
  \end{thm}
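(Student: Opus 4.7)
The plan is to fix $z, a, b, c$ and regard the identity as a linear relation, in the free variable $d\in\wt\Sigma$, among three functions
\[
f_1(d):=\theta(z+d-b)\,E(a,d),\quad f_2(d):=\theta(z+d-a)\,E(d,b),\quad f_3(d):=\theta(z+c+d-a-b)\,E(c,d).
\]
I would first show that $f_1, f_2, f_3$ are sections of a common holomorphic line bundle $L$ on~$\Sigma$, then apply Riemann--Roch to deduce that $\dim H^0(\Sigma,L)=2$ generically, and finally pin down the coefficients of the forced linear relation by evaluating at $d=a$ and $d=b$.

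For the common line bundle, I would verify via Lemma~\ref{lem:ident_1} and Equations~\eqref{eq:primeformA}--\eqref{eq:primeformB} that $f_1, f_2, f_3$ transform identically under the covering action $d\mapsto d+A_j$ and $d\mapsto d+B_j$. The $A_j$-invariance is essentially immediate: the theta function is $\ZZ^g$-periodic, and each prime-form factor contributes the same $A_j$-sign. The $B_j$-computation is the main bookkeeping: each theta factor picks up an exponential linear in $d_j$ from Lemma~\ref{lem:ident_1}(2), and each $d$-dependent prime form contributes a comparable exponential from~\eqref{eq:primeformB}; the shifts by $a$, $b$ and $c$ in the theta arguments cancel exactly against the same shifts in the prime-form arguments, so that all three $f_i$ end up with the same factor of automorphy $q_j^{-1}(d)\, e^{-i\pi(4d_j - 2a_j - 2b_j + \text{const}_i)}$. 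Consequently the quotients $f_1/f_3$ and $f_2/f_3$ are meromorphic functions on~$\Sigma$, exhibiting $f_1, f_2, f_3$ as sections of a single line bundle $L$.

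The divisor of $f_3$ on $\Sigma$ consists of the simple zero $d=c$ from $E(c,d)$ together with the $g$ zeros of $\theta(z+c+d-a-b)$ provided by Riemann's theorem~\eqref{eq:RiemannDelta}; hence $\deg L=g+1$ and Riemann--Roch yields $\dim H^0(\Sigma,L)\geq 2$, with equality on a dense open set of parameters. A relation $\alpha f_1+\beta f_2=f_3$ must therefore hold on this open set, and extends to all parameters by continuity. Evaluating at $d=a$ annihilates $f_1$ through $E(a,a)=0$ and yields $\beta=\theta(z+c-b)E(c,a)/[\theta(z)E(a,b)]$; evaluating at $d=b$ annihilates $f_2$ through $E(b,b)=0$ and yields $\alpha=\theta(z+c-a)E(c,b)/[\theta(z)E(a,b)]$. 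Clearing the denominator $\theta(z)E(a,b)$ then recovers precisely~\eqref{eq:FayMumford}. The main obstacle is the $B_j$-automorphy check — not conceptually deep but requiring careful cancellation of the $a, b, c$ shifts — together with ensuring that the $\pm$ signs entering~\eqref{eq:primeformA}--\eqref{eq:primeformB} are compatible across $f_1, f_2, f_3$ for a fixed choice of theta characteristic defining the prime form.
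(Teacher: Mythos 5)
The paper states this identity and refers to Fay and to Mumford's Tata Lectures for the proof (``that we now recall without proof''), so there is no in-paper argument to compare against; you are effectively supplying a proof from scratch. Your strategy — fix $z,a,b,c$, recognize the three products
\[
f_1(d)=\theta(z+d-b)E(a,d),\qquad f_2(d)=\theta(z+d-a)E(d,b),\qquad f_3(d)=\theta(z+c+d-a-b)E(c,d)
\]
as holomorphic sections of a common line bundle~$L$ of degree~$g+1$ in the free variable~$d$, force a two-dimensional space of sections, and pin down the coefficients of the resulting linear relation by evaluating at $d=a$ and $d=b$ — is a standard and sound route to Fay's trisecant identity. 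Your automorphy bookkeeping is right: the $A_j$-factors match trivially via the $\ZZ^g$-periodicity of $\theta$ and the common sign in~\eqref{eq:primeformA}, and under $B_j$ each $f_i$ acquires the same factor $\pm(-1)^{2\delta''_j}q_j^{-1}(d)\,e^{-2i\pi\bigl(z_j+2d_j-a_j-b_j+\Omega_{jj}\bigr)}$ once the shifts by $a,b,c$ in the theta arguments cancel against the corresponding $\int_x^y\omega_j$-terms from~\eqref{eq:primeformB}. The evaluations at $d=a,b$, and the final clearing of the denominator $\theta(z)E(a,b)$, are also correct.

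The one step that needs to be firmed up is the assertion that $\dim H^0(\Sigma,L)=2$ ``on a dense open set of parameters.'' Riemann--Roch gives $h^0(L)=2+h^1(L)$ with $h^1(L)=h^0(K\otimes L^{-1})$, and $\deg(K\otimes L^{-1})=g-3$. For $g\le 2$ this degree is negative and $h^1=0$ automatically, but for $g\ge 3$ one must argue that the line bundle $L$ is \emph{non-special}; Riemann--Roch alone does not give this, and without it three sections of a degree-$(g+1)$ bundle need not be linearly dependent. The fact you need is that the isomorphism class of $L$ in $\Pic^{g+1}(\Sigma)$ is, via Riemann's theorem~\eqref{eq:RiemannDelta}, equal to $\Delta-z+a+b+(g-1)x_0$ — which sweeps out all of $\Pic^{g+1}(\Sigma)$ as $z$ varies over $\CC^g$ — whereas the special locus $\{L:\ K-L\in W_{g-3}\}$ is a proper closed subvariety, since $W_{g-3}$ (the image of $\mathrm{Sym}^{g-3}\Sigma$) has dimension $g-3<g$. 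Making this genericity argument explicit closes the gap. The rest is fine: since both sides of~\eqref{eq:FayMumford}, in the form given (before dividing by $\theta(z)E(a,b)$), are jointly holomorphic in $z,a,b,c,d$, the equality extends from the dense open set to all parameters by continuity. The $\pm$-sign issue you flag is resolved exactly as in Proposition~\ref{prop:prime}: once a single non-degenerate theta characteristic is fixed to define $E$, the choice of square root $\xi\thchar{\delta'}{\delta''}$ fixes the sign in the factor of automorphy definitively, and since the same $\xi$ appears in every prime-form factor, the signs enter $f_1,f_2,f_3$ consistently.
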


The first variation is the~$n=3$ case of~\cite[Lemma~1]{Fock}, and can be obtained as follows.
Divide Equation~\eqref{eq:FayMumford} by~$\theta(z)\theta(z+d-a)\theta(z+d-b)E(c,a)E(c,b)E(c,d)$
and set~$a=\alpha$,~$b=\beta$,~$c=u$,~$d=\gamma$ in~$\wt{\Sigma}$ and~$z=s-\gamma\in\wt{\Pic^0(\Sigma)}=\CC^g$.
This yields the equality
  \begin{multline}
    \label{eq:Fay}
    \frac{%
      \theta(s+u-\alpha-\beta)E(\alpha,\beta)
      }{%
      E(\alpha,u)E(\beta,u)\theta(s-\alpha)\theta(s-\beta)
    }
    +
    \frac{%
      \theta(s+u-\beta-\gamma)E(\beta,\gamma)
      }{%
      E(\beta,u)E(\gamma,u)\theta(s-\beta)\theta(s-\gamma)
    }
    \\ +
    \frac{%
      \theta(s+u-\gamma-\alpha)E(\gamma,\alpha)
      }{%
      E(\alpha,u)E(\gamma,u)\theta(s-\alpha)\theta(s-\new{\gamma})
    }
    = 0,
  \end{multline}
for all~$u,\alpha,\beta,\gamma$ in~$\wt\Sigma$, and all~$s$ in the universal cover~$\wt{\Pic^1(\Sigma)}$
of~$\Pic^1(\Sigma)$.

To obtain the second version, simply pass the second and third terms in Equation~\eqref{eq:Fay}
on the right-hand side. This yields the equation
\begin{equation}
  \label{eq:Fay_bis}
    \frac{%
      \theta(s+u-\alpha-\beta)E(\alpha,\beta)
      }{%
      E(\alpha,u)E(\beta,u)\theta(s-\alpha)\theta(s-\beta)
    }
    = F^{s,\gamma}(u;\beta)-F^{s,\gamma}(u;\alpha)\,,
\end{equation}
where
\begin{equation*}
  F^{s,\gamma}(u;\alpha)=\frac{\theta(s+u-\alpha-\gamma)E(\gamma,\alpha)}{E(\alpha,u)E(\gamma,u)\theta(s-\alpha)\theta(s-\gamma)}.
\end{equation*}
Note that $\gamma$ does not appear in the left-hand side of
Equation~\eqref{eq:Fay_bis} and can be chosen arbitrarily to define $F$.
By carefully letting $\gamma$ tend to $u$ in the definition of
$F^{s,\gamma}(u;\alpha)$, one obtains the following version of
Equation~\eqref{eq:Fay_bis}:
\begin{multline}
  \label{eq:Fay_ter}
\theta(u-s)
    \frac{%
      \theta(s+u-\alpha-\beta)E(\alpha,\beta)
      }{%
      E(\alpha,u)E(\beta,u)\theta(s-\alpha)\theta(s-\beta)
    }\\
    =\omega_{\beta-\alpha}(u)+
    \sum_{j=1}^g\left(%
      \frac{\partial\log\theta}{\partial z_j}(s-\alpha)-
      \frac{\partial\log\theta}{\partial z_j}(s-\beta)
    \right)\omega_j(u)\,,
\end{multline}
where~$\omega_{\beta-\alpha}(u)=\ud_u \log \frac{E(u,\beta)}{E(u,\alpha)}$ is
the unique meromorphic 1-form with 0 integral along $A$-cycles, and two \new{simple}
poles: at $\beta$ with residue 1, and at $\alpha$ with residue $-1$.
See~\cite[Proposition~2.10]{Fay} for a derivation of this variant.

The third version, which can be found at the very end of~\cite{Fock}, is simply obtained by
setting~$F_t(a,b):=\theta(a+b-t)E(a,b)$ in Equation~\eqref{eq:FayMumford} with~$z=a+b-t$,
yielding
\begin{equation}
\label{eq:FayFock}
F_t(a,b)F_t(c,d)+F_t(a,d)F_t(b,c)+F_t(a,c)F_t(d,b)=0
\end{equation}
for all~$a,b,c,d\in\wt\Sigma$ and~$t\in\wt{\Pic^2(\Sigma)}$.

\begin{rem}
\label{rem:Faychar}
It should be noted that in Equation~\eqref{eq:FayMumford}, the theta function~$\theta$ can be replaced with
any theta function~$\theta\thchar{\delta'}{\delta''}$ with theta characteristic. This is a
consequence of the first point of Lemma~\ref{lem:ident_1}.
The same holds true for all the versions of Fay's identity displayed above.
\end{rem}

\section{Fock's Kasteleyn operators and their inverses}
\label{sec:Fock}

In this section, we define our dimer models on an arbitrary minimal graph~$\Gs$, and initiate their study.
More precisely, we start in Section~\ref{sub:dimer} by briefly recalling the necessary combinatorial concepts,
namely those of train-tracks and minimal graphs, together with the definition of the dimer model.
Section~\ref{sub:mapd} also deals with background material, \emph{i.e.}, the definition of the \emph{discrete Abel map} of~\cite{Fock},
and of the parameter spaces~$X_\Gs$ of~\cite{BCdTimmersions}.
In Section~\ref{sub:Kast}, we finally give the definition of the models via the corresponding adjacency
operators of~\cite{Fock}, but restricting the parameters to \new{e}nsure that the resulting edge-weights are positive.
Section~\ref{sub:kernel} deals with explicit functions in the kernel of these operators inspired by~\cite{Fock}, functions that are
used in Section~\ref{sub:inverse} to define a two-parameter family of inverses of each of these operators.

\subsection{Dimer models on minimal graphs}
\label{sub:dimer}

\begin{figure}
  \centering
  \def\svgwidth{7cm}
  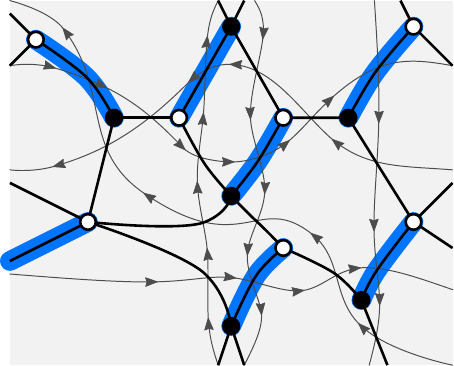
  \hfill
  \def\svgwidth{7cm}
  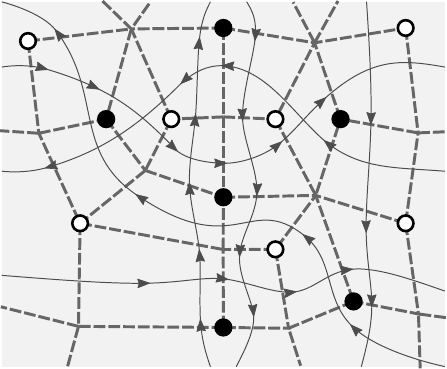
  \caption{Left: piece of a
minimal graph with its associated oriented
    train-tracks (in grey), and
 the corresponding portion of a dimer configuration (in blue).
  Right: the same train-tracks together with the associated graph $\GR$ (in
 dashed lines).
 }
  \label{fig:graph_traintrack_dimers}
\end{figure}

In an attempt to be reasonably self-contained, we now recall the fundamental concepts of train-tracks and minimal graphs, as well as the definition of the dimer model
and of the associated Kasteleyn matrices.
However, this article being a sequel to~\cite{BCdTelliptic}, we favor brevity over rigor and completeness, referring the reader to Sections~2.1 and~2.2 of~\cite{BCdTelliptic} for details.

Let~$\Gs=(\Vs,\Es)$ be a locally finite graph embedded in the plane with faces being bounded topological discs;
\new{in particular, the graph~$\Gs$ is infinite}.
If~$\Gs^*=(\Vs^*,\Es^*)$ stands for the dual embedded graph, then the associated \emph{quad-graph}~$\GR$ is defined from the vertex set~$\Vs\sqcup\Vs^*$ by
joining a primal vertex~$\vs\in\Vs$ and a dual vertex~$\fs\in\Vs^*$ with an edge each time~$\vs$ lies on the boundary of the face corresponding to~$\fs$.
Note that~$\GR$ embeds in the plane with (possibly degenerate) quadrilaterals faces (see Figure~\ref{fig:around_rhombus}).
Following ~\cite{Kenyon:crit,K-S}, we define a \emph{train-track} of~$\Gs$ as a maximal
chain of adjacent quadrilaterals of~$\GR$ such that when one enters a quadrilateral, one exits through the opposite edge,
see Figure~\ref{fig:graph_traintrack_dimers}.

Let us now assume that~$\Gs$ is \emph{bipartite}, \emph{i.e.}, that~$\Vs$ admits a partition~$\Bs\sqcup\Ws$ into black and white vertices such that no edge of~$\Es$ connects two vertices of the same color.
In this case, train-tracks can be consistently oriented, say, with black vertices on the right and white vertices on the left of the path, see again Figure~\ref{fig:around_rhombus}.
We let~$\T$ denote the set of consistently oriented train-tracks of the bipartite graph~$\Gs$.
A bipartite, planar graph~$\Gs$ is said to be \emph{minimal}~\cite{Thurston,GK} if its train-tracks do not self-intersect, and no pair of oriented train-tracks intersect twice in the same direction.
\new{This implies that train-tracks do not form loops, and that~$\Gs$ has neither multiple edges,
nor degree 1 vertices. In particular, a minimal graph is a simple graph.}

We now quickly recall basic facts on dimer models, referring to~\cite{KenyonIntro} for details.

A \emph{dimer configuration} of a graph~$\Gs$ is a collection~$\Ms\subset\Es$ such that every vertex is incident to exactly one edge of~$\Ms$,
see Figure~\ref{fig:graph_traintrack_dimers}.
If~$\Gs$ is finite and endowed with a positive edge-weight function~$\nu=(\nu_\es)_{\es\in\Es}$,
then the \emph{dimer Boltzmann measure}~$\PP$ on the set of dimer configurations of~$\Gs$ is defined by
\[
\PP(\Ms)=\frac{\prod_{\es\in\Ms}\nu_\es}{Z(\Gs,\nu)},
\]
where~$Z(\Gs,\nu)=\sum_{\Ms}\prod_{\es\in\Ms}\nu_\es$ is the \emph{dimer partition function}.
When the graph~$\Gs$ is infinite and planar, this notion is replaced by that of \emph{Gibbs measure}, see e.g.~\cite{KOS}.

Two dimer models on~$\Gs$ defined via edge-weights~$\nu$ and~$\nu'$
are called \emph{gauge equivalent} if there is a positive vertex-function~$\sigma$ such that~$\nu'_{\xs\ys}=\sigma_\xs\,\nu_{\xs\ys}\,\sigma_\ys$ holds for each~$\es=\xs\ys\in\Es$.
If~$\Gs$ is finite, then~$\nu$ and~$\nu'$ yield the same Boltzmann measure.
When $\Gs$ is planar and bipartite, two dimer models on~$\Gs$ are gauge equivalent if and only if
the corresponding edge-weights define equal \emph{face weights},
 where faces weights are the alternating product of edge-weights around each given bounded face.

One of the most fundamental tools for studying the dimer model is the \emph{Kasteleyn matrix}, named after~\cite{Kasteleyn1}, see also~\cite{TF}, and extended by Kuperberg~\cite{Kuperberg} as follows.
Let us fix a finite, planar and bipartite graph~$\Gs$. Consider a weighted adjacency matrix~$\Ks$ of~$\Gs$ twisted by a phase, \emph{i.e.}, a matrix~$\Ks$ with~$\Ks_{\ws,\bs}=\omega_{\ws\bs}\nu_{\ws\bs}$
and~$\omega_{\ws\bs}$ any modulus~$1$ complex number.
Let us assume that for any bounded face~$\fs$ of degree~$2m$ of~$\Gs$, the phase~$\omega$ satisfies the following \emph{Kasteleyn condition}:
\[
\prod_{j=1}^m \frac{\omega_{\ws_j\bs_j}}{\omega_{\ws_j\bs_{j+1}}}=(-1)^{m+1}\,,
\]
assuming the notation of Figure~\ref{fig:tt_around_face}.
Then, the dimer partition function and Boltzmann measure can be computed from~$\Ks$ and its inverse $\Ks^{-1}$, see~\cite{Kenyon1}.

\subsection{The discrete Abel map and the parameter
space~\texorpdfstring{$X_\Gs$}{XG}}
\label{sub:mapd}

Let~$\Sigma$ be an~M-curve. Recall that it admits an oriented real component denoted
by~$A_0$, which contains the base point~$x_0$.
We assign to each oriented train-track~$T\in\T$ of~$\Gs$ an element~$\alpha_T$ of~$A_0$,
referred to as its \emph{angle}.
(This terminology originates from the elliptic case, where~$A_0$ is naturally identified with~$\RR/\ZZ$.)

Following Fock~\cite{Fock}, we define a function~$\mapd$
from the set of vertices of~$\GR$ into~$\Pic(\Sigma)$, as follows.
Choose a face~$\fs_0$ and set~$\mapd(\fs_0)=0$.
Then, along an edge of~$\GR$ crossing a train-track~$T$ with
angle~$\alpha_T$, we formally add~$\alpha_T$ to the value of~$\mapd$ if we
arrive at a black vertex or leave a white vertex (see Figure~\ref{fig:around_rhombus}).
In this way, the degree of~$\mapd(\xs)\in\Div(\Sigma)$ is
equal to
\begin{equation*}
  \deg\mapd(\xs) =
  \begin{cases}
    1 & \text{if $\xs$ is a black vertex of $\Gs$,} \\
    0 & \text{if $\xs$ is a face of $\Gs$,} \\
    -1 & \text{if $\xs$ is a white vertex of $\Gs$.}
  \end{cases}
\end{equation*}
In particular, for any face~$\fs$ of~$\Gs$, the element~$\mapd(\fs)$ belongs
to~$\Pic^0(\Sigma)$.
By Lemma~\ref{lemma:real}, its image by the Abel-Jacobi map belongs to~$(\RR/\ZZ)^g\subset\Jac(\Sigma)$.

As it turns out, only a special class of angle assignments~$T\mapsto\alpha_T$ gives rise to probabilistic models.
It can be described as follows, see~\cite{BCdTimmersions} for more detail.

Let us call two non-closed oriented planar curves~\emph{parallel} (resp.~\emph{antiparallel}) if they intersect infinitely many times in
the same direction (resp. in opposite directions), or if they are disjoint and cross a topological disc in the same direction (resp. in opposite directions).
Consider a triple of oriented train-tracks of~$\Gs$, pairwise non-parallel.
Let~$B$ be a compact disk outside of which these train-tracks do not meet, apart from possible anti-parallel ones,
and order this triple of elements of~$\Tbip$ cyclically according to the outgoing points of the corresponding oriented curves in the circle~$\partial B$. This gives a well-defined partial cyclic order on~$\Tbip$, see~\cite[Section 2.3]{BCdTimmersions}.
Note that~$A_0$ is an oriented topological circle, and therefore endowed with a total cyclic order as well,
which allows for the following definition.

We define~$X_\Gs$ as the set of maps~$\mapalpha\colon\Tbip\to A_0$ \new{that are monotone, in the sense that they preserve the cyclic order, and} such that non-parallel train-tracks have distinct images.
One of the main results of~\cite{BCdTimmersions} is that if~$\Gs$ is minimal, then~$X_\Gs$ is included in the space of \emph{minimal immersions}
of~$\Gs$, and coincides with it if~$\Gs$ is minimal and periodic, see in particular \cite[Theorem 23, and Corollary 29]{BCdTimmersions}.

\subsection{Fock's Kasteleyn operators}
\label{sub:Kast}

To define a version of Fock's adjacency operator satisfying Kasteleyn's condition, let us fix a minimal graph~$\Gs$,
an M-curve~$\Sigma$ and an angle map~$\mapalpha\in X_\Gs$.

We now fix an arbitrary lift~$\widetilde\mapalpha\colon\Tbip\to\wt{A}_0$
of~$\mapalpha$, \emph{i.e.}, ~lifts~$\wt{\alpha}_T\in\wt{A}_0$ of the angles~$\alpha_T\in A_0$,
where~$\wt{A}_0\subset\wt{\Sigma}$ denotes the universal cover of~$A_0$.
Recall from Lemma~\ref{lemma:real} that the Abel-Jacobi map defines an embedding of~$A_0$
in~$(\RR/\ZZ)^g$, and therefore an embedding of~$\wt{A}_0$
in~$\RR^g\subset\wt{\Jac(\Sigma)}=\CC^g$.
We define a lift~$\wt{\mapd}\colon\Vs(\GR)\to\Div(\wt{\Sigma})$ of the discrete Abel map~$\mapd$
by setting~$\wt{\mapd}(\fs_0)=0$,
 and computing the values at every vertex iteratively by adding and subtracting the
  lifts $\wt{\alpha}_T$ of the crossed train-tracks, with the same local rule as
  $\mapd$.
  In particular, if $\bs$ (resp. $\ws$) and $\fs$ are separated by a train-track with angle
  $\alpha$ (resp.\@~$\beta$), one has $\wt\mapd(\bs)=\wt\mapd(\fs)+\wt\alpha$
  (resp.\@~$\wt\mapd(\ws)=\wt\mapd(\fs)-\wt\beta$).
Note that for any face~$\fs$ of~$\Gs$, the divisor~$\wt\mapd(\fs)$ has degree~$0$,
and its image by the Abel-Jacobi map~$\Div^0(\wt\Sigma)\to\CC^g$ belongs to~$\RR^g$.

\begin{defi}
\label{def:K}
  \emph{Fock's adjacency operator} $\Ks$ is the complex weighted adjacency operator
  of the graph $\Gs$, indexed by elements~$t\in(\RR/\ZZ)^g\subset\Jac(\Sigma)$,
  with non-zero coefficients given as follows:
  for every edge $\ws\bs$ crossed by train-tracks with angles $\alpha,\beta$ in~$A_0$ as in Figure~\ref{fig:around_rhombus}, we have
  \begin{equation}
    \Ks_{\ws,\bs}=\frac{E(\wt\alpha,\wt\beta)}{\theta(\wt{t}+\wt\mapd(\fs))\,
    \theta(\wt{t}+\wt\mapd(\fs'))}\,,
      \label{eq:def_Kast}
  \end{equation}
where~$\wt{t}\in\RR^g$
  is a lift of~$t\in(\RR/\ZZ)^g$.
\end{defi}

Here are several remarks on this definition.

\begin{rem}\leavevmode
\label{rem:Kast}
\begin{enumerate}
\item
Since we are working in the universal cover, the coefficient~$\Ks_{\ws,\bs}$  can be understood as an honest complex-valued
function of the lifted angles~$\widetilde{\alpha},\widetilde{\beta}\in\wt{A}_0$ (recall Section~\ref{subsub:linebundles}).
On the other hand, it does \emph{not} project to a well-defined function of~$\alpha,\beta\in A_0$.
Indeed, while the denominator gives a function on~$A_0\times A_0$, the numerator is only well-defined
on the universal cover~$\wt{A}_0\times\wt{A}_0$: replacing~$\wt\alpha\in \wt{A}_0$
by~$\wt\alpha+\II$, \emph{i.e.}, going once around~$A_0$, leads to
\[
E(\wt\alpha+\II,\wt\beta)=\frac{\xi\thchar{\gamma'}{\gamma''}(\wt\alpha)}{\xi\thchar{\gamma'}{\gamma''}(\wt\alpha+\II)}e^{2i\pi\gamma'\cdot\II}E(\wt\alpha,\wt\beta)=-\lambda E(\wt\alpha,\wt\beta)
\]
for some~$\lambda>0$, by Lemma~\ref{lem:prime}, and similarly for~$\beta$.
\new{(Here, we use the notation~$\binom{\gamma'}{\gamma''}$ for the odd theta characteristic
appearing in the prime form, to distinguish it from the theta characteristic~$\binom{\delta'}{\delta''}$
possibly appearing in the denominator, see points~3--5 below.)}
However, the corresponding face weights are well-defined,
\emph{i.e.}, these factors cancel up to gauge equivalence.

\item
By the second point of Lemma~\ref{lem:ident_1}, the \new{entry}~$\Ks_{\ws,\bs}$ does not depend on the choice of the lift~$\wt{t}\in\RR^g$ of~$t\in(\RR/\ZZ)^g$.

\item
By the third point of Lemma~\ref{lem:ident_1}, we have the
identity~$\theta\thchar{\delta'}{0}(z)=\theta\thchar{\delta'}{\delta''}(z-\delta'')$.
Hence, up to a translation of $\wt{t}$ by an element of $\frac{1}{2}\ZZ^g$, we can choose~$\delta''$
arbitrarily in the theta characteristic of the denominator of~\eqref{eq:def_Kast_intro}. We set~$\delta''=0$ in~\eqref{eq:def_Kast} for definiteness.

\item
As stated in Proposition~\ref{prop:Kasteley_intro}, it is possible to define Kasteleyn operators
indexed by an arbitrary real element of~$\Jac(\Sigma)$, \emph{i.e.}, by any element of the form~$t+\Omega\delta$
with~$t\in(\RR/\ZZ)^g$ and~$\delta\in(\frac{1}{2}\ZZ)^g$, as long as the theta characteristic~$\thchar{\delta'}{\delta''}$ of the denominator satisfies~$\delta'=\delta$.
Indeed, the gauge equivalence class of the resulting operator only depends on~$t\in(\RR/\ZZ)^g$, and we set~$\delta=\delta'=0$ for definiteness.

To check this claim, let us fix~$\delta\in(\frac{1}{2}\ZZ)^g$ and write~$\Ks'$ for  the Kasteleyn operator
defined as in~\eqref{eq:def_Kast}, but with~$\wt{t}$ replaced by~$\wt{t}+\Omega\delta$ and~$\theta$
replaced by~$\theta\thchar{\delta}{0}$.
By the third point of Lemma~\ref{lem:ident_1},
we get
\[
\Ks'_{\ws,\bs}=e^{2i\pi\delta\cdot\Omega\delta}\,
e^{2i\pi(2\delta)\cdot\wt{t}} \,
e^{2i\pi\delta\cdot(\new{\wt\mapd(\bs)+\wt\mapd(\ws)})}\,
\Ks_{\ws,\bs}
\,,
\]
yielding the statement.

This is no surprise, as we know from~\cite{KO:Harnack} that \new{for periodic graphs}, the gauge-equivalence classes are
parametrized by one point on each of the ovals, \emph{i.e.}, by a~$g$-dimensional torus,
see also the third point of Remark~\ref{rem:Harnack}.

\item
Note that another natural choice of theta characteristic is given by~$\delta'=\delta''=\frac{1}{2}\II$, leading to
\[
\Ks_{\ws,\bs}=\frac{E(\wt\alpha,\wt\beta)}{\theta\thchar{\frac{1}{2}\II}{\frac{1}{2}\II}(\wt{t}'+\wt\mapd(\fs))\,
\theta\thchar{\frac{1}{2}\II}{\frac{1}{2}\II}(\wt{t}'+\wt\mapd(\fs'))}
\]
indexed by~$t'\in(\RR/\ZZ)^g+\Omega\frac{1}{2}\II$. This generalizes the genus~$1$ case of~\cite{BCdTelliptic}.

\end{enumerate}
\end{rem}

\begin{figure}[ht]
  \centering
  \def\svgwidth{7cm}
  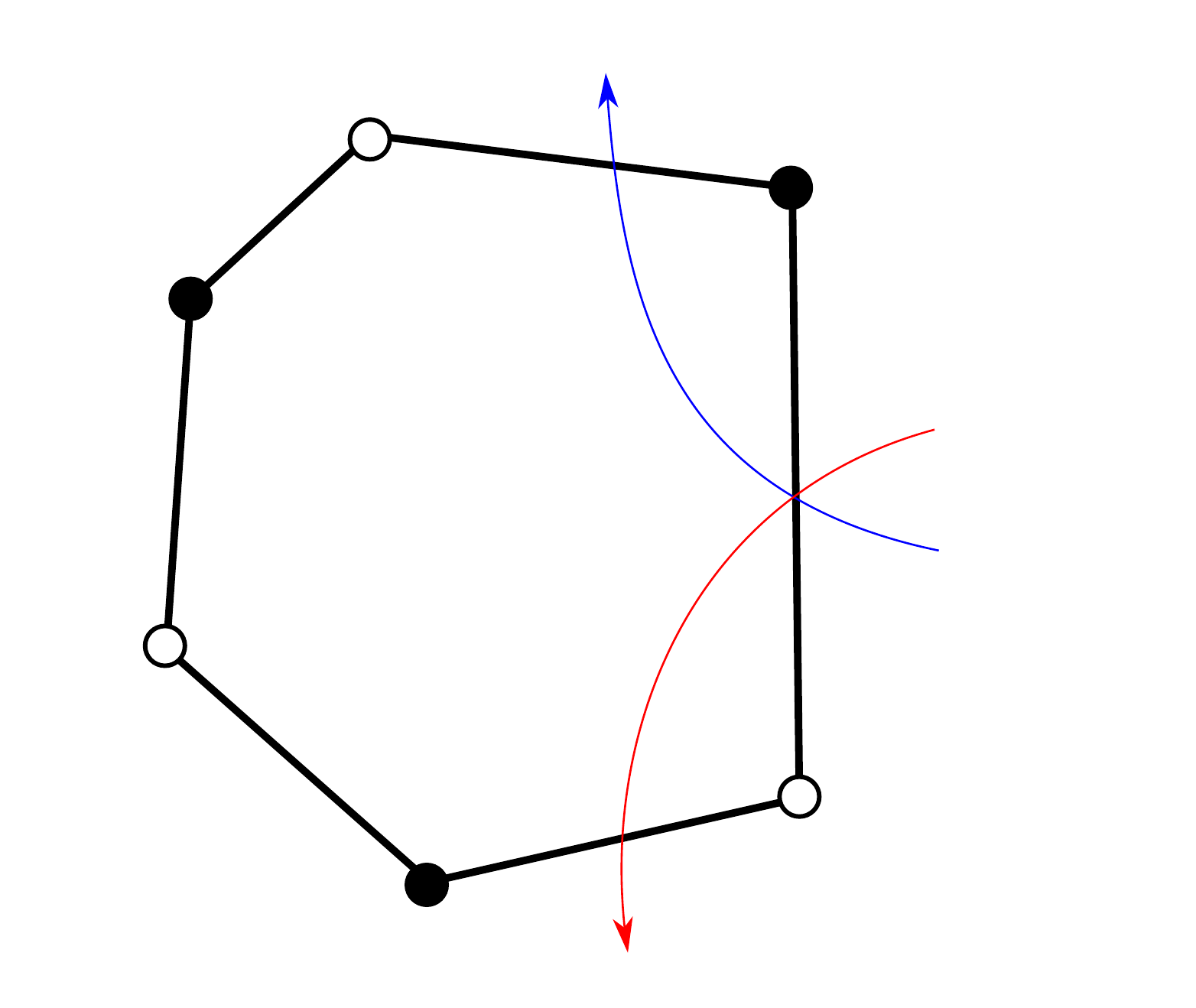
  \caption{Train-tracks around a typical face~$\fs$ of degree $2m$, with
    white vertices~$\ws_1,\dotsc,\ws_m$ \new{and black vertices~$\bs_1,\dots,\bs_m$} on its boundary.}
\label{fig:tt_around_face}
\end{figure}

The following result establishes the fact that we can harness Kasteleyn's theory.
Note that even though it extends the elliptic case treated in~\cite[Proposition~12]{BCdTelliptic},the proof given below is significantly simpler then the one appearing in~\cite{BCdTelliptic}.

\begin{prop}
\label{prop:Kasteleyn}
For any lift~$\wt{\mapalpha}$ of the map~$\mapalpha\in X_\Gs$ and any~$t\in(\RR/\ZZ)^g\subset\Jac(\Sigma)$,
Fock's adjacency operator~$\Ks$ is a Kasteleyn operator.
\end{prop}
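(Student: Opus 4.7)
Checking that~$\K$ is a Kasteleyn operator amounts to verifying two things: first, that every entry~$\K_{\ws,\bs}$ is a nonzero complex number, so that its phase~$\omega_{\ws\bs}:=\K_{\ws,\bs}/|\K_{\ws,\bs}|$ is well-defined; and second, that the Kasteleyn phase condition
\[
\prod_{j=1}^m\frac{\omega_{\ws_j\bs_j}}{\omega_{\ws_j\bs_{j+1}}}=(-1)^{m+1}
\]
holds around every bounded face of degree~$2m$. The first point is straightforward: the denominator factors~$\theta(\wt{t}+\wt\mapd(\fs))$ and~$\theta(\wt{t}+\wt\mapd(\fs'))$ have real arguments by Lemma~\ref{lemma:real} together with the assumption~$t\in(\RR/\ZZ)^g$, hence are strictly positive by the second point of Lemma~\ref{lem:ident_2}; and the numerator~$E(\wt\alpha,\wt\beta)$ is nonzero because the two train-tracks crossing an edge are non-parallel, so by the definition of~$X_\Gs$ their angles are distinct points of~$A_0$.

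For the phase condition, I would fix a bounded face~$\fs$ of degree~$2m$. By Remark~\ref{rem:Kast}, Point~1, the face weight is invariant under a change of lifts, so the computation may be carried out with any convenient choice. The minimality of~$\Gs$ combined with the assumption~$\mapalpha\in X_\Gs$ converts the global monotonicity of~$\mapalpha$ on~$\Tbip$ into local monotonicity around~$\fs$ (a consequence of~\cite{BCdTimmersions}): the~$2m$ train-track angles encountered cyclically along~$\partial\fs$, labelled~$\beta_1,\dotsc,\beta_{2m}$ with the black/white alternation matching the bipartite structure (so that~$\beta_{2k-1}$ is at the black vertex~$\bs_k$ and~$\beta_{2k}$ at the white vertex~$\ws_k$), are cyclically monotone on~$A_0$. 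I would then choose monotone lifts~$\wt\beta_1<\dotsb<\wt\beta_{2m}$ fitting inside a single period of~$\wt A_0$. In the alternating product, the positive-real theta factors drop out of the phase entirely, and the universal constant~$\varphi\in\{\pm 1,\pm i\}$ from Lemma~\ref{lem:prime} appears equally often in the numerator and denominator, hence also cancels. What remains is a product of signs~$(-1)^{\lfloor\cdot\rfloor}$ read off from Lemma~\ref{lem:prime}: the~$m$ entries~$\omega_{\ws_j\bs_j}$ correspond to differences~$\wt\beta_{2j}-\wt\beta_{2j-1}\in(0,\II)$ contributing~$+1$ each, the~$m-1$ entries~$\omega_{\ws_j\bs_{j+1}}$ with~$j<m$ correspond to differences in~$(-\II,0)$ contributing~$-1$ each, and the closing entry~$\omega_{\ws_m\bs_1}$ corresponds to~$\wt\beta_{2m}-\wt\beta_1\in(0,\II)$ contributing~$+1$. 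Assembling these signs yields~$(-1)^{m-1}=(-1)^{m+1}$, as desired.

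The main combinatorial subtlety lies in the closing entry~$\omega_{\ws_m\bs_1}$: one might naively expect going once around~$\partial\fs$ to induce a wrap-around on the lifted circle~$\wt A_0$, and hence an additional sign, but the~$2m$ monotone lifts of the angles all fit inside a single period of~$\wt A_0$, so this last edge contributes~$+1$ rather than~$-1$. This single sign shift is precisely what converts the naive~$(-1)^m$ into the correct~$(-1)^{m+1}$; tracking it cleanly, while keeping the bipartite labelling and the integer parts~$\lfloor\cdot\rfloor$ straight, is the only delicate combinatorial bookkeeping required by the proof.
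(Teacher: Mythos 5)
The opening observations are fine and follow the paper's route: the theta factors in the denominator of~\eqref{eq:def_Kast} are strictly positive real by the third point of Lemma~\ref{lem:imaginary} together with the second point of Lemma~\ref{lem:ident_2}, so the phase of a face weight lives entirely in the prime forms; each~$E(\wt\alpha,\wt\beta)$ is nonzero because non-parallel train-tracks have distinct angles when~$\mapalpha\in X_\Gs$; and the required phase reduces to a parity of integer parts~$\lfloor\cdot\rfloor$ via Lemma~\ref{lem:prime}.

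The proof then breaks at the pivotal monotonicity claim. You assert, as ``a consequence of~\cite{BCdTimmersions}'', that the~$2m$ angles met cyclically along~$\partial\fs$, labelled so that~$\beta_{2k-1}$ is at the black vertex~$\bs_k$ and~$\beta_{2k}$ at the white vertex~$\ws_k$, are cyclically monotone on~$A_0$, hence admit monotone lifts~$\wt\beta_1<\dotsb<\wt\beta_{2m}<\wt\beta_1+\II$. This is neither what~\cite{BCdTimmersions} proves, nor is it true. Writing~$\alpha_j$ for the angle at~$\ws_j$ and~$\alpha'_j$ for the one at~$\bs_j$ (so your~$\beta_{2k-1}=\alpha'_k$ and~$\beta_{2k}=\alpha_k$), a consecutive triple of your sequence is~$(\alpha'_j,\alpha_j,\alpha'_{j+1})$, so your monotonicity would place~$\alpha_j$ on the short arc from~$\alpha'_j$ to~$\alpha'_{j+1}$. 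But~\cite[Lemma~8]{BCdTimmersions} — monotonicity of~$\mapalpha$ on the cyclically ordered strands around the white vertex~$\ws_j$ — places~$\alpha_j$ on the \emph{opposite} arc: the admissible lifts around~$\ws_j$ satisfy~$\wt\alpha_j<\wt\alpha'_j<\wt\alpha'_{j+1}<\wt\alpha_j+\II$, exactly as the paper's proof records. The~$2m$ angles are genuinely not interleaved in a single cyclic sweep around a face, the lifts you rely on do not exist, and the sign bookkeeping built on them is unfounded; that it happens to return~$(-1)^{m-1}=(-1)^{m+1}$ is not confirmation that the route is valid.

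What~\cite{BCdTimmersions} actually provides is Lemma~8 (cyclic monotonicity around each vertex) and Lemma~9 (cyclic monotonicity of the single family~$\alpha'_1,\dotsc,\alpha'_m$ of strands turning counterclockwise around~$\fs$), not monotonicity of all~$2m$ strands. The paper's proof is structured precisely to avoid the stronger false statement: it first uses Lemma~8 to rewrite~$\lfloor\wt\alpha'_{j+1}-\wt\alpha_j\rfloor+\lfloor\wt\alpha'_j-\wt\alpha_j\rfloor\equiv\lfloor\wt\alpha'_{j+1}-\wt\alpha'_j\rfloor\pmod 2$, eliminating the white angles~$\alpha_j$ from the count altogether, and only then invokes Lemma~9 on the surviving family~$(\alpha'_j)_j$ to conclude that~$\sum_j\lfloor\wt\alpha'_{j+1}-\wt\alpha'_j\rfloor$ is odd. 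This two-step modular reduction is the content your proposal is missing.
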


\begin{proof}
Let us study the validity of the Kasteleyn condition around an arbitrary face~$\fs$
as in Figure~\ref{fig:tt_around_face}.
By definition, we have
\[
\Ks_{\ws,\bs}=
\frac{E(\wt\alpha,\wt\beta)}
{\theta(\wt{t}+\wt\mapd(\fs))\,\theta(\wt{t}+\wt\mapd(\fs'))}
\,,
\]
with~$\wt{t},\wt\mapd(\fs)$ and~$\wt\mapd(\fs')$ elements of~$\RR^g$.
Hence, by the third point of Lemma~\ref{lem:imaginary} and the second point of Lemma~\ref{lem:ident_2},
the denominator is strictly positive.
Since the prime form is skew-symmetric, the phase of the face weight around~$\fs$ is
therefore equal to the phase of
\[
(-1)^m\frac{E(\wt\alpha_1,\wt\alpha_2')E(\wt\alpha_2,\wt\alpha_3')\dotsb E(\wt\alpha_{m},\wt\alpha_1')}{E(\wt\alpha_1,\wt\alpha_1')E(\wt\alpha_2,\wt\alpha_2')\dotsb E(\wt\alpha_m,\wt\alpha_m')}\,.
\]
Note that this phase is well-defined for~$\alpha_j,\alpha_j'\in A_0$,
as replacing~$\wt\alpha_j$ by~$\wt\alpha_j+\II$ makes a sign appear at the
numerator and denominator, and similarly for~$\alpha'_j$.

By Lemma~\ref{lem:prime}, we know that the phase
of~$E(x,y)$ is equal
to~$(-1)^{\lfloor y-x\rfloor}\,\varphi$ for some fixed~$\varphi\in\{\pm 1,\pm i\}$.
Therefore, we are left with the proof that the integer
\[
\sum_{j=1}^m \lfloor\wt\alpha_{j+1}'-\wt\alpha_j\rfloor + \sum_{j=1}^m \lfloor\wt\alpha_{j}'-\wt\alpha_j\rfloor
\]
is odd, where~$\alpha'_{m+1}$ stands for~$\alpha'_1$.
Since~$\mapalpha$ belongs to~$X_\Gs$ and~$\Gs$ is minimal, Lemma~8 of~\cite{BCdTimmersions} states that~$\mapalpha$
is monotone and injective on the cyclically ordered set of oriented train-track strands adjacent to the vertex~$\ws_j$.
This implies the modulo~$2$ equality~$\lfloor\wt\alpha_{j+1}'-\wt\alpha_j\rfloor + \lfloor\wt\alpha_{j}'-\wt\alpha_j\rfloor=\lfloor\wt\alpha_{j+1}'-\wt\alpha'_j\rfloor$
for all~$j=1,\dotsc,m$;
indeed, observe that this equality does not depend on the lifts, and holds for any
choice of the form~$\wt\alpha_j<\wt\alpha'_j<\wt\alpha'_{j+1}<\wt\alpha_j+\II$.
Furthermore, since~$\mapalpha$ belongs to~$X_\Gs$ and~$\Gs$ is minimal, Lemma~9 of~\cite{BCdTimmersions} states that~$\mapalpha$
is also monotone and non-constant on the cyclically ordered set of oriented train-track strands  turning counterclockwise around~$\fs$
(appearing in blue in Figure~\ref{fig:tt_around_face}).
This implies that the
integer~$\sum_{j=1}^m \lfloor\wt\alpha_{j+1}'-\wt\alpha'_j\rfloor$
is odd; indeed, observe that its parity does not depend on the lifts, and that it is equal to~1 for any
choice of the form~$\wt\alpha'_1<\wt\alpha'_2<\dotsb<\wt\alpha'_m<\wt\alpha'_1+\II$.
This concludes the proof.
\end{proof}

\subsection{Kernel of the Kasteleyn operators}
\label{sub:kernel}

As before, we consider an~M-curve~$\Sigma$ with fixed parameter~$t\in (\RR/\ZZ)^g\subset\Jac(\Sigma)$,
a minimal graph~$\Gs$, and an angle map~$\mapalpha\in X_\Gs$.
The aim of this section is to define a meromorphic form~$g_{\xs,\ys}$ on~$\Sigma$,
with~$\xs,\ys$ arbitrary vertices of the quad-graph~$\GR$,
providing elements in the kernel of the operator~$\Ks$, see also~\cite{Fock}.
These forms play a crucial role in the definition and study of the \emph{divisor of a vertex},
see Proposition~\ref{prop:div(w)}, and of the inverses of the Kasteleyn operators, see Section~\ref{sub:inverse}.

To do so, and as in Section~\ref{sub:Kast},
we also fix a lift~$\widetilde{t}\in\RR^g$ of~$t$ and a lift~$\wt{\mapalpha}\colon\T\to\wt{A}_0$
of~$\mapalpha$,
which induces a lift~$\wt\mapd$ of the discrete Abel map.
We first define a function~$g_{\xs,\ys}$ on~$\wt\Sigma$,
starting with~$\xs,\ys$ being adjacent vertices in~$\GR$.
One of these vertices is a vertex~$\fs$ of~$\Gs^*$, while the other one is
a (white~$\ws$ o\new{r} black~$\bs$) vertex of~$\Gs$.
Depending on these two cases, and following the notation of Figure~\ref{fig:around_rhombus}, we set:
\begin{align*}
  g_{\fs,\ws}(\wt{u}) =
  g_{\ws,\fs}(\wt{u})^{-1} &=
    \frac{\theta(\wt{t}+(\wt{u}+\wt\mapd(\ws)))}{E(\wt\beta,\wt{u})}, \\
  g_{\bs,\fs}(\wt{u}) =
    g_{\fs,\bs}(\wt{u})^{-1} &=
    \frac{\theta(-\wt{t}+(\wt{u}-\wt\mapd(\bs)))}{E(\wt\alpha,\wt{u})}
\end{align*}
for~$\wt{u}\in\wt\Sigma$, noting that the divisors~$\wt{u}+\wt\mapd(\ws)$
and~$\wt{u}-\wt\mapd(\bs)$ both have
degree zero and hence can be considered naturally inside~$\CC^g$ via the Abel-Jacobi map~$\Div^0(\wt\Sigma)\to\CC^g$.
By the second point of Lemma~\ref{lem:ident_1}, the function~$g_{\xs,\ys}$ does not depend on the choice of the lift~$\wt{t}\in\RR^g$ of~$t\in(\RR/\ZZ)^g$.

When $\xs$ and $\ys$ are not necessarily neighbors, consider a path
$\xs=\xs_1,\dotsc,\xs_n=\ys$ in $\GR$ and set:
\begin{align*}
  g_{\xs,\ys}(\wt u) =
  \prod_{j=1}^{n-1} g_{\xs_j,\xs_{j+1}}(\wt u).
\end{align*}

The result is well defined, \emph{i.e.}, does not depend on the choice of the path,
because the product along any closed path is easily seen to be equal to 1.
For example, if~$\bs$ and~$\ws$ are neighbors in~$\Gs$ as in Figure~\ref{fig:around_rhombus},
we get
\begin{equation*}
  g_{\bs,\ws}(\wt u)=g_{\bs,\fs}(\wt u)g_{\fs,\ws}(\wt u)=
  \frac{%
    \theta(\wt t + (\wt u+\wt\mapd(\ws)))
    \theta(-\wt t + (\wt u-\wt\mapd(\bs)))
  }{%
    E(\wt\alpha,\wt u)E(\wt\beta,\wt u)
  }\,.
\end{equation*}

Due to its quasi-periodicity properties, the divisor of~$g_{\xs,\ys}$ is well-defined
on~$\Sigma$ for any vertices~$\xs,\ys$ of~$\GR$.
Furthermore, when~$\xs$ and~$\ys$ are vertices of~$\Gs$, the function~$g_{\xs,\ys}$
defined on~$\wt\Sigma$ projects to a well-defined form on~$\Sigma$,
as follows.

\begin{lem}\label{lem:gmeromfuncform}
  Let $\bs,\bs'$ (resp. $\ws,\ws'$) be two black (resp.\@ white) vertices. Then:
  \begin{itemize}
    \item
  the meromorphic function $g_{\bs,\ws}$
  on $\wt\Sigma$ projects to a meromorphic 1-form on $\Sigma$ (\emph{i.e.}, a section of
  the canonical bundle);
\item the meromorphic functions $g_{\bs,\bs'}$ and $g_{\ws,\ws'}$ on
  $\wt\Sigma$ project to meromorphic functions on $\Sigma$.
\end{itemize}
Moreover, these 1-forms and functions are real, in the sense that they
  satisfy:
  \begin{equation*}
    \sigma^* g_{\bs,\ws} = \overline{g_{\bs,\ws}},\quad
    \sigma^* g_{\bs,\bs'} = \overline{g_{\bs,\bs'}},\quad
    \sigma^* g_{\ws,\ws'} = \overline{g_{\ws,\ws'}}.
  \end{equation*}
\end{lem}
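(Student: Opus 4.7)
The plan is to verify both parts by direct computation: the projection claim from the quasi-periodicity formulas~\eqref{eq:primeformA}--\eqref{eq:primeformB} for the prime form and Lemma~\ref{lem:ident_1}(2) for theta, and the reality claim from Lemma~\ref{lem:primeform_conj} together with Lemma~\ref{lem:ident_2}(1).

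For the projection, I would compute how each of $g_{\bs,\ws}$, $g_{\bs,\bs'}$, $g_{\ws,\ws'}$ transforms under the $A_j$- and $B_j$-deck transformations of $\wt\Sigma\to\Sigma$, and check that the resulting factors of automorphy match those of the canonical bundle $K=L^{\otimes 2}$ in the first case and are trivial in the other two. For $g_{\bs,\ws}$: the two primes in the denominator share the same theta characteristic, so under $A_j$ the global signs and $(-1)^{2\gamma'_j}$ factors square away, leaving $p_j^{-2}(\wt u)$, while the two thetas contribute trivially; the total factor $p_j^2(\wt u)$ matches the $A_j$-automorphy of $\xi^2$, a section of $K$. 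Under $B_j$, a short computation shows that the exponential factor from the two thetas -- which involves $\wt\mapd(\bs)-\wt\mapd(\ws)=\wt\alpha+\wt\beta$ read off from Figure~\ref{fig:around_rhombus} -- cancels exactly against the exponential prefactor of the two primes in~\eqref{eq:primeformB}, leaving $q_j^2(\wt u)$, the $B_j$-automorphy of $\xi^2$. For $g_{\bs,\bs'}$, choosing a path $\bs\to\fs\to\bs'$, the two primes appear as a ratio in the variable $\wt u$, so all their $p_j^{-1}$, $q_j^{-1}$ and sign factors cancel; the remaining exponentials from the thetas and from the primes cancel thanks to the Abel-map identity $\wt\mapd(\bs')-\wt\mapd(\bs)=\wt\alpha'-\wt\alpha$, yielding trivial automorphy, so $g_{\bs,\bs'}$ descends to a meromorphic function on $\Sigma$. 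The argument for $g_{\ws,\ws'}$ is identical.

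For the reality identities, the angles $\wt\alpha,\wt\beta\in\wt A_0$ are fixed by $\sigma$, and $\wt t$ together with the Abel-Jacobi images of $\wt\mapd(\ws),\wt\mapd(\bs)$ lie in $\RR^g$ by construction of $\wt\mapd$ from lifted angles in $\wt A_0\subset\RR^g$ (using Lemmas~\ref{lem:non-zero} and~\ref{lemma:real}). Combining Lemma~\ref{lem:primeform_conj} for the primes, Lemma~\ref{lem:imaginary}(1) (which implies that the Abel-Jacobi image of $\sigma(\wt u)$ is the complex conjugate of that of $\wt u$), and Lemma~\ref{lem:ident_2}(1) for the theta factors, each of the four factors in $g_{\bs,\ws}(\wt u)$ conjugates correctly, giving $\sigma^*g_{\bs,\ws}=\overline{g_{\bs,\ws}}$; the same reasoning handles $g_{\bs,\bs'}$ and $g_{\ws,\ws'}$. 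The main obstacle is the bookkeeping: carefully combining the global $\pm 1$ signs and exponential prefactors of~\eqref{eq:primeformA}--\eqref{eq:primeformB} with the theta automorphy factors, and checking their cancellation via the discrete Abel map local rule; there is no conceptual difficulty, only careful accounting.
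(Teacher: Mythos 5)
Your proof via factors of automorphy is correct, and it matches the secondary argument the paper itself sketches at the end of its own proof of this lemma (``We now briefly sketch an alternative proof of this first statement, using the viewpoint of Section~\ref{subsub:linebundles}\dots''). The paper's primary argument is different: it writes $g_{\bs,\ws}$ in the product form~\eqref{eq:g}, applies Riemann's theorem~\eqref{eq:RiemannDelta} to locate the zeros of the two theta factors, and uses the discrete Abel map identity $\mapd(\fs)-\mapd(\fs')=\sum_k(\beta_k-\alpha_k)$ to show that the resulting divisor $D$ of $g_{\bs,\ws}$ on $\Sigma$ satisfies $D-2(g-1)x_0=2\Delta$ in $\Jac(\Sigma)$, hence is linearly equivalent to the canonical divisor. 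What the divisor route buys that the automorphy route does not is an explicit description of where the zeros and poles of $g_{\bs,\ws}$ sit on $\Sigma$, which the paper reuses directly in Lemma~\ref{lem:sector} (the sectors on $A_0$) and Proposition~\ref{prop:div(w)} (the divisor of a vertex); what your automorphy computation buys is independence from Riemann's theorem and a more direct handle on the projection statement itself. Your treatment of the second bullet and of the reality claim coincides with the paper's.
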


\begin{proof}
To show the first point, fix two vertices~$\bs,\ws$ and
 consider a path $\xs_0,\xs_1,\dotsc,\xs_{2n}$ in $\GR$ from~$\bs$ to
  $\ws$.
  The vertices $\xs_1$ and $\xs_{2n-1}$ represent faces $\fs$ and $\fs'$
  of $\Gs$, which may coincide if $\bs$ and $\ws$ are neighbors.
  Denote by~$\alpha$ (resp.~$\beta$) the angle associated to the train-track separating~$\bs$
from~$\fs$ (resp.\@ $\ws$ from~$\fs'$). For~$1\leq k \leq n-1$, denote
  by~$\alpha_k$ and~$\beta_k$ the angles of the two train-tracks associated to
  the two edges of this path incident with~$\xs_{2k}$, so that~$g_{\bs,\ws}$ can be written as
  \begin{equation}
  \label{eq:g}
    g_{\bs,\ws}(\wt u) =
    \frac{%
      \theta(\wt{t}+\wt\mapd(\fs')+(\wt u-\wt\beta))\,
      \theta(-\wt{t}-\wt\mapd(\fs)+(\wt u-\wt\alpha))
    }{E(\wt{\alpha},\wt u) E(\wt{\beta},\wt u)}
    \prod_{k=1}^{n-1} \frac{E(\wt\alpha_k,\wt u)}{E(\wt\beta_k, \wt u)}\,.
  \end{equation}
  The divisor of $g$ on $\wt\Sigma$ is~$\pi_1(\Sigma)$-periodic,
  and projects to a divisor on~$\Sigma$.
  We now compute this divisor from the product form above,
  making use of Riemann's theorem, see Section~\ref{sub:theta}, to understand the zeros of the first two factors.

  By
Riemann's theorem
applied to~$f_e(u)=\theta(e+\int_{x_0}^u\vec{\omega})$
  with
  $e$ equal to~$t+\mapd(\fs')+(x_0-\beta)$ and
  $-t-\mapd(\fs)+(x_0-\alpha)$ respectively, there exist
  elements~$x_1,\dotsc, x_g$ and~$y_1,\dotsc, y_g$ of~$\Sigma$
  such that for all $1\leq j \leq g$,
\begin{equation*}
  \theta(t+\mapd(\fs') + (x_j-\beta))=
  \theta(-t-\mapd(\fs) + (y_j -\beta))=0\,.
\end{equation*}
  Therefore, the divisor of $g_{\bs,\ws}$, representing its zeros and poles, is well
  defined as an algebraic sum of points in~$\Sigma$ (and not only in~$\wt\Sigma$),
  and is given by
  \begin{equation*}
    D=\sum_{j=1}^g (x_j + y_j) - \alpha - \beta
    +\sum_{k=1}^{n-1}(\alpha_k-\beta_k)\,.
  \end{equation*}
  Now we use Relation~\eqref{eq:RiemannDelta} to get the following equalitites in~$\Jac(\Sigma)$:
  \begin{equation*}
    \sum_{j=1}^g (x_j-x_0)= \Delta - t - \mapd(\fs') +(\beta-x_0),\quad
    \sum_{j=1}^g (y_j-x_0)= \Delta + t + \mapd(\fs) +(\alpha-x_0)\,.
  \end{equation*}

  Moreover, the definition of~$\mapd$ yields the equality~$\mapd(\fs)-\mapd(\fs')=\sum_{k=1}^{n-1} (\beta_k-\alpha_k)$.
 Thus, the divisor~$D$ of~$g_{\bs,\ws}$ satisfies the equality
  \begin{equation*}
  D-2(g-1)x_0=2\Delta
  \end{equation*}
   in~$\Jac(\Sigma)$, which by~\cite[Corollary~3.11, p.~166]{ThetaTata1} implies that~$D$ is linearly equivalent to the canonical divisor.
  By standard arguments, $g_{\bs,\ws}$ is a meromorphic 1-form, and the first point is proved.

We now briefly sketch an alternative proof of this first statement, using the viewpoint of Section~\ref{subsub:linebundles}
and the notation of the proof of Proposition~\ref{prop:prime}.
  When~$u$ is moving along the cycle~$A_j$, we get a
  factor of automorphy~$p_j^2(u)$, and when~$u$ is moving along~$B_j$, the ``extra''
  factors coming from the theta functions and the prime forms cancel exactly,
  and only remains the factor of automorphy~$q_j^2(u)$.
 Hence, the map~$g_{\bs,\ws}$ on~$\widetilde\Sigma$ transforms exactly like meromorphic
  1-forms lifted to the universal cover, and therefore projects onto~$\Sigma$ as a meromorphic~$1$-form.

The second point can be proved in the same way: use the product
form for~$g_{\bs,\bs'}$ and~$g_{\ws,\ws'}$ together with Riemann's theorem
to check that the corresponding divisors are principal.

Finally, the last point is a direct consequence of the fact that the
Riemann matrix~$\Omega$ is purely imaginary (Lemma~\ref{lem:imaginary}),
together with Point~1 of Lemma~\ref{lem:ident_2} and Lemma~\ref{lem:primeform_conj}.
\end{proof}

These forms provide non-zero vectors in the kernel of~$\Ks$, as follows.

\begin{lem}[\cite{Fock}]\label{lem:gkernel}Fix~$\widetilde{u}\in\wt\Sigma$.\leavevmode
  \begin{itemize}
    \item For any vertex $\xs$ of $\GR$, $(g_{\bs,\xs}(\widetilde{u}))_{\bs\in\Bs}$ is in
      the right kernel of $\Ks$. Equivalently, for any white vertex $\ws\in\Ws$,
we have~$\sum_{\bs\sim\ws}\K_{\ws,\bs}\,g_{\bs,\xs}(\widetilde{u})=0$.
    \item For any vertex $\xs$ of $\GR$, $(g_{\xs,\ws}(\widetilde{u}))_{\ws\in\Ws}$ is in
      the left kernel of $\Ks$. Equivalently, for any black vertex $\bs\in\Bs$,
we have~$\sum_{\ws\sim\bs}g_{\xs,\ws}(\widetilde{u})\K_{\ws,\bs}=0$.
  \end{itemize}
\end{lem}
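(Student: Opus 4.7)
The two bullets are proved by the same Fay-identity computation, performed around a white or black vertex respectively, so I focus on the first. By the multiplicativity of $g$ along paths in $\GR$, one has $g_{\bs,\xs}(\wt u)=g_{\bs,\ws}(\wt u)\,g_{\ws,\xs}(\wt u)$, and the second factor is independent of $\bs$. Factoring it out of the sum reduces the first statement to proving
\[
\sum_{\bs\sim\ws}\K_{\ws,\bs}\,g_{\bs,\ws}(\wt u)=0
\]
for every white vertex $\ws$.

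Let $d=\deg(\ws)$ and label the train-tracks incident to $\ws$ cyclically by their angles $\gamma_1,\dotsc,\gamma_d\in A_0$, so that the two train-tracks crossing the edge $\ws\bs_j$ have angles $\gamma_j$ and $\gamma_{j+1}$ (indices mod $d$), with $\gamma_j$ on the side of the adjacent face $\fs_{j-1}$ and $\gamma_{j+1}$ on the side of $\fs_j$. Matching Figure~\ref{fig:around_rhombus} with $\fs=\fs_j$ and $\fs'=\fs_{j-1}$, one identifies $\alpha=\gamma_j$ and $\beta=\gamma_{j+1}$. The local rule for $\mapd$ then gives $\wt\mapd(\fs_j)=\wt\mapd(\ws)+\wt\gamma_{j+1}$, $\wt\mapd(\fs_{j-1})=\wt\mapd(\ws)+\wt\gamma_j$ and $\wt\mapd(\bs_j)=\wt\mapd(\ws)+\wt\gamma_j+\wt\gamma_{j+1}$. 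Setting $s=\wt t+\wt\mapd(\ws)$ and computing $g_{\bs_j,\ws}$ along the path $\bs_j,\fs_j,\ws$, together with the evenness of $\theta$, one obtains
\[
\K_{\ws,\bs_j}\,g_{\bs_j,\ws}(\wt u)=\theta(\wt u+s)\cdot\frac{E(\wt\gamma_j,\wt\gamma_{j+1})\,\theta(-s+\wt u-\wt\gamma_j-\wt\gamma_{j+1})}{\theta(-s-\wt\gamma_j)\,\theta(-s-\wt\gamma_{j+1})\,E(\wt\gamma_j,\wt u)\,E(\wt\gamma_{j+1},\wt u)}\,.
\]
The factor $\theta(\wt u+s)$ is independent of $j$, while the remaining quotient is exactly the left-hand side of Fay's identity~\eqref{eq:Fay_bis} with Fay's $s$ specialised to $-s$, $\alpha=\wt\gamma_j$, $\beta=\wt\gamma_{j+1}$ and $u=\wt u$.

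Applying~\eqref{eq:Fay_bis} with any auxiliary $\gamma\in\wt\Sigma$ rewrites the $j$-th summand as $\theta(\wt u+s)\bigl(F^{-s,\gamma}(\wt u;\wt\gamma_{j+1})-F^{-s,\gamma}(\wt u;\wt\gamma_j)\bigr)$; summing cyclically over $j=1,\dotsc,d$ then produces a telescoping sum which vanishes, proving the first bullet. The second bullet follows by the exact same computation carried out around a black vertex $\bs$, with $\wt\mapd(\ws)$ replaced by $\wt\mapd(\bs)$ and the roles of $t$ and $-t$ exchanged in the formula for $g_{\bs,\ws}$. The main difficulty is purely the bookkeeping of angles and of the discrete Abel map around $\ws$; once the identifications are in place, Fay's identity does all the work. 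Finally, consecutive train-tracks at $\ws$ intersect at $\ws$ and are therefore non-parallel, so by $\mapalpha\in X_\Gs$ their angles are distinct, ensuring that Fay's identity applies at each step.
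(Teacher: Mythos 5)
Your proof is correct and takes essentially the same route as the paper's, which simply cites Fay's trisecant identity in its telescopic form~\eqref{eq:Fay_bis} (referring to Fock and to the elliptic-case details in \cite[Proposition~16]{BCdTelliptic}); you have merely carried out the bookkeeping explicitly. The reduction to $\xs=\ws$ via multiplicativity of $g$, the identifications $\wt\mapd(\fs_j)=\wt\mapd(\ws)+\wt\gamma_{j+1}$, $\wt\mapd(\bs_j)=\wt\mapd(\ws)+\wt\gamma_j+\wt\gamma_{j+1}$, the matching of each summand with the left-hand side of~\eqref{eq:Fay_bis} (with Fay's $s$ replaced by $-s$), and the telescoping cancellation all check out. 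One small remark: the closing sentence about non-parallel train-tracks having distinct angles is not needed --- Fay's identity~\eqref{eq:Fay_bis} is an identity of holomorphic functions and holds for all values of $\alpha,\beta$ (both sides vanish when $\alpha=\beta$), so no genericity hypothesis is required at that step.
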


\begin{proof}
  This is a consequence of Fay's trisecant identity in its telescopic
  form~\eqref{eq:Fay_bis}, as noted in the periodic context by~Fock~\cite{Fock},
see also~\cite[Proposition~16]{BCdTelliptic} for details in the elliptic case.
\end{proof}

We now come to the study of poles and zeros of~$g_{\xs,\ys}$.
Recall that for any vertices~$\xs,\ys$ of~$\GR$, these poles and zeros give a well-defined divisor of~$g_{\xs,\ys}$
on~$\Sigma$. In~\cite[Section~3.4]{BCdTelliptic}, this divisor was studied in the elliptic case.
We now adapt and generalize this discussion to the present case.

\begin{lem}
\label{lem:sector}
Suppose that the graph~$\Gs$ is minimal and that the angle map~$\mapalpha$ belongs to~$X_\Gs$. Then,
  for any vertices~$\xs,\ys$ of $\GR$, there exists a partition of~$A_0$ into two intervals, such that one contains
  no poles of~$g_{\xs,\ys}$ and the other no zeros.
\end{lem}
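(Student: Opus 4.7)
The plan is to identify the poles and zeros of $g_{\xs,\ys}$ on $A_0$ as images under $\mapalpha$ of certain oriented train-tracks, and then to use the monotonicity of $\mapalpha \in X_\Gs$ to show that these two sets of angles occupy disjoint arcs.

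First, I would observe that by Lemma~\ref{lem:Delta} and the product representation~\eqref{eq:g} (extended in the obvious way when one or both endpoints of the path are faces), the divisor of $g_{\xs,\ys}$ on $A_0$ comes only from the prime-form factors, since the zeros of the theta-function factors lie on the components $A_1,\dotsc,A_g$. Consequently, each pole (respectively zero) of $g_{\xs,\ys}$ on $A_0$ is of the form $\mapalpha(T)$ for some train-track $T$ of $\Gs$.

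Next, I would choose a \emph{taut} path in $\GR$ from $\xs$ to $\ys$, that is, one that is not crossed twice by any train-track. The existence of such a path in a minimal graph follows from the combinatorial theory developed in~\cite{BCdTimmersions}; the analogous argument appears for the elliptic case in~\cite[Section~3.4]{BCdTelliptic}. Along such a path, orient each crossed train-track with black vertices on its right, as in Section~\ref{sub:dimer}. A direct inspection of the definitions in Section~\ref{sub:kernel} shows that each edge-crossing along the path contributes either a pole at $\mapalpha(T)$ (if the edge is traversed in the $\GR$-orientation black $\to$ face $\to$ white) or a zero (otherwise). This partitions the train-tracks crossed along the path into two disjoint sets $\T_+$ (pole-contributing) and $\T_-$ (zero-contributing), whose images under $\mapalpha$ are precisely the poles and zeros of $g_{\xs,\ys}$ on $A_0$.

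It then remains to show that $\mapalpha(\T_+)$ and $\mapalpha(\T_-)$ lie in two complementary arcs of $A_0$. I would enclose a neighborhood of the path in a large topological disc $B$ such that each train-track in $\T_+ \cup \T_-$ meets $\partial B$ in two points, one on each side of the path. Since tracks in $\T_+$ and $\T_-$ cross the taut path in opposite transverse directions, the outgoing points on $\partial B$ of tracks in $\T_+$ lie on one arc of $\partial B$ while those of tracks in $\T_-$ lie on the complementary arc. The monotonicity built into $\mapalpha \in X_\Gs$---the compatibility between the partial cyclic order on $\T$ defined by outgoing directions at infinity and the cyclic order on $A_0$---then translates this topological separation into the claimed partition of $A_0$. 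The main obstacle will be to confirm that the outgoing points on $\partial B$ really do separate without interleaving: a priori, train-tracks in $\T_+$ and $\T_-$ might alternate along $\partial B$. Here the minimality of $\Gs$ is essential, as it rules out two oriented train-tracks crossing twice in the same direction, thereby preventing the kind of winding that could obstruct the separation.
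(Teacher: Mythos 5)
Your proposal follows the same two-step strategy as the paper: first rule out any contribution of the theta-function factors to the divisor of $g_{\xs,\ys}$ on $A_0$, and then use the partial cyclic order on train-tracks together with the monotonicity of $\mapalpha\in X_\Gs$ to separate pole-producing and zero-producing tracks into complementary arcs. For the first step you invoke Lemma~\ref{lem:Delta} (which locates the theta divisor on $A_1,\dotsc,A_g$), while the paper argues more directly that the theta arguments lie in $\RR^g\oplus\Omega\ZZ^g$ and hence the theta values are nonzero by Lemma~\ref{lem:ident_1}, Point~2, and Lemma~\ref{lem:ident_2}, Point~2; both routes are valid and lead to the same reduction to the prime-form factors. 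For the second step the paper simply observes that the proof of Lemma~19 of~\cite{BCdTelliptic} applies verbatim, and your taut-path reconstruction---pole and zero crossings traverse the corresponding train-tracks in opposite transverse directions, so their outgoing points on $\partial B$ lie on complementary arcs---is exactly the mechanism behind that cited lemma. You are right to flag the non-interleaving of outgoing points as the genuine content: this is where minimality (no self-intersections, no two oriented tracks crossing twice in the same direction) does the work, and that step is precisely what the combinatorics of \cite{BCdTimmersions} supplies.
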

\begin{proof}
  Recall that~$\wt{t}$ belongs to~$\mathbb{R}^g$. Moreover, for~$u\in A_0$, the divisors~$\wt{u}-\wt{\mapd}(\bs)$ and~$\wt{u}+\wt{\mapd}(\ws)$ are mapped by the Abel-Jacobi to~$\mathbb{R}^g \new{\oplus} \Omega\mathbb{Z}^g$ (recall Lemma~\ref{lemma:real}).
  As a consequence, the arguments of the~$\theta$ functions appearing in the factors of~$g_{\xs,\ys}$ belong \new{to}~$\mathbb{R}^g\new{\oplus}\Omega\mathbb{Z}^g$.
  By Point~2 of Lemma~\ref{lem:ident_1} and of Lemma~\ref{lem:ident_2},
  this function does not vanish on~$\mathbb{R}^g\new{\oplus}\Omega\mathbb{Z}^g$.
  Therefore, the two ``theta'' factors appearing in the expression of~$g_{\bs,\ws}$ do not contribute to the zeros or poles of that 1-form on~$A_0$: those come from the remaining factors
  expressed with the prime form in the numerator and denominator, respectively.
The statement now follows from Section~3.4 of~\cite{BCdTelliptic}:
indeed, the proof of Lemma~19 of~\cite{BCdTelliptic} can now be applied verbatim,
as it only relies on the partial cyclic order of train-tracks of the minimal graph~$\Gs$ and the properties of
the angle map~$\mapalpha\in X_\Gs$.
\end{proof}

As a consequence, we can extend the terminology of~\cite[Definition~10]{BCdTelliptic}:
we define the \emph{sector  associated to~$g_{\bs,\ws}$}, denoted by~$s_{\bs,\ws}$,  to be the part of the
partition of~$A_0$ containing the poles of~$g_{\bs,\ws}$. If it has no zeros on~$A_0$ (which happens when~$\bs$
and~$\ws$ are neighbors), then~$s_{\bs,\ws}$ is defined to be the arc from~$\alpha$ to~$\beta$ in the
positive direction of~$A_0$, with the convention of Figure~\ref{fig:around_rhombus}.

\medskip

The~$1$-forms~$g_{\bs,\ws}$ also allow us to define the \emph{divisor of a vertex},
thus extending this notion due to Kenyon-Okounkov~\cite{KO:Harnack} from the periodic
to the general case, see Section~\ref{sub:curve}.

To do so, we fix a white vertex~$\ws$ and assume that the parameter~$t\in(\RR/\ZZ)^g$ is \emph{generic},
in the sense that the function~$\wt{u}\mapsto\theta(\wt{t}+(\wt u+\wt\mapd(\ws)))$ is not identically zero on~$\wt{\Sigma}$.

\begin{defi}
\label{def:divisor}
Let~$\ws$ be a white vertex of~$\Gs$, and let~$t$ be generic. Then, the \emph{divisor of~$\ws$},
denoted by~$\mathrm{div}(\ws)$, is the element of~$\Div(\Sigma)$ given by the divisor of the
function~$\wt{u}\mapsto\theta(\wt{t}+(\wt u+\wt\mapd(\ws)))$ on~$\wt{\Sigma}$.
\end{defi}

This divisor can be given a more concrete description, as follows.
Let us fix a point~$x_0\in A_0\subset\Sigma$, and denote by~$\Delta\in\CC^g$
the associated constant given by Riemann's theorem~\eqref{eq:RiemannDelta}.

\begin{prop}
\label{prop:div(w)}
For any white vertex~$\ws$ of~$\Gs$,~$\mathrm{div}(\ws)$ is a divisor of degree~$g$
whose class in~$\Pic(\Sigma)$ is determined by the following equality in~$\Pic^0(\Sigma)=\Jac(\Sigma)$:
\begin{equation}
\label{eq:divisor}
(\mathrm{div}(\ws)-g x_0) + (\mapd(\ws)+x_0) =  \Delta - t\,.
\end{equation}
Moreover,~$\mathrm{div}(\ws)$ is given by the common zeros of the~$1$-forms~$\{g_{\bs,\ws}\}_{\bs\in\Bs}$,
which consist of one point on each of the real components~$A_1,\dotsc,A_g$ of~$\Sigma$.
Finally, the assignment~$t\mapsto\mathrm{div}(\ws)$ defines a bijection
from~$(\RR/\ZZ)^g\subset\Jac(\Sigma)$ to the product~$A_1\times\dotsb\times A_g$.
\end{prop}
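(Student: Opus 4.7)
The strategy is to apply Riemann's theorem~\eqref{eq:RiemannDelta} to the function $f(\wt u) = \theta(\wt t + (\wt u + \wt\mapd(\ws)))$ on the universal cover, and then exploit the real structure provided by Lemmas~\ref{lem:Delta} and~\ref{lemma:real}. To put $f$ in the form $f_e$ of Section~\ref{sub:theta}, write $\wt u + \wt\mapd(\ws) = (\wt u - x_0) + (\wt\mapd(\ws) + x_0)$, so that $f = f_e$ with $e = \wt t + \Phi(\wt\mapd(\ws) + x_0) \in \CC^g$. Genericity of $t$ ensures $f \not\equiv 0$, hence Riemann's theorem gives exactly $g$ zeros on $\Sigma$ and
\[
\Phi(\mathrm{div}(\ws) - g x_0) = \Delta - e = \Delta - t - \Phi(\mapd(\ws) + x_0)\,,
\]
which is precisely the announced identity~\eqref{eq:divisor} in $\Pic^0(\Sigma) \simeq \Jac(\Sigma)$.

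Next, I would verify that $e$ is real: $\wt t \in \RR^g$ by construction, and $\wt\mapd(\ws) + x_0$ is a $\ZZ$-combination of points of $A_0$ of total degree zero, whose Abel-Jacobi image lies in $\RR^g$ by point~3 of Lemma~\ref{lemma:real}. Lemma~\ref{lem:Delta} then applies and produces exactly one zero of $f$ on each real component $A_1, \ldots, A_g$. For the identification of $\mathrm{div}(\ws)$ with the common zeros of $\{g_{\bs,\ws}\}_{\bs \in \Bs}$, I would invoke the explicit product formula~\eqref{eq:g}, choosing a path in $\GR$ that ends with the edge $\fs' \to \ws$, where $\fs'$ is the face adjacent to $\ws$ across the train-track of angle $\beta$. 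The local rule $\wt\mapd(\fs') = \wt\mapd(\ws) + \wt\beta$ makes the first theta factor of~\eqref{eq:g} collapse to $f(\wt u)$, so the $g$ points of $\mathrm{div}(\ws)$ are zeros of every $g_{\bs,\ws}$; conversely, all remaining factors in~\eqref{eq:g} depend on $\bs$ and on the auxiliary angles along the path, so for generic $t$ their zeros and poles move with $\bs$, and only the $g$ zeros arising from $f$ survive the intersection over $\bs \in \Bs$.

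For the bijection, I would combine Equation~\eqref{eq:divisor} with point~4 of Lemma~\ref{lemma:real}, which asserts that the Abel-Jacobi map restricts to a homeomorphism from $A_1 \times \dotsb \times A_g$ onto the real torus $(\RR/\ZZ)^g + \Omega \tfrac{1}{2}\II$. Since $\Delta \in \RR^g + \Omega \tfrac{1}{2}\II$ by Lemma~\ref{lem:Delta} and $\Phi(\mapd(\ws) + x_0) \in (\RR/\ZZ)^g$ by the reality observation above, as $t$ ranges over $(\RR/\ZZ)^g$ the right-hand side of~\eqref{eq:divisor} sweeps the whole real torus $(\RR/\ZZ)^g + \Omega \tfrac{1}{2}\II$ bijectively; composing with the inverse of the restricted Abel-Jacobi map yields the desired bijection $t \mapsto \mathrm{div}(\ws)$. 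The main technical point is the reality of $e$, which triggers Lemma~\ref{lem:Delta} and underlies both the description of the zeros and the bijection; the remainder is essentially bookkeeping with the real structures on $\Jac(\Sigma)$ together with a clean use of the product formula~\eqref{eq:g}.
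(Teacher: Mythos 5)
Your proof correctly handles the first and third parts of the proposition: the derivation of Equation~\eqref{eq:divisor} from Riemann's theorem, the reality of $e$, the invocation of Lemma~\ref{lem:Delta} to locate one zero on each $A_1,\dotsc,A_g$, and the bijection via point~4 of Lemma~\ref{lemma:real}. These steps essentially mirror the paper's argument.

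The gap is in the second part: you assert that the $g$ zeros coming from the common factor~$\theta(\wt t + (\wt u+\wt\mapd(\ws)))$ are the \emph{only} common zeros of $\{g_{\bs,\ws}\}_{\bs\in\Bs}$ because ``all remaining factors depend on $\bs$, \ldots so for generic $t$ their zeros and poles move with $\bs$''. This is the crux of the claim and cannot be waved through. Focusing on $\bs\sim\ws$, the formula
\begin{equation*}
g_{\bs,\ws}(\wt u)=\frac{\theta(\wt t+(\wt u+\wt\mapd(\ws)))\,\theta(-\wt t+(\wt u-\wt\mapd(\ws)-\wt\alpha-\wt\beta))}{E(\wt\alpha,\wt u)\,E(\wt\beta,\wt u)}
\end{equation*}
has a pole at $\wt\beta$ shared by two consecutive $\bs$'s (the prime-form factors do not ``move'' freely with $\bs$: consecutive edges around $\ws$ share a train-track angle), and the second theta factor also vanishes at a $g$-point divisor on $A_1\cup\dots\cup A_g$ by Lemma~\ref{lem:Delta}, which a priori could coincide across consecutive $\bs$'s. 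The paper rules out common poles using the monotone cyclic ordering of train-track angles around $\ws$ from~\cite[Lemma~8]{BCdTimmersions} (which requires minimality of $\Gs$ and $\mapalpha\in X_\Gs$, facts you never invoke), and then proves that the second theta factors of two consecutive $\bs,\bs'$ cannot share a zero: assuming they do leads, via the uniqueness in Riemann's theorem applied to a carefully chosen shifted $e_1$, to two distinct $g$-tuples describing the same theta divisor, a contradiction because $\alpha\in A_0$ cannot lie on any $A_j$, $j\ge 1$. This is a substantial argument, not bookkeeping; without it you have only proved the inclusion $\mathrm{div}(\ws)\subset\bigcap_\bs \{g_{\bs,\ws}=0\}$, not the equality. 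Note also that your genericity hedge (``for generic $t$'') suggests you would need to \emph{enlarge} the set of excluded $t$'s to make the claim go through, which would not immediately yield the bijection over all of $(\RR/\ZZ)^g$ as stated.
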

\begin{proof}
Let us fix a white vertex~$\ws$ of~$\Gs$.
The zeros of~$\theta(\wt{t}+(\wt{u}+\wt\mapd(\ws)))$ can be computed using Riemann's theorem~\eqref{eq:RiemannDelta}  as in the proof of Lemma~\ref{lem:gmeromfuncform}, easily leading to Equation~\eqref{eq:divisor}.
Now, recall that~$t$ belongs to~$(\RR/\ZZ)^g$ by definition, and so does~$\mapd(\ws)+x_0$
by the third point of Lemma~\ref{lemma:real}.
On the other hand, we know by Lemma~\ref{lem:Delta} that the constant~$\Delta$, which only depends on~$x_0$,
belongs to~$(\RR/\ZZ)^g+\Omega\frac{1}{2}\mathbf{1}$.
Finally, the fourth point of Lemma~\ref{lemma:real} ensures that the Abel-Jacobi map defines a homeomorphism from
the product~$A_1\times\dotsb\times A_g$ onto that particular real torus,
thus showing that~$\mathrm{div}(\ws)$ consists \new{of} one simple zero on each of the real
components~$A_1,\dotsc,A_g$ of~$\Sigma$.
Moreover, Equation~\eqref{eq:divisor} defines a bijection between~$A_1\times\dotsb\times A_g$ and the elements~$t$ of~$(\RR/\ZZ)^g\subset\Jac(\Sigma)$.

By definition, the form~$g_{\bs,\ws}$ contains the
factor~$\theta(\wt{t}+(\wt u+\wt\mapd(\ws)))$ for any~$\bs\in\Bs$.
Hence, we are left with the proof that the family~$\{g_{\bs,\ws}\}_{\bs\in\Bs}$ does not contain any additional
common zero or pole.
Focusing on the black vertices adjacent to~$\ws$, and following the notation of Figure~\ref{fig:around_rhombus}, we have
\[
g_{\bs,\ws}(\wt{u})=\frac{\theta(\wt{t}+(\wt{u}+\wt\mapd(\ws)))\theta(-\wt{t}+(\wt{u}-\wt\mapd(\ws)-\wt\alpha-\wt\beta))}{E(\wt\alpha,\wt{u})E(\wt{\beta},\wt{u})}\,.
\]
Since~$\Gs$ is minimal and~$\mapalpha$ belongs to~\new{$X_\Gs$}, the train-track angles are cyclically ordered around~$\ws$, \new{see~\cite[Lemma~8]{BCdTimmersions}}.
In particular, the functions~$\{g_{\bs,\ws}\}_{\bs\sim\ws}$ do not have common poles unless~$\ws$ is of degree~$2$.
In this latter case, a similar
argument shows that~$\{g_{\bs,\ws}\}_{\bs\in\Bs_1}$ do not have common poles.
Furthermore, a common zero of the second ``theta factor'' above would contradict Riemann's theorem.
Indeed, imagine that $u$ is a common zero to the second theta factor in
$g_{\bs,\ws}$ and $g_{\bs',\ws}$ for two consecutive black vertices $\bs$ and
$\bs'$ around $\ws$, which we first assume to have degree at least 3.
In other words, $u$ is a common zero of $f_e$ and $f_{e'}$,
with~$e,e'\in\RR^g$ given by
\begin{equation*}
  e=e_0
  -\int_{\wt{x}_0}^{\wt{\alpha}}\vec{\omega}
  -\int_{\wt{x}_0}^{\wt{\beta}}\vec{\omega},
  \quad
  e'=e_0
  -\int_{\wt{x}_0}^{\wt{\beta}}\vec{\omega}
  -\int_{\wt{x}_0}^{\wt{\gamma}}\vec{\omega},
  \quad
  \text{where }
  e_0=-\wt{t}-\wt{\mapd}(\ws)-\wt{x}_0\,.
\end{equation*}
If~$e$ and~$e'$ are not degenerate, then~$f_e$ and~$f_{e'}$ are not identically
zero and, by Lemma~\ref{lem:Delta}, they both have one zero on~$A_j$ for each~$1\leq
j\leq g$. If we denote these zeros by~$x_j$ and~$x'_j$, respectively, then they
satisfy the relation~\eqref{eq:RiemannDelta} in~$\Jac(\Sigma)$, which takes the form
\begin{equation*}
  \Delta
  =e   +\sum_{j=1}^g \int_{x_0}^{x_j}\vec{\omega}
  =e'  +\sum_{j=1}^g\int_{x_0}^{x'_j}\vec{\omega}\,.
\end{equation*}
Let us assume without loss of generality that the common zero of~$f_e$ and~$f_{e'}$ is~$u=x_1=x'_1$.
Then, if we add to the previous equality the vector
\begin{equation*}
  -e_1:=-e_0+\int_{x_0}^\alpha\vec{\omega}+\int_{x_0}^\beta\vec{\omega}+\int_{x_0}^\gamma\vec{\omega}-\int_{x_0}^u\vec{\omega}\,,
\end{equation*}
we get
\begin{equation*}
  -e_1+\Delta=
  \int_{x_0}^\gamma\vec{\omega}+\sum_{j=2}^g\int_{x_0}^{x_j}\vec{\omega} =
  \int_{x_0}^\alpha\vec{\omega}+\sum_{j=2}^g\int_{x_0}^{x'_j}\vec{\omega}\,.
\end{equation*}
Since Equation~\eqref{eq:RiemannDelta} uniquely determines the theta divisor (if~$e_1$ is not degenerate),
this means that the divisor of~$f_{e_1}$ is described by the two~$g$-tuples of
points:~$\gamma,x_2,\dotsc,x_g$
on one hand, and~$\alpha,x'_2,\dotsc,x'_g$ on the other hand.
But this is impossible: indeed,~$\alpha$ is not equal to~$\gamma$ since~$\ws$ has degree at least~$3$,
and~$\alpha\in A_0$ cannot be equal to~$x_j\in A_j$ since they belong to different real components of~$\Sigma$.
If~$\ws$ has degree~$2$, then a similar argument shows that the second ``theta'' factor\new{s} in~$\{g_{\bs,\ws}\}_{\bs\in\Bs_1}$
do not have a common zero.
This concludes the proof.
\end{proof}

\subsection{Inverses of the Kasteleyn operators}
\label{sub:inverse}

Once again, we fix an~M-curve~$\Sigma$ with an element~$t\in (\RR/\ZZ)^g\subset\Jac(\Sigma)$,
a minimal graph~$\Gs$, and an angle map~$\mapalpha\colon\Tbip\to A_0$ in the parameter space~$X_\Gs$.
The aim of this section is to define a two-parameter family of inverses for the associated operator~$\Ks$.

To do so, we need some preliminary definitions.
Recall from Section~\ref{sub:period} that~$\Sigma$ cut along the cycles~$A_0,\dotsc,A_g$ consists
in two compact oriented surfaces with boundary; we denote by~$\Sigma^+$ the one whose oriented boundary
contains~$A_0$ endowed with the fixed orientation, see Figure~\ref{fig:surface}.
Define~$\D$ as the subset of~$\Sigma$ given by
\begin{equation*}
\D=\Sigma^+\setminus\mapalpha(\Tbip)\,.
\end{equation*}

 Given~$\bs,\ws$ and~$u_0$ in the interior of~$\Sigma^+$, we define a
 path~$\Cs_{\bs,\ws}^{u_0}$ in~$\Sigma$
 as an oriented simple path from~$\sigma(u_0)$ to~$u_0$, intersecting~$A_0$ once in
 the complement of the sector~$s_{\bs,\ws}$ (recall Section~\ref{sub:kernel}),
 disjoint from~$A_1\cup\dotsb\cup A_g$
 and such that~$\sigma(\Cs_{\bs,\ws}^{u_0})=-\Cs_{\bs,\ws}^{u_0}$.

When~$u_0$ lies on the boundary of~$\Sigma^+$
(\emph{i.e.}, when~$\sigma(u_0)=u_0$), we define~$\Cs_{\bs,\ws}^{u_0}$ as the natural limit of~$\Cs_{\bs,\ws}^u$ when~$u$
approaches~$u_0$ in~$\Sigma^+$, namely as a closed loop \new{based at~$u_0$} with the following properties:
\begin{itemize}
  \item if~$u_0$ belongs to~$A_0$, then~$\Cs_{\bs,\ws}^{u_0}$ is a homotopically trivial closed contour on~$\Sigma$,
  maybe enclosing some poles of $g_{\bs,\ws}$;
  \item if~$u_0$ belongs to~$A_j$ for some~$1\leq j\leq g$, then~$\Cs_{\bs,\ws}^{u_0}$ is
    homologous to~$B_j$.
\end{itemize}

  Note that these properties do not determine the path~$\Cs_{\bs,\ws}^{u_0}$ uniquely,
  even up to continuous deformation
  in~$\Sigma\setminus\mapalpha(\Tbip)$.
  However, the resulting operator turns out not to depend on this choice.
\medskip

We now define a family of operators~$(\A^{u_0})_{u_0\in\D}$, and prove in
Theorem~\ref{thm:Kinv} that they are indeed inverses of the Kasteleyn operator~$\Ks$.

\begin{defi}
\label{defi:inv}
For every~$u_0\in\D$, we define the linear operator~$\As^{u_0}$ mapping functions
on white vertices (with finite support for definiteness) to functions on black vertices by
its entries: for every pair~$(\bs,\ws)$ of black and white vertices of~$\Gs$, we set
  \begin{equation}
    \As_{\bs,\ws}^{u_0}  = \frac{1}{2i\pi}\int_{\Cs_{\bs,\ws}^{u_0}}g_{\bs,\ws}\,,
      \label{eq:Kinv}
  \end{equation}
\new{where} the path of integration~$\Cs_{\bs,\ws}^{u_0}$ \new{is} as given above,
and the meromorphic~$1$-form~$g_{\bs,\ws}$
as defined in Section~\ref{sub:kernel}.
\end{defi}

We recall from Lemma~\ref{lem:gmeromfuncform} that~$g_{\bs,\ws}$ is a meromorphic 1-form,
with poles in~$s_{\bs,\ws}$ by construction, so the integral is invariant if we deform the path~$\Cs_{\bs,\ws}^{u_0}$
in~$\Sigma\setminus s_{\bs,\ws}$.
\new{Note also that if~$u_0$ and~$u_1$ belong to the same connected component of~$A_0\setminus\mapalpha(\T)$, the operators~$\As^{u_0}$ and~$\As^{u_1}$ coincide.}

Using the same argument of contour deformation as
in~\cite[Lemma~24]{BCdTelliptic}, we obtain the following alternative expression
for the coefficents of~$\A^{u_0}$.

\begin{lem}
\label{lem:H}
 Let~$H^{u_0}$ be a meromorphic function on~$\Sigma\setminus\Cs_{\bs,\ws}^{u_0}$
with a discontinuity jump of~$+1$ when crossing~$\Cs_{\bs,\ws}^{u_0}$ from right to left,
and let~$\gamma_{\bs,\ws}^{u_0}$ be a collection of contours, homologically trivial in~$\Sigma$,
surrounding all the poles of~$g_{\bs,\ws}H^{u_0}$ exactly once counterclockwise.
  Then, we have the equality
  \begin{equation}
    \A^{u_0}_{\bs,\ws}=\frac{1}{2i\pi}\oint_{\gamma_{\bs,\ws}^{u_0}} H^{u_0}\,g_{\bs,\ws}\,.
      \label{eq:Kinv_H}
  \end{equation}
\end{lem}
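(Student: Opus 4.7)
The proof is a standard contour deformation argument, completely parallel to that of Lemma~24 of~\cite{BCdTelliptic}. The plan is to apply Stokes' theorem to the meromorphic $1$-form $H^{u_0}g_{\bs,\ws}$ on the domain $D$ obtained from $\Sigma$ by removing a thin tubular neighborhood $T$ of the cut $\Cs_{\bs,\ws}^{u_0}$ together with small open disks around each pole of $H^{u_0}g_{\bs,\ws}$. On $D$ the form is holomorphic, so its total boundary integral vanishes.

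The boundary of $D$ has two types of components, and computing each will produce the two sides of~\eqref{eq:Kinv_H}. The boundary of $T$, traversed counterclockwise as the boundary of $T$ and hence clockwise as the boundary of $D$, consists of two copies of $\Cs_{\bs,\ws}^{u_0}$ (one on each side of the cut) joined by small arcs around the endpoints $\sigma(u_0)$ and $u_0$. Using the jump relation $H^{u_0}_{\mathrm{left}} - H^{u_0}_{\mathrm{right}} = 1$, the contributions along the two sides of the cut collapse to $\pm\int_{\Cs_{\bs,\ws}^{u_0}}g_{\bs,\ws} = \pm 2i\pi\,\A^{u_0}_{\bs,\ws}$. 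The boundaries of the small disks around the poles, with the orientation inherited from $D$, contribute $\mp\oint_{\gamma^{u_0}_{\bs,\ws}}H^{u_0}g_{\bs,\ws}$. Setting the sum equal to zero and matching the signs will yield~\eqref{eq:Kinv_H}, provided the endpoint arc contributions vanish in the shrinking limit.

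The only non-routine point is precisely this vanishing of the endpoint arcs, which I expect to be the main obstacle. Here we rely crucially on the assumption $u_0\in\D=\Sigma^+\setminus\mapalpha(\T)$: since the poles of $g_{\bs,\ws}$ sit in $s_{\bs,\ws}\subset A_0\subset\mapalpha(\T)$, neither $u_0$ nor $\sigma(u_0)$ is a pole of $g_{\bs,\ws}$, so the form is holomorphic at the two endpoints of the cut. A function with a unit jump across a cut necessarily has at worst logarithmic singularities at its endpoints; a concrete model is obtained by integrating a meromorphic differential of the third kind with simple poles of residues $\pm 1$ at $u_0$ and $\sigma(u_0)$, in the spirit of Section~\ref{subsub:Fay}. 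It follows that $H^{u_0}g_{\bs,\ws}$ is at most of order $O(\log\epsilon)$ on small arcs of radius $\epsilon$ around each endpoint, so these arc integrals are $O(\epsilon\log\epsilon)$ and vanish as the tubular neighborhood shrinks, completing the argument.
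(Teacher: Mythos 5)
Your proposal is correct and is the same contour–deformation (Stokes) argument that the paper itself defers to, namely the one from Lemma~24 of~\cite{BCdTelliptic}: remove a tubular neighborhood of the cut and small disks around the poles of~$H^{u_0}g_{\bs,\ws}$, use the unit jump of~$H^{u_0}$ to turn the two sides of the cut into~$\int_{\Cs_{\bs,\ws}^{u_0}}g_{\bs,\ws}$, and let the endpoint arcs shrink away thanks to the merely logarithmic singularity of~$H^{u_0}$ at~$u_0$ and~$\sigma(u_0)$. One small slip: you wrote~$s_{\bs,\ws}\subset A_0\subset\mapalpha(\T)$, but the last inclusion points the wrong way, as~$\mapalpha(\T)\subset A_0$; this has no bearing on the argument since what is actually used is that the poles of~$g_{\bs,\ws}$ lie in~$s_{\bs,\ws}\subset A_0$ while~$u_0$ and~$\sigma(u_0)$ lie outside it.
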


\begin{rem}\leavevmode
\label{rem:H}
\begin{enumerate}
\item The function~$H^{u_0}$ is well defined up to addition of a meromorphic function on~$\Sigma$.
By a careful choice of that meromorphic function, it might be assumed that~$H^{u_0}$ has no pole on~$A_0$.
It is also possible if needed to ensure that all poles of~$H^{u_0}$ are simple.
\item If~$u_0$ belongs to~$A_0$, then~$\Cs_{\bs,\ws}^{u_0}$ bounds a disk, and~$H^{u_0}$ can simply be chosen to be the indicator
function of this disk.
If~$u_0$ belongs to~$A_j$ for $1\leq j\leq g$, then~$H^{u_0}$ can be understood as a determination, which depends on~$\bs$ and~$\ws$,
of the multivalued function on~$\Sigma$ given by the projection of a meromorphic function on the infinite cyclic cover of~$\Sigma$
determined by the loop~$A_j$.
Finally, if~$u_0\neq\sigma(u_0)$, then~$H^{u_0}$ can be understood as a determination of the multivalued function
on~$\Sigma\setminus{\{u_0,\sigma(u_0)\}}$ given by the projection of a meromorphic function on the infinite cyclic cover
determined by the loops around~$u_0$ and~$\sigma(u_0)$.
In any case, even though the function~$H^{u_0}$ depends on~$\bs$ and~$\ws$, it can be chosen so that its poles
(and residues against a 1-form) do not depend on these vertices, hence their absence in the notation.
\item An explicit form for~$H^{u_0}$ is given in Remark~\ref{rem:Hexplicit} below.
\end{enumerate}
\end{rem}

We are finally ready to state and prove the main result of this section.

\begin{thm}\label{thm:Kinv}
  For any~$u_0\in\D$, the operator~$\A^{u_0}$ is an inverse of the operator~$\Ks$.
\end{thm}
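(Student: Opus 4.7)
My plan is to verify both $\Ks \A^{u_0} = \mathrm{Id}$ and $\A^{u_0} \Ks = \mathrm{Id}$ by contour manipulation and residue calculus, generalizing the elliptic argument of~\cite[Proposition~16]{BCdTelliptic}. I shall focus on
\[
(\Ks \A^{u_0})_{\ws, \ws'} = \sum_{\bs \sim \ws} \Ks_{\ws, \bs}\, \A^{u_0}_{\bs, \ws'} \stackrel{?}{=} \delta_{\ws, \ws'}\,;
\]
the reverse identity $(\A^{u_0}\Ks)_{\bs,\bs'}=\delta_{\bs,\bs'}$ will follow by the same argument using the left-kernel version of Lemma~\ref{lem:gkernel}.

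The key input is Lemma~\ref{lem:gkernel}: the meromorphic 1-form $f(u) := \sum_{\bs \sim \ws} \Ks_{\ws, \bs}\, g_{\bs, \ws'}(u)$ vanishes identically on $\Sigma$, hence integrates to $0$ along any common contour. The obstruction to applying this directly is that the paths $\Cs^{u_0}_{\bs, \ws'}$ depend on $\bs$ through the sector $s_{\bs, \ws'}$. I will introduce a reference path $\Cs^{u_0}$ from $\sigma(u_0)$ to $u_0$ satisfying $\sigma(\Cs^{u_0})=-\Cs^{u_0}$, and write
\[
(\Ks \A^{u_0})_{\ws,\ws'}
= \frac{1}{2i\pi}\int_{\Cs^{u_0}} f
+ \sum_{\bs \sim \ws}\Ks_{\ws,\bs}\;\frac{1}{2i\pi}\int_{\Cs^{u_0}_{\bs,\ws'} - \Cs^{u_0}} g_{\bs,\ws'}
= \sum_{\bs \sim \ws} \Ks_{\ws, \bs} \sum_p \mathrm{Res}_p(g_{\bs, \ws'})\,,
\]
where the loop $\Cs^{u_0}_{\bs,\ws'} - \Cs^{u_0}$ is $\sigma$-symmetric and, after a homotopy in $\Sigma\setminus\mapalpha(\T)$ permitted by Lemma~\ref{lem:gmeromfuncform}, bounds a chain enclosing precisely those poles of $g_{\bs,\ws'}$ in the arc of $A_0$ swept between the two $A_0$-crossings. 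By Lemma~\ref{lem:sector}, these poles lie on $A_0$ within $s_{\bs,\ws'}$.

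The crucial residue computation uses the prime form expansion $E(\wt{\gamma},\wt{u})\sim (z(u)-z(\gamma))/\sqrt{dz(\gamma)\,dz(u)}$ near $u=\gamma$: for $\bs\sim\ws$ and $\gamma\in\{\alpha,\beta\}$ the two train-track angles at the edge $\ws\bs$, the relations $\wt\mapd(\bs)=\wt\mapd(\fs)+\wt\alpha$ and $\wt\mapd(\ws)=\wt\mapd(\fs)-\wt\beta$ together with the evenness of $\theta$ identify the residue numerator with $\theta(\wt t+\wt\mapd(\fs))\theta(\wt t+\wt\mapd(\fs'))$, yielding
\[
\Ks_{\ws,\bs}\cdot\mathrm{Res}_\alpha g_{\bs,\ws}=+1\quad\text{and}\quad
\Ks_{\ws,\bs}\cdot\mathrm{Res}_\beta g_{\bs,\ws}=-1
\]
(signs coming from the skew-symmetry $E(\wt\beta,\wt\alpha)=-E(\wt\alpha,\wt\beta)$). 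For $\ws'\neq\ws$, I will use the fact that the graph is minimal and $\mapalpha\in X_\Gs$ (so strands at $\ws$ are cyclically ordered, cf.~\cite[Lemmas~8,9]{BCdTimmersions}) to exhibit an arc of $A_0$ disjoint from every $s_{\bs,\ws'}$ with $\bs\sim\ws$; choosing $\Cs^{u_0}$ to cross $A_0$ inside that arc makes all the enclosed pole sets empty, giving $0$. For $\ws'=\ws$, the sectors $\{s_{\bs,\ws}\}_{\bs\sim\ws}$ partition $A_0$ into the cyclically ordered arcs $[\gamma_i,\gamma_{i+1}]$ delimited by the angles of the train-tracks through $\ws$; the reference crossing point falls in exactly one such arc, and telescoping the residue contributions at the $\gamma_i$'s---using the fact that, by Lemma~\ref{lem:gkernel} applied residue-wise, the contributions of consecutive neighbors at a shared angle cancel---leaves a single uncanceled $+1$.

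The main obstacle is the careful bookkeeping of signs, orientations, and $\sigma$-symmetry across the three topological regimes for $u_0$: the interior of $\Sigma^+$ (open path, $\sigma$-symmetric closed difference loop), $u_0\in A_0$ (homotopically trivial closed loop), and $u_0\in A_j$ with $j\ge 1$ (closed loop in the homology class $B_j$). The last case is the most delicate because homotoping $\Cs^{u_0}_{\bs,\ws'}$ to $\Cs^{u_0}$ across a $B_j$-cycle a priori modifies $g_{\bs,\ws'}$ by a multiplicative factor of automorphy; showing independence of the result rests on Lemma~\ref{lem:gmeromfuncform} (these factors cancel between theta and prime-form contributions, so $g_{\bs,\ws'}$ descends to a genuine 1-form on $\Sigma$) and on the $\sigma$-equivariance of the paths, which forces the $B_j$-quasi-periodicity contributions to enter symmetrically and thus cancel.
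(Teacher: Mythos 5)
Your proposal follows essentially the same strategy as the paper's proof: split into the off-diagonal case $\ws\neq\ws'$ and the diagonal case $\ws=\ws'$, use Lemma~\ref{lem:gkernel} (Fay's identity in telescopic form) to annihilate the common-contour contribution, and reduce the diagonal case to a residue computation that picks up a net $+1$ because, by minimality and $\mapalpha\in X_\Gs$, the train-track angles wind exactly once around $A_0$. Your reference-contour/difference-loop decomposition is an equivalent reformulation of the paper's argument via the auxiliary function $H^{u_0}$ of Lemma~\ref{lem:H}: that function is precisely the bookkeeping device that records which poles of $g_{\bs,\ws}$ lie between the reference crossing and the actual crossing, and the ``poles of $H^{u_0}$'' contribution in the paper is the analogue of your $\int_{\Cs^{u_0}}f$ term, both killed by Lemma~\ref{lem:gkernel}. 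Two small corrections. First, your residue signs are reversed: Fay's identity in the form~\eqref{eq:Fay_ter} gives $\K_{\ws,\bs}\,g_{\bs,\ws}=\omega_{\beta-\alpha}+(\text{holomorphic})$, hence $\K_{\ws,\bs}\cdot\mathrm{Res}_\alpha\,g_{\bs,\ws}=-1$ and $\K_{\ws,\bs}\cdot\mathrm{Res}_\beta\,g_{\bs,\ws}=+1$, not $+1$ and $-1$. Second, your concern about factors of automorphy when $u_0\in A_j$ is moot: once Lemma~\ref{lem:gmeromfuncform} tells you $g_{\bs,\ws'}$ descends to a genuine meromorphic $1$-form on $\Sigma$, the integral over a null-homologous difference cycle is unambiguously a sum of residues, and no $B_j$-quasi-periodicity correction appears at all, so there is nothing for the $\sigma$-symmetry to cancel.
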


\begin{proof}
 The proof follows the same lines as the one of~\cite[Theorem~26]{BCdTelliptic}, which in turn is inspired from~\cite{Kenyon:crit}.
We need to check that we have~$(\K\A^{u_0})_{\ws,\ws'}=\delta_{\ws,\ws'}$ for every pair of white
vertices~$\ws,\ws'$ and~$(\A^{u_0}\K)_{\bs,\bs'}=\delta_{\bs,\bs'}$ for any pair of black vertices~$\bs,\bs'$.
We write the proof of the first equality in detail; the second can be checked in a similar way.

Let us first assume that~$\ws$ and~$\ws'$ are distinct, and use Definition~\ref{defi:inv}
together with Lemma~\ref{lem:sector}.
\new{By~\cite[Lemma~23]{BCdTelliptic},} the intersection of the complements in~$A_0$ of the sectors~$\{s_{\bs,\ws'}\}_{\bs\sim\ws}$
is non-empty. 
Therefore, the paths~$\{\Cs_{\bs,\ws'}^{u_0}\}_{\bs\sim\ws}$ can be chosen to coincide with
a single path~$\Cs_{\ws,\ws'}^{u_0}$.
This leads to the equality
  \begin{equation*}
    (\Ks\As^{u_0})_{\ws,\ws'}=\sum_{\bs\sim\ws}
    \K_{\ws,\bs}\,\frac{1}{2i\pi}\int_{\Cs_{\ws,\ws'}^{u_0}}g_{\bs,\ws'}
=\frac{1}{2i\pi}\int_{\Cs_{\ws,\ws'}^{u_0}}\left(\sum_{\bs\sim\ws}\K_{\ws,\bs}\,g_{\bs,\ws'}\right)\,,
  \end{equation*}
  which vanishes by Lemma~\ref{lem:gkernel}.

  We now turn to the case where~$\ws$ and~$\ws'$ coincide and use Lemma~\ref{lem:H}, which yields
 \[
    (\Ks\As^{u_0})_{\ws,\ws}=\frac{1}{2i\pi}\sum_{\bs\sim\ws}\oint_{\gamma_{\bs,\ws}^{u_0}} H^{u_0}\,\K_{\ws,\bs}\,g_{\bs,\ws}\,.
 \]
Let us fix a black vertex~$\bs$ adjacent to~$\ws$, and denote by~$\alpha$ and~$\beta$ the angles of the incident
  train-tracks and by~$\fs,\fs'$ the adjacent faces, as in Figure~\ref{fig:around_rhombus}.
  We compute explicitly the corresponding integral
by  residues.
 The residues contributing to
  the integral come in two classes: on the one hand, those coming from poles of~$H^{u_0}$,
  and on the other hand, those from singularities
  of~$\K_{\ws,\bs}g_{\bs,\ws}$.

Recall from Remark~\ref{rem:H} that, even though~$H^{u_0}$ depends on~$\bs,\ws$, its poles do not.
  The residue computation for the contribution of the poles of~$H^{u_0}$
  \begin{equation*}
    \sum_{v\ \text{pole of}\ H^{u_0}} \res_v\left(H^{u_0}\,\Ks_{\ws,\bs}g_{\bs,\ws}\right)
  \end{equation*}
can be carried to the universal cover~$\wt\Sigma$. In particular, if all
the poles of~$H^{u_0}$ are simple, it will result in a linear combination of
evaluations of~$g_{\bs,\ws}$ at lifts~$\wt{v}$ of the poles of~$H^{u_0}$.
When summing over all black vertices adjacent to the white vertex~$\ws$, this
linear combination of functions~$g$ vanishes as~$\bs\mapsto
g_{\bs,\ws}(\wt{v})$ is in the kernel of~$\Ks$ by~Lemma~\ref{lem:gkernel}.

  Let us now turn to the remaining residues at poles of
  $\K_{\ws,\bs}g_{\bs,\ws}$.
From Fay's identity in the form of Equation~\eqref{eq:Fay_ter}, we see that
the meromorphic 1-form $\Ks_{\ws,\bs}g_{\bs,\ws}$ has the following
decomposition
  \begin{equation*}
    \K_{\ws,\bs}\,g_{\bs,\ws} = \omega_{\beta-\alpha}+\sum_{j=1}^g
    c^{\,j}_{\ws,\bs} \omega_j,
  \end{equation*}
  with
  \begin{equation*}
  c^{\,j}_{\ws,\bs}=\frac{\partial\log\theta}{\partial
  z_j}(\wt{t}+\wt\mapd(\fs))-
  \frac{\partial\log\theta}{\partial z_j}(\wt{t}+\wt\mapd(\fs'))\,,
  \end{equation*}
on which we read that it has two simple poles: at~$\beta$ with residue~$+1$, and
at~$\alpha$ with residue~$-1$.
  Therefore,
  since we assumed that~$H^{u_0}$ has no pole on~$A_0$,
  we have:
  \begin{equation*}
    \res_{u=\alpha} H^{u_0}\omega_{\beta-\alpha} = -H^{u_0}(\alpha),\quad
    \res_{u=\beta} H^{u_0}\omega_{\beta-\alpha} = H^{u_0}(\beta).
  \end{equation*}
Since the angle map~$\mapalpha$ belongs to the space~$X_\Gs$ and~$\Gs$ is minimal, Lemma~8 of~\cite{BCdTimmersions}
states that its restriction is monotone and injective on the (cyclically ordered) set of oriented train-track strands adjacent to the vertex~$\ws$.
In other words, the angles $\alpha,\beta,\dotsc$ of the train-track strands~$\ws$ wind once around~$A_0$.
By construction, the corresponding increments
$H^{u_0}(\beta)-H^{u_0}(\alpha),\dotsc$ sum to $+1$, yielding
\begin{equation*}
  (\K \A^{u_0})_{\ws,\ws}=\sum_{\bs\sim\ws}\K_{\ws,\bs}\A^{u_0}_{\bs,\ws}=1\,.
\end{equation*}
This concludes the proof.
\end{proof}

  From the mere existence of at least an inverse of~$\Ks$, we get the following simple but useful lemma.
  \begin{lem}
    \label{lem:maxpp}
    Let~$f$ a function on black vertices of~$\Gs$ with finite support, such that~$\Ks f$ is
    identically zero on white vertices. Then~$f$ is identically zero.
  \end{lem}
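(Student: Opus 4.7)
The plan is to use directly the existence of the inverse operator $\A^{u_0}$ constructed in Theorem~\ref{thm:Kinv}. That theorem proves in particular the identity $\A^{u_0}\Ks=I$ on black vertices, understood as the equality
\[
\sum_{\ws}\A^{u_0}_{\bs,\ws}\Ks_{\ws,\bs'}=\delta_{\bs,\bs'}
\]
for every pair of black vertices $\bs,\bs'$. Given this, I would simply compose both sides of the relation $\Ks f\equiv 0$ with $\A^{u_0}$ to recover $f$ from $\Ks f$, and conclude that $f$ vanishes identically.

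More concretely, let $B_0$ denote the (finite) support of $f$, and let $W_0$ denote the set of white vertices adjacent to at least one element of $B_0$, which is finite since $\Gs$ is locally finite. For any black vertex $\bs$, I would write
\[
f(\bs)=\sum_{\bs'\in B_0}\delta_{\bs,\bs'}f(\bs')=\sum_{\bs'\in B_0}\Bigl(\sum_{\ws}\A^{u_0}_{\bs,\ws}\Ks_{\ws,\bs'}\Bigr)f(\bs').
\]
For each fixed $\bs'\in B_0$, the inner sum over $\ws$ is supported on the finitely many white neighbours of $\bs'$, so the double sum is finite and can be reordered as
\[
f(\bs)=\sum_{\ws\in W_0}\A^{u_0}_{\bs,\ws}\sum_{\bs'\in B_0}\Ks_{\ws,\bs'}f(\bs')
=\sum_{\ws\in W_0}\A^{u_0}_{\bs,\ws}(\Ks f)(\ws)=0,
\]
where the last equality uses the hypothesis $\Ks f\equiv 0$.

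There is essentially no obstacle here beyond checking that this Fubini-type manipulation is legitimate, and this is taken care of by the finite-support assumption on $f$: the second identity $\A^{u_0}\Ks=I$ proved in Theorem~\ref{thm:Kinv} only needs to be applied to finitely many pairs $(\bs,\bs')$ for the argument to go through, so no convergence issue arises despite the graph $\Gs$ being infinite.
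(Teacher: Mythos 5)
Your proof is correct and takes essentially the same route as the paper: apply the inverse operator $\A^{u_0}$ from Theorem~\ref{thm:Kinv} and use associativity $(\A^{u_0}\Ks)f = \A^{u_0}(\Ks f)$, which is justified because the finite support of $f$ and local finiteness of $\Gs$ make all the sums finite. You have simply spelled out the Fubini step in more detail than the paper, which states the same argument in one line.
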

  \begin{proof}
    Let~$\As^{u_0}$ be one of the inverses of~$\Ks$ given above. Since all the
     occurring sums have a finite number of nonzero terms, we can
     write~$f=\left(\A^{u_0} \K\right)f = \A^{u_0}\left(\K f\right) =\A^{u_0} 0  = 0$.
  \end{proof}

\begin{rem}
\label{rem:Hexplicit}
  A first step towards the construction of~$H^{u_0}$ is to define a function
  with a jump along a path joining~$u_0$ to~$\sigma(u_0)$. Instead of working
  directly on the surface~$\Sigma$, we pass to its universal cover~$\wt\Sigma$.
  Let~$\wt{u}_0\in\wt\Sigma$ be an arbitrary lift of~$u_0\in\Sigma$, and
  let~$\sigma(\wt{u}_0)\in\wt\Sigma$ denote a lift of~$\sigma(u_0)\in\Sigma$
  such that~$u_0$ and~$\sigma(\wt{u}_0)$ belong to one fundamental domain.

  The expected discontinuity can be obtained by taking the logarithm of an
  expression having a zero at~$\wt{u}_0$ and a pole at~$\sigma(\wt{u}_0)$.
Hence, a natural first candidate is given by
  \[
    H_{\text{pre}}^{u_0}(x)=\frac{1}{2i\pi}\log\frac{E(\wt{u}_0,x)}{E(\sigma(\wt{u}_0),x)}
  \]
  for every~$x\in\wt\Sigma$ whose orbit under the action of the fundamental group does not meet
  a path~$\Cs$ connecting~$\sigma(\wt{u}_0)$ to~$\wt{u}_0$.
  (We can take~$\Cs$ to be a lift to $\wt{\Sigma}$ of $\Cs_{\bs,\ws}^{u_0}$ for a given pair~$(\bs,\ws)$ but any continuous deformation will do.)
  This ensures that we work with a consistent determination of the logarithm.
  This function has the desired behaviour of jumping by +1 when crossing
  the path~$\Cs$.
Moreover,~$H_{\text{pre}}^{u_0}$ is quasi-periodic: if~${x}'\in\wt{\Sigma}$ (resp.~${x}''$) is obtained from~$x$ by the action
of a loop in~$\pi_1(\Sigma)$ corresponding to~$A_j$ (resp.~$B_j$), then we have by Equations~\eqref{eq:primeformA} and~\eqref{eq:primeformB}
  \[
    H_{\text{pre}}^{u_0}({x}')= H_{\text{pre}}^{u_0}(x),\quad
    H_{\text{pre}}^{u_0}({x}'')=H_{\text{pre}}^{u_0}(x)+\int_{\Cs}\omega_j.
  \]

  We can compensate this defect of periodicity along the~$B$-cycles by noting that for~$y\in\wt\Sigma$, we have
  \[
    \frac{\partial\log\theta}{\partial z_k}(x'-y)=
    \frac{\partial\log\theta}{\partial z_k}(x-y),\quad
    \frac{\partial\log\theta}{\partial z_k}(x''-y)=
   \frac{\partial\log\theta}{\partial z_k}(x-y) -2i\pi\delta_{j,k}\,.
  \]
  Hence, we fix~$y\in\wt\Sigma$ and set

  \begin{equation}
    \label{eq:formulaH}
    H^{u_0}(x)=\frac{1}{2i\pi}\log\frac{E(\wt{u}_0,x)}{E(\sigma(\wt{u}_0),x)}
    +\frac{1}{2i\pi}\sum_{j=1}^g\frac{\partial\log\theta}{\partial
    z_j}(x-y)\times\int_{\Cs}\omega_j\,,
  \end{equation}
  which projects to a well-defined function on~$\Sigma$ deprived from the projection of~$\Cs$,
and satisfies the desired conditions.

In the genus~1 case (recall Examples~\ref{ex:period-elliptic} and~\ref{ex:prime}), the choice~$y=\frac{1}{2}\in\CC=\wt{\TT}$ gives
the function~$H^{u_0}$ of~\cite[Section~4.3]{BCdTelliptic}.
\end{rem}

\section{The periodic case}
\label{sec:periodic}

This section deals with the special case where the bipartite planar graph~$\Gs$ is~$\ZZ^2$-periodic.
We start in Section~\ref{sub:prelim} by recalling the properties of train-tracks in this case; we also introduce the space~$X^\mathit{per}_\Gs\subset X_\Gs$ of periodic angle maps,
and the Newton polygon~$N(\Gs)$.
In Section~\ref{sec:Kast-per}, we show that~$\mapalpha\in X^\mathit{per}_\Gs$ induces a periodic operator~$\Ks$ if and only if its image by some natural map~$\varphi\colon X^\mathit{per}_\Gs\to N(\Gs)$
lies in~$\ZZ^2$.
In Section~\ref{sub:curve}, we assume that~$\Ks$ is periodic and use the functions~$g_{\xs,\ys}$ of Section~\ref{sub:kernel} to give an explicit parametrization of the spectral curve for the
corresponding periodic dimer model;
we also identify the divisor of a vertex,
and show that dimer models with Fock's weights can realize any such ``spectral
data''.
We then describe the set of ergodic Gibbs measures of this model in Section~\ref{sub:Gibbs}, and use the map~$\varphi$ to express the
corresponding slopes in Section~\ref{sub:slope}.
Finally, in Section~\ref{sub:free}, we give an explicit local formula for the
free energy and the surface tension of this model.

\subsection{Preliminaries}
\label{sub:prelim}

The aim of this preliminary section is to quickly recall the specificities of oriented train-tracks and angle maps in the periodic case, referring to~\cite[Section~4.1]{BCdTelliptic} for details.

In the whole of this section, we assume that the bipartite planar graph~$\Gs$ is \emph{~$\ZZ^2$-periodic}, \emph{i.e.}, that the group~$\ZZ^2$ acts freely by translation on colored vertices, edges and faces.
We fix a basis of~$\ZZ^2$, allowing to identify a \emph{horizontal} direction (along the vector~$(1,0)$) and a \emph{vertical} one (along the vector~$(0,1)$).
The graph~$\Gs$ has a natural toroidal exhaustion~$(\Gs_n)_{n\geq 1}$, where~$\Gs_n:=\Gs/n\ZZ^2$.
We use similar notation for the toroidal exhaustions of the dual graph~$\Gs^*$, of the quad-graph~$\GR$, and of the train-tracks~$\Tbip$.

Fix a face~$\fs$ of~$\Gs$ and draw two simple dual paths in the plane, denoted by~$\gamma_x$ and~$\gamma_y$, joining~$\fs$ to~$\fs+(1,0)$ and~$\fs+(0,1)$ respectively,
and intersecting only at~$\fs$. They project onto the torus to two simple closed loops in~$\Gs^*_1$, also denoted by~$\gamma_x$ and~$\gamma_y$, winding around the torus and intersecting only at~$\fs$.
Their homology classes~$[\gamma_x]$ and~$[\gamma_y]$ form a basis of the first homology group of the torus~$H_1(\TT,\ZZ)$, and allow for its identification with~$\ZZ^2$.
Every oriented train-track~$T\in\Tbip$ projects to an oriented closed curve on the torus, so the corresponding
homology class~$[T]\in H_1(\TT,\ZZ)$ can be written as~$[T]=h_T[\gamma_x]+v_T[\gamma_y]$
with~$h_T$ and~$v_T$ coprime integers.
This allows to define a partial cyclic order on~$\Tbip$ by using the natural cyclic order
of coprime elements of~$\ZZ^2$ around the origin, an order which coincides with the partial cyclic order on~$\Tbip$ defined in Section~\ref{sub:mapd}.
By construction, this cyclic order induces a cyclic order on~$\Tbip_1=\Tbip/\ZZ^2$.
Note also that two oriented train-tracks~$T,T'\in\Tbip$ are parallel (resp.\@ anti-parallel) as defined in Section~\ref{sub:mapd} if and only if~$[T]=[T']$ (resp.\@~$[T]=-[T']$) in~$H_1(\TT,\ZZ)$.

Recall that~$X_\Gs$ denotes the set of monotone maps~$\mapalpha\colon\Tbip\to A_0$ assigning different images to non-parallel train-tracks.
Following~\cite{BCdTelliptic}, we denote by~$X^{\mathit{per}}_\Gs$ the set of~$\ZZ^2$-periodic elements of~$X_\Gs$:
\[
X^\mathit{per}_\Gs=\{\mapalpha\in X_\Gs\,|\,\alpha_{T+(m,n)}=\alpha_T \text{ for all }T\in\T\text{ and }
(m,n)\in\ZZ^2\}\,.
\]
Since disjoint curves on the torus have either identical or opposite homology classes, this space can be described as
\[
X^\mathit{per}_\Gs=\{\mapalpha\colon\Tbip_1\to A_0\,|\,\mapalpha\text{ is monotone and } \alpha_T\neq\alpha_{T'}\text{ for }[T]\neq[T']\}\,.
\]

By construction, the sum of all oriented closed curves~$T\in\T_1$ bounds a~$2$-chain in the torus, so its homology class vanishes and we have~$\sum_{T\in\T_1}[T]=0$.
As a consequence, the collection of vectors~$([T])_{T\in\T_1}$ in~$\ZZ^2$, ordered cyclically, and drawn so that the initial point of a vector~$[T]$ is the end point of the previous vector,
give\new{s} a convex polygon well-defined up to translations.
This polygon is referred to as the \emph{geometric Newton polygon} of~$\Gs$~\cite{GK} and denoted by~$N(\Gs)$.
The space~$X^\mathit{per}_\Gs$ can now be described combinatorially as the set of order-preserving
maps from oriented boundary edges of~$N(\Gs)$ to~$A_0$ mapping distinct vectors to distinct images.

\subsection{Periodicity of the Kasteleyn operator}
\label{sec:Kast-per}

From now on, we assume that the graph~$\Gs$ is minimal and~$\ZZ^2$-periodic.
We further suppose that~$\Gs$ is \emph{non-degenerate}, in the sense that its geometric Newton polygon~$N(\Gs)$ has positive area.
The aim of this section is to understand for which maps~$\mapalpha\in X^\mathit{per}_\Gs$ the corresponding Kasteleyn operator~$\K$ defined in Equation~\eqref{eq:def_Kast} is periodic.
This criterion in expressed in terms of a natural map~$\varphi\colon X^\mathit{per}_\Gs\to N(\Gs)$ that also proves useful in Section~\ref{sub:slope}.

Note that the periodicity of~$\Gs$ and of~$\mapalpha$ is not
sufficient to ensure the periodicity of the operator~$\K$.
Indeed, this operator makes use of the~$\Pic(\Sigma)$-valued discrete Abel map~$\mapd$ defined in Section~\ref{sub:mapd},
which might have horizontal and vertical \emph{periods}.
More precisely, we have that for every vertex~$\xs$ of~$\GR$ and~$(m,n)\in\ZZ^2$, the equality
\begin{equation}
\label{eq:per-eta}
\mapd(\xs+(m,n))=\mapd(\xs)+\sum_{T\in\T_1}(mv_T-nh_T)\alpha_T
\end{equation}
holds in~$\Pic(\Sigma)$, where recall that~$[T]=(h_T,v_T)\in\ZZ^2$ denotes
the homology class of~$T$.

Consider the map
\[
\varphi\colon X_\Gs^\mathit{per}\longrightarrow\CC^g
\]
defined as follows.
Let us enumerate by~$T_1,\dotsc,T_r$ the elements of~$\T_1$ respecting the
cyclic order, and let~$P_1,\dotsc,P_r\in\CC$ denote the integer points on the boundary of~$N(\Gs)$
numbered so that~$P_{j+1}-P_j=[T_j]$ (where~$P_{r+1}$ stands for~$P_1$).
Given a map~$\mapalpha\in X^\mathit{per}_\Gs$, set
\begin{equation}
\varphi(\mapalpha)=\sum_{j=1}^r P_j \int_{\alpha_{j-1}}^{\alpha_j}\vec{\omega}\in\CC^g\,,
  \label{equ:map_vphi}
\end{equation}
where~$\alpha_j$ stands for~$\alpha_{T_j}$, and the integration path follows the orientation of~$A_0$.

\begin{prop}
\label{prop:angles_perio}
For any~$1\le i\le g$, the image of the coordinate~$\varphi_i\colon X_\Gs^\mathit{per}\to\CC$
of~$\varphi$ is equal to the interior of~$N(\Gs)$.
Moreover, a periodic
map~$\mapalpha\in X_\Gs^\mathit{per}$ induces a periodic Kasteleyn operator~$\K$ if and
only if~$\varphi(\mapalpha)$ lies in~$(\ZZ^2)^g$.
In such a case, the~$g$ integer points in the interior of~$N(\Gs)$ given by~$\varphi(\mapalpha)$ are distinct.
\end{prop}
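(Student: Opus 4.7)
The plan is to settle the three assertions in turn, with everything hinging on a single Abel-summation identity. Using $\sum_{T\in\T_1}[T]=0$ and $P_{j+1}-P_j=[T_j]$, a summation by parts on~\eqref{equ:map_vphi} yields
\begin{equation}
\label{eq:abel_phi}
\varphi_i(\mapalpha) = C_i - \sum_{T\in\T_1}[T]\int_{x_0}^{\alpha_T}\omega_i,
\end{equation}
for a constant $C_i$ depending only on $N(\Gs)$ and the base point $x_0\in A_0$, with the integration path on $A_0$ following its positive orientation.

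For the image of each coordinate, I keep the original convex-combination form $\varphi_i(\mapalpha)=\sum_{j=1}^{r} t_j^{(i)}P_j$ with weights $t_j^{(i)}=\int_{\alpha_{j-1}}^{\alpha_j}\omega_i$. By Lemma~\ref{lem:non-zero} these weights are nonnegative with sum~$1$, and strictly positive exactly when $\alpha_{j-1}\ne\alpha_j$. Since $\mapalpha\in X_\Gs^{\mathit{per}}$ forces non-parallel train-tracks to carry distinct angles, this strict positivity holds at every vertex of $N(\Gs)$ (where $[T_{j-1}]\ne[T_j]$), so $\varphi_i(\mapalpha)$ is a convex combination of the lattice points $P_j$ with strictly positive weight at every vertex of the two-dimensional polygon $N(\Gs)$, hence lies in its topological interior. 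Conversely, any interior point of $N(\Gs)$ is a strict convex combination of the vertices, and such a combination is realised by an angle map distributing the mass onto vertex train-tracks and collapsing edge-interior train-tracks onto their neighbours, the realisability following from the strict monotonicity of each $\omega_i$ on $A_0$.

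For the equivalence with periodicity, Equation~\eqref{eq:per-eta} reduces the periodicity of $\Ks$ to the two generating horizontal and vertical conditions $\sum_T v_T\Phi(\alpha_T)\in\Lambda$ and $\sum_T h_T\Phi(\alpha_T)\in\Lambda$ inside $\Jac(\Sigma)=\CC^g/\Lambda$. Because $\alpha_T\in A_0$, Lemma~\ref{lemma:real} places both sums in $\RR^g$; because $\Omega$ is purely imaginary (Lemma~\ref{lem:imaginary}), $\Lambda\cap\RR^g=\ZZ^g$. Periodicity is therefore equivalent to both sums lying in $\ZZ^g$, and reading off the real and imaginary parts of~\eqref{eq:abel_phi} through $[T]=h_T+iv_T$ shows that this is exactly the condition $\varphi(\mapalpha)\in(\ZZ^2)^g$.

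The distinctness statement is the main obstacle. Assuming for contradiction that $\varphi_i(\mapalpha)=\varphi_j(\mapalpha)$ for some $i\ne j$ and setting $\omega=\omega_i-\omega_j$, identity~\eqref{eq:abel_phi} gives $\sum_T[T]\int_{x_0}^{\alpha_T}\omega=0$ in $\CC$. My plan is to apply the Riemann bilinear relation~\eqref{eq:RiemannBilin} to the two third-kind differentials with residues $h_T$ and $v_T$ at the points $\alpha_T$, using the periodicity hypothesis to control their $B$-periods. The assumed coincidence then imposes a non-trivial linear relation between the $i$-th and $j$-th coordinates of those periods, which I aim to contradict using the strict monotonicity of $\Phi|_{A_0}$ from Lemma~\ref{lem:non-zero} together with the fact that $\omega$ is a non-zero real holomorphic differential on $\Sigma$ satisfying $\int_{A_i}\omega=1$ and $\int_{A_j}\omega=-1$. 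Whereas the first two stages amount to bookkeeping on top of the M-curve technology developed in Section~\ref{sec:M}, this third stage genuinely combines the global Jacobian picture with the local combinatorics of the angles on $A_0$, and I expect the delicate point to be ruling out the degenerate alignments that survive both the integrality constraint $\varphi(\mapalpha)\in(\ZZ^2)^g$ and the cyclic ordering of the~$\alpha_T$.
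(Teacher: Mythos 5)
Your handling of the first two assertions is correct.  For the inclusion~$\varphi_i(X_\Gs^\mathit{per})\subset\mathrm{int}\,N(\Gs)$ your observation is actually neater than the paper's: once every weight $t_j^{(i)}=\int_{\alpha_{j-1}}^{\alpha_j}\omega_i$ is strictly positive at every corner $P_j$ of $N(\Gs)$ (which follows from Lemma~\ref{lem:non-zero} and the $X_\Gs^\mathit{per}$-condition that non-parallel train-tracks get distinct angles), the convex hull of $\{P_j : t_j^{(i)}>0\}$ is the whole of $N(\Gs)$ and the combination lands in its interior.  The paper instead proves the two inclusions separately via the composition $\overline{X}_\Gs^\mathit{per}\to\Delta\to N(\Gs)$ and an explicit boundary analysis.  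Your converse realisability argument is terse but sound, since $\alpha\mapsto\int_{x_0}^\alpha\omega_i$ is a monotone bijection of $A_0$ onto $\RR/\ZZ$.  The periodicity equivalence is exactly the paper's argument: Abel summation converts $\varphi(\mapalpha)$ into $-\sum_T[T]\int_{x_0}^{\alpha_T}\vec\omega$ up to a constant in $(\ZZ^2)^g$, and $\Lambda\cap\RR^g=\ZZ^g$ because $\Omega$ is purely imaginary.

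The third assertion, however, is the genuinely new and hard part, and your proposal does not prove it — you sketch a ``plan'' via the Riemann bilinear relation~\eqref{eq:RiemannBilin} and explicitly flag the expected difficulties.  This is not the route the paper takes, and it is not clear the route can be made to work non-circularly.  The bilinear relation does show (as in Corollary~\ref{cor:slope-gas}) that $\varphi_k(\mapalpha)$ equals, up to the anchoring $P_1$ and a sign convention, the slope of the gaseous Gibbs measure associated with the oval $A_k$; so distinctness of the $\varphi_k(\mapalpha)$ is equivalent to distinctness of the gaseous slopes.  But to conclude via the KOS uniqueness theory that different ovals produce different slopes one needs to know that the parametrisation $\psi\colon\Sigma\to\C$ sends $A_1,\dotsc,A_g$ to $g$ distinct ovals of the spectral curve, which is Proposition~\ref{prop:param_curve}; and the paper's proof of Proposition~\ref{prop:param_curve} invokes Corollary~\ref{cor:slope-gas}, which in turn uses the distinctness statement you are trying to prove.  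The paper avoids this circularity with a self-contained convex-geometric argument: set $\omega=\omega_l-\omega_k$, observe that $\omega$ has exactly two zeros $\beta,\gamma$ on $A_0$ (because it has vanishing $A_0$-period, is real along $A_0$, and already carries $2(g-2)$ zeros on the other ovals $A_i$, $i\ne k,l$), note that $\omega$ has a definite sign on each of the two arcs of $A_0\setminus\{\beta,\gamma\}$, and then show that the identity $\sum_j P_j\int_{\alpha_{j-1}}^{\alpha_j}\omega=0$ would express a single point $Q\in N(\Gs)$ simultaneously as a strict convex combination over the lattice points of each of the two subpolygons obtained by cutting $N(\Gs)$ along the chord $[P_{j_0},P_{j_1}]$.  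This forces $Q$ onto that shared chord, kills all coefficients at non-$P_{j_0},P_{j_1}$ corners, and contradicts~\eqref{eq:signomega}.  You would need to either reproduce this argument or find a genuinely independent analytic one; as written, the distinctness claim is left open.
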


\begin{proof}
Knowing Lemma~\ref{lem:non-zero}, the beginning of the proof follows quite closely the analogous result in the genus 1 case, see~\cite[Proposition 39]{BCdTelliptic}. Proving that, when $\vphi(\mapalpha)$ lies in $(\ZZ^2)^g$, its $g$ coordinates correspond to distinct integer points of the interior of the Newton polygon $N(\Gs)$ is new (this is not relevant when $g=1$) and non-trivial. In particular this shows that, when $\vphi(\mapalpha)$ lies in $(\ZZ^2)^g$, the interior of $N(\Gs)$ contains at least $g$ integer points.

Let us fix~$\mapalpha\in X_\Gs^\mathit{per}$ and consider its image by~$\varphi_i$ for an
arbitrary~$1\le i\le g$.
First observe that since~$\mapalpha$ belongs to~$X_\Gs$, we have
\[
\sum_{j=1}^r \int_{\alpha_{j-1}}^{\alpha_j}\omega_i=\int_{A_0}\omega_i
=\sum_{k=1}^g\int_{A_k}\omega_i=\sum_{k=1}^g\delta_{k,i}=1\,.
\]
By Lemma~\ref{lem:non-zero}, we also have that~$\int_{\alpha_{j-1}}^{\alpha_j}\omega_i\ge 0$.
Therefore,~$\varphi_i(\mapalpha)$ is a convex combination of the
vertices~$P_1,\dotsc,P_r$, and hence an element of the convex hull~$N(\Gs)$ of these vertices,
so we have the inclusion of the image of~$\varphi_i$ into~$N(\Gs)$.

Now, let us
write~$\overline{X}_\Gs^\mathit{per}$ for
the set of non-constant monotone maps~$\mapalpha\colon\T_1\to A_0$ ($\overline{X}_\Gs^\mathit{per}$ is the set~$X_\Gs^{\mathit{per}}$ without the condition that train-tracks with different homology classes need to have distinct images), and denote by~$\Delta=\{\mapbeta=(\beta_j)_j\in[0,1]^r\,|\,\sum_{j=1}^r\beta_j=1\}$ the standard simplex of dimension~$r-1$.
Observe that~$\varphi_i$
can be described as the restriction to~$X_\Gs^\mathit{per}$ of the composition
\[
\overline{X}_\Gs^\mathit{per}\stackrel{\delta_i}{\longrightarrow}\Delta
\stackrel{p}{\longrightarrow}N(\Gs)\,,
\]
with~$\delta_i(\mapalpha)=(\int_{\alpha_{j-1}}^{\alpha_j}\omega_i)_j$
and~$p(\mapbeta)=\sum_j\beta_j P_j$.
Since~$p$ is an affine surjective map, any point in the interior of~$N(\Gs)$
is the image under~$p$ of an element of the interior of~$\Delta$, \emph{i.e.},\@ an element~$\mapbeta\in\Delta$ with no
vanishing coordinate. Therefore, we have
\[
\delta_i^{-1}(p^{-1}(\mathrm{int}\,N(\Gs)))\subset \delta_i^{-1}(\mathrm{int}\,\Delta)\subset\{\mapalpha\in \overline{X}_\Gs^\mathit{per}\,|\,\mapalpha\text{ injective}\}\subset X_\Gs^\mathit{per}\,,
\]
thus checking the inclusion of the interior of~$N(\Gs)$ into~$\varphi_i(X_\Gs^\mathit{per})$.

To prove the opposite inclusion, consider an arbitrary element~$x$ of~$N(\Gs)\setminus\mathrm{int}\,N(\Gs)$,
and let us write~$F$ for the biggest face of~$N(\Gs)$ containing~$x$ in its interior: concretely,~$F=x$ if~$x$ is
a vertex of~$N(\Gs)$, and~$F$ is the boundary edge of~$N(\Gs)$ containing~$x$ otherwise.
By definition, we have~$p^{-1}(x)=\{\mapbeta\in\Delta\,|\,\sum_j\beta_jP_j=x\}$.
Fix a reference frame for~$\RR^2$ with origin at~$x$ and first coordinate axis orthogonal to~$F$.
Then, the first coordinate of the equation~$\sum_j\beta_jP_j=x$ leads to~$\beta_j=0$ for all~$j$ such that~$P_j$ does
not belong to~$F$. Since~$N(\Gs)$ has positive area, we have~$\beta_j=0$ for some vertex~$P_j$ of~$N(\Gs)$.
By Lemma~\ref{lem:non-zero}, such an element of~$\Delta$
can only be realized as~$\delta_i(\mapalpha)$ with~$\alpha_j=\alpha_{j-1}$. Since~$P_j$ is a vertex of~$N(\Gs)$,
we have~$[T_j]\neq[T_{j-1}]$, so~$\mapalpha$ does not belong to~$X_\Gs^\mathit{per}$.
This shows the inclusion of~$\varphi_i(X_\Gs^\mathit{per})$ into the interior of~$N(\Gs)$, and thus the equality of these
two sets.

Since~$\mapalpha$ is assumed to be periodic,
the operator~$\K$ itself is periodic if and only if the~$\Pic^0(\Sigma)$-valued
discrete Abel map~$\mapd$ on faces is periodic
by Lemma~\ref{lemma:real}.
By Equation~\eqref{eq:per-eta}, this
holds if and only if
\[
\sum_{T\in\T_1} [T]\alpha_T =
\sum_{T\in\T_1} \left(\begin{smallmatrix} h_T \\ v_T \end{smallmatrix}\right)\alpha_T= \left(\begin{smallmatrix} 0 \\ 0 \end{smallmatrix}\right)\in(\RR/\ZZ)^{2g}\,.
\]
This is equivalent to requiring that
the following element of~$\RR^{2g}$ belongs to~$\ZZ^{2g}$:
\begin{equation}
\label{equ:diff}
\sum_{j=1}^r [T_j]\int_{x_0}^{\alpha_j}\vec{\omega}=
\sum_{j=1}^r (P_{j+1}-P_j)\int_{x_0}^{\alpha_j}\vec{\omega}
=\sum_{j=1}^rn_j P_j - \varphi(\mapalpha)\,,
\end{equation}
with~$n_j=\int_{\alpha_{j-1}}^{\alpha_{j}}\vec{\omega}-(\int_{x_0}^{\alpha_j}\vec{\omega}-\int_{x_0}^{\alpha_{j-1}}\vec{\omega})$, and $x_0$ the reference point of $A_0$ chosen in Section~\ref{sub:Abel-Jacobi}. Since~$n_j$ belongs to~$\ZZ^g$ and~$P_j$ to~$\ZZ^2$ for all~$j$, this is equivalent to requiring that~$\varphi(\mapalpha)$ belongs to~$\ZZ^{2g}$. This concludes the proof of the second statement.

To show the last statement,
let us fix~$k\neq l$ and consider the holomorphic 1-form~$\omega=\omega_l-\omega_k$. Since for all~$i\neq k,l$, the integral along~$A_i$ of~$\omega_k$ and~$\omega_l$ is zero,
  then so is the integral of~$\omega$.
  By the same argument as in the proof of Lemma~\ref{lem:non-zero},
  the form~$\omega$ has at least 2 zeros on each
  such~$A_i$, that is at least~$2(g-2)$ zeros (counted with multiplicity).

  We now turn to the behavior of~$\omega$ on~$A_0$. As this form is real and
  has vanishing integral along~$A_0$, it can be written in a tubular
  neighborhood of~$A_0$ as~$\omega=df$
  with~$f$ a non-constant real-valued function. Therefore,~$\omega$ has at least two (distinct) zeros \new{on~$A_0$}.
  On the other hand, as
  the divisor of~$\omega$ has degree~$2g-2$, it cannot have more.
  Let us call
  these two zeros~$\beta$ and~$\gamma$,
  corresponding respectively to the minimum and maximum of~$f$ along~$A_0$.
  Let~$1\le j_0\le j_1\le r$ be
  the indices such that
  \begin{equation*}
    \alpha_{j_0-1} < \beta \leq \alpha_{j_0},
    \quad
    \alpha_{j_1-1} < \gamma \leq \alpha_{j_1}.
  \end{equation*}
  We suppose for the moment that we are in the generic situation where~$\beta$ and~$\gamma$ are distinct from the~$\alpha_j$'s.

  By means of contradiction, let us now assume that the~$k^\mathit{th}$
  and~$l^\mathit{th}$ coordinates of~$\varphi(\mapalpha)$ coincide, \emph{i.e.}, that we have
  \begin{equation}
    \sum_{j=1}^r P_j \int_{\alpha_{j-1}}^{\alpha_{j}}\omega_k =
    \sum_{j=1}^r P_j \int_{\alpha_{j-1}}^{\alpha_{j}}\omega_l\,.
    \label{eq:lincombpoints_omegakl}
  \end{equation}
  This can be written as
  \begin{equation}
    \sideset{}{'}\sum_{j=0}^r P_j\int_{\alpha_{j-1}}^{\alpha_j}\omega = 0\,,
    \label{eq:lincombpoints}
  \end{equation}
  where the prime in the sum means that we drop indices~$j$ for which~$\alpha_{j-1}=\alpha_j$.
  Since the Newton polygon has positive area, it has at least
  three corners (or extremal points), and since~$\mapalpha$ belongs to~$X_\Gs^\mathit{per}$, the angles associated to the two
  train-tracks with homology given by the two edges attached to such a corner are
  different.
  Therefore, the sum with a prime has at least a number of terms
  equal to the number of corners, which is at least three.

  \begin{figure}
    \centering
    \hfill
    \begin{tikzpicture}[scale=0.7]
      \coordinate (p1) at (0,0);
      \coordinate (pj0) at (-2,2);
      \coordinate (p4) at (-3,3);
      \coordinate (p5) at (-5,2);
      \coordinate (p6) at (-6,0);
      \coordinate (pj1) at (-6,-2);
      \coordinate (p9) at (-3,-4);
      \coordinate (pr) at (-1,-3);
\node[label=right:$P_1$] at (p1) {};
      \node[label=right:$P_{j_0}$] at (pj0) {};
      \node[label=left:$P_{j_1}$] at (pj1) {};
      \node[label=right:$P_r$] at (pr) {};
      \draw[clip] (p1) -- (p4) -- (p5) -- (p6) -- (pj1) -- (p9) -- (pr) -- cycle;
      \draw[style=help lines,color=lightgray] (-6,-5) grid (0,5) ;
      \fill[fill=blue!40, fill opacity=0.4] (pj0) -- (p4) -- (p5) -- (p6) -- (pj1)-- cycle;
      \fill[fill=red!40, fill opacity=0.4] (pj1) -- (p9) -- (pr) -- (p1) -- (pj0)-- cycle;
    \end{tikzpicture}
    \hfill
    \begin{tikzpicture}[scale=2]
      \coordinate (alpha0) at (-15:1);
      \coordinate (alpha1) at (10:1);
      \coordinate (alphaj0m1) at (70:1);
      \coordinate (beta) at (90:1);
      \coordinate (alphaj0) at (105:1);
      \coordinate (alphaj1m1) at (-120:1);
      \coordinate (gamma) at (-105:1);
      \coordinate (alphaj1) at (-85:1);
      \draw (0,0) circle [radius=1];
      \node[label=right:$\alpha_1$] at (alpha1) {};
      \node[label=above:$\alpha_{j_0-1}$] at (alphaj0m1) {};
      \node[label=above:$\alpha_{j_0}$] at (alphaj0) {};
      \node[label=above:$\beta$] at (beta) {};
      \node[label=left:$\alpha_{j_1-1}$] at (alphaj1m1) {};
      \node[label=below:$\alpha_{j_1}$] at (alphaj1) {};
      \node[label=below:$\gamma$] at (gamma) {};
      \node[label=right:{$\alpha_0=\alpha_r$}] at (alpha0) {};
      \fill (alpha1) circle [radius=0.05];
      \fill (alphaj0m1) circle [radius=0.05];
      \fill (alphaj0) circle [radius=0.05];
      \fill (alphaj1m1) circle [radius=0.05];
      \fill (alphaj1) circle [radius=0.05];
      \fill (165:1) circle [radius=0.05];
      \fill (170:1) circle [radius=0.05];
      \fill (200:1) circle [radius=0.05];
      \fill (-45:1) circle [radius=0.05];
      \fill (alpha0) circle [radius=0.05];
      \draw (90:0.95) -- (90:1.05); 
      \draw (-105:0.95) -- (-105:1.05); 
      \draw [line width=4mm, opacity=0.4,color=blue!40]
        (beta) arc [start angle=90,end angle=255, radius=1] (gamma);
      \draw [line width=4mm, opacity=0.4,color=red!40]
        (gamma) arc [start angle=-105,end angle=90, radius=1] (beta);
    \end{tikzpicture}
    \hfill\mbox{}
    \caption{Left: the Newton polygon~$N(\Gs)$ and its subdivision induced by the
      convex hulls of~$\{P_{j_0},P_{j_0+1},\dotsc,P_{j_1}\}$ (in blue) and of~$\{P_{j_1}, P_{j_1+1},\dotsc,P_{j_0}\}$ (in red).
      Right: a schematic
      representation of~$A_0$ and the angles associated to train-tracks with
      homology classes given by the boundary of~$N(\Gs)$. Along the blue (resp.\@
      red) arc from~$\beta$ to~$\gamma$ (resp.\@ from~$\gamma$ to~$\beta$),~$\omega$ is positive (resp.\@ negative).}
      \label{fig:sub}
  \end{figure}
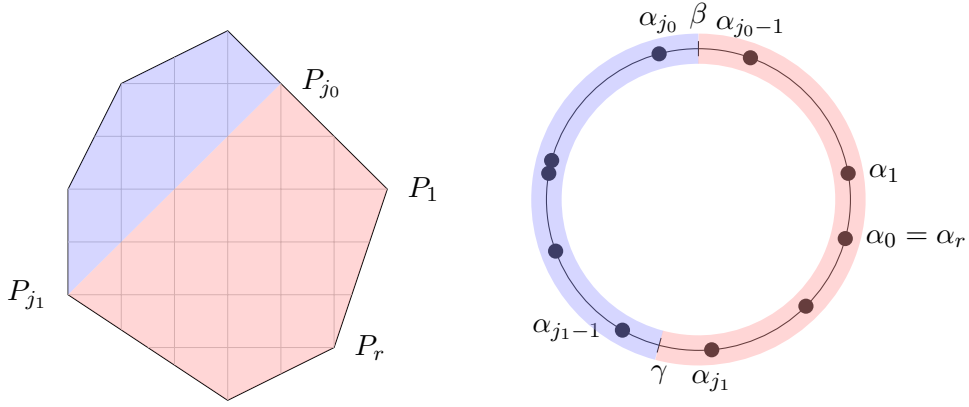

  When~$\alpha_{j-1}\neq\alpha_j$, then
  \begin{equation}
    \int_{\alpha_{j-1}}^{\alpha_j} \omega \text{\ is\ }
    \begin{cases}
      \text{strictly positive}& \text{if $j_0 < j < j_1$ in cyclic order},\\
      \text{strictly negative} & \text{if $j_1 < j < j_0$}.
    \end{cases}
    \label{eq:signomega}
  \end{equation}
  We also split the integral
  from $\alpha_{j_0-1}$ to $\alpha_{j_0}$
  (resp.\@ from $\alpha_{j_1-1}$ to $\alpha_{j_1}$)
  into a negative part from $\alpha_{j_0-1}$ to $\beta$
  (resp.\@ from $\gamma$ to $\alpha_{j_1}$),
  and a positive part from $\beta$ to $\alpha_{j_0}$
  (resp.\@ from $\alpha_{j_1-1}$ to $\gamma$).

  Passing terms with negative coefficients on the right-hand side of the equal
  sign in~\eqref{eq:lincombpoints}, one gets:
  \begin{multline}
    P_{j_0} \int_{\beta}^{\alpha_{j_0}}\omega +
      \sideset{}{'}\sum_{j_0<j<j_1}P_j\int_{\alpha_{j-1}}^{\alpha_j}\omega
      +P_{j_1}\int_{\alpha_{j_1-1}}^\gamma\omega \\
      =
    P_{j_1} \left(-\int_{\gamma}^{\alpha_{j_1}}\omega\right) +
    \sideset{}{'}\sum_{j_1<j<j_0}P_j\left(\int_{\alpha_{j-1}}^{\alpha_j}\omega\right)
    +P_{j_0}\left(\int_{\alpha_{j_0-1}}^\beta\omega\right)\,.
    \label{eq:lincombpoints_contrad}
  \end{multline}
  Note that at least one of the two sums is not empty, because of the number of
  corners being at least 3.
  The sums of the scalar coefficients on both sides are non-zero and equal, as the total
  integral of $\omega$ along $A_0$ is zero. If we divide both sides by this sum, this equation can
  be interpreted as defining a point~$Q$ of the Newton polygon, written once
  as a convex combination of~$P_{j_0}, P_{j_0+1},\dotsc, P_{j_1}$, and once as
  a convex combination of~$P_{j_1},P_{j_1+1},\dotsc,P_{j_0}$. As these two
  collections of vertices of~$N(\Gs)$ are
  vertices of two subpolygons forming a subdivision of $N(\Gs)$, it means that
  this point $Q$ is along the segment $[P_{j_0},P_{j_1}]$ shared by the two
  subpolygons, \new{see Figure~\ref{fig:sub}}.

  This segment being part of the boundary of both subpolygons,
  all coefficients in front of corners different from
  $P_{j_0}$ and $P_{j_1}$ are equal to 0. Since there is at least one such
  corner, there exists a~$j\neq j_0,j_1$ such that
  $\int_{\alpha_{j-1}}^{\alpha_j}\omega=0$, contradicting~\eqref{eq:signomega}.
  Therefore Equation~\eqref{eq:lincombpoints_omegakl} cannot hold, and the
  conclusion of the lemma follows.

  The non-generic case where either~$\beta$ or~$\gamma$ is equal to one of
  the~$\alpha_j$'s can be treated similarly: the point $P_{j_0}$ or $P_{j_1}$ appears only on one side of
  Equation~\eqref{eq:lincombpoints_contrad}. (The discussion changes a little bit
  depending on whether these points are corners or not.)
\end{proof}

\subsection{The spectral data}
\label{sub:curve}

The aim of this section is to understand the spectral data associated to periodic dimer models with Fock's weights.  After recalling the necessary prerequisites, we give an explicit parametrization of the spectral curve,
following and extending the discussion of~\cite[Section~5.4]{BCdTelliptic}, see Proposition~\ref{prop:param_curve}.
Then, we identify the divisor of a vertex as defined by Kenyon-Okounkov~\cite{KO:Harnack}
with its counterpart from Definition~\ref{def:divisor}, see Proposition~\ref{prop:divisor}.
Finally, in Theorem~\ref{thm:Harnack}, we
show that any Harnack curve endowed with a standard divisor can be explicitly realized as the spectral data of a dimer model of this class.

We start by recalling what is meant by the \emph{spectral data} of a dimer model~\cite{KO:Harnack}.
For this part of the discussion, let us suppose that $\Gs$ is any planar, periodic, bipartite weighted graph (not necessarily minimal) and that $\Ks$ is the corresponding Kasteleyn operator.
Following~\cite{KOS}, we define the finite matrix~$\K(z,w)$ for any~$(z,w)\in(\mathbb{C}^*)^2$ as the action in a natural basis of~$\Ks$
on~$(z,w)$-quasiperiodic functions on $\Gs$, \emph{i.e.}, functions~$f$ satisfying
\begin{equation*}
  f(\xs+(m,n))= z^m w^n f(\xs)
\end{equation*}
for any (black or white) vertex~$\xs$ and any~$(m,n)\in\mathbb{Z}^2$.
The \emph{characteristic polynomial}~$P(z,w)$ is the determinant of~$\K(z,w)$.
The \emph{Newton polygon} of~$P$, denoted by~$N(P)$, is the convex hull of lattice points~$(i,j)\in\ZZ^2$ such that $z^iw^j$ appears as a monomial in~$P$.
It actually coincides (up to translations) with its geometric counterpart~$N(\Gs)$ defined in Section~\ref{sub:prelim}, see~\cite[Theorem~3.12]{GK}.

The \emph{spectral curve}~$\Cscr$ is the zero locus of the
characteristic polynomial:
\begin{equation*}
  \Cscr = \{(z,w)\in(\mathbb{C}^*)^2\ :\ P(z,w)=0\}.
\end{equation*}

Following the convention of~\cite{BCdTelliptic}, we define the \emph{amoeba}~$\Ascr$ of the curve~$\Cscr$ as the image
of~$\Cscr$ through the map~$(z,w)\mapsto (-\log|w|,\log|z|)$.
By~\cite{KO:Harnack,KOS},~$\Cscr$ is a \emph{Harnack curve}~\cite{Mikhalkin1}
(also known as a \emph{simple Harnack curve}, see e.g.~\cite{Erwan}),
which is
equivalent to saying that the amoeba map~$\C\to\Ascr$ is at most 2-to-1~\cite{M-R}.
The \emph{real locus} of the curve~$\Cscr$ is the set of points that are invariant under complex conjugation
deprived from its isolated nodes~$\Cscr_\mathrm{sing}$, which are the only singularities a Harnack curve admits~\cite{Mikhalkin1,Erwan}.

Being Harnack is a very strong condition on~$\C$, and on the associated amoeba~$\Ascr$.
For example, if~$\C$ is a genus~$g$ Harnack curve with Newton polygon~$N(P)$, then the boundary
of~$\Ascr$ (which is the image of the real locus of~$\C$ under the amoeba map) consists of~$g$ ovals together with one unbounded component
producing a tentacle for each boundary edge of~$N(P)$. Moreover, each interior point of~$N(P)$ with integer coordinates
corresponds to an oval or to an isolated node, and each tentacle's asymptotic direction coincides with the
vector in~$\ZZ^2$ given by the corresponding boundary edge of~$N(P)$.

\begin{rem}
\label{rem:ovals}
As stated above, for a Harnack curve~$\C$ with given Newton polygon~$N(P)$, each  interior point of~$N(P)$
with integer coordinates corresponds either to an oval of~$\C$ or to an element of~$\Cscr_\mathrm{sing}$.
In our setting, the situation is very explicit: as explained in Proposition~\ref{prop:angles_perio}, any~$\mapalpha\in X_\Gs^\mathit{per}$ inducing
a periodic Kasteleyn operator defines~$g$ distinct integer points in the interior of~$N(\Gs)$ via~$\varphi(\mapalpha)\in(\ZZ^2)^g$.
These are precisely the points that give rise to the ovals, see Corollary~\ref{cor:slope-gas} and the proof of Proposition~\ref{prop:param_curve} below.
\end{rem}

The spectral curve is only the first part of the \emph{spectral data} introduced by Kenyon and Okounkov in~\cite{KO:Harnack}, see also~\cite{GK}: the second consists \new{of} a divisor on~$\C$, whose definition we now recall.
To do so, let us fix an element~$\ws$ in the set~$\Ws_1$ of white vertices of~$\Gs_1=\Gs/\ZZ^2$.
The kernel and cokernel of
\[
\Ks(z,w)\colon\CC^{\Bs_1}\longrightarrow\CC^{\Ws_1}
\]
are~$1$-dimensional for every smooth~$(z,w)\in\C$, see e.g. the proof of~\cite[Theorem~2.2]{Cook}, thus defining line bundles over~$\C\setminus\C_\mathrm{sing}$.
The class in~$\mathrm{Coker}\,\Ks(z,w)$ of the indicator function~$\mathbf{1}_\ws\in\CC^{\Ws_1}$
defines a section of the cokernel line bundle, whose divisor~$(\ws)\in\mathrm{Div}(\C)$ is called the \emph{divisor of the vertex~$\ws$}.
In concrete terms, the equations of~$(\ws)$ are the~$(\ws,\bs)$-cofactors of~$\Ks(z,w)$ for~$\bs\in\Bs_1$.  In the case of periodic graphs, this yields a second definition of the divisor of a vertex, see Definition~\ref{def:divisor}. We prove that the two coincide in Proposition~\ref{prop:divisor}.
Remarkably, this divisor is a so-called \emph{standard divisor}, \emph{i.e.}, it consists \new{of} the sum of one point on each of the~$g$ ovals of~$\C$, see~\cite[Theorem~1]{KO:Harnack}.

\medskip

We now come back to our setting: the graph~$\Gs$ is minimal and periodic,~$\Sigma$ is an M-curve,~$t$ is a real element of~$\Jac(\Sigma)$, and the angle map~$\mapalpha\in X_\Gs^\mathit{per}$ is such that the corresponding Kasteleyn operator~$\Ks$ is periodic. Recall that by Proposition~\ref{prop:angles_perio}, this implies in particular that the Newton polygon~$N(\Gs)$ has at least $g$ interior integer points.

We progress towards an explicit parametrization of the associated spectral curve~$\C$.
Let us fix an arbitrary vertex~$\xs_0$ of~$\Gs$.
For any~$\wt{u}\in\widetilde\Sigma$, the function $\xs\mapsto g_{\xs,\xs_0}(\wt{u})$ is $(z(\wt{u}),w(\wt{u}))$-quasiperiodic with
\begin{equation*}
  z(\wt{u}) = g_{\xs_0+(1,0),\xs_0}(\wt{u}),\quad w(\wt{u}) = g_{\xs_0+(0,1),\xs_0}(\wt{u})\,.
\end{equation*}
These quantities are easily seen not to depend on~$\xs_0$.
By Lemma~\ref{lem:gkernel}, and as already observed in~\cite{Fock}, the pair~$(z(\wt{u}),w(\wt{u}))$ belongs to the
spectral curve~$\C$ for all~$\wt{u}\in\widetilde\Sigma$ not corresponding to a~$u\in\mapalpha(\T)$.
Using the definition of~$g_{\xs,\ys}$ together with Equation~\eqref{eq:per-eta} and
the second point of Lemma~\ref{lem:ident_1}, we get the following explicit
expressions in terms of train-track angles and homology classes:
\begin{equation}
  z(\wt{u})=\prod_{T\in \Tbip_1} E(\wt\alpha_T,\wt{u})^{-v_T},\quad
  w(\wt{u})=\prod_{T\in\Tbip_1}  E(\wt\alpha_T,\wt{u})^{h_T}\,.
  \label{eq:expr_zu_wu}
\end{equation}

By Lemma~\ref{lem:gmeromfuncform}, the maps~$z$ and~$w$ project to meromorphic functions on~$\Sigma$, thus defining a
holomorphic map~$\psi\colon\Sigma\setminus\mapalpha(\Tbip)\to\C$.
This map is not injective in general, as it may send two conjugated elements of~$\Sigma$ to an isolated node in~$\C_\mathrm{sing}$.
However, we have the following result.

\begin{prop}~\label{prop:param_curve}
The map~$\psi\colon\Sigma\to\C$ given by~$\psi(u)=(z(u),w(u))$ is an explicit birational parametrization of the spectral curve~$\C$,
mapping~$A_1,\dotsc,A_g$ to the ovals of~$\C$ and~$A_0$ to the unbounded real component of~$\C$,
implying in particular that~$\C$ has geometric genus~$g$.
More precisely, its restriction
\[
\psi\colon\Sigma\setminus\{\mapalpha(\Tbip)\cup\psi^{-1}(\C_\mathrm{sing})\}\to\C\setminus\C_\mathrm{sing}
\]
is a biholomorphic parametrization of the spectral curve deprived from its singularities.
\end{prop}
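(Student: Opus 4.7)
The plan is to proceed in three steps: (i) extend $\psi$ to a meromorphic map $\Sigma\to(\mathbb{P}^1)^2$ taking values in the projective closure of $\C$; (ii) deduce birationality from a degree count keyed to the Newton polygon $N(\Gs)$; (iii) identify the images of the real circles $A_0,\dots,A_g$.

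First I will recall that, by Lemma~\ref{lem:gmeromfuncform}, the functions $z$ and $w$ are meromorphic on $\Sigma$, so $\psi=(z,w)$ is a meromorphic map $\Sigma\to(\mathbb{P}^1)^2$; by Lemma~\ref{lem:gkernel}, $\psi(u)\in\C$ whenever $z(u),w(u)$ are finite and nonzero. From the product formula~\eqref{eq:expr_zu_wu}, the zeros and poles of $z$ and $w$ are located precisely at the train-track angles $\alpha_T\in A_0$: at each $\alpha_T$, the function $z$ has a pole (resp.\ zero) of order $|v_T|$ when $v_T>0$ (resp.\ $v_T<0$), and similarly for $w$ with $h_T$. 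It follows that $\psi$ sends each $\alpha_T$ to a point at infinity of $\C$ in the asymptotic direction $[T]=(h_T,v_T)$.

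Next, I will compute the degree of $z\colon\Sigma\to\mathbb{P}^1$ by summing pole orders: it equals $\sum_{T:\,v_T>0} v_T$, the lattice height of $N(\Gs)$. On the other hand, viewing $z$ as a meromorphic function on the smooth projective model of $\C$, its degree equals the same height, since $N(P)=N(\Gs)$. The multiplicativity of degrees in the factorization $z=(z|_\C)\circ\psi$ then forces $\deg\psi=1$, so $\psi$ is a birational parametrization of $\C$. Because $\Sigma$ is smooth and compact, it must be biholomorphic to the normalization of the projective closure of $\C$; since $\C$ is Harnack, its singularities are ordinary nodes, which immediately yields the biholomorphic restriction $\psi\colon\Sigma\setminus\{\mapalpha(\T)\cup\psi^{-1}(\C_{\mathrm{sing}})\}\to\C\setminus\C_{\mathrm{sing}}$ and the equality of geometric genera.

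To identify the real components, I will use the identities $\sigma^*z=\overline{z}$ and $\sigma^*w=\overline{w}$ from Lemma~\ref{lem:gmeromfuncform} to conclude that $\psi$ sends the real locus of $\Sigma$ into the real locus of $\C$. Since $\mapalpha(\T)\subset A_0$, each $A_i$ with $i\geq 1$ avoids the zeros and poles of $z$ and $w$, so $\psi(A_i)\subset(\RR^*)^2$ is a compact real component of $\C$; by the injectivity from the previous step, $A_1,\dots,A_g$ embed as $g$ distinct ovals, exhausting the ovals of a genus-$g$ Harnack curve. Since $A_0$ carries all the angles $\alpha_T$, each sent to infinity, it maps onto the unique unbounded real component of $\C$. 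The main obstacle will be the degree computation in the second step: one must carefully match the divisor of $z$ on $\Sigma$ read off from~\eqref{eq:expr_zu_wu} with the $w$-degree of $P(z,w)$, and verify that the meromorphic extension of $\psi$ at each angle $\alpha_T$ genuinely lands on the correct point at infinity associated to the direction $[T]$.
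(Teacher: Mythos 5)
Your degree-counting route is genuinely different from the paper's and is essentially valid; the comparison is worth making explicit. The paper proceeds via: (i) irreducibility of $\C$ (from the Harnack property), so that $\psi$ is onto; (ii) the prime-form conjugation formula of Lemma~\ref{lem:primeform_conj}, giving $\psi\circ\sigma=\overline{\psi}$; (iii) exclusion of $\psi(A_j)$ being an isolated node, since then $z$ would be constant on $A_j$ and hence globally constant; (iv) an appeal to Corollary~\ref{cor:slope-gas} (a later result, relying on the Riemann bilinear relation) to obtain that $A_1,\dots,A_g$ hit \emph{distinct} ovals; and (v) an appeal to~\cite[Theorem~10]{Erwan} to upgrade $\psi$ to a birational map. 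You replace steps (iv) and (v) by a single degree count: $\deg(z\colon\Sigma\to\mathbb{P}^1)=\sum_{T:\,v_T>0}v_T$ is the height of $N(\Gs)=N(P)$, which equals the $w$-degree of $P$ and hence the degree of $z$ on the normalization of $\C$; multiplicativity of degrees then forces $\deg\psi=1$, and distinctness of the $\psi(A_i)$ follows directly from the resulting biholomorphism away from $\C_\mathrm{sing}$. This is more self-contained (no forward reference to Corollary~\ref{cor:slope-gas}, no black-box input from~\cite{Erwan}). The degree-matching you flag as the ``main obstacle'' is actually unproblematic: by definition of $X^\mathit{per}_\Gs$, train-tracks with \emph{distinct} homology classes receive distinct angles, so no cancellation between zeros and poles occurs in $\prod_T E(\wt\alpha_T,\wt{u})^{-v_T}$, and the total pole order really is the height; the tentacle-direction verification you mention is not needed for the degree count itself.

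There is, however, a gap you must close. The identity $\deg(z\text{ on }\Sigma)=\deg\psi\cdot\deg(z|_\C)$ only gives $\deg\psi=1$ once you know that $\psi(\Sigma)$ is \emph{all} of $\C$. A priori, $\psi(\Sigma)$ is an irreducible subvariety of $\C$; if $\C$ were reducible with $\psi(\Sigma)=\C_1\subsetneq\C$, then $\deg(z|_{\C_1})$ could be strictly smaller than the $w$-degree of $P$, and your computation would be consistent with $\deg\psi>1$. You must therefore invoke explicitly, as the paper does in its first two lines, that $\C$ is irreducible because it is a Harnack curve~\cite{KO:Harnack}; with this on the table, your argument is complete. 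A minor omission: the sentence ``$\psi(A_i)\subset(\RR^*)^2$ is a compact real component of $\C$'' silently assumes $\psi|_{A_i}$ is non-constant, so that the image is an oval and not an isolated node; this follows because $\psi$ constant on $A_j$ would force $z$ constant along $A_j$, hence constant on all of $\Sigma$, which is false since $z$ has poles on $A_0$.
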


\begin{proof}
The map~$\psi$ being meromorphic, it parametrizes an open set of an irreducible component of the spectral curve~$\Cscr$.
Since this curve is Harnack~\cite{KO:Harnack}, it is irreducible. Therefore, the map~$\psi$ is a parametrization of the whole spectral curve.

By Equation~\eqref{eq:expr_zu_wu} and Lemma~\ref{lem:primeform_conj}, we have~$\psi(\sigma(u))=\overline{\psi(u)}$ for all~$u\in\Sigma$.
This implies that~$\psi$ maps the real locus~$A_0\cup A_1\cup\dotsb\cup A_g$ of~$\Sigma$ to the real locus of~$\C$.
Since~$z(u)$ and~$w(u)$ have zeros and poles on~$A_0$, this real component of~$\Sigma$ is mapped to the unbounded real component of~$\Cscr$,
so the remaining real components~$A_1,\dotsc,A_g$ are mapped to the ovals of~$\C$ or to its isolated real nodes (recall Remark~\ref{rem:ovals}).
This latter case is excluded, as it would imply that the holomorphic map~$z$ is constant along some~$A_j$, and hence constant.
By Corollary~\ref{cor:slope-gas}, each of these ovals~$A_j$ gives rise to a different slope, and is therefore mapped to a distinct oval of~$\C$.
Finally, note that since~$\mapalpha$ belongs to~$X^\mathit{per}_\Gs$, the cyclic ordering of~$\mapalpha(\Tbip)\subset A_0$ coincides with
the cyclic ordering of the tentacles of~$\C$.
We are now in the setting of~\cite[Theorem~10]{Erwan}, and can therefore conclude: if~$\psi$ was not birational, then the
curve~$\psi(\Sigma)=\C$ would not be reduced, which is impossible as the isolated nodes are its only possible singularities.
\end{proof}

Before turning to the divisor, we need a preliminary lemma whose proof is postponed to Section~\ref{sub:Gibbs}.

\begin{lem}
\label{lem:Q}
Let~$Q(z,w)$ be the adjugate matrix of~$\Ks(z,w)$.  For every~$\bs$ and~$\ws$ on~$\Gs_1$
and every~$u\in\Sigma\setminus\mapalpha(\T)$, we have
  \[
Q(z(u),w(u))_{\bs,\ws}\lambda(u) = g_{\bs,\ws}(u)\,,
\]
where~$\lambda$ is the  meromorphic 1-form on~$\Sigma$ given
by~$\lambda=\frac{dz}{zw\partial_wP(z,w)}=-\frac{dw}{zw\partial_zP(z,w)}$.
\end{lem}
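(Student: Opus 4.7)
My plan is to prove the identity $Q(z(u),w(u))_{\bs,\ws}\,\lambda(u)=g_{\bs,\ws}(u)$ as an equality of meromorphic 1-forms on $\Sigma$ in three stages: a rank-one reduction that decouples the $(\bs,\ws)$-dependence from a universal scalar function $c(u)$, an application of Jacobi's formula to identify $c(u)$, and a trace identity derived from the Fay-based decomposition of $\Ks_{\ws,\bs}\,g_{\bs,\ws}$ used in the proof of Theorem~\ref{thm:Kinv}.

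\textbf{Rank-one reduction.} Since $\Ks(z,w)\,Q(z,w)=P(z,w)\,I$ vanishes on the spectral curve, each column of $Q(z(u),w(u))$ lies in the one-dimensional right kernel of $\Ks(z(u),w(u))$; by Lemma~\ref{lem:gkernel} this kernel is spanned by $(g_{\bs,\ws_0}(u))_{\bs\in\Bs_1}$ for any reference white vertex~$\ws_0$, and the analogous statement holds for the left kernel. Combining the two proportionalities using the multiplicative property $g_{\bs,\ws}\,g_{\bs_0,\ws_0}=g_{\bs,\ws_0}\,g_{\bs_0,\ws}$, inherent to the path-product definition of~$g$, produces a meromorphic function $c(u)$ on $\Sigma$, independent of $(\bs,\ws)$, such that $Q(z(u),w(u))_{\bs,\ws}\,\lambda(u)=c(u)\,g_{\bs,\ws}(u)$. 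The claim becomes $c\equiv 1$.

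\textbf{Identification via Jacobi's formula.} Applying $\partial_wP=\mathrm{tr}(Q\,\partial_w\Ks)$ at $(z,w)=(z(u),w(u))$, plugging in the rank-one expression, and using the $\ZZ^2$-quasi-periodicity $g_{\bs+(m,n),\ws}=z(u)^mw(u)^n\,g_{\bs,\ws}$ to absorb the $z^mw^n$ coefficients in $\Ks(z,w)_{\ws,\bs}=\sum_{(m,n)}z^mw^n\,\Ks_{\ws,\bs+(m,n)}$, the condition $c\equiv 1$ becomes---via $\lambda=dz/(zw\,\partial_wP)$---the identity
\[
\sum_{\ws\in\Ws_1,\,\bs\in\Bs} y(\bs)\,\Ks_{\ws,\bs}\,g_{\bs,\ws}(u)=\frac{dz}{z}
\]
of meromorphic 1-forms on $\Sigma$, where $y(\bs)$ is the integer $n$ in the unique decomposition $\bs=\bs_0+(m,n)$ with $\bs_0\in\Bs_1$. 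Shifting $y$ by a constant adds a multiple of the kernel relation $\sum_{\bs\sim\ws}\Ks_{\ws,\bs}\,g_{\bs,\ws}=0$ of Lemma~\ref{lem:gkernel}, so the choice of fundamental domain is immaterial; the symmetric identity with $x(\bs)$ and $-dw/w$ is equivalent along the curve via $\partial_zP\,z'+\partial_wP\,w'=0$.

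\textbf{Trace identity and main obstacle.} To establish the displayed identity, I expand its left-hand side via the Fay-based decomposition from the proof of Theorem~\ref{thm:Kinv},
\[
\Ks_{\ws,\bs}\,g_{\bs,\ws}(u)=\omega_{\beta_{\ws\bs}-\alpha_{\ws\bs}}(u)+\sum_{j=1}^g c^{\,j}_{\ws,\bs}\,\omega_j(u),
\]
which presents it as a sum of Abel differentials of the third kind with simple poles at train-track angles on $A_0$, plus a holomorphic correction, structurally matching $dz/z=-\sum_T v_T\,d_u\log E(\wt\alpha_T,u)$. Since meromorphic 1-forms on $\Sigma$ are determined by their residues together with their $A_j$-periods for $j=1,\dotsc,g$, I verify both. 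The residue at $u=\alpha_T$ on the left-hand side matches $-v_T$ by a telescoping argument along one $(h_T,v_T)$-period of $T$, grouping the edges crossed by $T$ and its $\ZZ^2$-translates using the sign convention of Figure~\ref{fig:around_rhombus}. The $A_j$-period of $dz/z$ vanishes for $j\geq 1$ because $z$ is real and nowhere zero on $A_j$ (all zeros and poles of $z$ lie on $A_0$, by the M-curve structure), and the $A_j$-period of the left-hand side vanishes since the $\omega_{\beta-\alpha}$-terms have zero $A$-periods by construction, while the remaining $\omega_j$-contribution rearranges---using $\ZZ^2$-periodicity of $\mapd$ provided by Proposition~\ref{prop:angles_perio}---into a face-by-face sum over $\Gs_1$ that vanishes by the bipartite alternation of ``upper'' versus ``lower'' face assignments around each face. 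The main obstacle is the combinatorial bookkeeping behind the residue telescoping: tracking the $\alpha/\beta$ role of each train-track as it traverses the minimal graph, and aggregating $y$-coordinate contributions over one period of~$T$ into a telescoping pattern collapsing to the homology number $v_T$; the face-by-face cancellation in the $A_j$-period analysis has the same flavor and relies on the same local conventions.
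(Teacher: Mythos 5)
Your proof follows essentially the same route as the paper's: the rank-one structure of $Q$ on $\Cscr$ via Lemma~\ref{lem:gkernel}, Jacobi's formula $\partial_w P=\mathrm{tr}\bigl(Q\,\partial_w\Ks\bigr)$, and the Fay decomposition of $\Ks_{\ws,\bs}\,g_{\bs,\ws}$ to interpret the resulting trace. The ``main obstacle'' you flag is in fact already disposed of in the paper: Remark~\ref{rem:J_gamma} rewrites your $y(\bs)$-weighted trace as the form $J_\gamma$ for an appropriate dual cycle~$\gamma$, and the identity $J_\gamma=-d\log z_\gamma$ is exactly Proposition~\ref{prop:J_dlog}, whose proof is the very residue- and $A_j$-period-matching you outline, so you may simply cite it.
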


We now turn to the identification \new{of} the divisor of a vertex of Definition~\ref{def:divisor} with the
homonymous notion of Kenyon-Okounkov.

\begin{prop}
\label{prop:divisor}
For any white vertex~$\ws$, the divisors~$\psi^{-1}((\ws))$ and~$\mathrm{div}(\ws)$ coincide.
\end{prop}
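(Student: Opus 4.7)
The plan is to show that both divisors have identical structure --- exactly one point on each real component $A_1,\dotsc,A_g$ --- and then identify them point-by-point using the key identity of Lemma~\ref{lem:Q}.

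First I would establish the structural form of both divisors. Proposition~\ref{prop:div(w)} gives directly that $\mathrm{div}(\ws)$ is a degree-$g$ effective divisor with exactly one simple point on each real component $A_j$, $1\leq j\leq g$. On the other hand, the Kenyon-Okounkov divisor $(\ws)\subset\Cscr$ is a standard divisor consisting of one point on each of the $g$ ovals of $\Cscr$, by~\cite[Theorem~1]{KO:Harnack}. By Proposition~\ref{prop:param_curve}, the parametrization $\psi\colon\Sigma\to\Cscr$ restricts to a biholomorphism onto $\Cscr\setminus\Cscr_\mathrm{sing}$, mapping $A_j$ homeomorphically to the corresponding oval. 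Since standard divisors live on the ovals (hence avoid the singular locus), the pullback $\psi^{-1}((\ws))$ is a well-defined degree-$g$ effective divisor on $\Sigma$ with exactly one point on each $A_j$, $1\leq j\leq g$.

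Next I would verify that the meromorphic 1-form $\lambda$ from Lemma~\ref{lem:Q} has neither zeros nor poles on $A_1\cup\dotsb\cup A_g$. On these compact real components, $\psi(u)$ lies in the smooth locus of $\Cscr\subset(\CC^*)^2$, so both $z(u)$ and $w(u)$ are finite and nonzero, and smoothness of $\Cscr$ at $\psi(u)$ ensures that at least one of $\partial_zP,\partial_wP$ is nonzero there. Using the two expressions $\lambda=dz/(zw\,\partial_wP)=-dw/(zw\,\partial_zP)$ as appropriate, this shows that $\lambda$ is holomorphic and non-vanishing on $A_1\cup\dotsb\cup A_g$.

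With these preliminaries, the conclusion follows quickly from Lemma~\ref{lem:Q}: for any $u_0\in A_j$ with $1\leq j\leq g$, the identity $g_{\bs,\ws}(u_0)=Q(z(u_0),w(u_0))_{\bs,\ws}\,\lambda(u_0)$ with $\lambda(u_0)\in\CC^*$ shows that the order of vanishing of $g_{\bs,\ws}$ at $u_0$ coincides with that of $u\mapsto Q(z(u),w(u))_{\bs,\ws}$, for every $\bs$. Moreover, the quasi-periodicity $g_{\bs+(m,n),\ws}(u)=z(u)^m w(u)^n g_{\bs,\ws}(u)$ combined with $z(u_0),w(u_0)\in\CC^*$ ensures that the common zero locus (with multiplicity) over $\bs\in\Bs$ agrees with that over $\bs\in\Bs_1$. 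By Proposition~\ref{prop:div(w)} the former equals $\mathrm{div}(\ws)\cap A_j$, while by the definition of the Kenyon-Okounkov divisor the latter equals $\psi^{-1}((\ws))\cap A_j$. Since both divisors are supported entirely on $A_1\cup\dotsb\cup A_g$, the equality $\mathrm{div}(\ws)=\psi^{-1}((\ws))$ follows. The main technical point will be verifying that $\lambda$ has no zeros or poles on the ovals; this crucially uses that Harnack curves have only isolated node singularities, which lie in the interior of the amoeba and hence away from the ovals.
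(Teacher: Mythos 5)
Your proof is correct and follows essentially the same route as the paper: both hinge on Lemma~\ref{lem:Q} to identify the common zeros of $g_{\bs,\ws}$ with those of the cofactors $Q(z,w)_{\bs,\ws}$ up to the universal $1$-form $\lambda$, and then invoke Proposition~\ref{prop:div(w)}. You are somewhat more explicit than the paper in checking that $\lambda$ is non-vanishing (not merely pole-free) on $A_1\cup\dotsb\cup A_g$ and in handling the passage from $\bs\in\Bs_1$ to $\bs\in\Bs$ via quasi-periodicity, both of which the paper leaves implicit, but these are refinements of the same argument rather than a different one.
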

\begin{proof}
Let~$\ws$ be an arbitrary white vertex, let~$(\ws)$ be the associated divisor on~$\C$, and let~$\psi^{-1}((\ws))$
be the corresponding divisor on~$\Sigma$.
By definition, it is given by the common zeros in~$\Sigma\setminus\mapalpha(\T)$
of~$Q(z(u),w(u))_{\bs,\ws}$ for all black vertices~$\bs\in\Bs_1$.
By Lemma~\ref{lem:Q}, this adjugate matrix satisfies
\[
Q(z(u),w(u))_{\bs,\ws}\lambda(u)=g_{\bs,\ws}(u)
\]
for all~$\bs$ and~$u\in\Sigma\setminus\mapalpha(\T)$,
with~$\lambda=\frac{dz}{zw\partial_wP(z,w)}=-\frac{dw}{zw\partial_zP(z,w)}$.
Note that the poles of~$\lambda$ coming from~$\frac{dz}{z}$ or~$\frac{dw}{w}$ lie in~$\mapalpha(\T)$, and therefore do not contribute to the divisor. Therefore, the possible poles of~$\lambda$
outside~$\mapalpha(\T)$ come
from the remaining factors~$\frac{1}{w\partial_wP(z,w)}$ and~$\frac{1}{z\partial_zP(z,w)}$, and hence correspond
to singular points of~$\C$ where the divisor is not defined.
As a consequence, the divisor~$\psi^{-1}((\ws))$ is determined by the common zeros
in~$\Sigma\setminus\mapalpha(\T)$ of~$g_{\bs,\ws}$ for all~$\bs\in\Bs_1$.
The conclusion now follows from Proposition~\ref{prop:div(w)}.
\end{proof}

\begin{rem}
\label{rem:div}
It should be noted that the arguments of Propositions~\ref{prop:div(w)} and~\ref{prop:divisor}
provide an independent proof of the following fact:
given a dimer model on a minimal periodic graph with Fock's weights, the divisor of a vertex is standard.
We hope that this discussion, in particular Equation~\eqref{eq:divisor}, helps clarifying the discrepancy between the viewpoints of
Kenyon-Okounkov~\cite{KO:Harnack} and Goncharov-Kenyon~\cite{GK}, who consider standard divisors of degree~$g$, and of Fock~\cite{Fock},
who deals with holomorphic line bundles of degree~$g-1$.
\end{rem}

In their seminal work~\cite{KO:Harnack}, Kenyon and Okounkov not only prove that the spectral curve of a dimer model
is Harnack and comes equipped with a standard divisor; they also show that every Harnack curve with triangular Newton polygon
endowed with such a divisor can be realized by a dimer model.
This result is extended to arbitrary Newton polygons by Goncharov and Kenyon in~\cite{GK}.
The main aim of Fock in~\cite{Fock}  is to give an explicit form to this inverse map: given a smooth curve~$\Cscr$ (non-necessarily Harnack),
he constructs an explicit ``dimer model'' (not-necessarily with real edge-weights) whose spectral curve is~$\Cscr$.

We are in the position to give a modified version of his result, now restricted to Harnack curves,
following and completing the discussion of~\cite[Section~5.4]{BCdTelliptic}.

\begin{thm}\label{thm:Harnack}
Fix a Harnack curve~$\C$ endowed with a standard divisor~$D$.
Then, there exists an abstract~M-curve~$\Sigma$, a periodic minimal graph~$\Gs$, a map $\mapalpha\in{X}^\mathit{per}_\Gs$
and an element~$t$ of~$(\RR/\ZZ)^g\subset\Jac(\Sigma)$ such that
the associated Fock operator~$\K$ is periodic, and such that the spectral data of
the corresponding dimer model coincides with~$(\Cscr,D)$
(up to a scale change~$(z,w)\mapsto (\lambda z, \mu w)$ with~$\lambda,\mu\in\RR^*$, and fixing a vertex~$\ws$).
Moreover, the assignment~$t\mapsto D$ defines a bijection from~$(\RR/\ZZ)^g$ to the
set of standard divisors on~$\C$.
\end{thm}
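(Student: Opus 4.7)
The plan is to construct the data $(\Sigma,\Gs,\mapalpha,t)$ explicitly from $(\C,D)$ and then invoke the results of Sections~\ref{sec:Fock} and~\ref{sub:curve} to identify the spectral data of the resulting Fock model with $(\C,D)$. First I would let $\Sigma$ be the normalization of $\C$: by the Harnack property, this is an abstract M-curve of genus $g$, with $A_0$ the preimage of the unbounded real component of $\C$ and $A_1,\dotsc,A_g$ those of the ovals. By~\cite{GK}, I would then obtain a periodic minimal graph $\Gs$ with geometric Newton polygon $N(\Gs)=N(\C)$, so that the one-period train-tracks $\T_1$ are in cyclic-order-preserving bijection with the primitive boundary vectors of $N(\C)$, and hence with the tentacles of the amoeba of $\C$. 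Each such tentacle lifts to a unique point on $A_0\subset\Sigma$ via the normalization; setting $\mapalpha(T)$ to be the lift of the tentacle corresponding to $T$ then defines a monotone, class-injective map $\mapalpha\colon\T_1\to A_0$, that is, an element of $X_\Gs^{\mathit{per}}$.

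The main technical step is to verify the periodicity of the resulting Fock operator $\K$. By Proposition~\ref{prop:angles_perio}, this reduces to checking $\varphi(\mapalpha)\in(\ZZ^2)^g$. A summation by parts in~\eqref{equ:map_vphi} yields, for each $1\le k\le g$,
\begin{equation*}
\varphi_k(\mapalpha)=P_{j^*}-\sum_{T\in\T_1}[T]\int_{x_0}^{\alpha_T}\omega_k,
\end{equation*}
with $j^*$ the index of the boundary edge containing $x_0$; since $P_{j^*}\in\ZZ^2$, it suffices to show that the two sums $\sum_{T}v_T\alpha_T$ and $\sum_{T}h_T\alpha_T$ vanish in $\Pic^0(\Sigma)$. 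For this I would consider the coordinate functions $z,w$ on $\C$, pulled back along the normalization to meromorphic functions $\tilde z,\tilde w$ on $\Sigma$. Their zeros and poles occur only at the $\alpha_T$, with orders $-v_T$ and $h_T$ respectively, as determined by the asymptotic direction $[T]=(h_T,v_T)$ of the corresponding tentacle in the amoeba. Principality of $(\tilde z)$ and $(\tilde w)$ then provides the desired vanishings; combined via Abel--Jacobi with Lemma~\ref{lem:imaginary} and the fact that $\int_{x_0}^{\alpha_T}\vec{\omega}\in\RR^g$, this forces $\sum_T v_T\int_{x_0}^{\alpha_T}\vec{\omega}$ and $\sum_T h_T\int_{x_0}^{\alpha_T}\vec{\omega}$ to lie in $\Lambda\cap\RR^g=\ZZ^g$.

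With $\K$ now periodic, the explicit formulas~\eqref{eq:expr_zu_wu} define meromorphic functions on $\Sigma$ that share their (principal) divisors with $\tilde z,\tilde w$ and hence differ from them by nonzero real scalars $\lambda,\mu$ (reality follows from the symmetry in Lemma~\ref{lem:gmeromfuncform} and the analogous symmetry of $\tilde z,\tilde w$). Proposition~\ref{prop:param_curve} thus yields that the spectral curve of the Fock model is $\C$ up to the stated scale change $(z,w)\mapsto(\lambda z,\mu w)$. I would next choose $t\in(\RR/\ZZ)^g$ via the bijection of Proposition~\ref{prop:div(w)} so that $\mathrm{div}(\ws)=\psi^{-1}(D)\in A_1\times\dotsb\times A_g$, where $\psi$ is the parametrization of Proposition~\ref{prop:param_curve} and $\ws$ is the distinguished white vertex. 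Proposition~\ref{prop:divisor} then identifies the divisor of $\ws$ on the spectral curve with $D$ up to the scale change, and the bijectivity of $t\mapsto D$ follows by composing the bijection of Proposition~\ref{prop:div(w)} with the homeomorphism $A_1\times\dotsb\times A_g\simeq\{\text{ovals of }\C\}$ induced by $\psi$. The hardest part is the integrality argument for $\varphi(\mapalpha)$, which becomes transparent once one recognizes that the $\alpha_T$ support the divisors of the coordinate functions pulled back from $\C$, forcing these divisors to be principal on $\Sigma$.
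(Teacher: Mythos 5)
Your proposal is correct and takes essentially the same approach as the paper: normalize~$\C$ to an abstract M-curve~$\Sigma$, read off train-track homology classes from the divisors of the pulled-back coordinate functions~$\tilde z,\tilde w$, build the minimal graph via~\cite[Theorem~2.5]{GK}, define~$\mapalpha$ by the tentacle angles, deduce periodicity of~$\K$ from the principality of~$(\tilde z)$ and~$(\tilde w)$, and handle the divisor via Propositions~\ref{prop:div(w)} and~\ref{prop:divisor}. The only stylistic difference is that you route the periodicity check through the map~$\varphi$ of Proposition~\ref{prop:angles_perio} and re-derive the reduction to principality by summation by parts, whereas the paper applies Equation~\eqref{eq:per-eta} directly; the two are equivalent since that reduction is already contained in the proof of Proposition~\ref{prop:angles_perio}.
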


\begin{proof}
 The curve~$\Cscr$ being Harnack, it has the maximal number of real components, \emph{i.e.}, it is an~M-curve.
 Hence, there exists an abstract~M-curve~$\Sigma$ and a birational map~$\psi\colon\Sigma\to\Cscr$ such that~$\psi(\sigma(u))=\overline{\psi(u)}$
 for~$u\in\Sigma$.
By definition of a Harnack curve, it has a single unbounded real component, and we
denote by~$A_0$ the corresponding real component of~$\Sigma$.
Let us write~$\psi(u)=(z(u),w(u))$ for the coordinates of the parametrization~$\psi$ of~$\Sigma$.
The maps~$z,w$ being meromorphic functions on~$\Sigma$, their respective divisors are of the form~$-\sum_j v_j \alpha_j$
and~$\sum_j h_j \alpha_j$ for some finite set of elements~$\{\alpha_j\}_j$ of~$\Sigma$
and integers~$\{v_j\}_j,\{h_j\}_j$ so that~$\sum_jv_j=\sum_jh_j=0$.
Since an element~$\alpha_j\in\Sigma$ with~$v_j\neq 0$ or~$h_j\neq 0$ corresponds via~$\psi$ to an element of the complement of~$\C$
in its (toric) closure, and since such elements lie in the closure of the unbounded real component of~$\C$, we have~$\alpha_j\in A_0$ for all~$j$.
By the discussion in Section~\ref{subsub:prime}, there exist constants~$\lambda,\mu\in\CC^*$ such that
 \begin{equation*}
    z(u)=\lambda \prod_j E(u,\alpha_j)^{-v_j},
    \quad
    w(u)=\mu \prod_j E(u,\alpha_j)^{h_j}
  \end{equation*}
for all~$u\in\Sigma$.
The equality~$\psi(\sigma(u))=\overline{\psi(u)}$ together with Lemma~\ref{lem:primeform_conj} imply that~$\lambda$
and~$\mu$ belong to~$\RR^*$, and can therefore be assumed to be~$\pm 1$ via a global scaling.

Allowing for the same element~$\alpha_j$ to appear multiple times, it can be assumed that~$v_j$ and~$h_j$ are coprime for all~$j$.
Then, the equalities~$\sum_jv_j=\sum_jh_j=0$ ensure that there exists a minimal graph~$\Gs_1\subset\TT$
with oriented train-tracks~$\Tbip=\{T_j\}_j$ satisfying~$[T_j]=(h_j,v_j)\in H_1(\TT;\ZZ)$ for all~$j$, see e.g.~\cite[Theorem~2.5]{GK} for the proof.

Let~$\mapalpha\colon\Tbip\to A_0$ be defined by~$\mapalpha(T_j)=\alpha_j$.
By construction, the cyclic order on the boundary edges of~$N(\Gs)$ agrees with the partial cyclic order on~$\Tbip$
given by  homology classes, which in turn coincides with the cyclic order of the tentacles of the amoeba of~$\C$.
By definition of a Harnack curve (recall the discussion before Remark~\ref{rem:ovals}), this cyclic order
coincides with the cyclic order on~$\{\alpha_j\}_j\subset A_0$.
Moreover, two train-tracks with different homology classes correspond to distinct elements in~$A_0$.
In conclusion, the map~$\mapalpha$ belongs to the space~${X}^\mathit{per}_\Gs$, as reinterpreted in Section~\ref{sub:prelim}.

Let~$\K$ be Fock's adjacency operator corresponding to the~M-curve~$\Sigma$, to the universal cover~$\Gs\subset\RR^2$ of~$\Gs_1\subset\TT$,
to~$\mapalpha$, and to any~$t\in(\RR/\ZZ)^g\subset\Jac(\Sigma)$. This operator is Kasteleyn by Proposition~\ref{prop:Kasteleyn},
so it defines a dimer model on~$\Gs$ with positive edge-weights. By construction, the curve~$\C$ is given by the
elements~$(z,w)$ of~$\new{(\mathbb{C}^*)^2}$ such that the kernel of~$\Ks(z,w)$ is non-trivial, so it is the spectral curve of this dimer model.
Also, the divisors~$\sum_j v_j \alpha_j$ and~$\sum_j h_j \alpha_j$ are principal by construction.
By Equation~\eqref{eq:per-eta}, this ensures that the discrete Abel map~$\mapd\colon\{\text{faces of~$\Gs$}\}\to\Pic^0(\Sigma)\simeq\Jac(\Sigma)$
is~$\ZZ^2$-periodic, and so is the operator~$\K$.

The last statement is now a direct consequence of Propositions~\ref{prop:div(w)} and~\ref{prop:divisor}.
\end{proof}

We close this section with several remarks.
\begin{rem}\leavevmode
\label{rem:Harnack}
  \begin{enumerate}
    \item It is natural to wonder to which extent the minimal graph~$\Gs$ is completely determined by the curve~$\C$.
This question is answered by~\cite[Theorem~2.5]{GK}: two minimal graphs defining the same spectral curve~$\C$ are not necessarily the same,
but they define the same associated Newton polygon and therefore, they are related by a sequence of explicit local transformations.
In~\cite[Section~7]{BCdTelliptic}, see also Section~\ref{sub:inv} below, we check that the models defined by Fock's operators are invariant
under these local transformations.
   \item Another natural question is whether every periodic Kasteleyn operator on a minimal graph~$\Gs$
with spectral curve~$\Cscr$ is gauge-equivalent
to Fock's Kasteleyn operators for some~$\mapalpha\in{X}^\mathit{per}_\Gs$ and some~$t\in(\RR/\ZZ)^g$.
The answer is positive, and is a consequence of Theorem~\ref{thm:Harnack} together with~\cite[Theorem~7.3]{GK}.
    \item Two dimer models on the same graph but coming from different  M-curves, or different angle maps,
define different spectral curves, and are therefore not gauge-equivalent.
Moreover, by Theorem~\ref{thm:Harnack} and~\cite[Theorem~7.3]{GK}, we have the following result:
two periodic dimer models on the same minimal graph~$\Gs$ arising from the same M-curve,
the same angle map, and elements~$t,t'\in(\RR/\ZZ)^{g}$
are gauge equivalent if and only if~$t=t'$.

\item Theorem~\ref{thm:Harnack} shows that any spectral data can be realized by a periodic minimal graph.
Does this statement hold for a smaller class of (periodic, bipartite) graphs?
A natural candidate is given by the set of bipartite \emph{isoradial} graphs~\cite{Kenyon:crit},
which can be described as the bipartite graphs whose \new{train-tracks} do not self-intersect and meet at most once~\cite{K-S}.
However, it is unknown whether every convex polygon can be realized as the Newton polygon of such an isoradial graph.

\item \new{As defined by Goncharov-Kenyon~\cite{GK}, the spectral data of a dimer model consists not only of the couple~$(\C,D)$, but also of a {\em parametrization\/}~$\nu$ of the divisor at infinity of the spectral curve.
More concretely,~$\nu$ should be thought of as a total cyclic ordering of the boundary points of~$\C$, see~\cite{KO:Harnack}.
Via the spectral transform, such a parametrization is realized by the set~$\T_1$ of train-tracks of~$\Gs_1$.}

\new{
In our context, recall that~${X}^\mathit{per}_\Gs$ consists of maps~$\mapalpha\colon\T_1\to A_0$ which
preserve the partial cyclic ordering on~$\T_1$.
Therefore, one can very well fix a full spectral data~$(\C,D,\nu)$ in the statement of Theorem~\ref{thm:Harnack}: indeed, the specification of~$\nu$ simply fixes a total cyclic ordering on~$\T_1$ compatible with its intrinsic partial ordering, and hence
restricts the possible angle maps~$\mapalpha\in{X}^\mathit{per}_\Gs$ to those which preserve this total ordering.}
  \end{enumerate}
\end{rem}

\subsection{Ergodic Gibbs measures}
\label{sub:Gibbs}

For any planar, bipartite, periodic weighted graph~$\Gs$ (not necessarily minimal),
the set of ergodic Gibbs measures on dimer configurations was completely characterized in the
work of Kenyon, Okounkov and Sheffield~\cite{KOS}: they form a two-parameter
family~$(\mathbb{P}^{B})$ indexed by~$B=(B_x,B_y)\in\mathbb{R}^2$.
All these measures are determinantal and have an explicit expression in terms of
the periodic Kasteleyn operator~$\K$ and its companion~$\K(z,w)$: for any~$B$,
the probability of occurrence of~$k$ distinct edges~$\es_1=\ws_1\bs_1,\dotsc,\es_k=\ws_k\bs_k$ is given by
\begin{equation}
\label{eq:GibbsKOS}
  \mathbb{P}^B(\es_1,\dotsc,\es_k)=\left(\prod_{j=1}^k \K_{\ws_j,\bs_j}\right)
  \det_{1\leq i,j\leq k} A^B_{\bs_i,\ws_j}\,.
\end{equation}
Here, the operator~$A^B$ has entries given by the following formula: if~$\ws$ and~$\bs$ are in the same fundamental domain
and~$(m,n)$ belongs to~$\mathbb{Z}^2$, then
\[
  A^B_{\bs+(m,n),\ws}=\iint_{\TT_B} \K(z,w)^{-1}_{\bs,\ws}\,z^m w^n
  \frac{dz}{2i\pi z}\frac{dw}{2i\pi w}
  =\iint_{\TT_B} \frac{Q(z,w)_{\bs,\ws}}{P(z,w)}z^m w^n
\frac{dz}{2i\pi z}\frac{dw}{2i\pi w}\,,
\]
with~$Q(z,w)$ the adjugate matrix of~$\K(z,w)$ and~$\TT_B=\{(z,w)\in{(\mathbb{C}^*)}^2\ ;\ |z|=e^{B_y}, |w|=e^{-B_x}\}$.

The phase diagram of this family is described by the amoeba of the characteristic
polynomial~$P(z,w)$, see~\cite[Theorem~4.1]{KOS}:
\begin{itemize}
  \item If~$U$ is a connected component of the complement of the interior of the amoeba,
then all the values of~$B$ inside~$U$ give the same measure:
    \begin{itemize}
      \item if~$U$ is unbounded, the measure is called \emph{solid} or
        \emph{frozen}: \new{every edge has} a deterministic state;
      \item if~$U$ is bounded, the measure is called \emph{gaseous} (or \emph{smooth} in the more general terminology of random surfaces):
        correlation between edges decay exponentially fast.
    \end{itemize}
  \item Any $B$ in the interior of the amoeba gives a different measure
    $\mathbb{P}^B$. These measures are called \emph{liquid} (or \emph{rough} in
    the terminology of random surfaces): the covariance of two edges at distance
    $n$ decays like $n^{-2}$.
\end{itemize}

We now directly relate these operators $A^B$ with our operators $\A^{u_0}$, indexed by the
subset~$\D=\Sigma^+\setminus\mapalpha(\Tbip)$ of~$\Sigma$.

\begin{thm}
  \label{prop:ABAu0}
  For any~$B=(B_x,B_y)$ in the amoeba of~$\C$, let~$u_0$ be the unique element of~$\D$
such that~$\log|z(u_0)| = B_y$ and~$\log|w(u_0)| = -B_x$.
  Then, the operators~$A^{B}$ and~$\A^{u_0}$ coincide.
\end{thm}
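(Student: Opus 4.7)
The plan is to start from the KOS double integral representation of $A^B$, reduce it to a one-dimensional integral on $\Sigma$ by pulling back along the birational parametrization $\psi\colon\Sigma\to\C$ of Proposition~\ref{prop:param_curve}, and then show that the resulting contour is homologous to $\Cs^{u_0}_{\bs+(m,n),\ws}$ within the complement of the singularities of $g_{\bs+(m,n),\ws}$.

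The first step applies Fubini together with the residue theorem. For fixed $z$ on the circle $\{|z|=e^{B_y}\}$, the inner integral
\[
\oint_{|w|=e^{-B_x}}\frac{Q_{\bs,\ws}(z,w)\,z^mw^{n-1}}{P(z,w)}\,\frac{dw}{2i\pi}
\]
evaluates to the sum of the residues at those roots $w_k(z)$ of $P(z,\cdot)$ lying inside $\{|w|<e^{-B_x}\}$, yielding contributions of the form $\frac{Q_{\bs,\ws}(z,w_k)\,z^mw_k^{n-1}}{\partial_wP(z,w_k)}$. Pulling back under $\psi$ reparametrizes the remaining $z$-integral as an integral over the 1-cycle
\[
C_B:=\{u\in\Sigma\,:\,|z(u)|=e^{B_y}\text{ and }|w(u)|\le e^{-B_x}\}\,.
\]
Using the identity $\lambda(u)=\frac{dz(u)}{z(u)\,w(u)\,\partial_wP(\psi(u))}$ together with Lemma~\ref{lem:Q} and the multiplicative quasi-periodicity $g_{\bs+(m,n),\ws}(u)=z(u)^m w(u)^n\,g_{\bs,\ws}(u)$ inherited from the very definition of $z$ and $w$, these manipulations collapse to
\[
A^B_{\bs+(m,n),\ws}=\frac{1}{2i\pi}\int_{C_B} g_{\bs+(m,n),\ws}(u)\,.
\]

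The second step is to compare $C_B$ with the path $\Cs^{u_0}_{\bs+(m,n),\ws}$ of Section~\ref{sub:inverse}. By the symmetry $\overline{\psi(u)}=\psi(\sigma(u))$ and Lemma~\ref{lem:primeform_conj}, the orientation-reversed cycle $\sigma(C_B)$ coincides with $-C_B$. For $B$ in the interior of the amoeba, $C_B$ is a simple arc from $\sigma(u_0)$ to $u_0$, since the endpoints $\{u_0,\sigma(u_0)\}$ are precisely $\psi^{-1}(\TT_B\cap\C)$; it avoids $\mapalpha(\Tbip)\subset A_0$, which parametrizes the zeros and poles of $z$ and $w$; and it crosses $A_0$ in the complement of the sector $s_{\bs+(m,n),\ws}$ of Lemma~\ref{lem:sector}, as follows from the description of the poles of $g_{\bs+(m,n),\ws}$ in terms of the cyclic position of the $\alpha_T$'s on $A_0$. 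These properties characterize $C_B$ up to continuous deformation in $\Sigma\setminus s_{\bs+(m,n),\ws}$, so $C_B$ and $\Cs^{u_0}_{\bs+(m,n),\ws}$ yield the same integral of $g_{\bs+(m,n),\ws}$.

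The main obstacle will be the degenerate situations when $B$ lies on the boundary of $\Ascr$, that is, when $u_0$ belongs to one of the real components $A_0,A_1,\dotsc,A_g$. When $u_0\in A_j$ for $j\ge 1$, the two preimages $u_0$ and $\sigma(u_0)$ coalesce and $C_B$ becomes a closed loop, which must be shown to be homologous to $B_j$, matching the prescribed limit form of $\Cs^{u_0}$. When $u_0\in A_0$, $C_B$ instead degenerates to a contractible loop around $u_0$ possibly enclosing some poles of $g_{\bs+(m,n),\ws}$, whose residues must be reconciled with the corresponding case in the definition of $\Cs^{u_0}$ via the function $H^{u_0}$ of Lemma~\ref{lem:H}. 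Handling these limits cleanly will require either a direct geometric analysis tracking the components of $C_B$ as $B$ approaches $\partial\Ascr$, or a continuity argument in $u_0$ relying on the uniqueness of the inverse supplied by Theorem~\ref{thm:Kinv} (via Lemma~\ref{lem:maxpp}), following the blueprint set out for the elliptic case in~\cite{BCdTelliptic}.
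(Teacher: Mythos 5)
Your overall strategy matches the paper's: perform a residue computation in $w$ for fixed $z$, pull the remaining $z$-integral back to $\Sigma$ via $\psi$, use Lemma~\ref{lem:Q} and the quasi-periodicity of $g$, and finally compare the resulting contour with $\Cs^{u_0}_{\bs+(m,n),\ws}$. However, there are two points where your argument is incomplete.

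The more serious one is in the residue step. You assert that for every $(m,n)$ the inner integral over $|w|=e^{-B_x}$ equals the sum of residues at the zeros of $P(z,\cdot)$ inside the disk. This is not true in general: $Q(z,w)_{\bs,\ws}/P(z,w)$ is a rational function in $w$ which, together with the factor $w^{n-1}$, may contribute an additional pole at $w=0$ unless $n$ is large enough. The paper handles this by first invoking Lemma~\ref{lem:maxpp} applied to $f=A^B_{\cdot,\ws}-A^{u_0}_{\cdot,\ws}$: since $\Ks f=0$, equality on all but finitely many black vertices forces equality everywhere. This reduction licenses the restriction to $n$ large (with the remaining regimes obtained by exchanging $z\leftrightarrow w$ or replacing $z,w$ by their inverses), which is exactly what makes the residue computation clean. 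Your mention of Lemma~\ref{lem:maxpp} only in connection with the boundary $u_0$ misplaces its role; you would need it earlier to justify the residue step, and without it that step has a genuine gap.

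A secondary inaccuracy: you describe $C_B$ as a simple arc from $\sigma(u_0)$ to $u_0$. In fact $\psi^{-1}(\{|z|=e^{B_y},\,|w|\le e^{-B_x}\})$ is generally an arc \emph{together with} a collection of closed loops (the sheets of $P(z,\cdot)$ that stay inside the disk for all $z$ on the circle). The paper notes these loops have trivial intersection with the $A$-cycles and can be deformed away without crossing $\mapalpha(\T)$, hence contribute zero; your argument should address why they are harmless rather than tacitly assume they are absent. Finally, the claim that the stated properties ``characterize $C_B$ up to continuous deformation in $\Sigma\setminus s_{\bs+(m,n),\ws}$'' is stronger than what holds: the paper explicitly remarks (after the definition of $\Cs^{u_0}_{\bs,\ws}$) that the path is \emph{not} uniquely determined by those conditions even up to deformation in $\Sigma\setminus\mapalpha(\T)$, only the resulting operator is; the argument you need is a deformation \emph{not crossing} the relevant $\alpha_T$'s, not a uniqueness claim.

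Aside from these, your identification of the key ingredients (Lemma~\ref{lem:Q}, the $\sigma$-symmetry of $C_B$, and the treatment of the boundary cases of the amoeba) follows the paper's blueprint.
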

This result together with Proposition~\ref{prop:param_curve} and~\cite[Theorem~4.1]{KOS} immediately yield the following
alternative presentation of the Gibbs measures and of the associated phase diagram.

\begin{cor}
\label{cor:Gibbs}
Fix a periodic minimal graph~$\Gs$, an M-curve~$\Sigma$, an element~$\mapalpha\in{X}^\mathit{per}_\Gs$
and a real element~$t\in\Jac(\Sigma)$, and consider the dimer model on~$\Gs$ with corresponding Kasteleyn operator~$\Ks$ \new{which we assume to be periodic}.
Then, the set of ergodic Gibbs measures is given by the measures~$(\PP^{u_0})_{u_0\in\D}$ whose expression on cylinder sets is given
as follows: for any set~$\{\es_1=\ws_1 \bs_1,\dotsc,\es_k=\ws_k \bs_k\}$ of distinct edges of~$\Gs$,
\begin{equation}\label{equ:Gibbs}
\PP^{u_0}(\es_1,\dotsc,\es_k)=
\Bigl(\prod_{j=1}^k
\Ks_{\ws_j,\bs_j}\Bigr)\times
\det_{1\leq i,j\leq k} \Bigl(\A^{u_0}_{\bs_i,\ws_j}\Bigr)\,.
\end{equation}
Furthermore, the model is solid (resp. gaseous, liquid) if~$u_0$ belongs
to~$A_0$ (resp. to~$A_1\cup\dotsc\cup A_g$, to the interior of~$\D$).
\qed
\end{cor}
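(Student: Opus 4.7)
The corollary is the direct consequence of Theorem~\ref{prop:ABAu0} combined with the Kenyon-Okounkov-Sheffield classification~\cite[Theorem~4.1]{KOS}, so the substantive content is the proof of Theorem~\ref{prop:ABAu0} (the identification $A^B=\A^{u_0}$). My plan is to first establish that theorem, and then read the corollary off it.

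To prove Theorem~\ref{prop:ABAu0}, I would pull back the KOS double contour integral defining $A^B$ to the M-curve $\Sigma$ via the birational parametrization $\psi$ of Proposition~\ref{prop:param_curve}. The first step is to perform the $w$-integration in
\[
A^B_{\bs+(m,n),\ws}=\iint_{\TT_B}\frac{Q(z,w)_{\bs,\ws}\,z^{m-1}w^{n-1}}{P(z,w)}\,\frac{dz\,dw}{(2i\pi)^2}
\]
by residues along $|w|=e^{-B_x}$, picking up the branches $w_j(z)$ of the spectral curve satisfying $|w_j(z)|<e^{-B_x}$. Each residue $Q(z,w_j)_{\bs,\ws}\,z^{m-1}w_j^{n-1}/\partial_w P(z,w_j)$, once pulled back via $\psi$ and interpreted as a $1$-form on $\Sigma$, equals $z(u)^m w(u)^n\,g_{\bs,\ws}(u)=g_{\bs+(m,n),\ws}(u)$, where the first equality is Lemma~\ref{lem:Q} rearranged and the second is the transformation rule for $g_{\xs,\ys}$ under translation of the vertex. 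The remaining $z$-integration thus becomes $\tfrac{1}{2i\pi}\int_\gamma g_{\bs+(m,n),\ws}$, with $\gamma\subset\Sigma$ the preimage under $\psi$ of $\{|z|=e^{B_y}\}\cap\{|w|<e^{-B_x}\}$.

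The main step, which I expect to be the principal obstacle, is to show that $\gamma$ is homologous in $\Sigma\setminus s_{\bs+(m,n),\ws}$ to the path $\Cs_{\bs+(m,n),\ws}^{u_0}$ of Section~\ref{sub:inverse}, so that the integrals of the meromorphic $1$-form $g_{\bs+(m,n),\ws}$ along both contours coincide. Since $\C$ is Harnack, the amoeba map is at most $2$-to-$1$ onto $\Ascr$; together with the $\sigma$-equivariance from Lemma~\ref{lem:gmeromfuncform}, this restricts to a homeomorphism $\Sigma^+\simeq\Ascr$. For $B$ in the interior of $\Ascr$, the two preimages are precisely $\{u_0,\sigma(u_0)\}$, and a level-set analysis of the real-valued functions $\log|z(u)|$ and $\log|w(u)|$ on $\Sigma$, combined with the location of the poles of $g_{\bs,\ws}$ controlled by the sector $s_{\bs,\ws}$ (Lemma~\ref{lem:sector}), should show that the component of $\gamma$ contributing to the residue theorem is an oriented arc from $\sigma(u_0)$ to $u_0$ crossing $A_0$ outside $s_{\bs,\ws}$ and satisfying $\sigma(\gamma)=-\gamma$. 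The most delicate subcases are those where $u_0$ degenerates to a real component of $\Sigma$: a careful limiting argument is then required to verify that $\gamma$ acquires the prescribed homology class in the boundary definition of $\Cs_{\bs,\ws}^{u_0}$ (trivial when $u_0\in A_0$, homologous to $B_j$ when $u_0\in A_j$).

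Granting Theorem~\ref{prop:ABAu0}, the corollary follows by inserting $A^B=\A^{u_0}$ into the KOS determinantal formula~\eqref{eq:GibbsKOS}. The phase identification is then read off the same homeomorphism $\Sigma^+\simeq\Ascr$: unbounded complementary components of $\Ascr$ (the solid phase) correspond to the unbounded real component $A_0$ of $\Sigma$; the $g$ bounded complementary components (the gaseous phase) correspond via Proposition~\ref{prop:param_curve} to the $g$ ovals $A_1,\dotsc,A_g$; and the interior of $\Ascr$ (the liquid phase) corresponds to the interior of $\D$.
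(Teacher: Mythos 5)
Your reading of the corollary itself is correct, and you correctly locate the substantive content in Theorem~\ref{prop:ABAu0}. Most of your sketch of that theorem also follows the paper's actual method (residues in $w$, pull-back via~$\psi$, contour analysis using Lemma~\ref{lem:Q}). However, there is one genuine gap.

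Your residue evaluation tacitly assumes that, for every~$(m,n)\in\ZZ^2$, the only poles of $w\mapsto Q(z,w)_{\bs,\ws}\,w^{n-1}/P(z,w)$ inside the disc $\{|w|<e^{-B_x}\}$ are the branches~$w_j(z)$ of the spectral curve. That is false in general: $Q$ and $P$ are Laurent polynomials in~$z,w$, so for small or negative~$n$ the factor~$w^{n-1}$ (and possibly the Laurent tails of~$Q,P$) creates an extra pole at~$w=0$ whose residue is not accounted for. The residue computation is only valid when~$n$ is sufficiently large; by exchanging and/or inverting~$z$ and~$w$ one can reach sufficiently large~$m$, or sufficiently negative exponents, but this leaves a finite set of~$(m,n)$ for which no residue argument applies. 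Without handling this finite region, one has not shown $A^B_{\bs+(m,n),\ws}=\A^{u_0}_{\bs+(m,n),\ws}$ for all~$(m,n)$, and the claimed equality of operators is not established.

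The paper closes this gap with Lemma~\ref{lem:maxpp}: since both~$A^B$ (by KOS) and~$\A^{u_0}$ (by Theorem~\ref{thm:Kinv}) are inverses of~$\Ks$, the difference $f=A^B_{\cdot,\ws}-\A^{u_0}_{\cdot,\ws}$ lies in the kernel of~$\Ks$. The residue computation shows that~$f$ vanishes outside a finite set of black vertices, i.e.\ that~$f$ has finite support; Lemma~\ref{lem:maxpp} then forces $f\equiv 0$. This kernel-vanishing step is essential and your proposal omits it.
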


Before turning to the proof of Theorem~\ref{prop:ABAu0}, let us emphasize the key property of Corollary~\ref{cor:Gibbs}.

\begin{rem}\label{rem:proba_edge}
Although it is far from obvious from their
  expression~\eqref{eq:GibbsKOS},
  the local statistics~\eqref{equ:Gibbs}
for the Gibbs measures $(\PP^{u_0})_{u_0\in\D}$ are \emph{local},
in the sense that~$\PP^{u_0}(\es_1,\dotsc,\es_k)$ only depends on the
graph~$\Gs$ in a neighborhood of~$\es_1\cup\dotsb\cup\es_k$, see however Remark~\ref{rem:weight_Fock_intro}. As an example of application, let us compute the probability of occurrence of a single edge $\es=\ws\bs$ using the notation of Figure~\ref{fig:around_rhombus}.

Using Corollary~\ref{cor:Gibbs} and the definition of $\As^{u_0}$, see Equation~\eqref{eq:Kinv}, we have:
 \[
 \PP^{u_0}(\es)=\frac{1}{2i\pi}\int_{\Cs^{u_0}_{\bs,\ws}} \Ks_{\ws,\bs}g_{\bs,\ws}.
 \]
Now, using Fay's identity in the form of~\eqref{eq:Fay_ter} to compute the product
$\Ks_{\ws,\bs}g_{\bs,\ws}$, we obtain
 \[
 \PP^{u_0}(\es)=\frac{1}{2i\pi}\int_{\Cs^{u_0}_{\bs,\ws}}\omega_{\beta-\alpha}+
 \frac{1}{2i\pi}
 \sum_{j=1}^g \Bigl(\frac{\partial \log\theta}{\partial z_j}(\wt{t}+\wt{\mapd}(\fs))-
 \frac{\partial \log\theta}{\partial z_j}(\wt{t}+\wt{\mapd}(\fs'))\Bigr)\int_{\Cs^{u_0}_{\bs,\ws}} \omega_j,
 \]
where recall that
$\omega_{\beta-\alpha}=\ud_{u}\log\frac{E(u,\beta)}{E(u,\alpha)}$ is the unique
meromorphic 1-form with $0$ integral along $A$-cycles, and two simple poles: at $\beta$ with residue 1, and $\alpha$ with residue $-1$.
Using the definition of the contours in the different phases, see Section~\ref{sub:inverse}, we can push the computation further.

$\bullet$ Gaseous phases. Then $\Cs^{u_0}_{\bs,\ws}$ is homologous to $B_k$
  for some $1\leq k\leq g$. Using Riemann's bilinear
  relation~\eqref{eq:RiemannBilin} with the differential of the third kind
  $\omega_{\beta-\alpha}$, and the fact that
  $\int_{B_k}\omega_j=\Omega_{j,k}$,
  we obtain
\[
\PP^{u_0}(\es)=\int_{\alpha}^{\beta}\omega_k + \frac{1}{2i\pi}
\sum_{j=1}^g \Omega_{j,k}
\Bigl(\frac{\partial \log\theta}{\partial z_j}(\wt{t}+\wt{\mapd}(\fs))-
\frac{\partial \log\theta}{\partial z_j}(\wt{t}+\wt{\mapd}(\fs'))\Bigr),
\]
where the path of integration from $\alpha$ to $\beta$ lies in the surface $\Sigma$ cut along
$\{A_j,B_j:\, 1\leq j\leq g\}$, see Figure~\ref{fig:cutopen}.

  Because in the surgery to cut open $\Sigma$, we take $\Cs_{\bs,\ws}^{u_0}$ as
  a realization of $B_k$, and since by definition, the contour
  $\Cs_{\bs,\ws}^{u_0}$ does not intersect $A_0$ in
  the oriented arc from $\alpha$ to $\beta$, the integration from $\alpha$ to
  $\beta$ of $\omega_k$ is really an integral along $A_0$ in the positive
  direction.

$\bullet$ Solid phases. Then, since we are integrating holomorphic
  1-forms on closed contours bounding disks in $\Sigma$, the integrals $\int_{\Cs^{u_0}_{\bs,\ws}} \omega_j$ are all equal to 0. The first integral is non zero if and only if $u_0$ is such that the cyclic order $[\alpha,u_0,\beta]$ on $A_0$ is preserved. In this case, it is equal to the residue at $\beta$ which is 1 by definition of $\omega_{\beta-\alpha}$, so we find
 \[
\PP^{u_0}(\ws\bs)=\II_{\{\text{the cyclic relation $[\alpha,u_0,\beta]$ holds in $A_0$}\}}.
 \]
$\bullet$ Liquid phase. We use the explicit form of $\omega_{\beta-\alpha}$, and obtain
   \begin{multline*}
\PP^{u_0}(\ws\bs)=\frac{1}{2i\pi}\log \frac{E(\sigma(u_0),\alpha)}{E(u_0,\alpha)}
\frac{E(u_0,\beta)}{E(\sigma(u_0),\beta)}\\
+\frac{1}{2i\pi}
 \sum_{j=1}^g \Bigl(\frac{\partial \log\theta}{\partial z_j}(\wt{t}+\wt{\mapd}(\fs))-
 \frac{\partial \log\theta}{\partial z_j}(\wt{t}+\wt{\mapd}(\fs'))\Bigr) \int_{\Cs_{\ws,\bs}^{u_0}} \omega_j.
\end{multline*}
Note that \new{specializing} to the case $g=1$, we recover the computation of Proposition 43 of~\cite{BCdTelliptic}.
\end{rem}

Theorem~\ref{prop:ABAu0} is a generalization of~\cite[Theorem~34]{BCdTelliptic} which deals with the elliptic case.
We follow the same strategy for the proof: we do not use a uniqueness argument for inverses of~$\Ks$ with some growth property,
but perform a direct computation to partially evaluate the double integral defining~$A^B_{\bs+(m,n),\ws}$ by taking a residue,
then make a change of variable to
transform the remaining integral as an integral on the surface~$\Sigma$.

Before doing that, let us associate to any closed, oriented, dual path~$\gamma$ on~$\Gs_1$
the following function : for~$\wt{u}\in\wt\Sigma$, set
\begin{equation}
  J_\gamma(\wt{u}) = \sum_{\es=\ws\bs}
  (\es\wedge\gamma)\K_{\ws,\bs}g_{\bs,\ws}(\wt{u}),
 \label{eq:defJgamma}
\end{equation}
where~$\es\wedge\gamma\in\ZZ$ denotes the algebraic intersection number of the oriented edge~$\es=\ws\bs$ with
the oriented curve~$\gamma$.
By Lemma~\ref{lem:gmeromfuncform}, the function~$J_\gamma$ on~$\wt{\Sigma}$
projects to a meromorphic 1-form on~$\Sigma$.
We now relate~$J_\gamma$ with~$z$ and~$w$ given by Equation~\eqref{eq:expr_zu_wu}.

\begin{prop}
  \label{prop:J_dlog}
  For any closed oriented dual path~$\gamma$ on~$\Gs_1$, we have the equality
 \[
J_\gamma=-d\log z_\gamma\,,
\]
where~$z_\gamma$ stands for~$z^{h_\gamma}w^{v_\gamma}$ if~$[\gamma]=(h_\gamma,v_\gamma)\in\ZZ^2=H_1(\TT;\ZZ)$.
\end{prop}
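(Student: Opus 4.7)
The plan is to substitute the Fay-based decomposition of $\K_{\ws,\bs}\,g_{\bs,\ws}$ derived in the proof of Theorem~\ref{thm:Kinv} into the definition~\eqref{eq:defJgamma} of $J_\gamma$, and then exploit the fact that $\gamma$ is a closed 1-chain on $\Gs^*_1$. Specifically, Equation~\eqref{eq:Fay_ter} applied as in that proof yields
\[
\K_{\ws,\bs}\,g_{\bs,\ws} = \omega_{\beta-\alpha} + \sum_{j=1}^g c^j_{\ws,\bs}\,\omega_j\,,
\]
with $c^j_{\ws,\bs} = F_j(\fs) - F_j(\fs')$ and $F_j(\fs^*) := \frac{\partial \log\theta}{\partial z_j}(\wt t + \wt\mapd(\fs^*))$. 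Substituting into~\eqref{eq:defJgamma} splits $J_\gamma$ into one contribution involving the 1-forms $\omega_{\beta_\es - \alpha_\es}$ and one involving the $\omega_j$'s; I plan to show that the second contribution vanishes while the first reproduces $-d\log z_\gamma$.

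For the $\omega_j$-contribution, I first note that the standing assumption that $\K$ is periodic, together with Proposition~\ref{prop:angles_perio} and Equation~\eqref{eq:per-eta}, implies that both $\sum_T v_T\,\wt\alpha_T$ and $\sum_T h_T\,\wt\alpha_T$ belong to $\ZZ^g$; combined with the $\ZZ^g$-periodicity of $\theta$, this makes $F_j$ a well-defined function on the faces of $\Gs_1$. Writing $c^j_\es = F_j(\fs_\es) - F_j(\fs'_\es)$ and regrouping the sum by faces, the coefficient of $\omega_j$ in $J_\gamma$ becomes
\[
\sum_{\fs^* \in \Gs^*_1} F_j(\fs^*)\sum_\es (\es\wedge\gamma)\bigl(\II_{\{\fs_\es=\fs^*\}} - \II_{\{\fs'_\es=\fs^*\}}\bigr)\,,
\]
and the inner sum is, up to an overall sign, the coefficient of $\fs^*$ in the boundary of $\gamma$ regarded as a 1-chain on $\Gs^*_1$; this vanishes because $\gamma$ is closed.

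To identify the remaining contribution $\sum_\es (\es\wedge\gamma)\,\omega_{\beta_\es - \alpha_\es}$ with $-d\log z_\gamma$, I use the equality $\omega_{\beta-\alpha}(\wt u) = d_{\wt u}\log E(\wt u, \wt\beta) - d_{\wt u}\log E(\wt u, \wt\alpha)$ and regroup the sum by train-track. The coefficient of $d_{\wt u}\log E(\wt u, \wt\alpha_T)$ is then
\[
\kappa(T) := \sum_\es (\es\wedge\gamma)\bigl(\II_{\{T_{\beta_\es}=T\}} - \II_{\{T_{\alpha_\es}=T\}}\bigr)\,.
\]
The core step, which I expect to be the most delicate part of the proof and to require careful bookkeeping of sign conventions, is the combinatorial identity $\kappa(T) = [\gamma]\wedge[T] = h_\gamma v_T - v_\gamma h_T$: at each crossing of $\gamma$ with an edge $\es$, the two train-tracks through the quadrilateral containing $\es$ are locally transverse to $\gamma$, and the ``black-on-the-right'' orientation convention for bipartite train-tracks (combined with the orientation of $\es$ from $\ws$ to $\bs$) forces them to be traversed with opposite signs, contributing respectively $+(\es\wedge\gamma)$ to $[\gamma]\wedge[T_{\beta_\es}]$ and $-(\es\wedge\gamma)$ to $[\gamma]\wedge[T_{\alpha_\es}]$. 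Summing over crossings and comparing with the explicit formulas~\eqref{eq:expr_zu_wu} for $z$ and $w$ yields
\[
\sum_\es (\es\wedge\gamma)\,\omega_{\beta_\es - \alpha_\es} = \sum_{T\in\T_1}(h_\gamma v_T - v_\gamma h_T)\,d_{\wt u}\log E(\wt u, \wt\alpha_T) = -d\log z_\gamma\,,
\]
which completes the identification.
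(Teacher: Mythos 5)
Your argument is correct, and the underlying calculations coincide with the paper's, but you organize them into a genuinely different proof. The paper first reduces the claim to a uniqueness statement for meromorphic $1$-forms: it verifies that both $J_\gamma$ and $-d\log z_\gamma$ have vanishing $A$-periods, poles only at those $\alpha_T$ with $T\wedge\gamma\neq 0$, and residue $-T\wedge\gamma$ there, and concludes by uniqueness. You instead keep the computation explicit throughout: after killing the $\omega_j$-contribution using the closedness of $\gamma$ exactly as in the paper, you expand each $\omega_{\beta_\es-\alpha_\es}$ as $d_{\wt u}\log E(\wt u,\wt\beta_\es)-d_{\wt u}\log E(\wt u,\wt\alpha_\es)$ on $\wt\Sigma$, regroup by train-track, and prove that the accumulated coefficient $\kappa(T)$ equals $h_\gamma v_T-v_\gamma h_T$, at which point Equation~\eqref{eq:expr_zu_wu} identifies the resulting sum as $-d\log z_\gamma$ outright. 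Your $\kappa(T)$ is precisely the paper's residue of $J_\gamma$ at $\alpha_T$ (equal to $\gamma\wedge T=-T\wedge\gamma$), so both proofs invoke the same intersection-theoretic bookkeeping, and your sign sketch — the two train-tracks through an edge crossed by $\gamma$ are traversed with opposite signs because of the bipartite black-on-the-right convention — matches the paper's discussion around Figure~\ref{fig:intersec_tt_edge_gammax}. One step you should make explicit: the individual terms $d_{\wt u}\log E(\wt u,\wt\alpha_T)$ are not $1$-forms on $\Sigma$ (they have nontrivial $B$-quasi-periodicity, while they are $A$-periodic), so the regrouped sum descends to $\Sigma$ only because $\sum_T\kappa(T)=0$, which holds since $\sum_{T\in\T_1}(h_T,v_T)=0$. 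Your route dispenses with separately checking the three defining properties of $-d\log z_\gamma$, at the modest cost of this descent verification.
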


\begin{proof}
  These two quantities define meromorphic 1-forms on~$\Sigma$. To
  prove they are equal, it is enough to show that they have the same singular
  parts, and the same periods along the cycles~$A_j$,~$1\leq j\leq g$.
More precisely, we prove that these two differential forms
  \begin{itemize}
    \item have~0 integral along~$A$-cycles;
    \item have no pole outside of $\{\alpha_T\ ;\ T\in\Tbip_1,\ T\wedge\gamma\neq 0\}$;
      \item admit $\alpha_T$ as a simple pole
        with residue $-T\wedge\gamma$ if $T\wedge\gamma$ does not vanish.
    \end{itemize}

  Let us start with $J_\gamma$.
  First, we rewrite Fay's identity in the form of Equation~\eqref{eq:Fay_ter}
in terms of~$\K$ and~$\g$ for two neighboring vertices~$\bs$ and~$\ws$ with
  the convention of Figure~\ref{fig:around_rhombus}.
 It yields the following unique decomposition of the meromorphic 1-form~$\Ks_{\ws,\bs}g_{\bs,\ws}$
as a sum of a holomorphic part (\emph{i.e.}, a linear combination of the $\omega_j$'s) and a
  meromorphic part with zero integral along the $A$-cycles:
  \begin{equation}
    \Ks_{\ws,\bs} g_{\bs,\ws}
    =
    \omega_{\beta-\alpha}+\sum_{j=1}^g\left(%
      \frac{\partial\log\theta}{\partial z_j}
      (\wt{t}+\wt\mapd(\fs))
      -
        \frac{\partial\log\theta}{\partial z_j}
        (\wt{t}+\wt\mapd(\fs'))
    \right)\omega_j\,.
    \label{eq:Faydiff}
  \end{equation}
  Note that the coefficient of~$\omega_j$ is the difference of the same function
  evaluated at~$\fs$ and~$\fs'$. By definition, the 1-form~$J_\gamma$ is the  weighted sum of these
  contributions, with weights given by the algebraic  intersections of~$\gamma$ with edges $\es=\ws\bs$.

  The first point now follows from the fact that for any face~$\fs$, the
  \new{term}~$\frac{\partial\log\theta}{\partial z_j}(\wt{t}+\wt\mapd(\fs))$ appears 
  in the sum defining~$J_\gamma$ \new{with coefficient~$\sum_{\es\subset\partial\fs}(\es\wedge\gamma)=\partial\fs\wedge\gamma=0$ since~$\gamma$ is a closed oriented path and~$\partial\fs$ a
  closed oriented path that bounds; hence,} the total coefficient of~$\omega_j$ in that sum is 0.
To check the second and third points, note that the only poles come from train-tracks
intersecting~$\gamma$. Fix a train-track~$T_\alpha$ of $\T_1$ with angle~$\alpha$. Every
intersection of~$T_\alpha$ with~$\gamma$ corresponds to an edge. The contribution
of this intersection to~$T_\alpha\wedge\gamma$ can be positive (as on
Figure~\ref{fig:intersec_tt_edge_gammax}), and the residue of~$\omega_{\beta-\alpha}$ is~$-1$,
or it can be negative, and the residue is~$+1$. Summing all these contributions,
we see that the total residue of~$J_\gamma$ at~$\alpha$ is~$-T_\alpha\wedge\gamma$.

\begin{figure}[htb]
  \centering
  \def\svgwidth{6cm}
\begingroup%
  \makeatletter%
  \providecommand\color[2][]{%
    \errmessage{(Inkscape) Color is used for the text in Inkscape, but the package 'color.sty' is not loaded}%
    \renewcommand\color[2][]{}%
  }%
  \providecommand\transparent[1]{%
    \errmessage{(Inkscape) Transparency is used (non-zero) for the text in Inkscape, but the package 'transparent.sty' is not loaded}%
    \renewcommand\transparent[1]{}%
  }%
  \providecommand\rotatebox[2]{#2}%
  \newcommand*\fsize{\dimexpr\f@size pt\relax}%
  \newcommand*\lineheight[1]{\fontsize{\fsize}{#1\fsize}\selectfont}%
  \ifx\svgwidth\undefined%
    \setlength{\unitlength}{582.2603951bp}%
    \ifx\svgscale\undefined%
      \relax%
    \else%
      \setlength{\unitlength}{\unitlength * \real{\svgscale}}%
    \fi%
  \else%
    \setlength{\unitlength}{\svgwidth}%
  \fi%
  \global\let\svgwidth\undefined%
  \global\let\svgscale\undefined%
  \makeatother%
  \begin{picture}(1,0.50221329)%
    \lineheight{1}%
    \setlength\tabcolsep{0pt}%
    \put(0,0){\includegraphics[width=\unitlength,page=1]{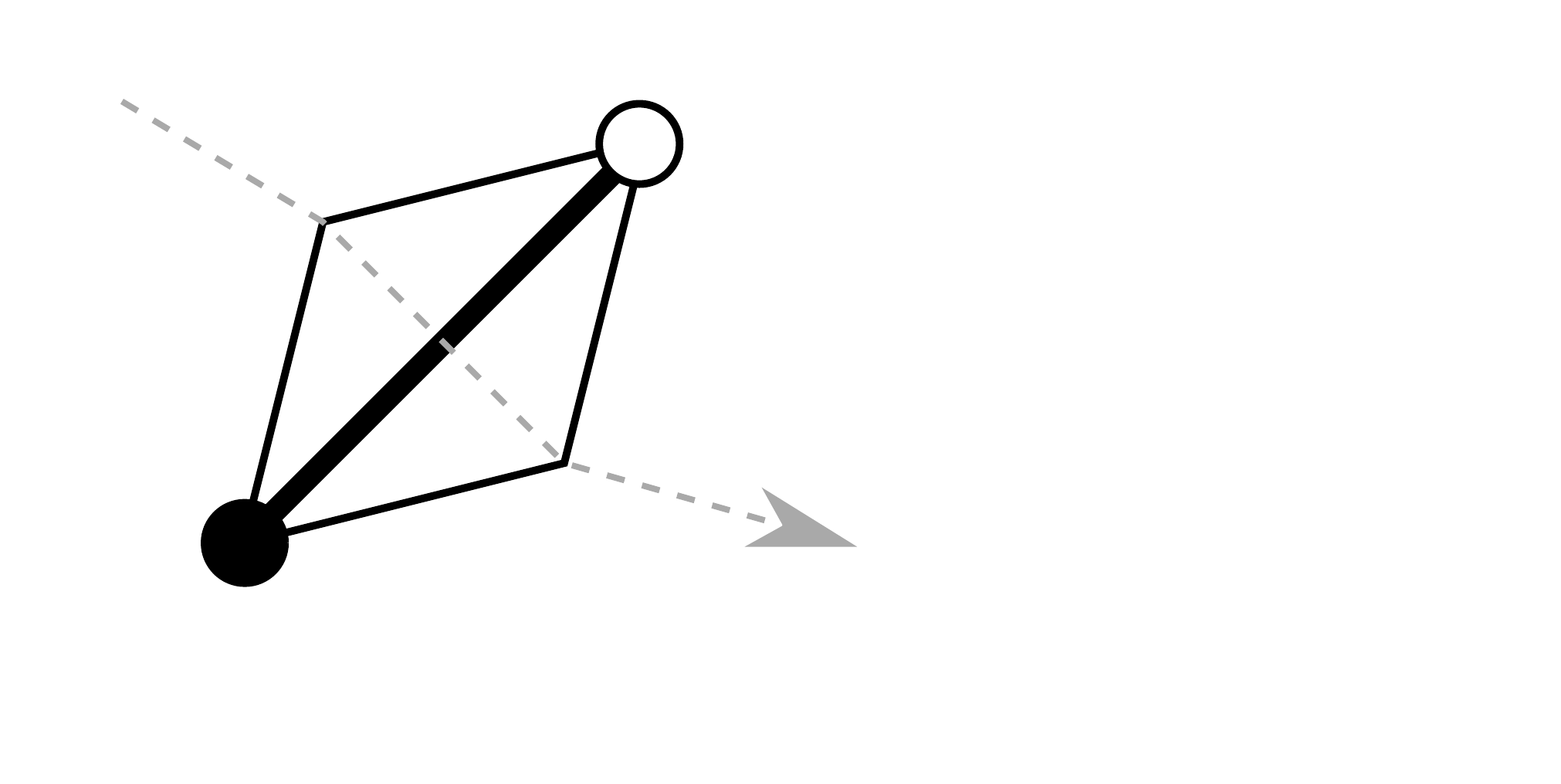}}%
    \put(0.59337397,0.15384039){\color[rgb]{0,0,0}\makebox(0,0)[lt]{\lineheight{1.25}\smash{\begin{tabular}[t]{l}$\gamma_x$\end{tabular}}}}%
    \put(0.4130423,0.46298042){\color[rgb]{0,0,0}\makebox(0,0)[lt]{\lineheight{1.25}\smash{\begin{tabular}[t]{l}$\ws$\end{tabular}}}}%
    \put(0.07814057,0.10231704){\color[rgb]{0,0,0}\makebox(0,0)[lt]{\lineheight{1.25}\smash{\begin{tabular}[t]{l}$\bs$\end{tabular}}}}%
    \put(0,0){\includegraphics[width=\unitlength,page=2]{fig_intersec_tt_edge_gammax.pdf}}%
    \put(0.56761228,0.01215122){\color[rgb]{0,0,0}\makebox(0,0)[lt]{\lineheight{1.25}\smash{\begin{tabular}[t]{l}$T_\alpha$\end{tabular}}}}%
    \put(0.74794399,0.34705289){\color[rgb]{0,0,0}\makebox(0,0)[lt]{\lineheight{1.25}\smash{\begin{tabular}[t]{l}$T_\beta$\end{tabular}}}}%
  \end{picture}%
\endgroup%

  \caption{Intersection of the two train-tracks corresponding to an edge and
  $\gamma_x$ when the white vertex is on the left of $\gamma_x$.}
  \label{fig:intersec_tt_edge_gammax}
\end{figure}

We now turn to~$d\log z_\gamma$.
To check the first point, note that~$z_\gamma$ is real along the~$A$-cycles, and does not vanish as all its zeros and poles lie on~$A_0$.
Therefore, the argument of~$z_\gamma$ is constant along any~$A_j$, so the integral of~$d\log z_\gamma$ is equal to 0.
To prove the second and third points, we use Equation~\eqref{eq:expr_zu_wu} to express~$z_\gamma$ in terms of angles associated to train-tracks.
It follows that the 1-form~$d\log z_\gamma$ has a
  simple pole at~$\alpha_T$ if the train-track~$T$ intersects~$\gamma$,
  and that the associated residue is the degree of that point in the divisor of~$z_\gamma$,
which is given by~$h_Tv_\gamma - v_Th_\gamma = T\wedge\gamma$. This concludes the proof.
\end{proof}

\begin{rem}\label{rem:J_gamma}
  For $\gamma=\gamma_y$, we can automatically keep track of the edges of $\Gs_1$
  intersecting $\gamma_y$ with the correct signs and multiplicities, by using
  $\Ks(z,w)$ as a kind of generating function for those. Because of the definition
  of the variable $w$, selecting these intersections with the correct signs
  boils down to applying the $w\partial_w$ differential operator to $\Ks(z,w)$. Thus we have, see also Equation (27) of~\cite{BCdTelliptic},
  \begin{equation*}
    J_{\gamma_y}(u)=-\sum_{\bs,\ws}w(u)\partial_w
    \Ks(z(u),w(u))_{\ws,\bs}g_{\bs,\ws}(u).
  \end{equation*}
  Likewise, for $\gamma=\gamma_x$, we have
  \begin{equation*}
    J_{\gamma_x}(u)=+\sum_{\bs,\ws}z(u)\partial_z
    \Ks(z(u),w(u))_{\ws,\bs}g_{\bs,\ws}(u).
  \end{equation*}
\end{rem}

\medskip

We are finally ready to give the proofs of Lemma~\ref{lem:Q}
relating~$Q(z,w)_{\bs,\ws}$ and~$g_{\bs,\ws}$, and of Theorem~\ref{prop:ABAu0}
on the equality of the operators~$A^B$ and~$A^{u_0}$.

\begin{proof}[Proof of Lemma~\ref{lem:Q}]
Let~$Q(z,w)$ be the adjugate matrix of~$\Ks(z,w)$.  We need to show the equality
 \begin{equation}
\label{eq:Q}
Q(z(u),w(u))_{\bs,\ws}\lambda(u) = g_{\bs,\ws}(u)
\end{equation}
for every~$\bs$ and~$\ws$ on~$\Gs_1$ and every~$u\in\Sigma\setminus\mapalpha(\T)$,
with~$\lambda$ the  meromorphic 1-form on~$\Sigma$ given
by~$\lambda=\frac{dz}{zw\partial_wP(z,w)}=-\frac{dw}{zw\partial_zP(z,w)}$.
 From the fact that the adjugate matrix~$Q(z,w)$ satisfies
  \begin{equation}
    Q(z,w)\K(z,w)=P(z,w) \operatorname{Id},
    \label{eq:adjugate}
  \end{equation}
  we know that it has rank at most~1 on the spectral curve. It is a product of a
  vector in the right kernel of~$\K(z,w)$ times one in the left kernel of~$\K(z,w)$.
Hence, there exists a non-zero meromorphic~1-form~$\lambda$ on~$\Sigma$ such that
Equation~\eqref{eq:Q} is satisfied
for every~$\bs$ and~$\ws$ on~$\Gs_1$.
  Moreover, differentiating~\eqref{eq:adjugate} with respect to~$w$, evaluating on the spectral curve and taking the trace yields, see also~\cite[Lemma 37]{BCdTelliptic}:
  \begin{align*}
  \lambda(u)\partial_w P(z(u),w(u))&=\sum_{\bs,\ws} \partial_w
  \Ks(z(u),w(u))_{\ws,\bs}g_{\bs,\ws}(u)\\
  &=-\frac{1}{w(u)} J_{\gamma_x}(u)= \frac{dz}{z w}(u),
  \end{align*}
  where in the last line we use Remark~\ref{rem:J_gamma} and Proposition~\ref{prop:J_dlog}.
This implies the first formula for~$\lambda$, while the second can be obtained in the same way by
exchanging the roles of~$z$ and~$w$ and dealing carefully with signs, or
noticing that on the spectral curve, the following equality holds:
\[
  \partial_z P(z(u),w(u)) dz(u)+
  \partial_w P(z(u),w(u)) dw(u)
  =d(P(z(u),w(u))=0.\qedhere
\]
\end{proof}

\begin{proof}[Proof of Theorem~\ref{prop:ABAu0}]
  We just sketch the proof, as it is analogous to that
  of~\cite[Theorem~34]{BCdTelliptic}.
  Consider a  white vertex~$\ws$ of~$\Gs_1$.
  By Lemma~\ref{lem:maxpp} applied to~$f=A^B_{\cdot,\ws}-A^{u_0}_{\cdot,\ws}\in\CC^{\Bs}$,
it is enough to show that for any black vertex~$\bs$ of $\Gs_1$, the equality
  \begin{equation*}
    A^B_{\bs+(m,n),\ws}=A^{u_0}_{\bs+(m,n),\ws}
  \end{equation*}
  holds for all~$(m,n)\in\mathbb{Z}^2$, except possibly for a finite number of values.
  Therefore, we now assume that~$n$ is large enough. The other situations are treated similarly
  by exchanging the roles of~$z$ and~$w$, and/or replacing~$z$ and~$w$ by their
  inverses.

  We now evaluate by the residue theorem one of the two integrals defining
  $A^{B}_{\bs+(m,n)}$: for every $z$ on the circle of radius $e^{B_y}$,
  the only poles of
  \begin{equation*}
    w\mapsto\frac{Q(z,w)_{\bs,\ws}}{P(z,w)} w^{n-1}
  \end{equation*}
  inside the disk $\{|w|<e^{-B_x}\}$ come from zeros of $P(z,\cdot)$,
  and we write
  \[
    \int_{|w|=e^{-B_x}} \frac{Q(z,w)_{\bs,\ws}}{P(z,w)}w^{n-1}\frac{dw}{2i\pi}=
    \sum_{j=1}^{d_{z,B_x}}\frac{Q(z,w_j(z))_{\bs,\ws}}{\partial_w
    P(z,w_j(z))}w_j(z)^{n-1},
    \]
    where $w_1(z),\dotsc,w_{d_{z,B_x}}(z)$ represent the poles of $w\mapsto P(z,w)$
  inside the circle of radius $e^{-B_x}$. When $z$ varies along the circle of
  radius $e^{B_y}$, the points $(z,w_j(z))$ describe paths which can be pulled
  back by $\psi$ as paths on the surface $\Sigma$. This collection of paths
  consists of
  \begin{itemize}
    \item a certain number of closed loops with trivial homology along the
      $A$-cycles
    \item a path connecting $\sigma(u_0)$ to $u_0$ if $B$ is in the interior of
      the amoeba. If $B$ is on the boundary, then this path is also a closed
      loop.
  \end{itemize}
  This family of paths and loops on $\Sigma$ can be deformed without crossing
  any $\{\alpha_T\}_{T\in\T_1}$, to become the path
  $\Cs_{\bs+(m,n),\ws}^{u_0}$.

  Performing the change of variable from~$z$ to~$u$ in the remaining
  integral defining~$A^B_{\bs+(m,n),\ws}$ and using Lemma~\ref{lem:Q}, we have
  \begin{align*}
    A^B_{\bs+(m,n),\ws} &= \int_{|z|=e^{B_y}}
    \sum_{j=1}^{d_{z,B_x}}\frac{Q(z,w_j(z))_{\bs,\ws}}{\partial_w
    P(z,w_j(z))}w_j(z)^{n-1}z^{m-1}\frac{dz}{2i\pi}\\
                        &=\frac{1}{2i\pi}
    \int_{\Cs_{\bs+(m,n),\ws}^{u_0}}\frac{Q(z(u),w(u))_{\bs,\ws}}{z(u)w(u)\partial_w
    P(z(u),w(u))} z^m(u) w^n(u)dz(u)\\
	&=\frac{1}{2i\pi}
\int_{\Cs^{u_0}_{\bs+(m,n),\ws}}g_{\bs,\ws}(u)z(u)^m w(u)^n \\
 &=\frac{1}{2i\pi}
\int_{\Cs^{u_0}_{\bs+(m,n),\ws}}g_{\bs+(m,n),\ws}(u)
= A^{u_0}_{\bs+(m,n),\ws}\,.
  \end{align*}
This concludes the proof.
\end{proof}

\subsection{Slope of the Gibbs measures \texorpdfstring{$\mathbb{P}^{u_0}$}{Pu₀}}
\label{sub:slope}

The discussion of this section follows~\cite[Section~5.6]{BCdTelliptic}.
Therefore, we only develop the new aspects coming from higher genus and often refer the reader to this article for details.

Let us fix~$u_1\in A_0\setminus\mapalpha(\T_1)$ and denote by~$\Ms_1$ the dimer
configuration on which the solid Gibbs measure~$\mathbb{P}^{u_1}$ is
concentrated. Also, let~$P_1$ be the integer point on the boundary of~$N(\Gs)$
corresponding to the interval of~$A_0\setminus\mapalpha(\T_1)$ containing~$u_1$.

\new{For any dimer configuration~$\Ms$}, the \new{corresponding} \emph{height difference} (relative to~$\Ms_1$) between two faces~$\fs$ and~$\fs'$ of $\Gs$
is defined as
\begin{equation}
  h(\fs')-h(\fs)=\sum_{\es=\ws\bs} (\es\wedge\gamma)\left(\II_{\{\es\in
  \Ms\}}-\II_{\{\es\in \Ms_1\}}\right)\,,
  \label{eq:heightchange}
\end{equation}
where~$\gamma$ is an oriented dual path connecting~$\fs$ to~$\fs'$, and~$\es\wedge\gamma$ its algebraic intersection
number with the oriented edge~$\es=\ws\bs$.
This quantity is
well defined and does not depend on the choice of~$\gamma$ because~$\Ms$ and~$\Ms_1$,
viewed as 1-forms on~$\Gs$, have the same divergence at any vertex.

The \emph{slope}~$(s^{u_0},t^{u_0})$ of the Gibbs measure~$\mathbb{P}^{u_0}$ is
the expected horizontal and vertical height change~\cite{KOS}, \emph{i.e.}, the
expectation of Expression~\eqref{eq:heightchange} for~$\fs'$ equal to~$\fs+(1,0)$ and~$\fs+(0,1)$,
or in other words, for~$\gamma=\gamma_x$ and~$\gamma=\gamma_y$, respectively.

Applying Corollary~\ref{cor:Gibbs} in the case~$k=1$ to Expression~\eqref{eq:heightchange} as in~\cite[Theorem~38]{BCdTelliptic}, we get
the equalities
\begin{equation*}
  s^{u_0}=\frac{1}{2i\pi}\int_{\Cs_{u_1}^{u_0}} J_{\gamma_x},\quad
  t^{u_0}=\frac{1}{2i\pi}\int_{\Cs_{u_1}^{u_0}} J_{\gamma_y}\,,
\end{equation*}
where~$\Cs_{u_1}^{u_0}$ is an oriented path in~$\Sigma$ connecting~$\sigma(u_0)$ to~$u_0$,
crossing~$A_0$ once at~$u_1$, disjoint from~$A_1\cup\dotsb\cup A_g$,
such that $\sigma(\Cs_{u_1}^{u_0})=-\Cs_{u_1}^{u_0}$.
By Proposition~\ref{prop:J_dlog}, this can be rewritten as
\begin{equation}
  s^{u_0}=-\frac{1}{2i\pi}\int_{\Cs_{u_1}^{u_0}} \ud \log z,\quad
  t^{u_0}=-\frac{1}{2i\pi}\int_{\Cs_{u_1}^{u_0}} \ud \log w\,.
  \label{eq:slope}
\end{equation}
In other words,~$s^{u_0}$ and~$t^{u_0}$ are (up to a multiplication by $-\pi$)
continuous determinations of the arguments of~$z(u_0)$ and~$w(u_0)$ respectively.
Up to a proper normalization, they correspond to a unique point in the
\emph{coamoeba} of $\C$.
Note that these formulas can be seen as a refinement of~\cite[Theorem~5.6]{KOS},
where the equalities are only valid up to a sign and modulo~$\pi$.
\new{They} are also related to~\cite[Proposition~3.2]{KOS}, since the part of the coamoeba of~$\C$ parametrized by~$\Sigma^+$ is in 1-to-1
correspondence with the Newton polygon
of $P$~\cite{Passare}.

Since the magnetic field is given by the log of the modul\new{us} of $z(u_0)$ and
$w(u_0)$, the pair slope/magnetic field is realizing (half of) the
\emph{amoeba-to-coamoeba mapping} for the Harnack curve $\C$, which has been
described and computed explicitly by
Passare~\cite{Passare}. In terms of dimer models, this translates to the fact
that the slope and the magnetic field are dual variables when performing
Legendre transform between the \emph{free energy}, represented by the Ronkin
function of the characteristic polynomial, and the \emph{surface
tension}~\cite{KOS}.

We now give explicit formulas for the slopes of the solid and gaseous phases.
For solid phases, Corollary~39 from~\cite{BCdTelliptic} is valid in the current more general
context without modification, as it only relies on the connection between the divisor
for~$z(u)$ and the homology class of~$T\in\T_1$.

\begin{cor}[slopes of solid phases, \cite{BCdTelliptic}, Corollary~39]
  Suppose that~$u_0$ belongs to one of the connected components of~$A_0\setminus\mapalpha(\T_1)$.
Then, we have
  \begin{equation*}
    (s^{u_0},t^{u_0}) = \sum_{T\in\T_1: [u_0,\alpha_T,u_1]} (v_T,-h_T)\,,
  \end{equation*}
where the sum is over all~$T\in\T_1$ such that the cyclic order relation~$[u_0,\alpha_T,u_1]$ holds in~$A_0$.
In particular, the points~$P_1+(t^{u_0},-s^{u_0})$ indexed by the connected
  components of~$A_0\setminus\mapalpha(\T_1)$ are the integer boundary vertices of~$N(\Gs)$.\qed
\end{cor}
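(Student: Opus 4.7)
The plan is to exploit the explicit product formulas~\eqref{eq:expr_zu_wu} for $z(u)$ and $w(u)$ in terms of prime forms, together with the integral representations~\eqref{eq:slope} of the slopes, and conclude by the residue theorem.

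First, I would analyze the path~$\Cs_{u_1}^{u_0}$ when~$u_0\in A_0\setminus\mapalpha(\T_1)$. Since $\sigma(u_0)=u_0$, this ``path from~$\sigma(u_0)$ to~$u_0$ crossing~$A_0$ at~$u_1$'' is now a closed loop based at~$u_0$ passing through~$u_1$. As it crosses~$A_0$ transversely exactly twice (at~$u_0$ and $u_1$) and avoids~$A_1\cup\dotsb\cup A_g$, it is homotopically trivial on~$\Sigma$; its complement has two connected components, and one may choose the loop so that it bounds a topological disk~$D\subset\Sigma$ (cut by~$A_0$ into two half-disks, one in~$\Sigma^+$ and one in~$\Sigma^-$). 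By construction, the points of~$\mapalpha(\T_1)\subset A_0$ lying in~$D$ are precisely those~$\alpha_T$ for which the cyclic order relation~$[u_0,\alpha_T,u_1]$ holds on~$A_0$.

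Next, I would analyze the singularities of~$d\log z$ and~$d\log w$. From~\eqref{eq:expr_zu_wu}, we have $d\log z = -\sum_{T\in\T_1} v_T\, d\log E(\wt\alpha_T,\cdot)$ and $d\log w = \sum_{T\in\T_1} h_T\, d\log E(\wt\alpha_T,\cdot)$. Since the prime form~$E(\wt\alpha_T,\cdot)$ has a simple zero at~$\alpha_T$ and no other zeros on~$\Sigma$ (recall Section~\ref{subsub:prime}), each~$d\log E(\wt\alpha_T,\cdot)$ is a meromorphic 1-form on~$\Sigma$ with a single simple pole of residue~$+1$ located at~$\alpha_T$. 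Consequently~$d\log z$ (resp.~$d\log w$) has simple poles exactly at the points~$\{\alpha_T\}_{T\in\T_1}\subset A_0$, with residue $-v_T$ (resp.~$+h_T$) at~$\alpha_T$. Orienting~$\Cs_{u_1}^{u_0}$ so that~$D$ lies to its left and applying the residue theorem, we obtain
\[
s^{u_0}=-\frac{1}{2i\pi}\oint_{\Cs_{u_1}^{u_0}}d\log z=\sum_{T:\,[u_0,\alpha_T,u_1]}v_T,\qquad
t^{u_0}=-\frac{1}{2i\pi}\oint_{\Cs_{u_1}^{u_0}}d\log w=-\sum_{T:\,[u_0,\alpha_T,u_1]}h_T,
\]
which is the announced formula.

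Finally, for the ``in particular'' statement, I would enumerate the elements of~$\T_1$ as~$T_1,\dotsc,T_r$ in the cyclic order of their angles starting from~$u_1$, so that~$P_{j+1}-P_j=[T_j]=(h_{T_j},v_{T_j})$ (with $P_{r+1}=P_1$). If~$u_0$ lies in the interval~$(\alpha_{T_k},\alpha_{T_{k+1}})$, corresponding to the vertex~$P_{k+1}$, then the train-tracks~$T$ satisfying~$[u_0,\alpha_T,u_1]$ are precisely~$T_{k+1},\dotsc,T_r$, and the telescoping relation~$\sum_{j=k+1}^{r}[T_j]=P_{r+1}-P_{k+1}=P_1-P_{k+1}$ yields
\[
P_1+(t^{u_0},-s^{u_0})=P_1-\sum_{j=k+1}^{r}(h_{T_j},v_{T_j})=P_1-(P_1-P_{k+1})=P_{k+1},
\]
so the points indexed by the connected components of~$A_0\setminus\mapalpha(\T_1)$ enumerate exactly the integer boundary vertices of~$N(\Gs)$. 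The main point requiring care is the sign and orientation bookkeeping in the residue computation; once this is set up, the argument proceeds as in the elliptic case~\cite[Corollary~39]{BCdTelliptic}, and only relies on the product structure of~$z,w$ and on the zero/pole structure of the prime form on~$\Sigma$.
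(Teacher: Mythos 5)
Your proof is correct and follows essentially the same route the paper intends for this corollary, which the text in fact delegates to Corollary~39 of~\cite{BCdTelliptic}: evaluate the contour integrals in~\eqref{eq:slope} by the residue theorem using the product formula~\eqref{eq:expr_zu_wu}, observe that the disk~$D$ bounded by the (now closed, null-homotopic) contour~$\Cs_{u_1}^{u_0}$ contains exactly the angles~$\alpha_T$ with~$[u_0,\alpha_T,u_1]$, and then telescope~$P_{j+1}-P_j=[T_j]$ to identify~$P_1+(t^{u_0},-s^{u_0})$ with the boundary vertices of~$N(\Gs)$. One small imprecision worth fixing: the individual forms~$d\log E(\wt\alpha_T,\cdot)$ do \emph{not} descend to meromorphic $1$-forms on~$\Sigma$ — the prime form carries nontrivial factors of automorphy in each variable (see the proof of Proposition~\ref{prop:prime}) — so only the full weighted sums~$d\log z$ and~$d\log w$ are globally defined on~$\Sigma$; but since those automorphy factors are holomorphic and non-vanishing, the residue of~$d\log z$ at~$\alpha_T$ is still~$-v_T$, equal to the order of vanishing of~$z$ there (as also read off Proposition~\ref{prop:J_dlog}), and the computation is unaffected.
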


For gaseous phases of the model, we have the following correspondence with the~$g$ marked interior lattice points of~$N(\Gs)$,
recall Proposition~\ref{prop:angles_perio}.
Note that the main ingredient in the proof is the Riemann bilinear relation~\eqref{eq:RiemannBilin}.

\begin{cor}[slopes of gaseous phases]
\label{cor:slope-gas}
Suppose that~$u_0$ belongs to~$A_k$ for some~$1\leq k\leq g$.
Then, the slope~$(s^{u_0}, t^{u_0})$ of the corresponding Gibbs measure is related to the~$k^\text{th}$~component of $\varphi(\mapalpha)$ by
  \begin{equation*}
    P_1+(t^{u_0},-s^{u_0}) = \varphi_k(\mapalpha)\,.
  \end{equation*}
\end{cor}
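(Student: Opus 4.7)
The plan is to combine the slope formulas~\eqref{eq:slope} with the Riemann bilinear relation~\eqref{eq:RiemannBilin}, and then perform an Abel summation to match the result with the definition of $\varphi_k(\mapalpha)$.

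The first step is to recast the two integrands. By Proposition~\ref{prop:J_dlog}, $-d\log z = J_{\gamma_x}$ and $-d\log w = J_{\gamma_y}$ are meromorphic 1-forms of the third kind on $\Sigma$ with vanishing $A$-periods, whose simple poles are at the angles $\alpha_T$ for $T\in\T_1$ with respective residues $v_T$ and $-h_T$; in particular, $\sum_T v_T = \sum_T h_T = 0$.

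The second step is to reduce the contour integrals along $\Cs_{u_1}^{u_0}$ to $B_k$-periods. Since $u_0\in A_k$ implies $\sigma(u_0) = u_0$, the path $\Cs_{u_1}^{u_0}$ is a closed loop based at $u_0$, crossing $A_0$ once at $u_1$ and homologous to $B_k$ in $\Sigma$. Choosing the representative of $B_k$ used in~\eqref{eq:RiemannBilin} so that it crosses $A_0$ also at $u_1$, the two loops are homologous in $\Sigma\setminus\mapalpha(\T_1)$, and their integrals against the two 1-forms above therefore coincide.

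The third step is to apply~\eqref{eq:RiemannBilin}, taking advantage of $\sum_T v_T = \sum_T h_T = 0$ to pick the common reference point to be $u_1$, with integration paths along $A_0$ in the positive direction. This yields
\begin{equation*}
s^{u_0} = \sum_{T\in\T_1} v_T \int_{u_1}^{\alpha_T} \omega_k, \qquad t^{u_0} = -\sum_{T\in\T_1} h_T \int_{u_1}^{\alpha_T} \omega_k.
\end{equation*}
Identifying $(t^{u_0},-s^{u_0})\in\RR^2$ with $t^{u_0}-is^{u_0}\in\CC$, and $[T]$ with $h_T+iv_T\in\CC$, this rewrites as $t^{u_0}-is^{u_0} = -\sum_{T\in\T_1}[T]\int_{u_1}^{\alpha_T}\omega_k$.

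The fourth step is an Abel summation. Label the train-tracks so that $\alpha_1$ is immediately after $u_1$ on $A_0$; by definition of $P_1$, the interval $(\alpha_0,\alpha_1)$ containing $u_1$ is the one associated to $P_1$, with $\alpha_0 = \alpha_r$. Using $P_{j+1}-P_j = [T_j]$ and $\sum_j\int_{\alpha_{j-1}}^{\alpha_j}\omega_k = 1$, one obtains $\varphi_k(\mapalpha) - P_1 = \sum_{l=1}^r [T_l]\int_{\alpha_l}^{\alpha_0}\omega_k$. Since $\sum_l [T_l] = 0$, the identity $\int_{\alpha_l}^{\alpha_0}\omega_k = \int_{u_1}^{\alpha_0}\omega_k - \int_{u_1}^{\alpha_l}\omega_k$ then gives $\varphi_k(\mapalpha) - P_1 = -\sum_l [T_l]\int_{u_1}^{\alpha_l}\omega_k = t^{u_0}-is^{u_0}$, proving the claim.

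The main technical difficulty is the careful tracking of integer ambiguities: both sides of~\eqref{eq:RiemannBilin} depend on choices (the representative of $B_k$ in $\Sigma\setminus\mapalpha(\T_1)$, the paths from the reference point to each pole). The crucial observation is that using $u_1$ as the common crossing point of $A_0$ with both the contour $\Cs_{u_1}^{u_0}$ and the chosen representative of $B_k$ aligns all these choices simultaneously and eliminates the ambiguities.
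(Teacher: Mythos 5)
Your proof is correct and follows essentially the same route as the paper's: slope formulas~\eqref{eq:slope} combined with the Riemann bilinear relation~\eqref{eq:RiemannBilin}, with the base point of the Abel--Jacobi integrals aligned with the crossing point~$u_1$ of the $B$-cycle representative, followed by an Abel summation against the definition of~$\varphi_k$. The only stylistic difference is that you package both coordinates into a complex identity $t^{u_0}-is^{u_0}=-\sum_T[T]\int_{u_1}^{\alpha_T}\omega_k$ and reach $\varphi_k(\mapalpha)-P_1$ by exploiting $\sum_T[T]=0$ directly, whereas the paper treats the two slope components separately via Equation~\eqref{equ:diff} and an explicit computation of the integer vectors~$n_j$; this is a slightly cleaner bookkeeping of the same argument.
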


\begin{proof}
We give the details for the imaginary part of the identity,
which corresponds to the horizontal slope~$s^{u_0}$.
The computation of the real part, corresponding to the vertical slope~$t^{u_0}$,
is similar, and therefore left to the reader.

 Since~$u_0$ belongs to~$A_k$, the contour of integration~$\Cs_{u_1}^{u_0}$ is a loop homologous to~$B_k$ in~$\Sigma$.
However, because of the possible presence of singularities of~$z$,
it can a priori not be moved outside the interval of~$A_0\setminus\mapalpha(\T_1)$ containing~$u_1$.
 For this reason, we consider realizations of the cycles~$B_1,\dotsc,B_g$
crossing~$A_0$ in that same fixed interval.

  According to Proposition~\ref{prop:J_dlog} and its proof, the differential form $J_{\gamma_x}=-d\log z$ is
  the unique differential of the third kind with a simple pole at
every~$\alpha_T$ for $T\in\T_1$ such that~$v_T\neq 0$, and residue $v_T$.
  Therefore, we can take this form for~$\omega_D$ in the Riemann bilinear relation~\eqref{eq:RiemannBilin},
with corresponding divisor~$D=\sum_{T\in\T_1}v_T\alpha_T$.
This yields
\begin{equation}
\label{eq:s}
s^{u_0}=\frac{1}{2i\pi}\int_{\Cs_{u_1}^{u_0}} J_{\gamma_x}=\frac{1}{2i\pi}\int_{B_k} \omega_D=\int_{D^-}^{D^+} \omega_k\,,
\end{equation}
where the integration paths from~$D^-=\sum_{T:v_T<0}(-v_T)\alpha_T$ to~$D^+=\sum_{T:v_T>0}v_T\alpha_T$
lie in the surface~$\Sigma$ cut along the cycles $A_\ell, B_\ell$.
Note that in this surface with boundary, the cycle~$A_0$ is represented by~$g$ oriented segments.
  Because of the assumption that all the~$B_\ell$'s intersect~$A_0$ in the same interval
of~$A_0\setminus\mapalpha(\T_1)$, all the $\alpha_T$'s are in the same segment.
We label them~$\alpha_1,\dotsc,\alpha_r$ in the increasing order along this oriented segment of~$A_0$
(see Figure~\ref{fig:cutopen}), and write~$T_1,\dotsc,T_r$ for the corresponding train-tracks.
In conclusion, the integral of~$\omega_k$ from~$D^-$ to~$D^+$ is given by integrals along a single segment of~$A_0$,
with orientation from~$\alpha_j$ with~$v_{T_j}<0$ to~$\alpha_\ell$ with~$v_{T_\ell}>0$.

\begin{figure}
  \centering
  \def\svgwidth{7cm}
  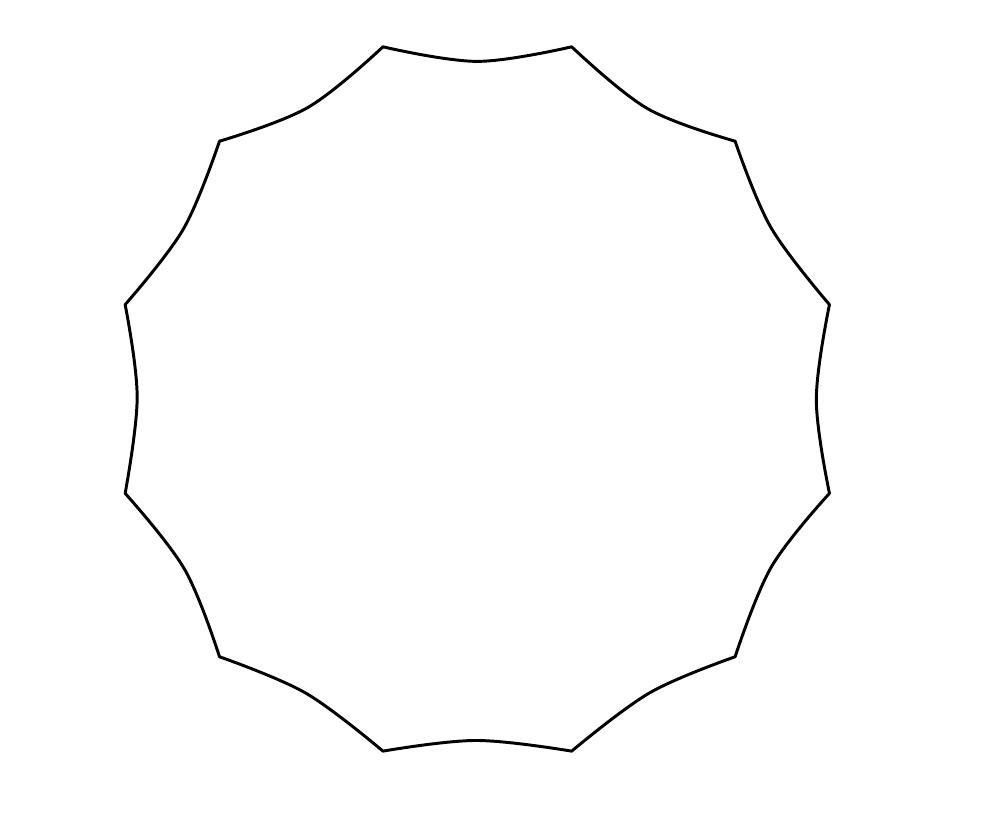
  \caption{The surface~$\Sigma$
cut along cycles representing a basis of its homology, chosen in such a way that all cycles~$B_\ell$ cut~$A_0$ in
 the same interval of~$A_0\setminus\mapalpha(\T)$.
The arcs in blue are the segments of~$A_0$ after the cuts, one of which contains all of~$\mapalpha(\T)$.
}
\label{fig:cutopen}
\end{figure}

Let us now study Equation~\eqref{equ:diff},
where we take~$x_0\in A_0$ just before~$\alpha_1$. The~$k^{th}$~component of~$n_j\in\ZZ^g$  is easily seen to be equal to~$\delta_{j1}$,
so the \new{vertical coordinate} of the~$k^{th}$~component of Equation~\eqref{equ:diff} yields
\[
\sum_{j=1}^rv_{T_j}\int_{x_0}^{\alpha_j}\omega_k=\Im\left(P_1-\varphi_k(\mapalpha)\right)\,,
\]
where the integrals are along the positive orientation of~$A_0$.
Comparing this equation with the right-hand side of~\eqref{eq:s}, and carefully taking into account the different orientation constraints
for these integrals, we obtain the equality~$s^{u_0}=\Im\left(P_1-\varphi_k(\mapalpha)\right)$.
\end{proof}

\subsection{Surface tension and free energy}
\label{sub:free}

For $(s,t)\in N(G)$, the \emph{surface tension} $\tau(s,t)$ of the dimer model
is defined as the exponential
growth rate of the partition function $Z_n^{(s,t)}$ of dimer configurations on
$\Gs_n=\Gs\setminus n\mathbb{Z}^2$ whose horizontal and vertical height change
around the torus are conditioned to be $\lfloor n s\rfloor$ and $\lfloor n t\rfloor$:
\begin{equation*}
  \tau(s,t) = -\lim_{n\to\infty} \frac{1}{n^2} \log Z_n^{(s,t)}\,,
\end{equation*}
see~\cite[Section 3.2.4]{KOS}.
Sheffield~\cite{Sheffield} proved that $\tau$ is a strictly convex function over
$N(G)$.
Using the correspondence between $(s,t)$ and $u_0$ given by
Equation~\eqref{eq:slope}, one can now see~$\tau$ as a function
$\underline{\tau}(u_0)=\tau(s^{u_0},t^{u_0})$ on
$\D=\Sigma^+\setminus\mapalpha(\T)$.

The \emph{free energy}~$F(B_x, B_y)$ of the dimer model is the Legendre dual of~$\tau$
\begin{equation*}
  F(B_x,B_y)=\max_{(s,t)}\bigl(sB_x+t B_y-\tau(s,t)\bigr)\,.
\end{equation*}
It is well defined up to an anchoring in~$\mathbb{Z}^2$ of the Newton polygon~$N(G)$ to fix the additive
constant in the definition of the height, which
corresponds to a change in the linear part in~$(B_x, B_y)$. This is done for
example by choosing a ``frozen'' point~$u_1$ in~$\D\cap A_0$ as the
reference configuration to measure the height function.

As the surface tension, the free energy can be seen as a function on $\D$:
\begin{equation*}
  \underline{F}(u)=F(-\log|w(u)|,\log|z(u)|)\,.
\end{equation*}
Because of the duality relation between the height and the magnetic field, the
differential of $\underline{\tau}$ is given by
\begin{align*}
  d\underline{\tau}(u) &= \frac{\partial\tau}{\partial s} ds^{u} +
  \frac{\partial\tau}{\partial t} dt^{u}
                  = B_x(u)ds^{u} + B_y(u) dt^{u}\\
                  &= \frac{1}{\pi}\bigl(\log|w(u)| d\arg z(u) -\log|z(u)|d\arg
                  w(u)\bigr)\,.
\end{align*}
Here and later in this section, the arguments are measured along continuous
paths in~$\D$ starting from the fixed point~$u_1$.

When~$u_0=u_1$, the measure concentrates on a single
periodic configuration~$\Ms_1$, and therefore, the surface tension is explicitly
obtained as
\begin{equation*}
  \underline{\tau}(u_1) =
-\sum_{\ws\bs\in \Ms_1}\log\bigl| \Ks_{\ws,\bs}\bigr|\,.
\end{equation*}
Then one can obtain the expression for $\underline{\tau}(u_0)$ for any other~$u_0\in\D$ by integration:
\begin{equation*}
  \underline{\tau}(u_0)=\underline{\tau}(u_1) +\frac{1}{\pi}\int_{u_1}^{u_0}
  \log|w(u)| d\arg z(u) -\log|z(u)|d\arg w(u)\,.
\end{equation*}

Taking advantage of the fact that~$z(u)$ and~$w(u)$ are expressed as products
over train-tracks of~$\Gs_1$, recall Equation~\eqref{eq:expr_zu_wu}, one can rewrite the~1-form
\begin{equation*}
  \log|w(u)| d\arg z(u) -\log|z(u)|d\arg w(u)=-\sum_{S}\sum_T (h_S v_T-v_S h_T)
  k_{\alpha_S}(u)d\ell_{\alpha_T}(u)\,,
\end{equation*}
where~$k_\alpha(u)=\log|E(\wt\alpha,\wt{u})|$ and~$\ell_\alpha(u) = \arg E(\wt\alpha,\wt{u})$.
The integer~$h_S v_T-v_S h_T$ is nothing but the algebraic intersection number
of the train-tracks~$S$ and~$T$.
Therefore, each edge~$\es\in\Es_1$ contributes exactly twice to this double sum: if~$\es$ is at
the intersection of the train-tracks~$T_\alpha$ and~$T_\beta$, then it contributes
once when~$(S,T)=(T_\alpha,T_\beta)$, and once when~$(S,T)=(T_\beta,T_\alpha)$.
Following the convention of~Figure~\ref{fig:around_rhombus}, this leads to
\begin{equation*}
  \log|w(u)| d\arg z(u) -\log|z(u)|d\arg w(u)=
  \sum_{\es\in\Es_1} \bigl(k_\beta(u)d\ell_\alpha(u)-k_\alpha(u)d\ell_\beta(u)\bigr)\,.
\end{equation*}
This yields the following \emph{local formula} for the surface tension, in the sense
that it consists of a sum
of terms associated to edges of the fundamental domain.
\begin{prop}
  \label{prop:surface_tension}
  Let~$u_1$ be a point on~$\D\cap A_0$ describing a
  frozen phase, and let~$\Ms_1$ be the corresponding dimer configuration on~$\Gs_1$,
  repeated in a periodic way on~$\Gs$.
  For any~$u_0\in\D$, the surface tension~$\underline{\tau}(u_0)$ is given by
\begin{equation*}
  \underline{\tau}(u_0)=-\sum_{\ws\bs\in\Ms_1}\log\bigl|\Ks_{\ws,\bs}\bigr|
  +\frac{1}{\pi}\sum_{\es\in\Es_1}
\int_{u_1}^{u_0}k_\beta(u)d\ell_\alpha(u)-k_\alpha(u)d\ell_\beta(u)\,.\qed
\end{equation*}
\end{prop}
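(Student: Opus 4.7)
The proof will proceed by first pinning down $\underline{\tau}$ at the reference frozen point and then integrating the differential $d\underline{\tau}$ from $u_1$ to $u_0$ along any path in $\D$.

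The plan is as follows. First, I would handle the base point $u_1\in\D\cap A_0$. Since $u_1$ describes a frozen phase, the Gibbs measure $\PP^{u_1}$ is concentrated on the single periodic extension of the dimer configuration $\Ms_1$ on $\Gs_1$. In particular, the toroidal partition function at the corresponding slope $(s^{u_1},t^{u_1})$ is, up to subexponential corrections, the weight $\prod_{\ws\bs\in\Ms_1}|\K_{\ws,\bs}|$ of that unique configuration. Taking logarithms and dividing by $n^2$ yields the starting value
\[
\underline{\tau}(u_1)=-\sum_{\ws\bs\in\Ms_1}\log\bigl|\K_{\ws,\bs}\bigr|\,.
\]

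Second, I would compute $d\underline{\tau}$ on $\D$ by the Legendre duality already recalled in the text: $d\underline{\tau}(u)=B_x(u)\,ds^u+B_y(u)\,dt^u$. Using $B_x(u)=-\log|w(u)|$, $B_y(u)=\log|z(u)|$ from Theorem~\ref{prop:ABAu0}, and $\pi s^u=-\arg z(u)$, $\pi t^u=-\arg w(u)$ from Equation~\eqref{eq:slope} (continuous determinations along paths starting at $u_1$), this gives
\[
d\underline{\tau}(u)=\frac{1}{\pi}\bigl(\log|w(u)|\,d\arg z(u)-\log|z(u)|\,d\arg w(u)\bigr)\,.
\]

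Third, I would localize this 1-form to a sum over edges of $\Es_1$. Substituting the product expansions of $z(u)$ and $w(u)$ from Equation~\eqref{eq:expr_zu_wu} in terms of the prime form evaluated at train-track angles, and setting $k_\alpha(u)=\log|E(\wt\alpha,\wt u)|$, $\ell_\alpha(u)=\arg E(\wt\alpha,\wt u)$, the 1-form becomes a double sum over $S,T\in\T_1$ weighted by $h_Sv_T-v_Sh_T$. The crucial identity is that this integer equals the algebraic intersection number $S\wedge T$ of the two oriented train-tracks, which in the minimal graph $\Gs_1$ equals the (signed) number of edges lying at the crossing of $T_\alpha$ and $T_\beta$. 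Hence each edge $\es\in\Es_1$ contributes exactly the two symmetric terms $k_\beta d\ell_\alpha-k_\alpha d\ell_\beta$ in the convention of Figure~\ref{fig:around_rhombus}, yielding the edge-local form
\[
d\underline{\tau}(u)=\frac{1}{\pi}\sum_{\es\in\Es_1}\bigl(k_\beta(u)\,d\ell_\alpha(u)-k_\alpha(u)\,d\ell_\beta(u)\bigr)\,.
\]
Integrating this exact 1-form from $u_1$ to $u_0$ and adding the base value gives the stated formula.

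The main point requiring some care is the well-definedness of the continuous determinations of $\arg z$ and $\arg w$, and of the primitives $\ell_\alpha$, along the integration path in $\D$: one needs to check that the quasi-periodicities of the prime form contribute only to $k_\alpha$ via real factors and to $\ell_\alpha$ via locally constant shifts, so that the differentials $d\ell_\alpha$ project unambiguously to $\Sigma\setminus\mapalpha(\T)$ and that the resulting integral does not depend on the choice of path in $\D$. This last independence is precisely the exactness of $d\underline{\tau}$, which is automatic from the construction since $\underline{\tau}$ itself is well-defined as the Legendre dual of the free energy on $\D$.
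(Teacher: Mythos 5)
Your proposal is correct and follows essentially the same route as the paper: the base value at the frozen point from concentration of the Gibbs measure, the differential $d\underline{\tau}=B_x\,ds^{u}+B_y\,dt^{u}$ rewritten via Theorem~\ref{prop:ABAu0} and Equation~\eqref{eq:slope}, localization to an edge sum through the product formulas for $z,w$ and the identification of $h_Sv_T-v_Sh_T$ with the algebraic intersection number, and finally integration from $u_1$ to $u_0$. Your concluding comments about well-definedness of the continuous determinations along paths in $\D$ match the remarks the paper makes after the proposition and do not introduce any gap.
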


\begin{rem}\leavevmode
  \begin{enumerate}
    \item
  Note that the term associated to a given edge~$\es$ in the sum over~$\Es_1$
  is genuinely local, as it only depends on the two parameters~$\alpha$ and~$\beta$
  of the train-tracks crossing that edge. All the dependency on~$t$ and the
  non-locality associated to~$\mapd$ is contained in the constant~$\underline{\tau}(u_1)$.
\item \emph{Per se}, the functions~$k_\alpha$ and~$\ell_\beta$ are not
  well-defined
on~$\Sigma$, so one needs to be slightly cautious when
  manipulating them. However, changing the lift~$\wt{u}$ in their definition
  would add to the integrand~$k_\beta(u)d\ell_\alpha(u)-k_\alpha(u)d\ell_\beta(u)$ a term which is of the
form~$F(\wt{u};\alpha)-F(\wt{u};\beta)$, and hence contributes~$0$ when summing over~$\es\in\Es_1$.
\item One can take advantage of telescopic contributions to add (or subtract)
  $k_\alpha(u)d\ell_\alpha(u) - k_\beta(u)d\ell_\beta(u)$ to the integrand, and so
  replace it by $(k_\alpha(u)\mp k_\beta(u))(d\ell_\alpha(u)\pm
  d\ell_\beta(u))$.
  \end{enumerate}
\end{rem}

We now derive a formula in the same spirit for~$\underline{F}$.
\begin{cor}
  \label{cor:free_energy}
  For any~$u_0$ in~$\D$, the free energy~$\underline{F}(u_0)$ is given by
  \begin{equation*}
    \underline{F}(u_0)=
    \sum_{\ws\bs\in\Ms_1}\log\bigl|\Ks_{\ws,\bs}\bigr|
    +\sum_{\es\in\Es_1}
    \frac{1}{\pi}\int_{u_1}^{u_0}\ell_\beta(u)dk_\alpha(u)-\ell_\alpha(u)dk_\beta(u)\,.
  \end{equation*}
\end{cor}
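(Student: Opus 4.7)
The plan is to obtain the formula as the Legendre-dual counterpart of Proposition~\ref{prop:surface_tension}. By the envelope theorem applied to $F(B) = \max_{(s,t)}(sB_x + tB_y - \tau(s,t))$, the relation
$$d\underline F = s^{u_0}\,dB_x(u_0) + t^{u_0}\,dB_y(u_0)$$
holds as a 1-form on $\D$, so it suffices to (i) identify this 1-form with the edge-local sum appearing in the statement, and (ii) verify the value of $\underline F$ at the reference point $u_1$.

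To make $d\underline F$ explicit, I would first use Equation~\eqref{eq:slope} together with the identity $z(\sigma(u)) = \overline{z(u)}$ from the proof of Proposition~\ref{prop:param_curve} to write $\pi s^{u_0}$ and $\pi t^{u_0}$ as continuous determinations of $-\arg z(u_0)$ and $-\arg w(u_0)$ along paths in $\D$ based at $u_1$. Combined with $B_x(u) = -\log|w(u)|$ and $B_y(u) = \log|z(u)|$, this yields
$$d\underline F = \tfrac{1}{\pi}\bigl(\arg z\, d\log|w| \,-\, \arg w\, d\log|z|\bigr)\,.$$
Substituting the product formulas~\eqref{eq:expr_zu_wu} for $z$ and $w$ in terms of prime forms, and setting $k_\alpha = \log|E(\wt\alpha,\wt u)|$, $\ell_\alpha = \arg E(\wt\alpha,\wt u)$, this 1-form expands into a double sum over pairs of train-tracks $(S,T)$ with coefficient $h_Sv_T - v_Sh_T$, which is precisely the algebraic intersection number of $T_S$ and $T_T$. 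Performing the same edge-by-edge bookkeeping as in the proof of Proposition~\ref{prop:surface_tension}, each edge $\ws\bs \in \Es_1$ (crossing the train-tracks with angles $\alpha$ and $\beta$) appears exactly twice with opposite roles, contributing a local 1-form of the shape $\ell_\beta\,dk_\alpha - \ell_\alpha\,dk_\beta$ in the conventions of Figure~\ref{fig:around_rhombus}. Integrating along a path from $u_1$ to $u_0$ in $\D$ then produces the integral term in the statement.

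The initial value $\underline F(u_1) = \sum_{\ws\bs\in\Ms_1}\log|\Ks_{\ws,\bs}|$ is fixed by the anchoring prescription described before Proposition~\ref{prop:surface_tension}: at the frozen reference $u_1 \in \D \cap A_0$, the Gibbs measure concentrates on the single periodic configuration~$\Ms_1$, so that the partition function over a fundamental domain reduces to $\prod_{\ws\bs\in\Ms_1\cap\Gs_1}|\Ks_{\ws,\bs}|$, and the additive constant in $\underline F$ is chosen precisely so that $\underline F(u_1)$ matches this log-weight. This is consistent with the Legendre identity $\underline F + \underline\tau = s^{u_0}B_x + t^{u_0}B_y$ applied at $u_1$, since $\underline\tau(u_1) = -\sum\log|\Ks_{\ws,\bs}|$ by Proposition~\ref{prop:surface_tension} and the $u_1$-based branches of $\arg z,\arg w$ vanish at $u_1$ by construction.

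I expect the main delicacy to lie in tracking the multivalued functions $\arg z$, $\arg w$ (and hence $\ell_\alpha$) consistently along the integration path. As noted in the remark following Proposition~\ref{prop:surface_tension}, $\ell_\alpha$ is not globally well-defined on $\Sigma$, but the edge-local combinations inside the sum over $\Es_1$ enjoy telescopic cancellations that render the formula unambiguous; the same cancellations ensure that the ambiguity in the branches chosen for $s^{u_0}$ and $t^{u_0}$ cancels out in the final expression.
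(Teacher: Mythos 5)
Your approach is essentially equivalent to the paper's, but packaged differently. The paper evaluates the pointwise Legendre identity $\underline{F}(u_0)=s^{u_0}B_x(u_0)+t^{u_0}B_y(u_0)-\underline{\tau}(u_0)$, substitutes the formula for $\underline{\tau}$ from Proposition~\ref{prop:surface_tension}, expands $s^{u_0}B_x(u_0)+t^{u_0}B_y(u_0)=\frac{1}{\pi}\sum_{\es\in\Es_1}\bigl(k_\alpha(u_0)\ell_\beta(u_0)-k_\beta(u_0)\ell_\alpha(u_0)\bigr)$ by the same double-sum-over-train-tracks bookkeeping, and then converts each edge term to the desired form via integration by parts $k_{\alpha}\ell_{\beta}(u_0)-\int_{u_1}^{u_0} k_{\alpha}\,d\ell_{\beta}=\int_{u_1}^{u_0}\ell_{\beta}\,dk_{\alpha}$. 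You instead differentiate the Legendre relation (the envelope theorem), expand the resulting $1$-form edge-locally, integrate from $u_1$, and anchor at the frozen point. These are the same computation organized along two dual lines: your integration from $u_1$ plays exactly the role of the paper's integration by parts, and the only structural difference is that you invoke the $\underline{\tau}$ formula merely to pin down $\underline{F}(u_1)$ rather than for all $u_0$. That is a mild economy, not a new argument, since the edge-by-edge reduction of the $1$-form $\arg z\, d\log|w|-\arg w\, d\log|z|$ is identical in substance to the paper's reduction of $\arg z\,\log|w|-\arg w\,\log|z|$. One small caveat: you should check the orientation/sign of the per-edge contribution explicitly from the intersection sign of the two train-tracks at an edge as in Figure~\ref{fig:around_rhombus}; the paper carries this out for $\underline{\tau}$ and you should not simply assert it for $d\underline{F}$.
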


\begin{proof}
  For any~$u_0$ in~$\D$, Legendre duality yields the following relation
  between~$\underline{F}(u_0)$ and~$\underline{\tau}(u_0)$:
  \begin{equation*}
    \underline{F}(u_0)= s^{u_0} B_x(u_0) +t^{u_0} B_y(u_0) - \underline{\tau}(u_0)\,.
  \end{equation*}
  As above, we now rewrite the quantity~$s^{u_0} B_x(u_0) +t^{u_0} B_y(u_0)$ as a
  sum over edges of~$\Es_1$:
  \begin{align*}
    s^{u_0} B_x(u_0) +t^{u_0} B_y(u_0)
    &= \frac{1}{\pi}\bigl(\arg z(u_0) \log|w(u_0)| -\arg
    w(u_0)\log|z(u_0)|\bigr)\\
    &=\frac{1}{\pi}\sum_{S}\sum_{T}(h_S v_T-v_S h_T)k_{\alpha_S}(u_0)
    l_{\alpha_T}(u_0)\\
    &=\frac{1}{\pi}\sum_{\es\in\Es_1}
    k_{\alpha}(u_0) \ell_{\beta}(u_0)-k_{\beta}(u_0) \ell_{\alpha}(u_0)\,.
  \end{align*}
  Finally, for any $\alpha$ and $\beta$, integration by parts yields
  \begin{equation*}
  k_{\alpha}(u_0) \ell_{\beta}(u_0)-\int_{u_1}^{u_0} k_{\alpha}(u) d\ell_{\beta}(u)
= \int_{u_1}^{u_0}\ell_{\beta}(u) dk_{\alpha}(u).\qedhere
  \end{equation*}
\end{proof}

\begin{rem}\leavevmode
  \begin{enumerate}
    \item The choice of the reference frozen phase appears in~$\ell_\alpha$,
which is defined as a continuous argument computed along a path from
      that reference point~$u_1$.
    \item Using the fact that~$k_\alpha, d\ell_\alpha$ (resp.\ $dk_{\alpha},\ell_{\alpha}$)
are symmetric (resp.\ antisymmetric) with respect
      to~$\sigma$, the integration from~$u_1$ to~$u_0$ can also be expressed as~$\frac{1}{2}$ times an
integral from~$\sigma(u_0)$ to~$u_0$ along the path~$\mathsf{C}_{u_1}^{u_0}$,
      symmetric with respect to $\sigma$ and
      passing through the sector of~$A_0\setminus\mapalpha(\T)$ containing~$u_1$.
One can then replace~$\ell_{\alpha}(u)=\arg
        E(\wt\alpha,\wt{u})$
      by~$\frac{1}{i}\log E(\wt{\alpha},\wt{u})$.
  \end{enumerate}
\end{rem}

The free energy is also given~\cite{KOS} by the \emph{Ronkin function}~$R(B_x,B_y)$ of the
characteristic polynomial~$P$ (up to a linear factor in~$(B_x,B_y)$
depending on the anchoring of~$N(P)$ and its relation to~$u_1$), \emph{i.e.}, by
\begin{equation*}
  R(B_x,B_y) = \iint_{\substack{|z|=e^{B_y}\\|w|=e^{-B_x}}}
  \log\bigl|P(z,w)\bigr| \frac{dz}{2i\pi z}\frac{dw}{2i\pi w}\,.
\end{equation*}
Note that the coordinates are `rotated' by 90 degrees when compared
with the original definition because of our choice of conventions,
as for the amoeba.
One can indeed check that the two expressions match by comparing the formula
from Corollary~\ref{cor:free_energy} with computations of~$R$ when an explicit
parametrization for the spectral curve~$\mathscr{C}$ is known, see for example
Theorem~7.5 from~\cite{Brunault_Zudilin} and references therein.

\section{Additional features, and perspectives}
\label{sec:more}

This final and slightly informal section deals with miscellaneous additional results, together with upcoming work.
We start in Section~\ref{sub:beyond} by explaining that under some natural hypothesis,
the construction of Gibbs measures extends beyond the periodic case, following and generalising~\cite[Section~6.1]{BCdTelliptic}.
In Section~\ref{sub:inv}, we check that~\cite[Section~7]{BCdTelliptic} extends without modification: the model is invariant under local transformations,
and this invariance is a consequence of (and in some precise sense, equivalent to) Fay's identity;
a possible extension of our results beyond minimal graphs is also discussed.
Finally, in Section~\ref{sub:known}, we relate these models on specific classes of minimal graphs to known models,
delaying their detailed study to future publications.

\subsection{Beyond the periodic case}
\label{sub:beyond}

It is natural to wonder whether some results of Section~\ref{sec:periodic}, in particular the classification of Gibbs measures of Corollary~\ref{cor:Gibbs}, extend to arbitrary minimal graphs.
We are not able to fully answer this question, but the discussion of~\cite[Section~6.1]{BCdTelliptic} applies, leading to the following result.

Let us assume that the minimal graph~$\Gs$ and angle map~$\mapalpha\in X_\Gs$ satisfy the following condition: any \new{finite} simply connected subgraph~$\Gs_0\subset\Gs$ extends to
 a periodic minimal graph~$\Gs'$, with the restriction of~$\mapalpha$ to the train-tracks of~$\Gs_0$ extending to an element~$\mapalpha'$ of~$X_{\Gs'}$.

\begin{thm}
\label{thm:Gibbs}
Consider a minimal graph~$\Gs$ and an element~$\mapalpha\in{X}^\mathit{per}_\Gs$ satisfying the assumption above.
Fix an M-curve~$\Sigma$, a real element~$t\in\Jac(\Sigma)$, and consider the dimer model on~$\Gs$ with corresponding Kasteleyn operator~$\Ks$.
Then, for every~$u_0\in\D$, the operator~$A^{u_0}$ defines a Gibbs measure~$\PP^{u_0}$ whose expression on cylinder sets is given
as follows: for any set~$\{\es_1=\ws_1 \bs_1,\dotsc,\es_k=\ws_k \bs_k\}$ of distinct edges of~$\Gs$,
\[
\PP^{u_0}(\es_1,\dotsc,\es_k)=
\Bigl(\prod_{j=1}^k
\Ks_{\ws_j,\bs_j}\Bigr)\times
\det_{1\leq i,j\leq k} \Bigl(\A^{u_0}_{\bs_i,\ws_j}\Bigr)\,.
\]
\end{thm}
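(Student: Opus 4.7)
The plan is to reduce to the periodic case via an exhaustion argument, exploiting the fact that the determinantal expression above only involves finitely many coefficients~$\K_{\ws_j,\bs_j}$ and~$\A^{u_0}_{\bs_i,\ws_j}$, each of which depends only on local data. The hypothesis on~$(\Gs,\mapalpha)$ is designed precisely so that any finite simply connected piece of~$\Gs$ can be embedded into some periodic minimal graph, where Corollary~\ref{cor:Gibbs} produces a genuine ergodic Gibbs measure whose cylinder probabilities are given by the same formula. The proof then consists in transferring these probabilities back to~$\Gs$ and invoking Kolmogorov's extension theorem.

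First I would establish the locality statement for~$\A^{u_0}$. By Definition~\ref{defi:inv}, each entry~$\A^{u_0}_{\bs,\ws}$ is an integral of the~$1$-form~$g_{\bs,\ws}$ along a contour~$\Cs^{u_0}_{\bs,\ws}\subset\Sigma$ whose shape depends only on~$u_0$ and on the sector~$s_{\bs,\ws}\subset A_0$; this sector in turn is determined by the train-track angles along any path from~$\bs$ to~$\ws$ in~$\GR$. The form~$g_{\bs,\ws}$ itself is defined path-by-path from the angles and from the lifted Abel map~$\wt{\mapd}$, so the integral only depends on the restriction of~$\mapalpha$ to a bounded simply connected neighborhood of the edges~$\es_1,\dotsc,\es_k$. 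Consequently, given such a finite collection of edges, I can choose a finite simply connected subgraph~$\Gs_0\subset\Gs$ containing it and large enough that all the relevant coefficients~$\K_{\ws_j,\bs_j}$ and~$\A^{u_0}_{\bs_i,\ws_j}$ are computed using only data in~$\Gs_0$. Applying the hypothesis, I extend~$(\Gs_0,\mapalpha|_{\Gs_0})$ to a periodic minimal~$(\Gs',\mapalpha')$ with~$\mapalpha'\in X^{\mathit{per}}_{\Gs'}$. Corollary~\ref{cor:Gibbs} then yields an ergodic Gibbs measure~$\PP'^{u_0}$ on~$\Gs'$ whose cylinder probability for~$\{\es_1,\dotsc,\es_k\}$ is given by the determinantal formula using the coefficients~$\K',\A'^{u_0}$ associated to~$\Gs'$. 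By the locality statement these coincide with those of~$\K,\A^{u_0}$ on~$\Gs$, so the candidate cylinder probability~$\PP^{u_0}(\es_1,\dotsc,\es_k)$ on~$\Gs$ equals~$\PP'^{u_0}(\es_1,\dotsc,\es_k)$ and is in particular non-negative, with the standard marginal compatibility relations inherited from~$\Gs'$.

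Kolmogorov's extension theorem then produces a probability measure~$\PP^{u_0}$ on dimer configurations of~$\Gs$ with the prescribed cylinder probabilities. To obtain the Gibbs (DLR) property, one fixes a finite region together with a boundary condition, embeds the corresponding finite subgraph into a single periodic minimal~$(\Gs',\mapalpha')$ containing all relevant edges, and observes that the Gibbs identity on~$\Gs'$ amounts to a finite algebraic relation among cylinder probabilities which, by the locality established above, transfers verbatim to~$\Gs$. The main obstacle is the locality argument itself: the contour~$\Cs^{u_0}_{\bs,\ws}$ lives on the abstract surface~$\Sigma$ rather than on the graph, and the discrete Abel map~$\wt{\mapd}$ is only defined up to a global additive constant depending on the choice of base point~$\fs_0$ and of lifts of the~$\alpha_T$'s. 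One must check that these global ambiguities cancel in the determinantal expression---this is the analogue of the gauge invariance highlighted in Remark~\ref{rem:Kast}---so that the identity~$\A^{u_0}_{\bs_i,\ws_j}=\A'^{u_0}_{\bs_i,\ws_j}$ holds not merely up to an overall factor but as an honest equality of local integrals, and the cylinder probabilities on~$\Gs$ and~$\Gs'$ really do agree.
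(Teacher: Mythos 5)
Your proposal reproduces the argument that the paper delegates to~\cite[Section~6.1]{BCdTelliptic}: exploit locality of both~$\K_{\ws,\bs}$ and~$\As^{u_0}_{\bs,\ws}$, embed any finite simply connected piece of~$\Gs$ into a periodic minimal graph where Corollary~\ref{cor:Gibbs} applies, transfer the cylinder probabilities back, and then invoke consistency plus a DLR transfer. This is the correct (and the paper's intended) approach.

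Two small points deserve a word of care, though neither is a genuine gap. First, to apply Corollary~\ref{cor:Gibbs} on~$\Gs'$ for the \emph{given}~$u_0$, you must ensure~$u_0\in\D'=\Sigma^+\setminus\mapalpha'(\T')$; since~$\T'$ contains train-tracks not present in~$\Gs$, their angles could a priori hit~$u_0$ when~$u_0\in A_0$. You should note that the extension~$\mapalpha'$ can always be chosen so that the finitely many new angles avoid~$u_0$, which is straightforward given the freedom in the hypothesis. Second, your last paragraph slightly overstates what gauge invariance gives: Remark~\ref{rem:Kast} only guarantees invariance of the combination~$\prod_j\K_{\ws_j,\bs_j}\cdot\det(\As^{u_0}_{\bs_i,\ws_j})$, not of~$\As^{u_0}_{\bs,\ws}$ entry-by-entry. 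To get the literal equality~$\As^{u_0}_{\bs,\ws}=\As'^{u_0}_{\bs,\ws}$ you should instead fix the base face~$\fs_0$ of the discrete Abel map and the lifts~$\wt\mapalpha$ inside~$\Gs_0$ once and for all, so that~$\wt\mapd$ (and hence~$g_{\bs,\ws}$) coincides on the nose; but in any case the weaker gauge-invariant statement is all that the cylinder-probability identity requires.
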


Once again, the remarkable property of these Gibbs measures is that they are local, in the sense that the probability of occurrence of any set of edges only depends on the weighted graph
near these edges.
The computation of the probability of occurrence of a single edge done in
Remark~\ref{rem:proba_edge} also holds in the more general setting of
Theorem~\ref{thm:Gibbs}. This is an illustration of the strength of local
formulas, which allow for explicit computations also in the case of non-periodic
graphs.

\begin{rem}
\label{rem:t-embedding}
 We believe that the condition stated above holds for any minimal graph~$\Gs$ and map~$\mapalpha\in X_\Gs$.
 Proving that this is indeed the case would not only imply that Theorem~\ref{thm:Gibbs} holds for any minimal graph~$\Gs$.
 As another consequence, the \emph{t-embedding}~\cite{KLRR,CLR} determined by Fock's Kasteleyn operator would define an embedding of the dual graph~$\Gs^*$ (for~$u_0$ in the interior of~$\Sigma^+$),
 a fact that is currently known to hold only for infinite periodic graphs, and for finite graphs with outer face of degree~$4$.
\end{rem}

\subsection{Invariance under moves, and going beyond minimal graphs}
\label{sub:inv}

Dimer configurations behave in a controlled way under several local transformations of bipartite graphs.
A natural family of such moves was introduced by Kuperberg and studied by Propp~\cite{Propp} under the name of \emph{urban renewal}.
An equivalent set of moves was considered by Goncharov and Kenyon~\cite{GK} and called \emph{shrinking/expanding of a 2-valent vertex} and \emph{spider move},
see Figure~\ref{fig:spider}.

\begin{figure}[htb]
    \centering
    \includegraphics[width=\linewidth]{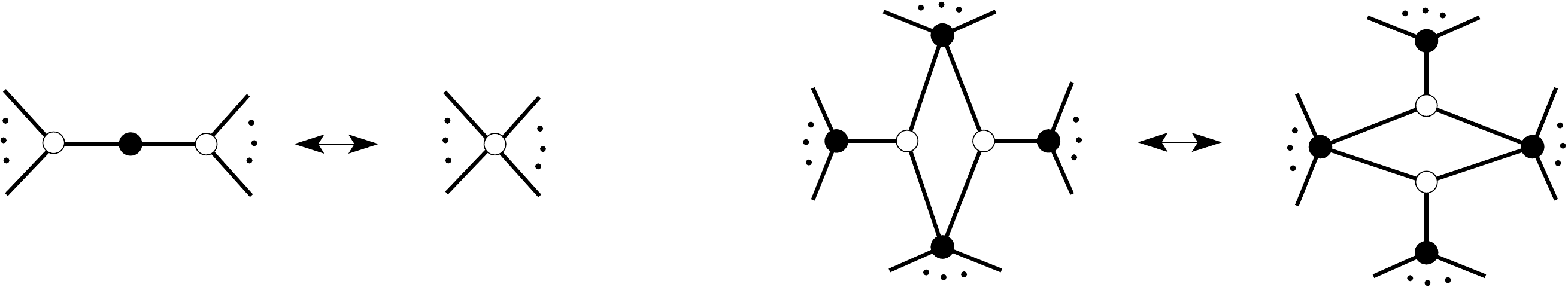}
    \caption{Shrinking/expanding of a~$2$-valent (black) vertex, and spider move (with black boundary vertices).}
    \label{fig:spider}
  \end{figure}

These moves play a crucial role in the theory. As was shown in~\cite[Theorem~2.5]{GK}, the work of Thurston~\cite{Thurston} implies that any
two periodic minimal graphs with the same Newton polygon are related by a finite sequence of these local transformations (recall Remark~\ref{rem:Harnack} above).
It is therefore natural to wonder how the dimer models studied in the present article behave under these moves.

The answer is the content of~\cite[Section~7]{BCdTelliptic}, which extends verbatim from the elliptic setting to the general case of arbitrary genus. We now give a brief summary of these results.

First of all, one easily checks that given a finite, bipartite, planar graph~$\Gs$ (not necessarily minimal) with Kasteleyn operator~$\Ks$ (not necessarily Fock's),
the associated partition function is invariant under shrinking/expanding of a~$2$-valent black vertex~$\bs$ with adjacent vertices~$\ws_1,\ws_2$ if and only~$\Ks$ satisfies
the equality~$\Ks_{\ws_1,\bs}+\Ks_{\ws_2,\bs}=0$ (and similarly for~$2$-valent white vertices, see~\cite[Proposition~50]{BCdTelliptic}).
The prime form being anti-symmetric, this holds in particular for Fock's Kasteleyn operator.

With this condition satisfied, it can be assumed via reduction of~$2$-valent white vertices and expansion of~$2$-valent black vertices
that all the white vertices of~$\Gs$ are trivalent. For such graphs, Fock's weights take a particularly simple form: it is precisely given by the function~$F_s(\alpha,\beta)=\theta(\alpha+\beta-s)E(\alpha,\beta)$ of
Section~\ref{subsub:Fay}, with~$s=s(\ws)$ constant on the four white vertices appearing in any spider move with black boundary vertices.

Finally, let us consider a dimer model on a bipartite, planar graph~$\Gs$, with Kasteleyn coefficients defined by some function~$F_s$ of
train-track parameters, as above.
Then, the corresponding partition function is invariant under spider moves with black boundary vertices if and only if these coefficients satisfy Equation~\eqref{eq:FayFock}.
In particular, Fay's identity directly implies that the dimer models given by Fock's weights are invariant under spider moves, a fact first proved (for urban renewal) by Fock~\cite[Proposition~1]{Fock}.

As a concluding remark, let us mention that
any (finite) bipartite graph whose train-tracks do not self-intersect can be reduced to a minimal one via
shrinking/expanding of a 2-valent vertices, spider moves, and \emph{merging parallel edges} as in~\cite[Figure~1]{KLRR}:
this can be checked using the theory developed by Postnikov~\cite{Postnikov} as in the proof of~\cite[Lemma~3]{KLRR},
or the work of Thurston~\cite{Thurston} as in~\cite[Lemma~33]{BCdTimmersions}.
Since our models are invariant under the first two transformations, one could hope that the whole theory applies to
any (possibly non-minimal) bipartite graphs whose train-tracks do not self-intersect.
This is not the case, for the simple reason that dimer models with Fock's weights are clearly \emph{not} invariant by the third move.
Another (similar) way to show that minimal graphs form the biggest class on which our work directly applies can be found
in~\cite[Theorem~31]{BCdTimmersions}.

Note however that it is in theory possible to study the dimer model on a periodic bipartite weighted graph~$(\Gs',\nu')$
with no self-intersecting train-track, as follows:
first use the three local moves to reduce~$(\Gs',\nu')$ to a periodic minimal weighted graph~$(\Gs,\nu)$, then harness Theorem~\ref{thm:Harnack}
to compute the parameters~$\Sigma,t,\mapalpha$ so that the corresponding Fock weights on~$\Gs$ are gauge-equivalent to~$\nu$,
and finally apply our results.

\subsection{Relation to known models, and perspectives}
\label{sub:known}

The dimer models studied in the present work are very general, as they are defined on arbitrary minimal graphs and cover all dimer models in the periodic case (recall Theorem~\ref{thm:Harnack}).
As it turns out, particular types of minimal graphs yield interesting classes of dimer models, recovering and extending known models.
This study was performed in Section~8.2 of~\cite{BCdTelliptic} in the genus~1 case, showing that the elliptic models of~\cite{BdTR1,BdTR2,dT_Mass_Dirac} could be recovered
by Fock's elliptic dimer model.

The extension of these results to higher genus is beyond the scope of this article and will be the subject of subsequent work~\cite{BCRdT}. Let us sketch these constructions very briefly.

Consider a planar graph~$G$, not necessarily bipartite. To this graph, one can associate two natural bipartite graphs: the \emph{double graph}~$\Gs=G^{\scriptscriptstyle{\mathrm{D}}}$,
see e.g.~\cite{Kenyon:crit}, and the graph~$\Gs=G^{\scriptscriptstyle{\mathrm{Q}}}$, see e.g.~\cite{WuLin}, both illustrated in Figure~\ref{fig:GDQ}.
One easily checks that if~$G$ is an isoradial graph, then the associated planar, bipartite graphs~$G^{\scriptscriptstyle{\mathrm{D}}}$ and~$G^{\scriptscriptstyle{\mathrm{Q}}}$ are minimal,
so Fock's dimer models can be defined and studied on these graphs.

\begin{figure}[ht]
  \centering
   \begin{overpic}[width=3.5cm]{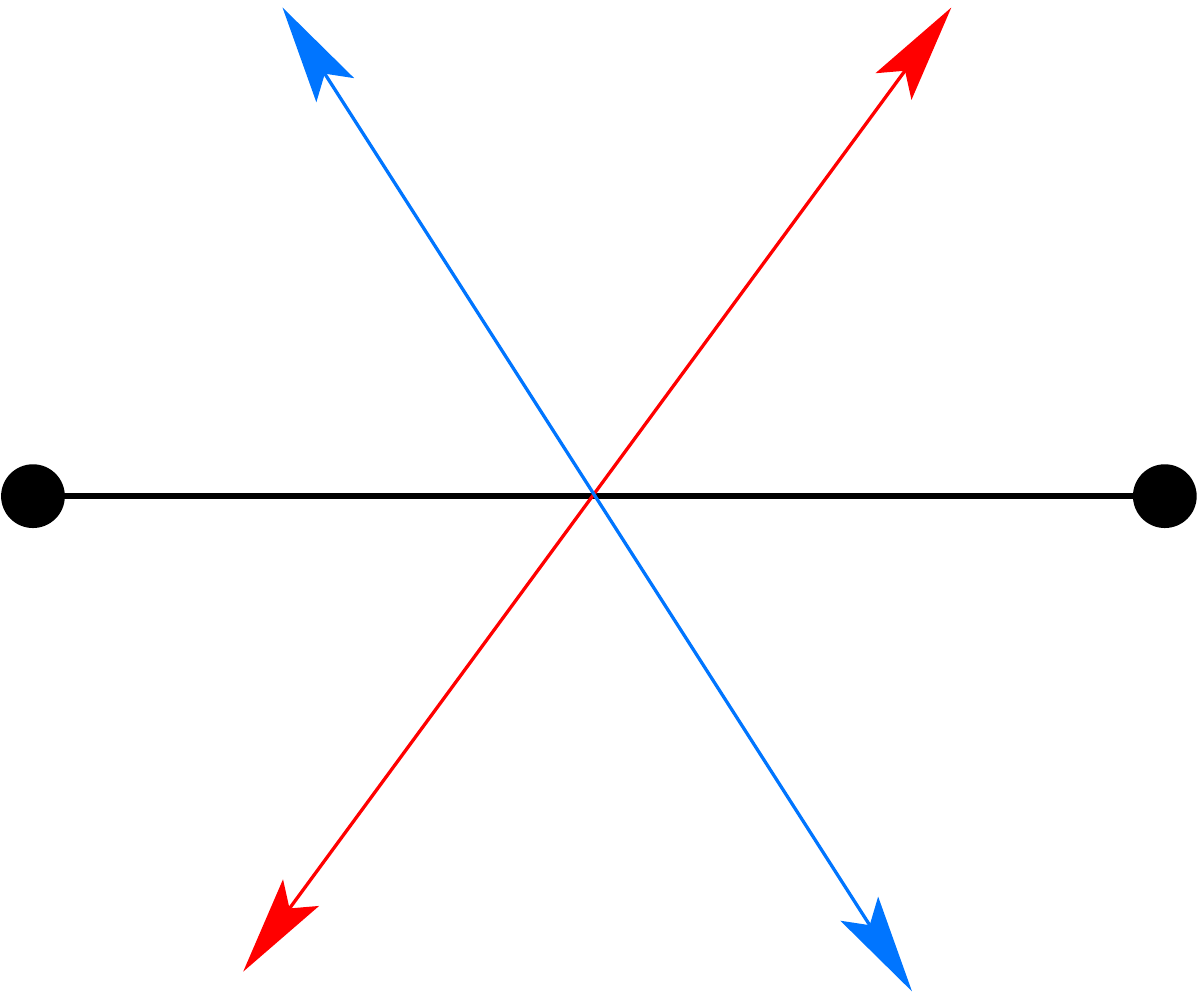}
   \put(0,60){$G$}
    \end{overpic}
   \hspace{1cm}
     \begin{overpic}[width=3.8cm]{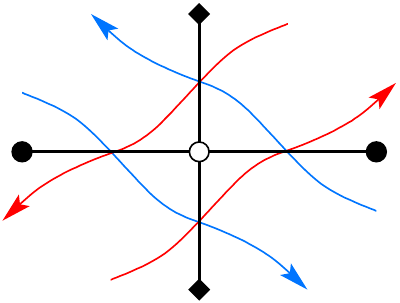}
     \put(5,65){$G^{\scriptscriptstyle{\mathrm{D}}}$}
     \end{overpic}
    \hspace{1cm}
      \begin{overpic}[width=3.8cm]{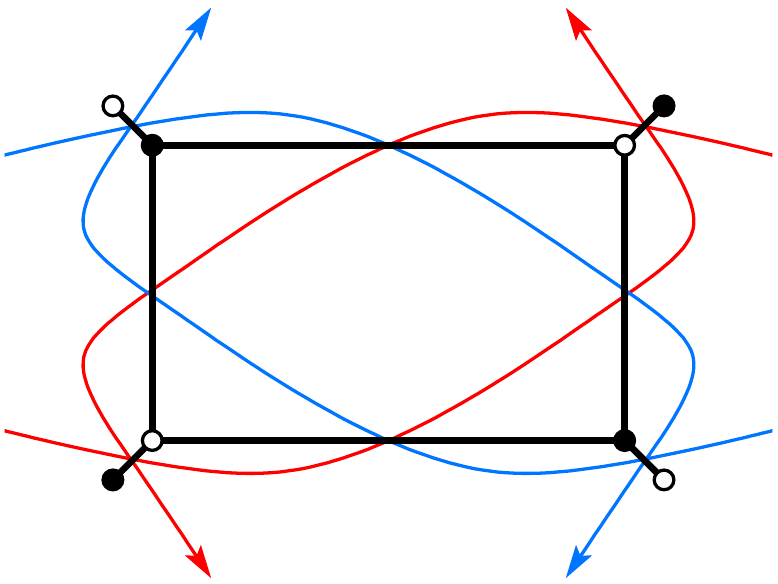}
      \put(5,65){$G^{\scriptscriptstyle{\mathrm{Q}}}$}
      \end{overpic}
    \caption{An edge of~$G$ with its two adjacent train-tracks (left), and the corresponding parts of~$\Gs=G^{\scriptscriptstyle{\mathrm{D}}}$ (center)
    and~$G^{\scriptscriptstyle{\mathrm{Q}}}$ (right) in black lines and white/black vertices, with the four adjacent train-tracks in red and blue lines.}
    \label{fig:GDQ}
\end{figure}

If the~M-curve~$\Sigma$ is endowed with a holomorphic involution, any isoradial embedding of~$G$ naturally defines a minimal immersion of~$\Gs=G^{\scriptscriptstyle{\mathrm{D}}}$,
\emph{i.e.}, an element of~$X_\Gs$. The study of the corresponding model can be undertaken using the theory of double (possibly ramified) coverings
of Riemann surfaces, as developed for example in~\cite{Fay}. In the ramified case (\emph{i.e.}, when the genus of~$\Sigma$ is even), it can be shown that Fock's Kasteleyn operator
on~$G^{\scriptscriptstyle{\mathrm{D}}}$ is gauge-equivalent to the direct sum of the discrete Laplacian of~\cite{George} on~$G$ and~$G^*$.
In the unramified case, it is a higher odd-genus generalisation of the massive Laplacian of~\cite{BdTR1} that appears.
These results, together with the study of the resulting Laplace operators and associated Green functions, will be the subject of the upcoming article~\cite{BCRdT}.

As for the dimer models with Fock's weights on minimal graphs of the form~$\Gs=G^{\scriptscriptstyle{\mathrm{Q}}}$, they yield higher genus extensions of the~$Z$-invariant elliptic Ising model of~\cite{BdTR2},
whose precise nature are yet to be understood and studied.

\bibliographystyle{alpha}
\bibliography{higher_genus_iso}

\end{document}